\documentclass{amsart}

\usepackage{epsfig}
\usepackage{amsmath}
\usepackage{amssymb}
\usepackage[all]{xy}


\setcounter{tocdepth}{1}

\def\comment#1{{\sf{[#1]}}}

\def\Z{{\mathbb Z}}
\def\N{{\mathbb N}}
\def\Q{{\mathbb Q}}
\def\R{{\mathbb R}}
\def\C{{\mathbb C}}
\def\A{{\mathbb A}}
\def\D{{\mathbb D}}
\def\H{{\mathbb H}}
\def\P{{\mathbb P}}
\def\V{{\mathbb V}}
\def\L{{\mathbb L}}

\def\B{{\mathcal B}}
\def\E{{\mathcal E}}
\def\M{{\mathcal M}}
\def\cN{{\mathcal N}}
\def\O{{\mathcal O}}

\def\cH{{\mathcal H}}
\def\cL{{\mathcal L}}
\def\cP{{\mathcal P}}
\def\cV{{\mathcal V}}

\def\bP{{\pmb \cP}}

\def\t{\mathbf{t}}
\def\a{\mathbf{a}}
\def\b{\mathbf{b}}
\def\bw{\mathbf{w}}

\def\g{{\mathfrak g}}
\def\h{{\mathfrak h}}
\def\p{{\mathfrak p}}

\def\Pminus{{\P^1-\{0,1,\infty\}}}

\def\l{{\ell}}
\def\d{{\epsilon}}		
\def\w{{\omega}}

\def\ehat{{\hat{\pmb{\d}}}}

\def\Mbar{\overline{\M}}
\def\Ebar{{\overline{E}}}
\def\cEbar{{\overline{\E}}}

\def\cLbar{\overline{\cL}}
\def\Hbar{\overline{\cH}}
\def\bPbar{{\overline{\bP}}}

\def\Ahat{\widehat{A}}
\def\Shat{\widehat{S}}
\def\That{\widehat{T}}

\def\G{{\Gamma}}

\def\Mtilde{\widetilde{M}}

\def\xbar{{\bar{x}}}

\def\alphatilde{{\tilde{\alpha}}}

\def\adual{{\check{\a}}}
\def\bdual{{\check{\b}}}

\def\v{{\vec{v}}}


\def\sl{\mathfrak{sl}}
\def\gl{\mathfrak{gl}}

\def\SL{{\mathrm{SL}}}

\def\SLtilde{\widetilde{\SL}}
\def\Gm{{\mathbb{G}_m}}

\def\un{\mathrm{un}}
\def\top{\mathrm{top}}
\def\zag{{\mathrm{Zag}}}
\def\norm{{\mathrm{norm}}}
\def\bch{{\mathrm{BCH}}}

\def\canon{{\mathrm{can}}}
\def\KZB{{\mathrm{KZB}}}
\def\dR{{\mathrm{dR}}}

\def\Ql{{\Q_\l}}

\def\dot{{\bullet}}
\def\bs{\backslash}
\def\bbs{{\bs\negthickspace \bs}}
\def\blank{\phantom{x}}

\def\shalf{\scriptstyle{\frac{1}{2}}}

\def\ll{\langle\langle}
\def\rr{\rangle\rangle}
\def\comptensor{\hat{\otimes}}

\newcommand\im{\operatorname{im}}               
\newcommand\id{\operatorname{id}}
\newcommand\ad{\operatorname{ad}}
\newcommand\Ad{\operatorname{Ad}}

\newcommand\Spec{\operatorname{Spec}}
\newcommand\Hom{\operatorname{Hom}}
\newcommand\End{\operatorname{End}}
\newcommand\Aut{\operatorname{Aut}}
\newcommand\Der{\operatorname{Der}}

\newcommand\Bl{\operatorname{Bl}}
\newcommand\Gr{\operatorname{Gr}}
\newcommand\Diff{\operatorname{Diff}}

\newcommand\Res{\operatorname{Res}}

\renewcommand\div{\operatorname{div}}
\renewcommand\Im{\operatorname{Im}}


\numberwithin{equation}{section}

\newtheorem{theorem}{Theorem}[section]
\newtheorem{lemma}[theorem]{Lemma}
\newtheorem{proposition}[theorem]{Proposition}
\newtheorem{corollary}[theorem]{Corollary}

\theoremstyle{definition}
\newtheorem{definition}[theorem]{Definition}
\newtheorem{example}[theorem]{Example}

\theoremstyle{remark}
\newtheorem{remark}[theorem]{Remark}


\begin{document}

\title{Notes on the Universal Elliptic KZB Connection}

\dedicatory{To Eduard Looijenga, friend and colleague}

\author{Richard Hain}

\address{Department of Mathematics\\ Duke University, Box 90320\\
Durham, NC 27708}

\email{hain@math.duke.edu}

\date{\today}

\thanks{Supported in part by the NSF through grants DMS-0706955 and
DMS-1005675.}



\maketitle

\tableofcontents


\section*{\sc Introduction}

The universal elliptic KZB\footnote{For Knizhnik--Zamolodchikov--Bernard.}
connections generalize the connections defined by the physicists Knizhnik and
Zamolodchikov \cite{kz} in genus 0 and Bernard \cite{kzb} in genus 1. For each
$n\ge 1$, the universal elliptic KZB connection is an integrable connection on a
bundle of pronilpotent Lie algebras over $\M_{1,1+n}$, the moduli space of
$(n+1)$-pointed smooth projective curves of genus 1, regarded as a stack over
$\C$. The fiber of the connection over the point corresponding to the
$(n+1)$-pointed genus 1 curve $(E;0,x_1,\dots,x_n)$ is the Lie algebra of the
unipotent completion of  $\pi_1^\un(C_n(E',(x_1,\dots,x_n)))$, where $E' :=
E-\{0\}$ and
$$
C_n(E') = (E')^n - \text{fat diagonal}
$$
is the configuration space of $n$ points in $E'$. Explicit constructions of the
universal elliptic KZB connection were given by Calaque, Enriquez and Etingof
(for all $n\ge 1$) in \cite{cee} and, independently, by Levin and Racinet (for
$n=1$ only) in \cite{levin-racinet}. We will generally drop the adjectives
``universal'' and ``elliptic''. Since we consider only universal elliptic KZB
connections, there should be no confusion.

In mathematics, KZB connections play a role in representation theory \cite{cee}
and in the study of periods of mixed elliptic motives
\cite{enriquez,hain-matsumoto:mem}. In this paper, we focus on the KZB
connection over $\M_{1,2}$, the $n=1$ case. This is the most important in the
theory of mixed elliptic motives.

In this paper, we give a complete exposition of the construction of the KZB
connection in the $n=1$ case. We use it to compute the limit mixed Hodge
structure (MHS) on the Lie algebra of the unipotent fundamental group of the
first order Tate curve $E_{\partial/\partial q}$ (i.e., the restriction of the
universal elliptic curve over the $q$-disk to the tangent vector
$\partial/\partial q$ at $q=0$) with its identity removed and with a canonical
tangential base point $\partial/\partial w$ at its identity.\footnote{Limit
mixed Hodge structures are reviewed in Section~\ref{sec:hodge}. Tangential base
points and their relationship to limit MHSs are explained in
Section~\ref{sec:pause}.} In particular, we show that its periods are multiple
zeta values. We also use it to derive certain formulas which relate this limit
MHS to the MHS on the unipotent fundamental group of $\Pminus$, which we regard
as the nodal cubic with its singular point and identity element removed. We also
show that, when restricted to $\M_{1,\vec{1}}$, the moduli space of elliptic
curves with a non-zero abelian differential (equivalently, a non-zero tangent
vector at the identity), the elliptic KZB connection is defined over $\Q$ and we
give an explicit formula for this connection in terms of the coordinates on
$\M_{1,\vec{1}/\Q}$.

This paper grew out of notes from a seminar at Duke University during the summer
of 2007 in which we read the paper of Levin and Racinet \cite{levin-racinet}.
Because this paper is derived from lecture notes, the style is sometimes a
little expansive and background which might otherwise be omitted is included.

The paper is in four parts. The first contains some background material. The
second part is a complete exposition of the elliptic KZB equation. This
exposition follows the approach of Levin and Racinet, which expresses the
elliptic KZB connection in terms of Kronecker's Jacobi form, $F(\xi,\eta,\tau)$,
\cite{kronecker} and Eisenstein series. This function $F$ was rediscovered by
Zagier in \cite{zagier} and can be expressed in terms of classical theta
functions. Zagier \cite{zagier} showed that this Jacobi form is a generating
function for the periods of modular forms of level 1, a fact whose relevance is
still not completely understood in the context of mixed elliptic motives.

During the seminar, we were unable to verify some of the computations in the
Levin-Racinet paper without modifying several factors of automorphy. Such
differences may have arisen because of differing conventions. This paper uses
the modified factors of automorphy. Because of this, and because it is not
likely that the paper of Levin and Racinet will be published, complete proofs of
the modular behaviour and integrability of the elliptic KZB connection are given
in Part 2.

Levin and Racinet define Hodge and weight filtrations on the fibers of the
elliptic KZB connection. In Part 3, we prove that with these filtrations, the
KZB connection is an admissible variation of MHS isomorphic to the canonical
variation of MHS whose fiber over $[E,x]$ is the Lie algebra of
$\pi_1^\un(E-\{0\},x)$ with its canonical MHS. This allows us to explicitly
compute the limit MHS on the fiber associated to a tangent vector at the
identity of the nodal cubic. In particular, we prove that its periods are
multiple zeta values. The explicit formula for the KZB connection allows us to
compute a formula for the canonical map of Lie algebras induced by the
homomorphism
$$
\pi_1^\un(\Pminus,\partial/\partial w) \to
\pi_1^\un(E'_{\partial/\partial q},\partial/\partial w)
$$
and also for the logarithm of the monodromy action
$$
\pi_1^\un(E'_{\partial/\partial q},\partial/\partial w) \to
\pi_1^\un(E'_{\partial/\partial q},\partial/\partial w).
$$
Here $w$ is the parameter in $\Pminus$ and $q$ is the coordinate $\exp(2\pi i
\tau)$ in the $q$-disk.

For applications to elliptic and modular motives, it is important to know that
the elliptic KZB connection is defined over $\Q$. Levin and Racinet
\cite{levin-racinet} state this as a result and sketch a proof of it. In Part 4
we elaborate on their computations and give an explicit formula for the
restriction of the KZB connection to $\M_{1,\vec{1}}$ and show that its
canonical extension to $\Mbar_{1,\vec{1}}$ is also defined over $\Q$. The story
for $\Mbar_{1,2}$ is more complicated and has been verified by Ma Luo. It will
appear in his Duke PhD thesis.

Background material on the topology of moduli spaces of elliptic
curves (viewed as orbifolds) and their associated mapping class groups is not
included. It can be found, for example, in \cite{hain:elliptic}. The books of
Serre \cite{serre} and Silverman \cite{silverman} are excellent references for
background material on modular forms.

\bigskip

\noindent{\em Acknowledgments:} I am indebted to all participants in the
seminar, particularly Aaron Pollack, whose help in checking the factors of
automorphy was invaluable.  I am grateful to Makoto Matsumoto, Francis Brown and
Benjamin Enriquez for their constructive comments, and to Ma Luo and Jin Cao for
their numerous corrections. Finally, I would like to thank the referee for
his/her thorough reading of the manuscript and for their constructive comments
on the exposition.

\subsection{Some Conventions:}

We use the topologist's convention for path multiplication: if $\alpha,\beta :
[0,1] \to X$ are paths in a topological space with $\alpha(1)=\beta(0)$, then
$\alpha\beta : [0,1]\to X$ is the path obtained by first traversing $\alpha$ and
then $\beta$.

The adjoint action of an element $u$ of the enveloping algebra of a Lie algebra
$\g$ on an element $x$ of $\g$ will often be denoted by $u\cdot x$. This will be
extended to power series $u$ of elements of $\g$ when it makes sense. For
example if $\t \in \g$, then
$$
e^\t \cdot x = \sum_{n=0}^\infty \ad_\t^n(x)/n!
$$
If $\delta$ is a derivation of $\g$, then $\delta (f\cdot u)= \delta(f)\cdot u +
f \cdot \delta(u)$.

We will be sloppy and denote the generic element of $\SL_2(\Z)$ by
$$
\gamma =
\begin{pmatrix}
a & b \cr c & d
\end{pmatrix}
$$
So, unless otherwise mentioned, the entries of $\gamma$ are $a$, $b$, $c$ and
$d$.

We will use the terms ``local system'' and ``locally constant sheaf''
interchangeably. Local systems of vector spaces over a smooth manifold
correspond to vector bundles with a flat (i.e., integrable) connection.
Sometimes we will abuse terminology and refer to such a local system as a ``flat
bundle''.

\part{Background}

In this part, we present the background needed to understand the universal
elliptic KZB connection. In parts 3 and 4, the reader will also need to be
familiar with the basics of Deligne's theory of mixed Hodge structures.
Introductory references are listed in Section~\ref{sec:hodge}.

\section{The Universal Elliptic Curve}
\label{sec:univ_curve}

The material in this section is standard. We will assume that the reader is
familiar with the construction of $\M_{1,1}$ as the orbifold quotient of the
upper half plane
$$
\h := \{\tau \in \C : \Im \tau > 0 \}
$$
by $\SL_2(\Z)$, the construction of its Deligne-Mumford compactification
$\Mbar_{1,1}$ (as an orbifold), the construction of the standard line bundle
$\cL$ over $\M_{1,1}$, and its extension $\cLbar$ to $\Mbar_{1,1}$. Denote their
$k$th powers by $\cL_k$ and $\cLbar_k$, respectively. In particular, their
inverses will be denoted by $\cL_{-1}$ and $\cLbar_{-1}$. This material is
classical and can be found, for example, in the first four sections of
\cite{hain:elliptic}.

The group $\SL_2(\Z)$ acts on $\Z^2$ by right multiplication:
$$
\begin{pmatrix} a & b \cr c & d\end{pmatrix} :
\begin{pmatrix} m & n \end{pmatrix} \mapsto
\begin{pmatrix}m & n\end{pmatrix}\begin{pmatrix} a & b \cr c & d\end{pmatrix}.
$$
Denote the corresponding semi-direct product $\SL_2(\Z) \ltimes \Z^2$ by $\G$.
This is the set $\SL_2(\Z)\times \Z^2$ with multiplication:
$$
(\gamma_1,v_1)(\gamma_2,v_2) = (\gamma_1\gamma_2,v_1\gamma_2+v_2)
$$
where $\gamma_1,\gamma_2 \in \SL_2(\Z)$ and $v_1,v_2 \in \Z^2$.

The group $\G$ acts on $X := \C\times \h$ on the left:
$$
(m,n) : (\xi,\tau)
\mapsto \bigg(\xi + \begin{pmatrix} m & n \end{pmatrix}
\begin{pmatrix} \tau \cr 1 \end{pmatrix}, \tau \bigg)
$$
and
$$
\gamma : (\xi,\tau) \mapsto
\big((c\tau+d)^{-1}\xi,\gamma \tau\big)
$$
where $\gamma \in \SL_2(\Z)$.

The quotient $\G\bs X$ is the universal elliptic curve $\E$; the map $\G\bs X
\to \SL_2(\Z)\bs \h$ induced by the projection $X \to \h$ is the projection $\E
\to \M_{1,1}$.

The universal elliptic curve can be compactified using the Tate curve to obtain
a proper orbifold map $\cEbar \to \Mbar_{1,1}$ whose fiber over $q=0$ is the
nodal cubic. Its pullback to the $q$-disk $\D$, with the double point removed,
is the quotient of $\C^\ast \times \D$ by the group action $\Z\times \C^\ast
\times \D \to \C^\ast\times \D$ defined by
$$
n : (w,q) \mapsto
\begin{cases}
(q^n w,q) & q \neq 0, \cr
(w,q) & q=0.
\end{cases}
$$
Note that, although this group action is not continuous, the quotient (endowed
with the quotient topology) is Hausdorff and is a complex manifold. The
fiber over $q=0$ is the group $\C^\ast$. The zero section (aka, the identity
section) passes through it at $w=1$.

\begin{proposition}
\label{prop:nbd}
The normal bundle of the zero section of $\cEbar$ is  $\cLbar_{-1}$.
\end{proposition}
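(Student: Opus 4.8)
The plan is to realize the normal bundle as the vertical tangent bundle of $\E\to\M_{1,1}$ along the zero section, to pin down its factor of automorphy on the interior, and then to analyze its extension across the cusp via the Tate curve presentation recalled above. Because the zero section $Z=\{\xi=0\}$ is a section of $\E\to\M_{1,1}$, the restriction $T\E|_Z$ splits canonically as $TZ\oplus T^{\mathrm{vert}}\E|_Z$, so the normal bundle is canonically isomorphic to the vertical tangent bundle along $Z$, i.e.\ to the fibrewise Lie algebra bundle of the universal curve. This is dual to the Hodge bundle $\cL$, whose fibre over $E_\tau$ is $H^0(E_\tau,\Omega^1)=\C\,d\xi$ and which has factor of automorphy $(c\tau+d)$ (its sections being the weight-one modular forms). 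So on the interior one expects $N\cong\cL_{-1}$.

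First I would carry out the interior computation directly. Over $X=\C\times\h$ the vertical tangent bundle is framed by $\partial/\partial\xi$. The translation subgroup $\Z^2\subset\G$ fixes this frame, since it acts by $\xi\mapsto\xi+m\tau+n$; and $\gamma\in\SL_2(\Z)$ acts on the fibre by $\xi\mapsto(c\tau+d)^{-1}\xi$, whose differential multiplies $\partial/\partial\xi$ by $(c\tau+d)^{-1}$. Hence a section $g(\tau)\,\partial/\partial\xi$ descends to $\M_{1,1}$ exactly when $g(\gamma\tau)=(c\tau+d)^{-1}g(\tau)$, so the normal bundle over $\M_{1,1}$ has factor of automorphy $(c\tau+d)^{-1}$ and is isomorphic to $\cL_{-1}$.

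The hard part will be the extension across the cusp, where I must check that the normal bundle extends to exactly $\cLbar_{-1}$ rather than to a twist by the cuspidal divisor. Here I would use the presentation above, writing $w=e^{2\pi i\xi}$ and $q=e^{2\pi i\tau}$ so that the curve near the cusp is $\C^\ast\times\D$ modulo $n:(w,q)\mapsto(q^n w,q)$. A short computation shows that the vector field $w\,\partial/\partial w$ is invariant under this $\Z$-action; it therefore descends to a nowhere-vanishing vertical field that extends across $q=0$, restricting there to the standard invariant field on the fibre $\C^\ast$. Since $\partial/\partial\xi=2\pi i\,w\,\partial/\partial w$, the interior frame extends to this global frame, and since $\cLbar$ is framed near the cusp by the invariant differential $dw/w=2\pi i\,d\xi$ of the Tate curve, its inverse $\cLbar_{-1}$ is framed by $w\,\partial/\partial w$. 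Matching frames then identifies the extended normal bundle with $\cLbar_{-1}$ over all of $\Mbar_{1,1}$; the nonvanishing of $w\,\partial/\partial w$ along the fibre over $q=0$ is precisely what rules out a boundary twist. The constant $2\pi i$ relating the two frames affects only the rational normalization, not the isomorphism class.
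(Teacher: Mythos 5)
Your proof is correct, but it follows a genuinely different route from the paper's. The paper's proof is a one-line global argument: the identity map on $\C\times\h$ is equivariant for the inclusion $\SL_2(\Z)\hookrightarrow\SL_2(\Z)\ltimes\Z^2$, so it induces a fiberwise quotient map $\cL_{-1}\to\E$ commuting with the projections to $\M_{1,1}$; since this map is a biholomorphism near the zero sections, the normal bundle of the zero section of $\E$ is identified with the normal bundle of the zero section of the line bundle $\cL_{-1}$, which is canonically $\cL_{-1}$ itself, and the extension of this picture over $q=0$ is quoted from Exercise~47 of \cite[\S5.2]{hain:elliptic}. You instead identify the normal bundle with the vertical tangent bundle along the zero section (using the splitting $T\E|_Z\cong TZ\oplus T^{\mathrm{vert}}\E|_Z$ coming from the section), read off its factor of automorphy $(c\tau+d)^{-1}$ from the frame $\partial/\partial\xi$, and then verify the extension across the cusp by hand, matching the invariant frame $w\,\partial/\partial w$ of the Tate-curve presentation against the frame $dw/w$ of $\cLbar$. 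What each approach buys: the paper's map $\cL_{-1}\to\E$ is shorter and immediately yields the stronger corollary that whole neighbourhoods of the zero sections are biholomorphic (not merely that the normal bundles agree), which is what the paper actually uses later; your argument is self-contained at the cusp --- the invariance and nonvanishing of $w\,\partial/\partial w$ replace the citation --- and it makes explicit why no twist by the boundary divisor can occur, at the cost of being tied to the weaker, infinitesimal statement. Both computations rest on the same two facts (the $\SL_2(\Z)$-action scales $\xi$ by $(c\tau+d)^{-1}$, and the Tate parameter satisfies $dw/w=2\pi i\,d\xi$), so the arithmetic content is identical; only the packaging differs.
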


\begin{proof}
The line bundle $\cL_{-1}$ is the quotient of $\C\times \h$ by the action
$$
\begin{pmatrix} a & b \cr c & d\end{pmatrix} :
(\xi,\tau) \mapsto \big((c\tau+d)^{-1}\xi,\gamma \tau\big).
$$
The identity $\C\times \h \to \C\times \h$ is equivariant with respect to the
natural inclusion $\SL_2(\Z) \to \SL_2(\Z)\ltimes \Z^2$ and thus induces a
quotient mapping $\cL_{-1} \to \E$ that commutes with the projections to
$\M_{1,1}$. This projection extends over $q=0$. This is well known and follows
from the result of Exercise~47 in \cite[\S5.2]{hain:elliptic}.
\end{proof}

\begin{corollary}
A neighbourhood of the zero section of $\cLbar_{-1}$ is biholomorphic with a
neighbourhood of the identity section of $\cEbar \to \Mbar_{1,1}$. \qed
\end{corollary}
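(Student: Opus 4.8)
The plan is to deduce the corollary directly from the map constructed in the proof of Proposition~\ref{prop:nbd}, rather than appealing to an abstract holomorphic tubular neighbourhood theorem. That proof produces a morphism $f : \cLbar_{-1} \to \cEbar$ commuting with the projections to $\Mbar_{1,1}$, induced on universal covers by the identity $\C\times\h \to \C\times\h$ together with the inclusion $\SL_2(\Z) \hookrightarrow \G$. I would show that $f$ restricts to a biholomorphism from a suitable neighbourhood of the zero section of $\cLbar_{-1}$ onto a neighbourhood of the identity section of $\cEbar$. Since $f$ is the identity on the zero section and is a local biholomorphism along it, everything reduces to proving injectivity on a small enough tube, uniformly along the base and in particular across the nodal fibre $q=0$.

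First I would check that $f$ is a local biholomorphism near the zero section. Over the interior of $\Mbar_{1,1}$ the fibre of $\cLbar_{-1}$ is $\C$ with coordinate $\xi$, the fibre of $\cEbar$ is the elliptic curve $\C/(\Z\tau+\Z)$, and $f$ is fibrewise the universal covering $\C \to \C/(\Z\tau+\Z)$; on the base it covers the identity. Hence $f$ is a local biholomorphism in a neighbourhood of the zero section, and its differential there is the identity on the tangent space of the section and an isomorphism in the normal direction. This is exactly the infinitesimal content of the normal bundle computation in Proposition~\ref{prop:nbd}. The orbifold fixed points of $\SL_2(\Z)$ at $\tau = i, \rho$ cause no difficulty, since $f$ is $\SL_2(\Z)$-equivariant and both sides carry the induced orbifold structure.

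Next I would establish injectivity on the tube $\{|\xi| < 1/4\}$ over the standard fundamental domain. Two points of $\cLbar_{-1}$ have the same image under $f$ precisely when their lifts to $\C\times\h$ differ by a translation $\xi \mapsto \xi + m\tau + n$ with $(m,n)\in\Z^2$. For $\tau$ in the standard fundamental domain every nonzero vector of the lattice $\Z\tau + \Z$ has length at least $1$, so no two points with $|\xi| < 1/4$ can be congruent modulo the lattice unless they coincide; thus $f$ is injective on this tube, uniformly in $\tau$. Being an injective local biholomorphism, $f$ is then a biholomorphism onto its image, which is an open neighbourhood of the identity section over the interior of $\Mbar_{1,1}$.

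The hard part will be the behaviour across the nodal fibre $q=0$, where the lattice degenerates and the estimate above must be re-examined. Here I would pass to the Tate curve coordinates $w = e^{2\pi i\xi}$ and $q = e^{2\pi i\tau}$, in which the identity section is $w = 1$ and the deck translations become $n : w \mapsto q^n w$. The exponential $\xi \mapsto w$ is a local biholomorphism near $\xi = 0$, and for small $q$ the orbit $\{q^n w : n\neq 0\}$ of a point near $w=1$ escapes towards $0$ or $\infty$, so $f$ stays injective on a neighbourhood of $w=1$ uniformly as $q\to 0$. Combined with the extension of $f$ over $q=0$ already supplied by Proposition~\ref{prop:nbd} and the fact that the fibre of $\cEbar$ over $q=0$ is $\C^\ast$ with the section at $w=1$, this shows that $f$ is an injective local biholomorphism on a full neighbourhood of the zero section of $\cLbar_{-1}$, which proves the corollary.
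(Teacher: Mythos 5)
Your proposal is correct and takes essentially the same route the paper intends: the corollary carries no separate argument precisely because the map $\cLbar_{-1}\to\cEbar$ constructed in the proof of Proposition~\ref{prop:nbd} is a fibre-preserving local biholomorphism along the zero section, and your fundamental-domain and Tate-coordinate estimates simply make explicit the local injectivity that the paper leaves to the reader. Note also that since the map commutes with the projections to $\Mbar_{1,1}$, injectivity is a fibrewise question, so your two regimes (interior and cusp) patch together automatically.
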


Denote by $L'$ the complex manifold obtained by removing the 0-section from a
holomorphic line bundle $L$.

\begin{corollary}
The moduli space $\M_{1,\vec{1}}'$ of pairs $(E,\v)$, where $E$ is a stable
elliptic curve and $\v$ is a (possible vanishing) tangent vector at the identity
is naturally isomorphic with $\cLbar_{-1}$. In particular, the moduli space of
smooth elliptic curves and a non-zero tangent vector at the identity
$\M_{1,\vec{1}}$ is isomorphic to $\cL_{-1}'$. \qed
\end{corollary}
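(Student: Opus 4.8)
The plan is to identify the moduli functor of pairs $(E,\v)$ with the total space of the line bundle $\cLbar_{-1}$ by constructing a natural isomorphism at the level of the uniformizing objects, then descending through the orbifold quotients. The key observation is that a tangent vector $\v$ at the identity of $E$ is precisely an element of the tangent space $T_0 E$, and this tangent space is canonically the fiber of the normal bundle of the zero section established in Proposition~\ref{prop:nbd}. Thus the data of $(E,\v)$ is exactly a point of $E$ together with a choice of element in $T_0 E$, which is a point of the total space of the normal bundle $\cLbar_{-1}$ lying over the appropriate point of $\Mbar_{1,1}$.

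\smallskip

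First I would make the identification $T_0 E \cong (\cL_{-1})_{[E]}$ explicit. On the uniformizing space $\C\times\h$, the coordinate $\xi$ gives a trivialization of the tangent space along the identity section, and the $\SL_2(\Z)$-action $\gamma:(\xi,\tau)\mapsto\big((c\tau+d)^{-1}\xi,\gamma\tau\big)$ is exactly the action defining $\cL_{-1}$ recalled in the proof of Proposition~\ref{prop:nbd}. Hence a tangent vector at the identity of the elliptic curve $E_\tau = \C/(\Z\tau+\Z)$, written in the $\xi$-coordinate, transforms under change of $\tau$ by the factor $(c\tau+d)^{-1}$, which is precisely the cocycle of $\cL_{-1}$. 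This gives a canonical isomorphism between the space of pairs $(E,\v)$ with $E$ smooth and the total space $\cL_{-1}$, where the zero tangent vector corresponds to the zero section; removing it yields $\M_{1,\vec{1}}\cong\cL_{-1}'$. Next I would extend this over the cusp $q=0$ using the description of $\cEbar$ near the identity section: by the Corollary to Proposition~\ref{prop:nbd}, a neighbourhood of the identity section of $\cEbar$ is biholomorphic to a neighbourhood of the zero section of $\cLbar_{-1}$, so the tangent-vector data extends naturally over the nodal cubic to give $\M_{1,\vec{1}}'\cong\cLbar_{-1}$.

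\smallskip

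The main obstacle I expect is the careful treatment at the boundary $q=0$, where one must check that the moduli-theoretic notion of a tangent vector at the identity of the Tate curve matches the fiber of $\cLbar_{-1}$ over the cusp, and that the isomorphism is compatible with the orbifold structure (i.e.\ respects the residual automorphisms such as $-I$ acting on both sides). Concretely, I would verify that the extension of the normal bundle over $q=0$ given in Proposition~\ref{prop:nbd} agrees with the extension of the tangent-space-at-identity as a sheaf on $\Mbar_{1,1}$, and that the universal family $\cEbar\to\Mbar_{1,1}$ carries a tautological tangent vector along the identity section making the identification functorial. Since this is essentially a restatement and boundary-extension of Proposition~\ref{prop:nbd} together with its first Corollary, the proof amounts to assembling those two results and checking that the zero-vector locus matches the zero section, whence the punctured statement $\M_{1,\vec{1}}\cong\cL_{-1}'$ follows immediately by restriction.
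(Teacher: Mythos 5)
Your proposal is correct and follows essentially the same route as the paper, which states this corollary with no proof at all precisely because it is an immediate consequence of Proposition~\ref{prop:nbd} (the normal bundle of the zero section of $\cEbar$ is $\cLbar_{-1}$) together with its first corollary (the neighbourhood biholomorphism extending over $q=0$). Your explicit check that a tangent vector at the identity of $E_\tau$, written in the $\xi$-coordinate, transforms by the cocycle $(c\tau+d)^{-1}$ of $\cL_{-1}$ is exactly the content the paper leaves implicit.
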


\subsection{Fundamental Groups}

A non-zero point $x$ of an elliptic curve $E$ determines (and is determined by)
an orbifold map $[E,x] : \C\to \E'$. 

\begin{proposition}
The fundamental group of $\E'$ with respect to the base point $[E,x]$ is an
extension
$$
1\to \pi_1(E',x) \to \pi_1(\E',[E,x]) \to \SL_2(\Z) \to 1.
$$
In particular, it is isomorphic to an extension of $\SL_2(\Z)$ by a free
group of rank $2$.
\end{proposition}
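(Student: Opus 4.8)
\emph{Proof proposal.} The plan is to realize $\E'$ as a global-quotient orbifold and to read off the extension from the long exact homotopy sequence of an associated fibration, taking care to account for the element $-I\in\SL_2(\Z)$, which acts trivially on $\h$ but nontrivially on each fibre. Recall from Section~\ref{sec:univ_curve} that $\E=\G\bs X$ with $X=\C\times\h$ and $\G=\SL_2(\Z)\ltimes\Z^2$; deleting the zero section removes the locus where $\xi\in\Z\tau+\Z$, so $\E'=\G\bs X'$ with $X':=\{(\xi,\tau)\in X:\xi\notin\Z\tau+\Z\}$. The normal subgroup $\Z^2\triangleleft\G$ of lattice translations acts freely and properly discontinuously on $X'$, so the quotient $Y':=\Z^2\bs X'$ is a genuine complex manifold carrying a residual action of $\SL_2(\Z)=\G/\Z^2$, and $\E'=[\,Y'/\SL_2(\Z)\,]$ as orbifolds.

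First I would analyse $Y'$. The projection $Y'\to\h$ is a smooth fibre bundle whose fibre over $\tau$ is $(\C-\Lambda_\tau)/\Lambda_\tau=E_\tau-\{0\}=E_\tau'$, a once-punctured torus. Since the base $\h$ is contractible, the bundle is fibre-homotopy trivial, so $Y'\simeq E'$. As $E'$ deformation retracts onto the wedge of the two standard loops $\alpha,\beta$ (the images of $1$ and $\tau$), it is a $K(F_2,1)$; hence $\pi_1(Y',x)\cong\pi_1(E',x)$ is free of rank $2$, with the loop about the puncture represented by the commutator $[\alpha,\beta]$. This identifies the prospective kernel of the extension.

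Next I would extract the extension from $\E'=[\,Y'/\SL_2(\Z)\,]$. Because $\SL_2(\Z)$ acts properly discontinuously on $Y'$, the orbifold fundamental group of the quotient is computed by the Borel construction, i.e.\ by the fibration
$$
Y'\longrightarrow E\SL_2(\Z)\times_{\SL_2(\Z)}Y'\longrightarrow B\SL_2(\Z).
$$
Its long exact homotopy sequence reads
$$
\pi_2\big(B\SL_2(\Z)\big)\to\pi_1(Y',x)\to\pi_1^{\mathrm{orb}}(\E',[E,x])\to\pi_1\big(B\SL_2(\Z)\big)\to\pi_0(Y').
$$
Since $B\SL_2(\Z)=K(\SL_2(\Z),1)$ we have $\pi_2(B\SL_2(\Z))=0$ and $\pi_1(B\SL_2(\Z))=\SL_2(\Z)$, while $Y'$ is connected so $\pi_0(Y')=\ast$. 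The sequence therefore collapses to
$$
1\to\pi_1(E',x)\to\pi_1(\E',[E,x])\to\SL_2(\Z)\to 1,
$$
the orbifold group $\pi_1^{\mathrm{orb}}$ being exactly the $\pi_1(\E',[E,x])$ of the statement; combined with the previous paragraph this exhibits $\pi_1(\E')$ as an extension of $\SL_2(\Z)$ by a free group of rank $2$.

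The step I expect to need the most care is the orbifold bookkeeping, and in particular the role of $-I$. One must verify that $[\,Y'/\SL_2(\Z)\,]$ recovers $\E'$ with the \emph{full} group $\SL_2(\Z)$ as quotient, rather than $\PSL_2(\Z)$: although $-I$ fixes every $\tau\in\h$, its action on the fibre $Y'$ is the elliptic involution $x\mapsto -x$ of $E'$, which is nontrivial (with the three nonzero $2$-torsion points as isolated fixed points). Thus $-I$ genuinely contributes to the deck action, the Borel construction sees all of $\SL_2(\Z)$, and no collapse to $\PSL_2(\Z)$ occurs. One should likewise justify that the homotopy quotient computes $\pi_1^{\mathrm{orb}}$ and that $\E'\to\M_{1,1}$ is the orbifold bundle claimed; these are precisely the facts developed in the orbifold treatment of \cite{hain:elliptic}, on which I would rely.
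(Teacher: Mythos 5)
Your proof is correct and takes essentially the same route as the paper: your $Y'$ is precisely the paper's $\E_\h'$, and the paper likewise shows $\pi_1(\E_\h')\cong\pi_1(E',x)$ by trivializing the bundle over the contractible base $\h$ (via the explicit homeomorphism $(u,v,\tau)\mapsto(u+v\tau,\tau)$) and then concludes from the fact that $\E_\h'\to\E'$ is a Galois (orbifold) covering with group $\SL_2(\Z)$. Your Borel-construction fibration and the remark about $-I$ simply make explicit the orbifold covering-space theory that the paper invokes by reference to \cite{hain:elliptic}.
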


\begin{proof}
The function $\R^2 \times \h \to \C\times \h$ defined by $(u,v,\tau) \mapsto
(u+v\tau,\tau)$ is a homeomorphism. It induces a homeomorphism $(\R/\Z)^2 \times
\h \to \E_\h$ which restricts to give a homeomorphism
$$
\big((\R/\Z)^2 -\{0\}\big) \times \h \to \E_\h',
$$
where $\E_\h$ denotes the universal elliptic curve $\Z^2\bs\big(\C\times\h\big)$
over $\h$ and $\E_\h'$ denotes $\E_\h$ with the 0-section removed. It follows
that $\E_\h'$ is homotopy equivalent to each of its fibers $E_\tau'$. In
particular, the inclusion $(E',x) \to (\E_\h',(E,x))$ induces an isomorphism on
fundamental groups.

The result follows from covering space theory as the covering $\E_\h' \to \E'$
is Galois with Galois group $\SL_2(\Z)$.
\end{proof}

\begin{corollary}
For each point $[E,x]$ of $\E'$, there is a natural action of $\pi_1(\E',[E,x])$
on $\pi_1(E',x)$.
\end{corollary}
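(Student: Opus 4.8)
The plan is to produce the action by conjugation within the group extension established in the preceding proposition. Writing $G = \pi_1(\E',[E,x])$ and $N = \pi_1(E',x)$, that proposition identifies $N$ with the kernel of the surjection $G \to \SL_2(\Z)$. Being a kernel, $N$ is a normal subgroup of $G$, so conjugation $c_g : n \mapsto gng^{-1}$ carries $N$ into itself for every $g \in G$. This defines a homomorphism $G \to \Aut(N)$, and I would take this to be the asserted natural action. First I would fix the identification of $N$ as a subgroup of $G$ coming from the fiber inclusion $(E',x) \hookrightarrow (\E_\h',(E,x))$ used in the proof of the proposition, which realizes $\pi_1(E',x)$ as the fundamental group of a fiber of $\E_\h' \to \h$. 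With this identification fixed, well-definedness is immediate: for $g\in G$ and $n\in N$, the element $gng^{-1}$ lies in $N$ precisely because it maps to the identity in $\SL_2(\Z)$.

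The one point that repays a moment's thought is the meaning of \emph{natural}. I would argue naturality by observing that the construction requires no splitting of the extension and depends only on the canonical embedding $N\hookrightarrow G$, which in turn is determined by the choice of base point $[E,x]$ and nothing more. Concretely, replacing the auxiliary path representing a class by a homotopic one, or the base point by another representative of its homotopy class, alters the relevant $g$ only by an element of $G$, and hence changes $c_g$ only in the prescribed functorial way. Equivalently, one may describe the action geometrically through the Galois covering $\E_\h' \to \E'$ with group $\SL_2(\Z)$: a loop in $\E'$ based at $[E,x]$ lifts to a path in $\E_\h'$ whose endpoints differ by a deck transformation $\gamma\in\SL_2(\Z)$, and the induced operation on $\pi_1$ of the fiber is the combination of this deck action with conjugation by the lifted loop.

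I expect no serious obstacle, since the corollary is essentially a formal consequence of the normality of $N$ in $G$ recorded in the proposition. The subtlety worth stating explicitly, rather than the bare existence of the action, is that the inner automorphisms of $N$ already arise from the inclusion $N\subseteq G$, so that the action descends to a homomorphism $\SL_2(\Z) \to \Out(N)$. Verifying this compatibility is the piece of content that will matter later, when the conjugation action is compared with the monodromy of the universal elliptic KZB connection.
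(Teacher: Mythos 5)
Your proposal is correct and coincides with the paper's own argument: the preceding proposition exhibits $\pi_1(E',x)$ as the kernel of $\pi_1(\E',[E,x]) \to \SL_2(\Z)$, hence as a normal subgroup, and the natural action is conjugation $g : \gamma \mapsto g\gamma g^{-1}$. The additional remarks on naturality and the induced map to $\Out$ are fine but not needed.
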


\begin{proof}
Since $\pi_1(E',x)$ is a normal subgroup of $\pi_1(\E',[E,x])$, one has the
conjugation action $g : \gamma \mapsto g\gamma g^{-1}$ of $\pi_1(\E',[E,x])$ on
$\pi_1(E',x)$.
\end{proof}

Denote the $\C^\ast$ bundle obtained from $\cL_k$ by removing the 0-section by
$\cL_k'$. Its (orbifold) fundamental group is a central extension
$$
0 \to \Z \to \pi_1(\cL_k',\ast) \to \SL_2(\Z) \to 1.
$$

\begin{remark}
It is well-known that $\pi_1(\cL_{-1}')$ is naturally isomorphic to each of
the following groups:
\begin{enumerate}

\item the braid group $B_3$ on 3-strings;

\item the fundamental group of $\C^2$ with the cusp $x^2=y^3$ removed;

\item the fundamental group of the complement of the trefoil knot;

\item the inverse image $\SLtilde_2(\Z)$ of $\SL_2(\Z)$ in the universal
covering group $\SLtilde_2(\R)$ of $\SL_2(\R)$. 
\end{enumerate}
Details can be found, for example, in \cite{hain:elliptic}.
\end{remark}

Proposition~\ref{prop:nbd} implies that if $E$ is an elliptic curve and $\v$ is
a non-zero tangent vector at $0\in E$, there is a natural homomorphism
$$
\pi_1(\cL_{-1}',[E,\v]) \to \pi_1(\E',[E,\v]).
$$
Composing this with the action above we obtain an action
$$
\pi_1(\cL_{-1}',[E,\v]) \to \Aut \pi_1(E',\v).
$$

Denote the element of $\pi_1(E',\v)$ that corresponds to moving once around  the
identity in the positive direction by $c_o$. Denote by $z_o$ the image  in
$\pi_1(\cL_{-1}',[E,\v]) \cong \SLtilde_2(\Z)$ of the {\em positive}
generator of the fundamental group of the fiber $\cL_{-1,E}'\cong\C^\ast$
over $[E]$ of the projection $\cL_{-1}' \to \M_{1,1}$.

\begin{proposition}
This action of $\pi_1(\cL_{-1}',[E,\v])$ on $\pi_1(E',\v)$ fixes $c_o$.
\end{proposition}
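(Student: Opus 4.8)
The plan is to realise $c_o$ as the image of a \emph{central} element of $\pi_1(\cL_{-1}',[E,\v])$ under the homomorphism to $\pi_1(\E',[E,\v])$; since the action of $\pi_1(\cL_{-1}',[E,\v])$ is by conjugation in $\pi_1(\E',[E,\v])$, a central element of the source will automatically be fixed.

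First I would identify the image of $z_o$ in $\pi_1(\E',[E,\v])$. The homomorphism $\pi_1(\cL_{-1}',[E,\v]) \to \pi_1(\E',[E,\v])$ is induced by the map $\cL_{-1}' \to \E'$ of spaces over $\M_{1,1}$ coming from Proposition~\ref{prop:nbd}, and by its corollary this map is a biholomorphism from a neighbourhood of the zero section of $\cL_{-1}'$ onto a punctured neighbourhood of the identity section of $\E'$. The generator $z_o$ is represented by a small loop in the fiber $\cL_{-1,E}' \cong \C^\ast$ encircling the zero section once in the positive direction; under the biholomorphism this is carried to a small loop in $E_\tau$ encircling the identity once, which is exactly $c_o$ (up to orientation, which will be immaterial). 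Since both projections to $\SL_2(\Z)$ are compatible, $z_o$ lands in the fiber subgroup $\pi_1(E',\v) \subset \pi_1(\E',[E,\v])$, so its image there is $c_o$.

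Next I would invoke centrality. The fibration $\C^\ast \to \cL_{-1}' \to \M_{1,1}$ presents $\pi_1(\cL_{-1}',[E,\v])$ as the central extension $0 \to \Z \to \pi_1(\cL_{-1}',[E,\v]) \to \SL_2(\Z) \to 1$ recorded above, with $z_o$ generating the central $\Z$. Thus $g\, z_o\, g^{-1} = z_o$ for every $g \in \pi_1(\cL_{-1}',[E,\v])$. Pushing this identity forward to $\pi_1(\E',[E,\v])$ yields $\tilde{g}\, c_o\, \tilde{g}^{-1} = c_o$, where $\tilde{g}$ is the image of $g$; as the action of $g$ on $\pi_1(E',\v)$ is precisely conjugation by $\tilde{g}$, this is the assertion that the action fixes $c_o$.

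The only delicate point is the first step: one must verify that the biholomorphism of Proposition~\ref{prop:nbd} and its corollary really carries the fiber circle of $\cL_{-1}'$ to a loop about the identity of $E$, matching base points at $\v$. I expect this to be the main obstacle, though a routine one. Note, however, that for the conclusion one needs only that the image of $\pi_1(\cL_{-1}',[E,\v])$ centralise $c_o$; since centralising $c_o^{-1}$ is the same as centralising $c_o$, the orientation of $z_o$ is irrelevant, and the argument rests solely on the centrality of $z_o$.
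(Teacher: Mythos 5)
Your proof is correct and takes essentially the same route as the paper's: the paper likewise observes that $c_o$ is the image of $z_o$ under the map $\cL_{-1}' \to \E'$ and concludes immediately from the centrality of $z_o$ in $\pi_1(\cL_{-1}',[E,\v]) \cong \SLtilde_2(\Z)$. Your extra care in checking that the fiber circle of $\cL_{-1}'$ is carried to a loop about the identity of $E$ (and your remark that the orientation of $z_o$ is immaterial) merely fills in the ``observe'' step that the paper leaves implicit.
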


\begin{proof}
Observe that $c_o$ is the image of $z_o$ under the continuous mapping
$\cL_{-1}' \to \E'$. The result follows as $z_o$ is central in
$\SLtilde_2(\Z)$.
\end{proof}

Since $\pi_1(\cL_{-1}',[E,\v])$ acts on $\pi_1(E',\v)$, we can form the
semi-direct product
$$
\pi_1(\cL_{-1}',[E,\v]) \ltimes \pi_1(E',\v).
$$

\begin{lemma}
The element $c_o^{-1}z_o$ is central in $\pi_1(\cL_{-1}',[E,\v]) \ltimes
\pi_1(E',\v)$.
\end{lemma}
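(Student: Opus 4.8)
The plan is to reduce centrality of $c_o^{-1}z_o$ to the two facts just established: that $z_o$ is central in $\Pi := \pi_1(\cL_{-1}',[E,\v]) \cong \SLtilde_2(\Z)$, and that the action of $\Pi$ on $N := \pi_1(E',\v)$ fixes $c_o$. Write $G = \Pi \ltimes N$ and set $x := c_o^{-1}z_o$. Since $G$ is generated by the subgroups $\Pi$ and $N$, it suffices to check that $x$ commutes with every element of $\Pi$ and with every element of $N$.

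The one new ingredient I need is an explicit description of how $z_o$ acts on $N$. The $\Pi$-action on $N$ is, by construction, the conjugation action obtained by first mapping $\Pi \to \pi_1(\E',[E,\v])$ and then conjugating the normal subgroup $N = \pi_1(E',\v)$. In the proof of the preceding proposition it was observed that the image of $z_o$ under the continuous map $\cL_{-1}' \to \E'$ is precisely $c_o$; that is, $z_o$ maps to the element $c_o$ sitting inside $N \subset \pi_1(\E',[E,\v])$. Consequently the automorphism of $N$ induced by $z_o$ is conjugation by $c_o$:
$$ z_o \cdot n = c_o\, n\, c_o^{-1} \qquad \text{for all } n \in N. $$
Geometrically this is the statement that a full rotation of the tangent vector $\v$ drags the base point once around the puncture, conjugating $\pi_1(E',\v)$ by the boundary loop $c_o$.

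Granting the displayed identity, the two commutation checks are short and formal. For $g \in \Pi$, centrality of $z_o$ in $\Pi$ gives $z_o g = g z_o$, while the relation $g c_o g^{-1} = g\cdot c_o = c_o$ (from the preceding proposition) shows that $c_o$, hence $c_o^{-1}$, commutes with $g$ in $G$; combining these yields $x g = c_o^{-1} g z_o = g c_o^{-1} z_o = g x$. For $n \in N$, the defining relation of the semidirect product together with the displayed identity give $z_o n z_o^{-1} = z_o\cdot n = c_o n c_o^{-1}$, whence $x n = c_o^{-1} z_o n = c_o^{-1}(c_o n c_o^{-1})z_o = n c_o^{-1} z_o = n x$. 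As $x$ commutes with all the generators, it is central in $G$.

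The only genuine obstacle is justifying the displayed identity $z_o\cdot n = c_o n c_o^{-1}$, i.e.\ unwinding the definition of the action so as to recognize it as conjugation by the image of $z_o$ in $\pi_1(\E',[E,\v])$, and then invoking the earlier identification of that image with $c_o$; everything afterward is routine group theory. I would also double-check the orientation convention at this step, since it determines whether $c_o$ or $c_o^{-1}$ appears in the conjugation, and hence fixes the element $c_o^{-1}z_o$ (rather than $c_o z_o$) as the central one in the statement.
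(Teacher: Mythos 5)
Your proof is correct and takes essentially the same route as the paper's: both rest on the key observation that $z_o$ acts on $\pi_1(E',\v)$ by conjugation by $c_o$, then check commutation separately with elements of $\pi_1(\cL_{-1}',[E,\v])$ (using centrality of $z_o$ and the fact that the action fixes $c_o$) and with elements of $\pi_1(E',\v)$ (via the same short computation $c_o^{-1}z_o n z_o^{-1} z_o = c_o^{-1}(c_o n c_o^{-1})z_o$). The only difference is cosmetic: you spell out the justification of the conjugation identity $z_o\cdot n = c_o n c_o^{-1}$ by unwinding the definition of the action through the map $\cL_{-1}'\to\E'$, whereas the paper simply asserts it, relying on the preceding proposition's proof.
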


\begin{proof}
Note that $z_o$ acts on $\pi_1(E',\v)$ by conjugation by $c_o$. Since $z_o$ is
central in $\pi_1(\cL_{-1}',[E,\v])$ and since each element of
$\pi_1(\cL_{-1}',[E,\v])$ fixes $c_o$, we see that $c_o^{-1}z_o$ commutes
with each element of $\pi_1(\cL_{-1}',[E,\v])$.

If $g\in \pi_1(E',\v)$, then
$$
gc_o^{-1}z_o g^{-1} = gc_o^{-1}\big(z_o g^{-1} z_o^{-1}\big) z_o
= gc_o^{-1}\big(c_o g^{-1} c_o^{-1}\big) z_o = c_o^{-1}z_o.
$$
\end{proof}

This semi-direct product can be realized as the fundamental group of the
pullback $\E'_\cL$ of $\E'$ to $\cL_{-1}'$. This has a (continuous)
section. Since $\E'_\cL$ is a $\C^\ast$ covering of $\E'$, we obtain:

\begin{proposition}
\label{prop:cent-extn}
The kernel of the natural homomorphism
$$
\pi_1(\E_\cL',[E,\v])\cong
\pi_1(\cL_{-1}',[E,\v]) \ltimes \pi_1(E',\v) \to \pi_1(\E',[E,\v])
$$
is the infinite cyclic subgroup generated by $c_o^{-1}z_o$. This homomorphism
induces an isomorphism
$$
\big(\pi_1(\cL_{-1}',[E,\v]) \ltimes
\pi_1(E',\v)\big)/\langle c_o^{-1}z_o \rangle
\to \pi_1(\E',[E,\v]).
$$
\qed
\end{proposition}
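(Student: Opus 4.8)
The plan is to read off both the kernel and the surjectivity directly from the semidirect product description, exploiting the two fibration structures carried by $\E_\cL'$. Recall that $\E_\cL'=\E'\times_{\M_{1,1}}\cL_{-1}'$, so the second projection $\E_\cL'\to\cL_{-1}'$ is a bundle with fibre $E'$ admitting the section $\sigma:l\mapsto(f(l),l)$, where $f:\cL_{-1}'\to\E'$ is the natural map of the preceding discussion; this is exactly what produces the isomorphism $\pi_1(\E_\cL')\cong\pi_1(\cL_{-1}')\ltimes\pi_1(E')$, the section giving the splitting and $\pi_1(\cL_{-1}')$ acting on $\pi_1(E')$ by the conjugation action already defined. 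I would first identify the homomorphism $\phi:\pi_1(\E_\cL')\to\pi_1(\E')$ induced by the first projection $\E_\cL'\to\E'$. Since this projection carries $\sigma$ to $f$ and restricts on each fibre to the inclusion $E'\hookrightarrow\E'$, one has $\phi(\gamma,g)=f_\ast(\gamma)\,g$ for $\gamma\in\pi_1(\cL_{-1}')$ and $g\in\pi_1(E')\subset\pi_1(\E')$.

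For surjectivity I would invoke the commuting square of $\E'$ and $\cL_{-1}'$ over $\M_{1,1}$: the image of $\phi$ contains the entire fibre group $\pi_1(E')=\ker\big(\pi_1(\E')\to\SL_2(\Z)\big)$ and surjects onto $\SL_2(\Z)$ through $\pi_1(\cL_{-1}')\to\SL_2(\Z)$, so comparing the two extensions of $\SL_2(\Z)$ shows $\phi$ is onto.

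The computation of the kernel is the crux. Chasing $\phi(\gamma,g)=f_\ast(\gamma)g=1$ through the same commuting square forces $f_\ast(\gamma)\in\pi_1(E')$, hence $\gamma\in\ker\big(\pi_1(\cL_{-1}')\to\SL_2(\Z)\big)=\langle z_o\rangle$; writing $\gamma=z_o^k$ and using the key fact $f_\ast(z_o)=c_o$ recorded earlier, the equation $g=f_\ast(\gamma)^{-1}$ becomes $g=c_o^{-k}$. Thus $\ker\phi=\{(z_o^k,c_o^{-k}):k\in\Z\}$, and a short computation in the semidirect product—using that $c_o^{-1}z_o$ is central by the preceding lemma together with $f_\ast(z_o)=c_o$—identifies $(z_o^k,c_o^{-k})$ with $(c_o^{-1}z_o)^k$. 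Hence $\ker\phi=\langle c_o^{-1}z_o\rangle$ is infinite cyclic, and with surjectivity this yields the asserted isomorphism. This is of course just the homotopy exact sequence of the $\C^\ast$-bundle $\E_\cL'\to\E'$, the kernel being the image of $\pi_1$ of the fibre.

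The step I expect to be the main obstacle is precisely this last identification. The generator of the $\C^\ast$-fibre of $\E_\cL'\to\E'$ projects to the generator $z_o$ of the fibre of $\cL_{-1}'\to\M_{1,1}$ for the \emph{other} fibration, and only after correcting by the section—which contributes the factor $f_\ast(z_o)=c_o$—does it become the central element $c_o^{-1}z_o$ rather than $z_o$ itself. Keeping the two fibration structures on $\E_\cL'$ straight, and checking that the fibre inclusion $\Z\to\pi_1(\E_\cL')$ is injective so that the kernel is genuinely infinite cyclic, is where the care lies; injectivity is immediate since $(c_o^{-1}z_o)^k=(z_o^k,c_o^{-k})\neq1$ for $k\neq0$.
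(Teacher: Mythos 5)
Your proof is correct, and it is worth noting how it relates to what the paper actually does: the paper offers no written argument at all, stating the proposition as an immediate consequence of the remark that $\E_\cL'$ is a $\C^\ast$-bundle over $\E'$ — i.e., the intended proof is the homotopy exact sequence of the fibration $\C^\ast \to \E_\cL' \to \E'$, whose kernel term is the image of the fibre class, identified with $c_o^{-1}z_o$ by the preceding discussion. You instead compute the kernel by hand through the \emph{other} fibration (the projection to $\cL_{-1}'$) and the two extensions of $\SL_2(\Z)$, using $\ker\big(\pi_1(\cL_{-1}')\to\SL_2(\Z)\big)=\langle z_o\rangle$ and $f_\ast(z_o)=c_o$. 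Your route buys two things: infinite cyclicity of the kernel is immediate, since $(z_o^k,c_o^{-k})\neq 1$ for $k\neq 0$, whereas the exact-sequence route needs $\pi_2(\E')=0$ (asphericity of $\E'$, true but unremarked in the paper) to rule out the fibre class having finite order; and the generator of the kernel is produced explicitly rather than quoted. The paper's route buys brevity, and surjectivity comes for free from $\pi_0(\C^\ast)=1$, while you must argue it by comparing the two extensions of $\SL_2(\Z)$ (which you do correctly). One caveat: the ``natural map'' $f:\cL_{-1}'\to\E'$ is not literally given by the quotient map $\C^\ast\times\h\to\E$, since points $(\xi,\tau)$ with $\xi\in\Lambda_\tau\setminus\{0\}$ land on the zero section; $f$ is defined only near the zero section of $\cL_{-1}$ via Proposition~\ref{prop:nbd}, or after composing with a contraction, and the section $\sigma$ should be defined accordingly. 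This is harmless at the level of fundamental groups (it is exactly how the paper itself uses ``the continuous mapping $\cL_{-1}'\to\E'$''), but a careful write-up should say so.
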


In mapping class group notation, this result says that there is a natural
isomorphism
$$
\G_{1,2} \cong \big(\G_{1,\vec{1}}\ltimes \pi_1(E',\v)\big)/\Z.
$$
In the Hodge and Galois worlds, the copy of $\Z$ is a copy of $\Z(1)$.

\subsection{The local system $\H$}
\label{sec:H}

This is the local system (i.e., locally constant sheaf) over $\M_{1,1}$ whose
fiber over $[E] \in \M_{1,1}$ is $H_1(E;\C)$. We identify it, via Poincar\'e
duality $H_1(E) \to H^1(E)$, with the local system $R^1\pi_\ast\C$ over
$\M_{1,1}$ associated to the universal elliptic curve $\pi : \E \to \M_{1,1}$.
This has fiber $H^1(E;\C)$ over $[E] \in \M_{1,1}$.

We consider two ways of framing (i.e., trivializing) the pullback of $\H$ to
$\h$. Denote the universal elliptic curve over $\h$ by $\E_\h \to \h$. It is the
quotient of $\C\times \h$ by the standard action of $\Z^2$ given above. The
first homology of $E_\tau := \C/(\Z\oplus \tau\Z)$ is naturally isomorphic to
$\Lambda_\tau := \Z\oplus\tau\Z$. Let $\a,\b$ be the basis of $H_1(E_\tau;\Z)$
that corresponds to the basis $1,\tau$ of $\Lambda_\tau$.

Denote the dual basis of $H^1(E_\tau;\C)\cong \Hom(H_1(E_\tau),\C)$ by
$\adual,\bdual$. Then, under Poincar\'e duality,
$$
\adual = -\b \text{ and } \bdual = \a.
$$

Denote the element $d\xi$ of $H^1(E_\tau,\C)$ by $w_\tau$. Then
$$
w_\tau = \adual + \tau \bdual = \tau \a - \b.
$$
The two framings $\a,\b$ and $2\pi i \bdual, \w_\tau$ of $\H$ over $\h$ are
related
by
$$
\begin{pmatrix} 2\pi i \bdual & w_\tau \end{pmatrix} = 
\begin{pmatrix} \bdual & \adual \end{pmatrix}
\begin{pmatrix} 2\pi i & \tau \cr 0 & 1 \end{pmatrix} =
\begin{pmatrix} \a & \b \end{pmatrix}
\begin{pmatrix} 2\pi i & \tau \cr 0 & -1 \end{pmatrix}.
$$

\begin{remark}
The local system $\H$ underlies a polarized variation of Hodge structure over
$\h$ of weight $-1$. The Hodge subbundle $F^0\cH$ of the corresponding flat
bundle $\cH =\H\otimes_\Q\O_{\h}$ is $\O(\h)\w$.
\end{remark}

\section{Unipotent Completion}

Suppose that $\pi$ is a discrete group and that $R$ is a commutative ring.
Denote the group algebra of $\pi$ over $R$ by $R\pi$. This is an $R$-algebra.
The {\em augmentation} is the homomorphism $\epsilon : R\pi \to R$ that takes
each $\gamma \in \pi$ to 1. Its kernel, denoted $J$, is called the {\em
augmentation ideal}. The powers of $J$ define a topology on $R\pi$. A base of
neighbourhoods of 0 consist of the powers of $J$:
$$
R\pi \supseteq J \supseteq J^2 \supseteq J^3 \supseteq \cdots
$$
The completion of $R\pi$ in this topology is called the $J$-adic completion
of $\pi$ and is denoted by $R\pi^\wedge$. In concrete terms:
$$
R\pi^\wedge = \varinjlim_n R\pi/J^n.
$$
Denote its augmentation ideal by $J^\wedge$.

The group algebra also has a ``coproduct''
$$
\Delta : R\pi \to R\pi \otimes R\pi.
$$
This is an augmentation preserving algebra homomorphism, which is continuous
in the $J$-adic topology. It thus induces a ring homomorphism
$$
\Delta : R\pi^\wedge \to R\pi^\wedge \hat{\otimes} R\pi^\wedge.
$$

Now suppose that $R$ is a field $F$ of characteristic zero. Note that
each element of $1+J^\wedge$ is a unit. Define
$$
\cP(F)=\{x\in F\pi^\wedge : \epsilon(x) = 1\text{ and } \Delta{x}=x\otimes x\}
$$
and
$$
\p = \{x \in F\pi^\wedge : \Delta x = x\otimes 1 + 1 \otimes x\}.
$$
Elements of $\p$ are said to be {\em primitive}; elements of $\cP$ are said to
be {\em group-like}.

\begin{proposition}
\label{prop:exp}
\begin{enumerate}

\item $\cP(F)$ is a subgroup of the group $1+J^\wedge$;

\item $\p$ is a Lie algebra, with bracket $[u,v] = uv-vu$, which lies in
$J^\wedge$;

\item The logarithm and exponential mappings
$$
\xymatrix{
J^\wedge \ar@/^/[rr]|\exp && 1 + J^\wedge \ar@/^/[ll]|\log
}
$$
are continuous bijections, which induce continuous bijections
$$
\xymatrix{
\p \ar@/^/[rr]|\exp && \cP(F) \ar@/^/[ll]|\log
}.
$$
\end{enumerate}
\end{proposition}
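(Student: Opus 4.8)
The plan is to establish the three assertions in sequence, using the fact that $\Delta$ is a continuous algebra homomorphism and that everything takes place in the $J$-adic completion $F\pi^\wedge$, where convergence of the relevant power series is automatic. I would first record the two lemmas that underpin the logarithm and exponential maps: for $x \in J^\wedge$, the series $\exp(x) = \sum_{n\ge 0} x^n/n!$ converges because $x^n \in (J^\wedge)^n$, so the partial sums form a Cauchy sequence in the $J$-adic topology; and for $y \in 1 + J^\wedge$, writing $y = 1 + z$ with $z \in J^\wedge$, the series $\log(y) = \sum_{n\ge 1} (-1)^{n-1} z^n/n$ converges for the same reason. The formal identities $\log\exp = \id$ and $\exp\log = \id$ then hold in the completed algebra because they hold as identities of formal power series in one variable (and each series lands in the correct set, $\exp$ mapping $J^\wedge$ into $1 + J^\wedge$ and $\log$ the reverse). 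Continuity is immediate since both maps are limits of polynomial maps that are compatible with the filtration by powers of $J^\wedge$.

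\smallskip
For part (ii), I would show that $\p$ is closed under the bracket and scalars. Linearity and closure under $F$-scaling are clear from the defining equation $\Delta x = x\otimes 1 + 1\otimes x$. For the bracket, if $u,v \in \p$ I would compute $\Delta(uv)$ using that $\Delta$ is an algebra homomorphism, namely $\Delta(uv) = \Delta(u)\Delta(v) = (u\otimes 1 + 1\otimes u)(v\otimes 1 + 1\otimes v)$, expand the four terms, and then subtract $\Delta(vu)$; the cross terms $u\otimes v$ and $v\otimes u$ cancel appropriately so that $\Delta[u,v] = [u,v]\otimes 1 + 1\otimes [u,v]$, using that elements of the two tensor factors commute. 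That $\p \subseteq J^\wedge$ follows by applying $\epsilon \otimes \id$ (or $\id \otimes \epsilon$) to the defining equation: one gets $x = \epsilon(x)\cdot 1 + x$, forcing $\epsilon(x) = 0$, i.e.\ $x \in J^\wedge$.

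\smallskip
For part (i), I would verify that $\cP(F)$ is closed under multiplication and inversion inside $1 + J^\wedge$. If $x,y \in \cP(F)$ then $\epsilon(xy) = \epsilon(x)\epsilon(y) = 1$ and $\Delta(xy) = \Delta(x)\Delta(y) = (x\otimes x)(y\otimes y) = xy \otimes xy$, so $xy \in \cP(F)$; the identity $1$ is group-like; and for the inverse I would note that $x \in 1 + J^\wedge$ is a unit and that $\Delta(x^{-1}) = \Delta(x)^{-1} = (x\otimes x)^{-1} = x^{-1}\otimes x^{-1}$, with $\epsilon(x^{-1}) = 1$. Finally, for the last clause of (iii) — that $\exp$ and $\log$ restrict to mutually inverse bijections between $\p$ and $\cP(F)$ — the content is that $\exp$ carries primitive elements to group-like elements and $\log$ does the reverse. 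The cleanest way is to work in the completed tensor product: for $x \in \p$, one has $\Delta \exp(x) = \exp(\Delta x) = \exp(x\otimes 1 + 1\otimes x)$, and since $x\otimes 1$ and $1\otimes x$ commute, this factors as $\exp(x\otimes 1)\exp(1\otimes x) = (\exp x \otimes 1)(1 \otimes \exp x) = \exp(x)\otimes \exp(x)$, together with $\epsilon(\exp x) = \exp(\epsilon(x)) = 1$; the reverse inclusion follows by applying $\log$ and the analogous computation.

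\smallskip
The step I expect to require the most care is the manipulation of $\exp$ and $\log$ across the coproduct in the completed tensor product $F\pi^\wedge \comptensor F\pi^\wedge$. One must check that $\Delta$ extends continuously so that $\Delta \exp(x) = \exp(\Delta x)$ genuinely holds (i.e.\ $\Delta$ commutes with the convergent series), and that the commuting-factors argument $\exp(a+b) = \exp(a)\exp(b)$ is legitimate for the commuting elements $a = x\otimes 1$ and $b = 1\otimes x$ of the completed tensor algebra. Once continuity of $\Delta$ and the convergence bookkeeping are in place, these identities reduce to formal power series identities, but verifying that the filtration-compatibility makes all the rearrangements convergent in the correct topology is the crux.
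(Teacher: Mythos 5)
Your proposal is correct and follows essentially the same route as the paper: the paper dismisses parts (i), (ii) and the first half of (iii) as easily verified (which you spell out), and its proof of the final assertion is exactly your computation $\Delta\exp(x)=\exp(\Delta x)=\exp(x\otimes 1)\exp(1\otimes x)=\exp(x)\otimes\exp(x)$, justified by continuity of $\Delta$ and the commuting of $x\otimes 1$ with $1\otimes x$. The extra details you supply (closure of $\cP(F)$ and $\p$, the counit argument showing $\p\subseteq J^\wedge$, convergence bookkeeping) are all sound.
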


The third part implies that the exponential map
$$
\exp : (\p,\bch) \to \cP
$$
is a group isomorphism, where the multiplication on $\p$ is defined using the
Baker-Campbell-Hausdorff formula \cite{serre:la-lg}:
$$
\bch(u,v) := \log(e^u e^v) = u + v + \frac{1}{2}[u,v] + \cdots
$$

\begin{proof}
The first two assertions are easily verified, as is the first part of the third
assertion. To prove the last assertion, note that since $\exp$ is continuous,
$\exp \Delta (x) = \Delta \exp(x)$ for all $x\in J^\wedge$. Now, $x \in
J^\wedge$ is primitive if and only if
$$
\Delta x = x\otimes 1 + 1\otimes x.
$$
Since $x\otimes 1$ and $1\otimes x$ commute, this holds if and only if
$$
\Delta\exp(x) = \exp(\Delta(x))
= \exp(x\otimes 1)\exp(1\otimes x) = \exp(x)\otimes\exp(x).
$$
That is, $x \in J^\wedge$ is primitive if and only if $\exp x$ is group-like.
\end{proof}

Since $\epsilon(\gamma) = 1$ for all $\gamma \in \pi$, there is a homomorphism
$\pi \to 1 + J^\wedge$. By the definition of the coproduct $\Delta$, the image
of this homomorphism lands in $\cP(F)$. Thus, the inclusion $\pi \to F\pi$
induces a natural homomorphism $\pi \to \cP(F)$

\begin{definition}
Suppose that $H_1(\pi;F)$ is finite dimensional (e.g., $\pi$ is finitely
generated). The homomorphism $\pi \to \cP(F)$ is called the {\em unipotent (or
Malcev) completion of $\pi$ over $F$.} The prounipotent group $\cP$ is denoted
$\pi^\un$. The Lie algebra of the unipotent completion is the Lie algebra $\p$.
It is also called the Malcev Lie algebra associated to $\pi$.
\end{definition}

Unipotent completion can be viewed as a functor from the category of groups
to the category of prounipotent groups over $F$:
$$
\xymatrix{\pi \ar@{~>}[r] & \cP(F)}
$$
There is also the functor $\xymatrix{\pi \ar@{~>}[r] & \p}$ that assigns to a
group, the Lie algebra of its unipotent completion over $F$. There are therefore
natural homomorphism
$$
\Aut\pi \to \Aut \cP \text{ and } \Aut \pi \to \Aut \p.
$$

\begin{remark}
When $\pi$ is the fundamental group of an algebraic variety, $\p$ carries
additional structure: If $\pi$ is the fundamental group of a complex algebraic
variety and $F=\Q$, then $\p$ has a natural mixed Hodge structure; if $\pi$ is
the fundamental group of a smooth algebraic variety defined over $\Q$ with
$\Q$-rational base point, then the absolute Galois group $G_\Q$ acts on
$\p\otimes \Ql$.
\end{remark}

\subsection{The unipotent completion of a free group}

Suppose that $\pi$ is the free group $\langle x_1,\dots,x_n\rangle$ generated by
the set $\{x_1,\dots,x_n\}$.

Consider the ring
$$
F\ll X_1,\dots,X_n\rr
$$
of formal power series in the non-commuting indeterminants $X_j$. Define
an augmentation
$$
\epsilon : F\ll X_1,\dots,X_n\rr \to F
$$
by sending a power series to its constant term. The augmentation ideal
$\ker\epsilon$ is the maximal ideal $I=(X_1,\dots,X_n)$.

Define a coproduct
$$
\Delta : F\ll X_1,\dots,X_n\rr \to
F\ll X_1,\dots,X_n\rr\hat{\otimes} F\ll X_1,\dots,X_n\rr
$$
by defining each $X_j$ to be primitive:
$$
\Delta X_j := X_j \otimes 1 + 1 \otimes X_j.
$$
There is a unique group homomorphism
$$
\pi \to F\ll X_1,\dots,X_n\rr
$$
that takes $x_j$ to $\exp(X_j)$. This extends to a ring homomorphism
$$
\theta : F\pi \to F\ll X_1,\dots,X_n\rr.
$$
Since $\epsilon (x_j) = 1 = \epsilon(\exp(X_j))$, $\theta$ is augmentation
preserving, and therefore extends to a continuous homomorphism
$$
\hat{\theta} : F\pi^\wedge \to F\ll X_1,\dots,X_n\rr
$$

As in the case of completed group algebras, one can define primitive and
group-like elements of $F\ll X_1,\dots,X_n\rr$. As there, an element of $1+I$ is
group-like if and only if it is the exponential of a primitive element. Since
$\exp(X_j)$ is group-like, it is easy to check that $\hat{\theta}$ preserves
both the product and the coproduct. (One says that it is a homomorphism of
complete Hopf algebras.)

It is easy to use universal mapping properties to prove:

\begin{proposition}
The homomorphism $\hat{\theta}$ is an isomorphism of complete Hopf algebras.
\qed
\end{proposition}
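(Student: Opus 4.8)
The plan is to produce an explicit two-sided continuous inverse to $\hat{\theta}$ and then to check that both it and $\hat{\theta}$ respect all the structure by testing on topological generators. The starting point is the universal property that characterizes $F\ll X_1,\dots,X_n\rr$: it is the $I$-adic completion of the free associative $F$-algebra on $X_1,\dots,X_n$, so for any complete augmented $F$-algebra $A$ whose augmentation ideal has powers forming a base of neighbourhoods of $0$, and any choice of elements $a_1,\dots,a_n$ in that augmentation ideal, there is a unique continuous $F$-algebra homomorphism $F\ll X_1,\dots,X_n\rr \to A$ sending $X_j \mapsto a_j$. I would apply this with $A = F\pi^\wedge$ and $a_j = \log(x_j)$, which lies in $J^\wedge$ because $x_j \in \pi \hookrightarrow \cP(F)$ is group-like and hence, by Proposition~\ref{prop:exp}(iii), $\log x_j$ is a primitive element of $J^\wedge$. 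This yields a continuous algebra homomorphism $\phi : F\ll X_1,\dots,X_n\rr \to F\pi^\wedge$ with $\phi(X_j) = \log x_j$.

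Next I would verify that $\phi$ and $\hat{\theta}$ are mutually inverse by evaluating the two composites on generators. On one side, $\hat{\theta}(\phi(X_j)) = \hat{\theta}(\log x_j) = \log \hat{\theta}(x_j) = \log \exp X_j = X_j$, where the middle equality uses that $\hat{\theta}$ is a continuous algebra map and so commutes with the power series $\log$ of a unit in $1+J^\wedge$. A continuous algebra endomorphism of $F\ll X_1,\dots,X_n\rr$ fixing each $X_j$ is the identity, by the uniqueness clause of the universal property; hence $\hat{\theta}\circ\phi = \id$. On the other side, $\phi(\hat{\theta}(x_j)) = \phi(\exp X_j) = \exp \phi(X_j) = \exp \log x_j = x_j$; since the $x_j$ generate $\pi$ and $F\pi$ is dense in $F\pi^\wedge$, a continuous algebra endomorphism of $F\pi^\wedge$ fixing every $x_j$ is the identity, so $\phi\circ\hat{\theta} = \id$. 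This already gives an isomorphism of complete topological algebras.

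To upgrade this to an isomorphism of complete Hopf algebras, I would check that $\phi$ is also a coalgebra map (that $\hat{\theta}$ is one was already noted in the text). Since $\Delta X_j = X_j\otimes 1 + 1\otimes X_j$ and $\phi(X_j)=\log x_j$ is primitive, both $\Delta_{F\pi^\wedge}\circ\phi$ and $(\phi\comptensor\phi)\circ\Delta$ are continuous algebra homomorphisms $F\ll X_1,\dots,X_n\rr \to F\pi^\wedge\comptensor F\pi^\wedge$ that agree on each $X_j$; by the universal property they coincide, so $\phi$ is a morphism of complete Hopf algebras, and therefore so is its inverse $\hat{\theta}$.

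I expect the only genuine subtlety — the main obstacle — to be the bookkeeping about convergence and continuity: one must know that substituting the $X_j$ by the topologically nilpotent elements $\log x_j \in J^\wedge$ produces a well-defined (convergent) homomorphism on the completion. This is exactly the content of the universal property of $F\ll X_1,\dots,X_n\rr$, and it relies on Proposition~\ref{prop:exp} to guarantee $\log x_j \in J^\wedge$. Everything else reduces to evaluating continuous algebra maps on the generators $X_j$ and $x_j$ and invoking the relevant density and uniqueness statements.
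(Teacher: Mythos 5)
Your proposal is correct and takes essentially the same approach as the paper: the paper states this proposition without proof, remarking only that it follows easily from universal mapping properties. Your argument --- constructing the continuous inverse $X_j \mapsto \log x_j$ via the universal property of $F\ll X_1,\dots,X_n\rr$, checking the two composites on topological generators, and verifying compatibility with the coproduct --- is precisely that argument carried out in detail.
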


\begin{corollary}
The restriction of $\hat{\theta}$ induces a natural isomorphism
$$
d\theta : \p \to \L(X_1,\dots,X_n)^\wedge
$$
of topological Lie algebras.
\end{corollary}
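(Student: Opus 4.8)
The plan is to read the statement off the preceding Proposition, which presents $\hat\theta$ as an isomorphism of complete Hopf algebras, together with the classical description of the primitive elements of a completed free associative algebra. By definition $\p$ is the space of primitive elements of $F\pi^\wedge$, so the corollary will follow once we check that (i) $\hat\theta$ carries $\p$ bijectively and bicontinuously onto the space $\operatorname{Prim}F\ll X_1,\dots,X_n\rr$ of primitive elements of $F\ll X_1,\dots,X_n\rr$, compatibly with the Lie bracket, and (ii) this space of primitives is precisely the completed free Lie algebra $\L(X_1,\dots,X_n)^\wedge$.

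For (i): since $\hat\theta$ is an isomorphism of complete Hopf algebras it intertwines the two coproducts, so for $x\in F\pi^\wedge$ one has $\Delta x = x\otimes 1 + 1\otimes x$ if and only if the same identity holds for $\hat\theta(x)$. Thus $\hat\theta$ restricts to a bijection $d\theta\colon \p \to \operatorname{Prim}F\ll X_1,\dots,X_n\rr$. Because $\hat\theta$ is a ring homomorphism it preserves commutators, so $d\theta$ is a Lie algebra homomorphism; its inverse, being also a Hopf-algebra map, likewise preserves brackets, and hence $d\theta$ is an isomorphism of Lie algebras. Finally, both $\hat\theta$ and $\hat\theta^{-1}$ are continuous for the respective adic filtrations (they carry $J^\wedge$ to $I$ and back), so $d\theta$ is a homeomorphism. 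Naturality is inherited from that of $\hat\theta$.

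For (ii), which is the substantive point: the inclusion $\L(X_1,\dots,X_n)^\wedge \subseteq \operatorname{Prim}$ is immediate, since each $X_j$ is primitive by construction, the primitive elements form a Lie subalgebra that is closed in the $I$-adic topology (the condition $\Delta u = u\otimes 1 + 1\otimes u$ is preserved by commutators and by passage to limits), and $\L(X_1,\dots,X_n)^\wedge$ is by definition the closure of the Lie subalgebra generated by the $X_j$. The reverse inclusion is the Friedrichs criterion, and this is where the real work lies: the main obstacle is to show that every primitive power series is a Lie series. Since the coproduct is graded for the total-degree grading $F\ll X_1,\dots,X_n\rr = \prod_k T_k$, an element is primitive if and only if each of its homogeneous components is, so it suffices to treat a homogeneous primitive $u$ of degree $k$ in the graded free associative algebra $T(V)$, $V = \Span\{X_1,\dots,X_n\}$. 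There the Dynkin operator $X_{i_1}\cdots X_{i_k}\mapsto [X_{i_1},[\dots,[X_{i_{k-1}},X_{i_k}]\dots]]$ acts as multiplication by $k$ on $\L(V)\cap T_k$, while a direct computation with the coproduct shows it sends any homogeneous primitive $u$ of degree $k$ into $\L(V)$ and in fact equals $k\,u$ on such $u$; as we are in characteristic zero this forces $u\in\L(V)$. Reassembling homogeneous components and completing yields $\operatorname{Prim}F\ll X_1,\dots,X_n\rr = \L(X_1,\dots,X_n)^\wedge$. This last identification is entirely classical and may alternatively be invoked as the structure theorem for primitives of the tensor algebra in characteristic zero (cf.\ \cite{serre:la-lg}).
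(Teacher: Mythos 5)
Your proof is correct and follows essentially the same route as the paper: the Hopf-algebra isomorphism $\hat\theta$ restricts to an isomorphism on primitive elements, and the primitives of $F\ll X_1,\dots,X_n\rr$ are exactly $\L(X_1,\dots,X_n)^\wedge$. The only difference is that the paper simply cites this last identification as a well-known fact (cf.\ \cite{serre:la-lg}), whereas you supply a proof of it via the Dynkin operator; that is a fine elaboration but not a different argument.
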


\begin{proof}
This follows immediately from the fact that $\hat{\theta}$ induces an
isomorphism on primitive elements and the well-known fact that the set of
primitive elements of the power series algebra $F\ll X_1,\dots,X_n\rr$ is the
completed free Lie algebra $\L(X_1,\dots,X_n)^\wedge$.
\end{proof}

There is a weaker version of the construction of the unipotent completion of a
free group, which will be relevant later. Suppose that
$$
\theta : \pi \to F\ll X_1,\dots,X_n\rr
$$ 
is a homomorphism that satisfies $\theta(x_j) = \exp(U_j)$, where $U_j \in
J^\wedge$ and $U_j \equiv X_j \bmod (J^\wedge)^2$. Then it is not difficult to
show that $\theta$ induces a continuous isomorphism
$$
\hat{\theta} : F\pi^\wedge \to F\ll X_1,\dots,X_n\rr
$$
and, by restriction, a Lie algebra isomorphism
$$
d\theta : \p \to \L(X_1,\dots,X_n)^\wedge
$$
and a group isomorphism
$$
\cP \to \exp\L(X_1,\dots,X_n)^\wedge.
$$

\section{Factors of Automorphy}
\label{sec:automorphy}

Suppose that $G$ is a group that acts on a space (or set) $X$ on the left.
Suppose that $V$ is a left $G$-module (or left $G$-space, etc.). A function $M :
G\times X \to \Aut V$ (written $(g,x) \mapsto M_g(x)$) is a {\em factor of
automorphy} if the function
$$
V\times X \to V \times X,\qquad g : (v,x) \mapsto (M_g(x)v,gx)
$$
is an action. This is equivalent to the condition
$$
M_{gh}(x) = M_g(hx)M_h(x)\quad\text{ all } g,h \in G,\ x \in X.
$$
Note that the projection $V \times X \to X$ is $G$-equivariant; $G$-equivariant
sections of this projection correspond to functions $f: X \to V$ satisfying
$f(gx) = M_g(x)f(x)$ and, by definition, to sections of the ``bundle'' $G\bs
(X\times V) \to G\bs X$.\footnote{More precisely, $G$-invariant sections of
$V\times X \to X$ correspond to section of the stack bundle $G\bbs(V\times X)
\to G\bbs X$.} Such bundles are flat in the sense that they give rise to a
locally constant sheaf. An open set in $G\bs X$ corresponds to a $G$-invariant
open set $U$ in $X$. The set of constant sections of the bundle over this set
is, by definition, the set of $G$-invariant locally constant sections of
$V\times X \to X$. When $V$ is a real or complex vector space, this bundle has a
natural flat connection $\nabla$ which is characterized by the property that a
local section $s$ is constant if and only if $\nabla s = 0$. In such cases, we
will refer to the bundle $G\bs(V\times X) \to X$ as being a {\em flat bundle}.

\bigskip

Three examples that will be generalized and combined to form $\cP$ are:

\begin{example}
Fix $k\in \Z$. Let $G = \SL_2(\Z)$, $X = \h$, $V=\C$, and $A_\gamma(\tau) =
(c\tau + d)^k$. The (orbifold) quotient of $\C\times \h \to \h$ is the line
bundle $\cL_k \to \M_{1,1}$.
\end{example}

Note that the fibered product $\E\times_{\M_{1,1}}\E \to \M_{1,1}$ of the
universal elliptic curve is the quotient of $\C\times\C\times\h$ by the
$\SL_2(\Z)\ltimes(\Z^2\oplus\Z^2)$-action
$$
\big((m,n),(r,s)\big) : (\xi,\eta,\tau)
= \big(\xi + m\tau+n,\eta+r\tau+s, \tau \big)
$$
and
$$
\gamma : (\xi,\tau) \mapsto
\big((c\tau+d)^{-1}\xi,(c\tau+d)^{-1}\eta,\gamma \tau\big)
$$
where $\gamma \in \SL_2(\Z)$.

\begin{example}
\label{ex:linebdle}
Suppose that $G = \SL_2(\Z)\ltimes(\Z^2\oplus\Z^2)$ and that $X=\C\times\C
\times \h$, where the $G$-action is the one defined above.
Let $V=\C$. Define
$$
A_\gamma(\xi,\eta,\tau) =
\begin{cases}
(c\tau+d)e\big(c\xi\eta/(c\tau+d)\big) & \gamma \in \SL_2(\Z),\cr
e(\tau)^{-mr} e(\xi)^{-r} e(\eta)^{-m} & \gamma = \big((m,n),(r,s)\big)
\end{cases}
$$
where $e(u) = \exp(2\pi i u)$. This is a well-defined factor of automorphy. The
quotient
$$
G\bs\big(\C\times X) \to G\bs X
$$
is a line bundle
$$
\cN \to \E\times_{\M_{1,1}}\E
$$
over the self product over $\M_{1,1}$ of the universal elliptic curve. The 
restriction of $\cN$ to the zero section $\M_{1,1}$ is the line bundle $\cL =
\cL_1$. (Just look at the factor of automorphy when $\xi=\eta=0$.)
\end{example}

\begin{remark}
Later (Prop.~\ref{prop:poincare}) we will see that the restriction of $\cN$ to
the fiber $E^2$ over $[E]$ is the pullback of the Poincar\'e line bundle
over $E\times E$ to $E\times E$ along the map $(\xi,\eta) \mapsto (\xi,-\eta)$.
\end{remark}

The next example gives an alternative description of the local system $\H$.

\begin{example}
\label{ex:hodge}
Let $G=\SL_2(\Z)$, $X=\h$ and $V = \C^2$. Then
$$
M_\gamma(\tau) =
\begin{pmatrix}
(c\tau + d)^{-1} & 0 \cr 2\pi i c & c\tau + d
\end{pmatrix}
$$
is a factor of automorphy. The resulting bundle is the vector bundle associated
to the local system $\H \to \M_{1,1}$ defined in Section~\ref{sec:H}. To see
this, we set
$$
\t = \w_\tau/2\pi i \in H^1(E_\tau,\C).
$$
Then $\a$ and $\t$ comprise a framing of the pullback $\H_\h$ of $\H$ to $\h$,
which gives an isomorphism $\C^2 \times \h \to \H_\h$ via
\begin{equation}
\label{eqn:coords}
(u,v,\tau) \mapsto
\begin{pmatrix} (\a,\tau) & (\t,\tau)\end{pmatrix}
\begin{pmatrix} u \cr v\end{pmatrix}
\end{equation}
Here, $(\a,\tau)$ denotes $\a$ viewed as an element of $H_1(E_\tau)$. Likewise,
$(\t,\tau)$ denotes the element $\w_\tau/2\pi i$ of $H^1(E_\tau)$.

Since $\Lambda_{\gamma\tau} = (c\tau + d) \Lambda_\tau$, multiplication by
$(c\tau + d)$ induces an isomorphism
$$
E_\tau \to E_{\gamma\tau}.
$$
This induces the identification of the fibers of $\H_\h$ over $\tau$ and
$\gamma\tau$. For convenience, set $\a = (\a,\tau) \in H_1(E_\tau)$ and $\a' =
(\a,\gamma\tau) \in H_1(E_{\gamma\tau})$. Similarly with $\b$ and $\b'$, and
with $\t$ and $\t'$. Then
$$
2\pi i \t' = \w_{\gamma\tau} = (c\tau+d)^{-1}\w_\tau = 2\pi i(c\tau+d)^{-1}\t
$$
and
\begin{align*}
\begin{pmatrix}
\a' & \w_{\gamma\tau}
\end{pmatrix}
&=
(c\tau+d)^{-1}
\begin{pmatrix}
\a' & \b'
\end{pmatrix}
\begin{pmatrix}
c\tau+d & {a\tau+b} \cr 0 & -(c\tau + d)
\end{pmatrix}
\cr
&=
(c\tau+d)^{-1}
\begin{pmatrix}
\a & \b
\end{pmatrix}
\begin{pmatrix}
d & b \cr c & a
\end{pmatrix}
\begin{pmatrix}
c\tau+d & {a\tau+b} \cr 0 & -(c\tau + d)
\end{pmatrix}
\cr
&=
(c\tau+d)^{-1}
\begin{pmatrix}
\a & \b
\end{pmatrix}
\begin{pmatrix}
(c\tau+d)d & \tau \cr (c\tau+d)c & -1
\end{pmatrix}
\cr
&=
(c\tau+d)^{-1}
\begin{pmatrix}
\a & \w_\tau
\end{pmatrix}
\begin{pmatrix}
1 & \tau \cr 0 & -1
\end{pmatrix}
\begin{pmatrix}
(c\tau+d)d & \tau \cr (c\tau+d)c & -1
\end{pmatrix}
\cr
&=
\begin{pmatrix}
\a & \w_\tau
\end{pmatrix}
\begin{pmatrix}
c\tau+d & 0 \cr -c & (c\tau+d)^{-1}
\end{pmatrix}
\end{align*}
from which we conclude that
$$
\begin{pmatrix}
\a & \t
\end{pmatrix}
=
\begin{pmatrix}
\a' & \t'
\end{pmatrix}
M_\gamma(\tau).
$$
Equation (\ref{eqn:coords}) now implies that the bundle with factor of
automorphy $M_\gamma(\tau)$ is isomorphic to $\H$ as the following points
correspond:
$$
\big(\begin{pmatrix} u \cr v \end{pmatrix},\tau\big) \leftrightarrow
\begin{pmatrix} \a & \t\end{pmatrix}
\begin{pmatrix} u \cr v \end{pmatrix}
\leftrightarrow
\begin{pmatrix} \a' & \t'\end{pmatrix}
M_\gamma(\tau)
\begin{pmatrix} u \cr v \end{pmatrix}
\leftrightarrow
\big(M_\gamma(\tau)\begin{pmatrix} u \cr v \end{pmatrix},\gamma\tau\big)
$$
Note that $\t$ and $\a$ are both invariant under $\tau \mapsto \tau + 1$. It
follows that $\H$ is trivial over the $q$-disk.
\end{example}

Since the bundle $\H$ exists over $\M_{1,1}$, this computation gives a
conceptual proof that $M_\gamma(\tau)$ is a factor of automorphy.

\begin{remark}
\label{rem:meaning}
It is useful to keep in mind that
$$
\a \in H_1(E_\tau,\Z) \text{ and } \langle \a, \t \rangle =
-(2\pi i)^{-1} \in \Z(-1).
$$
Note that $\t$ spans a line sub-bundle of $\cH := \H\otimes_\C \O_{\M_{1,1}}$.
This line bundle is the {\em Hodge bundle} $F^1 \cH$ and is isomorphic to $\cL$.
The factor of automorphy of $\H$ implies that the quotient of $\cH$ by $F^1$ is
isomorphic to $\cL_{-1}$, so that we have an exact sequence
$$
0 \to \cL \to \cH \to \cL_{-1}\to 0.
$$
Later we will see that this splits, even over $\Mbar_{1,1}$. (Cf.\
Remark~\ref{rem:splitting} and the last paragraph of
Section~\ref{sec:trivializing}.)
\end{remark}

\section{Some Lie theory}

Let $\C\ll \t,\a \rr$ be the completion of the free associative algebra
generated by the indeterminants $\t$ and $\a$. It is a topological algebra.
Denote the closure of the free Lie algebra $\L(\t,\a)$ in $\C\ll \t,\a \rr$ by
$\p$. It is a topological Lie algebra.

Define a continuous action  $\C\ll \t,\a \rr \times \p \to \p$ of $\C\ll \t,\a
\rr$ on $\p$ by
$$
f(\t,\a) : x \mapsto f(\t,\a)\cdot x := f(\ad_\t,\ad_\a)(x).
$$
for all $x\in \p$.

For later use, we record the following fact:

\begin{proposition}
Suppose that $A : [a,b] \to \L(X_1,\dots,X_n)^\wedge$ is smooth.\footnote{That
is, each coefficient of the power series $A(t)$ is a smooth function of $t\in
[a,b]$.} If $X : [a,b] \to \C\ll X_1,\dots,X_n\rr$ satisfies the initial
value problem
$$
X' = AX,\quad X(0)=1,
$$
then $X(t)$ is group-like for all $t\in [a,b]$.
\end{proposition}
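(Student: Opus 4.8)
The plan is to verify directly that $X(t)$ satisfies the two defining conditions of a group-like element of $\C\ll X_1,\dots,X_n\rr$, namely $\epsilon(X(t))=1$ and $\Delta X(t) = X(t)\comptensor X(t)$ for all $t$. Both will drop out of a single principle: the initial value problem $X'=AX$, $X(0)=1$, has a \emph{unique} solution in the relevant complete algebra, and in each case I will exhibit a second quantity solving an IVP with the same (matrix) coefficient and the same initial value. So first I would record uniqueness. Since $A(t)$ lies in the completed free Lie algebra, it is primitive and hence has no constant term: comparing constant terms in $\Delta A = A\comptensor 1 + 1\comptensor A$ forces the degree-zero part to vanish, so $A(t)\in I=\ker\epsilon$ and $\epsilon(A(t))=0$. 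Decomposing $A=\sum_{j\ge1}A_j$ and $X=\sum_{k\ge0}X_k$ into homogeneous components, the equation becomes the triangular system $X_k'=\sum_{j=1}^{k}A_j X_{k-j}$ with $X_0\equiv 1$ forced by the initial condition; each $X_k$ is then obtained from lower-order data by a single integration. Equivalently, reduction modulo $I^N$ yields a finite-dimensional linear ODE, for which existence and uniqueness are classical, and the truncated solutions assemble to the unique solution in the inverse limit.

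With uniqueness in hand, the augmentation condition is immediate. Applying the continuous algebra homomorphism $\epsilon$ to the equation and setting $f(t)=\epsilon(X(t))$ gives $f'=\epsilon(A)f=0$, since $\epsilon(A)=0$; together with $f(0)=\epsilon(1)=1$ this yields $\epsilon(X(t))=1$ for all $t$.

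For the coproduct condition I would show that $Y:=\Delta X$ and $Z:=X\comptensor X$ solve the same IVP in $\C\ll X_1,\dots,X_n\rr \comptensor \C\ll X_1,\dots,X_n\rr$. Because $\Delta$ is a continuous algebra homomorphism it commutes with $d/dt$, so using the primitivity of $A$ one gets $Y'=\Delta(X')=\Delta(A)\Delta(X)=(A\comptensor 1+1\comptensor A)\,Y$, with $Y(0)=\Delta(1)=1\comptensor 1$. On the other hand, the Leibniz rule gives $Z'=(AX)\comptensor X + X\comptensor(AX)=(A\comptensor 1+1\comptensor A)\,Z$, with $Z(0)=1\comptensor 1$. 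The coefficient $A\comptensor 1+1\comptensor A$ again has no constant term, so the same triangular/graded argument applies in the completed tensor product and the IVP there has a unique solution; hence $Y=Z$, i.e.\ $\Delta X(t)=X(t)\comptensor X(t)$. Combined with $\epsilon(X(t))=1$, this shows $X(t)$ is group-like.

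I expect the only real work to be the bookkeeping behind uniqueness in the pro-nilpotent setting: justifying that differentiation commutes with the continuous maps $\Delta$ and $x\mapsto x\comptensor 1$, $x\mapsto 1\comptensor x$, and that the IVP is well posed degreewise because $A$ takes values in the augmentation ideal. This is the step where the completeness and the finite-dimensionality of the graded pieces are essential; once it is in place, both verifications above are purely formal.
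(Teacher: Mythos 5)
Your proof is correct and follows essentially the same route as the paper: both arguments show that $\Delta X$ and $X\otimes X$ satisfy the identical IVP $Y' = (A\otimes 1 + 1\otimes A)Y$, $Y(0)=1\otimes 1$, and conclude equality by uniqueness of solutions. The only difference is that you also spell out the uniqueness bookkeeping (the degreewise triangular system in the completed algebra) and the augmentation condition $\epsilon(X(t))=1$, both of which the paper leaves implicit.
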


\begin{proof}
This follows from standard Lie theory. It can also be proved directly as
follows. Since the diagonal $\Delta$ is linear, since $\Delta$ is an algebra
homomorphism, and since $A$ is primitive, we have
$$
\big(\Delta X \big)' = \Delta\big(X') = \Delta(AX)
= (\Delta A)(\Delta X) = (A\otimes 1 + 1 \otimes A)\Delta X.
$$
On the other hand,
$$
(X\otimes X)' = X'\otimes X + X\otimes X' = (AX)\otimes X + X\otimes(AX)
= (A\otimes 1 + 1 \otimes A)(X\otimes X).
$$
Thus both $\Delta X$ and $X\otimes X$ satisfy the IVP
$$
Y' = (A\otimes 1 + 1 \otimes A)Y,\quad Y(0) = 1\otimes 1,
$$
where $Y : [a,b] \to \C\ll X_1,\dots,X_n\rr \otimes \C\ll X_1,\dots,X_n\rr$. It
follows that $\Delta X = X\otimes X$ for all $t$.
\end{proof}

\subsection{Two identities}

For later use we recall two standard identities. To avoid confusion, we shall
denote composition of endomorphisms $\phi$ and $\psi$ of $\p$ by $\phi\circ
\psi$.

Recall that if $V$ is a vector space and $u,\phi \in \End V$, then in $\End V$
we have
$$
\exp(\ad \phi)(u) = e^\phi \circ u \circ e^{-\phi}.
$$
Applying this in the case where $V = \p$, we see that for all $\delta \in \Der
\p$ and $\phi \in \C\ll \t,\a\rr$, 
$$
\exp(\phi)\cdot \delta = e^\phi \circ \delta \circ e^{-\phi}.
$$
In particular, if $\w$ is a 1-form on a manifold that takes values in $\p$, then
\begin{equation}
\label{eq:exp_ad}
e(-m\t)\circ \w \circ e(m\t) = e(-m\t)\cdot \w,
\end{equation}
where $e(u) := \exp(2\pi i u)$.

\begin{lemma}
\label{lem:exp_ad}
Suppose that $u\in \C\ll \t,\a \rr$. If $\delta$ is a continuous derivation of
$\C\ll \t, \a\rr$, then
$$
e^{-u} \delta(e^u) = \frac{1-\exp(-\ad_u)}{\ad_u} \delta(u)
\text{ and }
\delta(e^u)e^{-u} = \frac{\exp(\ad_u)-1}{\ad_u}\delta(u).
$$
\end{lemma}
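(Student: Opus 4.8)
The plan is to prove the first identity by differentiating a one-parameter deformation and then integrating (the standard ``derivative of the exponential'' or Duhamel argument), and to deduce the second identity from the first by conjugation. Introduce the curve
$$
\Phi(s) = e^{-su}\,\delta(e^{su}),\qquad s\in[0,1],
$$
valued in $\C\ll\t,\a\rr$. Each coefficient of $\Phi(s)$ is a power series in $s$ with infinite radius of convergence, so term-by-term differentiation in $s$ is legitimate and commutes with the continuous derivation $\delta$. Note that $\Phi(0)=\delta(1)=0$, since a derivation kills the identity.

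First I would compute $\Phi'(s)$. Because $\tfrac{d}{ds}$ commutes with $\delta$ and because $u$ commutes with $e^{\pm su}$, one has $\tfrac{d}{ds}\delta(e^{su}) = \delta(u\,e^{su}) = \delta(u)e^{su} + u\,\delta(e^{su})$, while $\tfrac{d}{ds}e^{-su} = -u\,e^{-su}$. Substituting these into the product rule, the two terms containing $\delta(e^{su})$ are $-u\Phi(s)$ and $e^{-su}u\,\delta(e^{su}) = u\Phi(s)$, which cancel, leaving
$$
\Phi'(s) = e^{-su}\,\delta(u)\,e^{su}.
$$
By the identity $\exp(\ad\phi)(v)=e^{\phi}\circ v\circ e^{-\phi}$ recalled at the start of this subsection (applied with $\phi=-su$), this is $\Phi'(s) = e^{-s\,\ad_u}\,\delta(u)$.

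Next, integrate from $0$ to $1$ using $\Phi(0)=0$. Since $\ad_u$ acts on the coefficients, pulling it out of the integral gives $\Phi(1) = \big(\int_0^1 e^{-s\,\ad_u}\,ds\big)\,\delta(u)$, and the scalar identity $\int_0^1 e^{-sx}\,ds = (1-e^{-x})/x$, read as an equality of entire power series in $x$ and specialized to $x=\ad_u$, yields
$$
e^{-u}\,\delta(e^u) = \frac{1-\exp(-\ad_u)}{\ad_u}\,\delta(u),
$$
which is the first identity. For the second, observe that
$$
\delta(e^u)e^{-u} = e^{u}\big(e^{-u}\delta(e^u)\big)e^{-u} = \exp(\ad_u)\Big(\tfrac{1-\exp(-\ad_u)}{\ad_u}\,\delta(u)\Big) = \frac{\exp(\ad_u)-1}{\ad_u}\,\delta(u),
$$
where in the last step I use that $\exp(\ad_u)$ and $(1-\exp(-\ad_u))/\ad_u$ are both power series in the single operator $\ad_u$, hence commute and multiply as scalar series.

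The algebra here is entirely routine, so the only point requiring genuine care — the mild obstacle — is the analytic bookkeeping in the completed algebra: justifying differentiation and integration under the infinite sums defining $e^{su}$ and $\delta(e^{su})$, and guaranteeing convergence of the operator series $(1-\exp(-\ad_u))/\ad_u$ and $(\exp(\ad_u)-1)/\ad_u$ applied to $\delta(u)$. The latter is clear once one notes that the constant term of $u$ is central, so $\ad_u$ depends only on the positive-degree part of $u$, which lies in the augmentation ideal; hence $\ad_u$ strictly raises the $I$-adic filtration and is topologically nilpotent, making every such operator power series converge.
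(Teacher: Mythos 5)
Your proof is correct and takes essentially the same route as the paper: both arguments study the one-parameter family $\Phi(s)=e^{-su}\,\delta(e^{su})$ and use elementary calculus in $s$, the paper by checking that $\Phi$ and the claimed right-hand side both solve the linear ODE $X'(s)=\delta(u)-\ad_u(X)$ with $X(0)=0$, and you by integrating the explicitly computed derivative $\Phi'(s)=\exp(-s\,\ad_u)\,\delta(u)$. Your derivation of the second identity by conjugating the first (rather than running a second ODE, as the paper does) is a minor, equally valid variation.
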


\begin{proof}
The functions $e^{-su} \delta(e^{su})$ and $\frac{1-\exp(-s\ad_u)}{\ad_u}
\delta(u)$ both satisfy the differential equation
$$
X'(s)= \delta(u) - \ad_u(X).
$$
Since both functions vanish when $s=0$, they are equal for all $s\in \C$. In
particular, they are equal when $s=1$. This proves the first identity. The
second is proved similarly using the differential equation $Y'=
\delta(u)+\ad_u(Y)$.
\end{proof}

\section{Connections and Monodromy}

Suppose that $\G$ is a discrete group, $G$ is a Lie (or proalgebraic) group and
that $X$ is a topological space. Suppose that $\G$ acts on $X$ on the left.
(Think of this action as being discontinuous and fixed point free, but it does
not have to be.) Suppose that the action of $\G$ lifts to the trivial right
principal $G$-bundle $G\times X \to X$:
$$
\gamma : (g,x) \mapsto \big(M_\gamma(x)g,\gamma x\big)
$$
where $M_\gamma : X \to G$ is a factor of automorphy.

\subsection{Connections}

Denote the Lie algebra of $G$ by $\g$. Sections of the bundle $G\times X \to X$
will be identified with functions $X \to G$ in the obvious way. A Lie algebra
valued 1-form
$$
\w \in E^1(X)\otimes \g
$$
defines a connection on the trivial bundle $G\times X \to X$ by the formula
$$
\nabla f = df + \w f
$$
where $f$ is a locally defined function $X \to G$.

\begin{proposition}
The connection $\nabla$ is $\G$-invariant if and only if for all $\gamma \in \G$,
$$
\gamma^\ast \w = \Ad(M_\gamma)\w - dM_\gamma M_\gamma^{-1}.
$$
The connection $\nabla$ is flat if and only if $\w$ satisfies
$$
d\w + \frac{1}{2} [\w,\w] = 0. \qed
$$
\end{proposition}

\begin{example}
\label{ex:connection_H}
The sections $\a$ and $\b$ of the Hodge bundle $\H_\h$ over $\h$  are flat.
Since they give local framings of the associated vector bundle $\cH :=
\H\otimes_\C \O$, there is a flat connection on $\cH$, which is characterized by
the property that $\nabla \a = \nabla \b = 0$. Since $\t = \w_\tau/2\pi i =
(\tau \a - \b)/2\pi i$, we have
$$
2 \pi i \nabla \t = \nabla (\tau \a - \b) = \a d\tau.
$$
It follows that, in terms of the framing $\a$, $\t$ of $\cH$, the connection
is given by
$$
\nabla = d + (2\pi i)^{-1} \a \frac{\partial}{\partial \t}\otimes d\tau.
$$
\end{example}

\subsection{Parallel transport}

Every path $\alpha : [0,1] \to X$ has a horizontal lift $\alphatilde : [0,1] \to
G$ that starts at $1\in G$. In other words, the section
$$
t \mapsto \big(\alphatilde(t),\alpha(t)\big) \in G\times X
$$
is a flat section of the bundle that projects to $\alpha$ and begins at
$(1,\alpha(0))$.\footnote{Note that this does not require the connection to be
flat.}

The function $\alphatilde$ is the unique solution of the ODE
$$
d\alphatilde = -(\alpha^\ast\w)\alphatilde,\quad \alphatilde(0) = 1.
$$
Note that the uniqueness of solutions of ODEs implies that the horizontal lift
of $\alpha$ that begins at $g\in G$ is $t\mapsto \alphatilde(t)g$.

Denote the value of the lift $\alphatilde$ at $t=1$ by $T(\alpha)$. The function
$$
T : \alpha \mapsto T(\alpha)
$$
is called the (parallel) transport function associated to $\nabla$. When
$\nabla$ is flat, $T(\alpha)$ depends only on the homotopy class of $\alpha$
relative to its endpoints.

An immediate consequence of the uniqueness of solutions to IVPs:

\begin{lemma}
If $\alpha$ and $\beta$ are composable paths, then $T(\alpha\beta) =
T(\beta)T(\alpha)$. \qed
\end{lemma}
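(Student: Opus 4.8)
The plan is to deduce the statement entirely from the uniqueness of solutions to the transport IVP, together with the observation recorded just above the lemma that the horizontal lift of a path beginning at $g\in G$ is obtained from the lift beginning at $1\in G$ by right multiplication by $g$. First I would fix the standard parametrization of the concatenation: $\alpha\beta\colon[0,1]\to X$ traverses $\alpha$ on $[0,\half]$ and $\beta$ on $[\half,1]$ (recall $\alpha(1)=\beta(0)$, with the topologist's convention in force). Let $\widetilde{\alpha\beta}\colon[0,1]\to G$ denote its horizontal lift with $\widetilde{\alpha\beta}(0)=1$, so that by definition $T(\alpha\beta)=\widetilde{\alpha\beta}(1)$. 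The strategy is to identify the two halves of this single lift with the lifts of $\alpha$ and of $\beta$ separately.

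On the first half $[0,\half]$, the function $\widetilde{\alpha\beta}$ is a horizontal lift of a (doubled-speed) reparametrization of $\alpha$ starting at $1$. Making the substitution $s=2t$ in the defining ODE $d\gammatilde=-(\alpha^\ast\w)\gammatilde$ shows that the endpoint value of the lift is unchanged by the reparametrization, so $\widetilde{\alpha\beta}(\half)=\alphatilde(1)=T(\alpha)$. On the second half $[\half,1]$ the lift $\widetilde{\alpha\beta}$ projects to $\beta$ but now begins at the point $T(\alpha)$ rather than at $1$. By the right-multiplication principle quoted above, this segment is precisely $t\mapsto\betatilde(t)\,T(\alpha)$ (after the same change of variables), and evaluating at the endpoint gives $\betatilde(1)\,T(\alpha)=T(\beta)T(\alpha)$. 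Hence $T(\alpha\beta)=\widetilde{\alpha\beta}(1)=T(\beta)T(\alpha)$.

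The only point requiring any care is the bookkeeping of the reparametrizations at the join $t=\half$, which is disposed of by the change of variables in the ODE just indicated; there is no genuine obstacle, so the lemma is indeed immediate from uniqueness of IVP solutions. It is worth noting that the reversal of order, $T(\beta)T(\alpha)$ rather than $T(\alpha)T(\beta)$, is forced precisely because the transport equation multiplies the unknown on the right: feeding the terminal value $T(\alpha)$ of the first leg in as the initial condition of the second leg leaves it sitting on the right, which is exactly what makes $T$ an \emph{anti}-homomorphism.
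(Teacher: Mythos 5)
Your proof is correct and is exactly the argument the paper intends: the paper states the lemma without proof, remarking only that it is ``an immediate consequence of the uniqueness of solutions to IVPs,'' and your write-up simply makes that argument explicit via the right-multiplication principle $t\mapsto\gammatilde(t)g$ recorded just before the lemma. The reparametrization bookkeeping and the identification of the second leg of the lift as $\betatilde(t)\,T(\alpha)$ are both handled correctly, so nothing is missing.
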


To make the transport multiplicative, we will work with $T(\alpha)^{-1}$. A
formula for the transport and the inverse transport can be given using Chen's
iterated integrals. First a basic fact from ODE.

\begin{lemma}
Suppose that $R$ is a topological algebra (such as $\C\ll \t,\a \rr$ or
$\gl_n(\C)$) and that $A: [a,b] \to R$ is a smooth function. A function
$X : [a,b] \to R^\times$ is a solution of the IVP
$$
X' = -AX,\quad X(0) = 1
$$
if and only $Y = X^{-1}(t)$ is a solution of the IVP
$$
Y' = YA,\quad Y(0) = 1.
$$
\end{lemma}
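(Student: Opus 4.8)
The plan is to prove both implications simultaneously by differentiating the constant relation $X(t)\,Y(t) = 1$, where $Y := X^{-1}$, and invoking the Leibniz rule in $R$. The product rule $(FG)' = F'G + FG'$ is available for smooth $R$-valued functions precisely because multiplication in $R$ is continuous, which is the content of the hypothesis that $R$ is a topological algebra. First I would record that $Y = X^{-1}$ is again smooth: in $\gl_n(\C)$ inversion is a rational (hence smooth) map on the open set of units, while in $\C\ll \t,\a\rr$ each coefficient of $u^{-1}$ is a polynomial expression in the coefficients of $u$ together with the inverse of the (invertible) constant term, so smoothness of $X$ in $t$ passes to $X^{-1}$. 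I would also note that $Y(0) = X(0)^{-1}$, so the initial condition $X(0) = 1$ is automatically equivalent to $Y(0) = 1$; the genuine content of the lemma is therefore the equivalence of the two differential equations.

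Next I would differentiate $XY \equiv 1$ to obtain the single master identity $X'Y + XY' = 0$, and feed each ODE into it in turn. For the forward direction, substitute $X' = -AX$ to get $-AXY + XY' = 0$; since $XY = 1$ this reads $-A + XY' = 0$, i.e.\ $XY' = A$, and left-multiplying by $Y = X^{-1}$ yields $Y' = YA$. For the reverse direction the computation is the mirror image: substituting $Y' = YA$ gives $X'Y + XYA = 0$, hence $X'Y + A = 0$ (again using $XY = 1$), so $X'Y = -A$, and right-multiplying by $X = Y^{-1}$ yields $X' = -AX$.

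Since the argument reduces to a two-line manipulation, there is no serious obstacle; the only point requiring care -- and the only place the topological-algebra hypothesis is used -- is the justification that inversion preserves smoothness and that the Leibniz rule applies, so that differentiating $XY \equiv 1$ is legitimate in the possibly noncommutative, infinite-dimensional algebra $R$. If one preferred to sidestep the smoothness-of-inversion point entirely, an alternative is to define $Y$ as the unique solution of $Y' = YA$, $Y(0) = 1$ (which exists by Picard iteration, equivalently by Chen's iterated integrals, in $R$), form $W = XY$, and compute $W' = X'Y + XY' = -AXY + XYA = -AW + WA$ with $W(0) = 1$. As the constant function $W \equiv 1$ solves this linear IVP, uniqueness of solutions forces $XY \equiv 1$, which identifies $Y$ with $X^{-1}$ and establishes the equivalence without a separate smoothness discussion.
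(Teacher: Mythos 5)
Your proposal is correct and follows essentially the same route as the paper: differentiate the identity $XX^{-1}\equiv 1$ with the Leibniz rule, substitute the ODE, and multiply by the inverse to obtain the other equation (the paper does the forward direction this way and notes the converse is symmetric, exactly as you do). Your added remarks on smoothness of inversion and the alternative uniqueness-of-solutions argument are fine but not needed beyond what the paper records.
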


\begin{proof}
Suppose that $X$ satisfies $X'=-AX$ and $X(0)=I$. Then
$$
0 = X^{-1}\big(X X^{-1}\big)'
= X^{-1}\big(X(X^{-1})'\big) - X^{-1}\big(AXX^{-1}\big) = (X^{-1})' - X^{-1}A. 
$$
The opposite direction is proved similarly.
\end{proof}

Recall that if $\w_1,\dots,\w_r$ are 1-forms on a manifold $X$ taking values in
an associative algebra $A$, and if $\gamma$ is a piecewise smooth path in $X$
then one defines the iterated integral
\begin{equation}
\label{eqn:it_int}
\int_\gamma \w_1 \w_2 \dots \w_r =
\int_{0\le t_1 \le \cdots \le t_r \le 1}
f_1(t_1) f_2(t_2) \dots f_r(t_r) dt_1dt_2\dots dt_r
\end{equation}
where $\gamma^\ast \w_j = f_j(t) dt$. See \cite{chen,hain:bowdoin} for more
background.

\begin{corollary}[Transport Formula]
The inverse transport is given by
$$
T(\alpha)^{-1} = 1 + \int_\alpha \w + \int_\alpha \w\w
+ \int_\alpha \w\w\w + \cdots
$$
\end{corollary}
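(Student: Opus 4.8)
The plan is to convert the assertion into an explicit solution of the transport ODE and to recognize the Picard (Volterra) iteration of that solution as the iterated-integral series. Recall from the preceding discussion that the horizontal lift $\alphatilde$ is the unique solution of $d\alphatilde = -(\alpha^\ast\w)\alphatilde$ with $\alphatilde(0)=1$, and that $T(\alpha)=\alphatilde(1)$. Writing $\alpha^\ast\w = A(t)\,dt$ for a smooth $R$-valued function $A$ on $[0,1]$, this is precisely the IVP $X'=-AX$, $X(0)=1$ of the previous lemma. First I would invoke that lemma to conclude that $Y:=\alphatilde^{-1}$ is the unique solution of $Y'=YA$, $Y(0)=1$; hence $T(\alpha)^{-1}=\alphatilde(1)^{-1}=Y(1)$, and it suffices to expand $Y(1)$.

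Next I would replace the IVP $Y'=YA$, $Y(0)=1$ by the equivalent Volterra integral equation
$$
Y(s) = 1 + \int_0^s Y(t)A(t)\,dt,
$$
and solve it by successive substitution. Iterating the equation into itself produces, as its degree-$r$ term,
$$
\int_{0\le t_1\le t_2\le\cdots\le t_r\le s} A(t_1)A(t_2)\cdots A(t_r)\,dt_1\,dt_2\cdots dt_r,
$$
where the ordering of the factors is forced by the fact that $Y'=YA$ appends each new factor of $A$ on the right, so that the integration variable nearest $s$ labels the rightmost factor. By the very definition of Chen's iterated integral, this $r$-fold simplex integral is exactly $\int_\alpha \w\w\cdots\w$ ($r$ factors). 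Setting $s=1$ then yields $Y(1) = 1 + \int_\alpha\w + \int_\alpha\w\w + \cdots$, which is the asserted formula.

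The one point demanding care is the convergence of the series and the legitimacy of the term-by-term identification, i.e.\ that the Volterra series really is the (unique) solution of the IVP rather than merely a formal expansion. In the pro-nilpotent setting of interest, where $\w$ takes values in $\p\subset\C\ll\t,\a\rr$ and hence in the augmentation ideal $I$, the degree-$r$ term lies in $I^r$, so the series converges in the $I$-adic topology and only finitely many terms contribute in each fixed degree; uniqueness of solutions of the IVP then forces the sum to equal $Y$. For a Banach algebra such as $\gl_n(\C)$ the same conclusion follows from the standard estimate bounding the norm of the order-$r$ term by $(\sup_t\|A(t)\|)^r/r!$, which gives absolute and uniform convergence. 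Beyond this, the argument is bookkeeping; the genuine content is the pairing of the right-multiplicative form $Y'=YA$ with the left-to-right ordering convention for the iterated integral, so I would make that convention explicit at the outset.
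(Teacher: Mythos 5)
Your proof is correct and follows essentially the same route as the paper: the paper's proof likewise invokes the preceding lemma to trade $T(\alpha)^{-1}$ for the solution of $Y'=YA$, $Y(0)=1$, and then appeals to the transport formula, citing \cite{hain:bowdoin} for the iterated-integral expansion. The only difference is that you derive that expansion yourself by Volterra/Picard iteration --- with the correct left-to-right ordering forced by $Y'=YA$ and an adequate convergence justification in both the pronilpotent and Banach settings --- where the paper simply cites it.
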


\begin{proof}
This follows from Chen's transport formula (cf.\ \cite{chen,hain:bowdoin}) and
the previous lemma.
\end{proof}

For future use, we record the following standard fact.

\begin{proposition}
If the connection $\nabla$ is $\G$-invariant, then for all paths $\alpha :
[0,1]\to X$ and all $\gamma\in \G$
$$
T(\gamma\circ \alpha)
= M_\gamma\big(\alpha(1)\big) T(\alpha) M_\gamma\big(\alpha(0)\big)^{-1}.
$$
\end{proposition}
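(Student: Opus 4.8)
The plan is to exploit the $\G$-invariance of $\nabla$ directly: it says precisely that the lifted $\G$-action on the bundle $G\times X$ carries horizontal sections to horizontal sections. Consequently, applying $\gamma$ to the horizontal lift of $\alpha$ must produce the horizontal lift of $\gamma\circ\alpha$, up to the bookkeeping of base points recorded by the factor of automorphy, and the formula will drop out of ODE uniqueness.

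Concretely, let $\alphatilde$ be the horizontal lift of $\alpha$ beginning at $1$, so that $\alphatilde$ solves $d\alphatilde = -(\alpha^\ast\w)\alphatilde$ with $\alphatilde(0)=1$ and $T(\alpha)=\alphatilde(1)$. Set $\beta=\gamma\circ\alpha$ and define $Z(t):=M_\gamma(\alpha(t))\alphatilde(t)$. First I would show that $Z$ is a horizontal lift of $\beta$, i.e.\ that it solves $dZ=-(\beta^\ast\w)Z$. Differentiating the product gives $Z'=(\alpha^\ast dM_\gamma)\,\alphatilde + M_\gamma(\alpha)\,\alphatilde'$, and substituting $\alphatilde'=-(\alpha^\ast\w)\alphatilde$ reduces the claim to the identity $(\alpha^\ast dM_\gamma)-M_\gamma(\alpha)(\alpha^\ast\w)=-(\beta^\ast\w)M_\gamma(\alpha)$ of $\g$-valued functions along $[0,1]$.

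The key step is to verify this identity using the invariance condition, and it is the only place where the hypothesis enters. Since $\beta^\ast\w=\alpha^\ast\gamma^\ast\w$, the formula $\gamma^\ast\w=\Ad(M_\gamma)\w-dM_\gamma\,M_\gamma^{-1}$ pulls back along $\alpha$ to $\beta^\ast\w=M_\gamma(\alpha)(\alpha^\ast\w)M_\gamma(\alpha)^{-1}-(\alpha^\ast dM_\gamma)M_\gamma(\alpha)^{-1}$. Right-multiplying by $M_\gamma(\alpha)$ and negating yields exactly the required identity, so $Z$ is horizontal. This is where care with conventions is essential: one must track that $\w$ acts by left multiplication, that the factor of automorphy acts on the left, and that $dM_\gamma\,M_\gamma^{-1}$ is the appropriate (right) logarithmic derivative, since $G$ need not be abelian. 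I expect this sign- and order-bookkeeping to be the main obstacle; everything else is routine.

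Finally, $Z(0)=M_\gamma(\alpha(0))\cdot 1=M_\gamma(\alpha(0))$, so by uniqueness of solutions to the lifting ODE together with the recorded fact that the horizontal lift of $\beta$ beginning at $g$ is $t\mapsto\betatilde(t)g$, we obtain $Z(t)=\betatilde(t)\,M_\gamma(\alpha(0))$, where $\betatilde$ is the lift of $\beta$ starting at $1$. Evaluating at $t=1$ gives $M_\gamma(\alpha(1))\,T(\alpha)=T(\gamma\circ\alpha)\,M_\gamma(\alpha(0))$, and rearranging produces the asserted formula.
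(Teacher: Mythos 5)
Your proof is correct. Note that the paper offers no proof at all here: it simply records the proposition as a ``standard fact,'' so your argument fills in exactly what was omitted, and it does so by the standard route. The one computation that carries all the content checks out: pulling the invariance identity $\gamma^\ast \w = \Ad(M_\gamma)\w - dM_\gamma\, M_\gamma^{-1}$ back along $\alpha$, right-multiplying by $M_\gamma(\alpha)$ and negating gives precisely $(\alpha^\ast dM_\gamma) - M_\gamma(\alpha)(\alpha^\ast\w) = -(\beta^\ast\w)M_\gamma(\alpha)$, which is what makes $Z(t) = M_\gamma(\alpha(t))\alphatilde(t)$ a horizontal lift of $\beta = \gamma\circ\alpha$; the uniqueness fact the paper records (the lift starting at $g$ is $t\mapsto \betatilde(t)g$) then yields the formula, and your implicit use of matrix-style calculus for the product rule is consistent with the paper's own conventions (it already writes expressions like $dM_\gamma\,M_\gamma^{-1}$ and $\Ad(M_\gamma)\w$).
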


\subsection{Monodromy}

Suppose that the connection $d+\w$ is $\G$-invariant and flat.  Our task in this
section is to explain how to compute the associated monodromy representation
from the transport function $T$ of $\w$ and the factor of automorphy $M$. 

By covering space theory, the choice of a point $x_o \in X$ determines a
surjective homomorphism
$$
\rho : \pi_1(\G\bs X,\xbar_o) \to \G
$$
whose kernel is $\pi_1(X,x_o)$, where $\xbar_o$ denotes the image of $x_o$ in
$\G\bs X$.

To each $\gamma \in \pi_1(\G\bs X)$, let $c_\gamma$ be its lift to a path in $X$
that begins at $x_o$. Note that its end point is $\rho(\gamma)\cdot x_o$ and
that the homotopy class of $c_\gamma$ depends only upon $\gamma$.

\begin{lemma}
If $\gamma,\mu\in \pi_1(\G\bs X,\xbar_o)$, then $c_{\gamma\mu} = c_\gamma \cdot
\big(\rho(\gamma)\circ c_\mu\big)$.
\end{lemma}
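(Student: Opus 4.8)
The claim is a cocycle-type identity for the lifts $c_\gamma$ of loops in $\G\bs X$ to paths in $X$ starting at the fixed basepoint $x_o$. Recall that $\rho : \pi_1(\G\bs X,\xbar_o) \to \G$ sends a loop to the deck transformation it induces, and that $c_\gamma$ is the unique lift of $\gamma$ starting at $x_o$, whose endpoint is $\rho(\gamma)\cdot x_o$. The identity to prove is
$$
c_{\gamma\mu} = c_\gamma \cdot \big(\rho(\gamma)\circ c_\mu\big),
$$
where on the right the path $\rho(\gamma)\circ c_\mu$ is the image of $c_\mu$ under the deck transformation $\rho(\gamma)$, and $\cdot$ is path concatenation in the topologist's convention (first traverse $c_\gamma$, then the translated $c_\mu$).

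**Plan.** The proof is a direct application of uniqueness of path lifting for the covering $X \to \G\bs X$, so the whole task is to check that the right-hand side is a legitimate lift of the loop $\gamma\mu$ starting at $x_o$. First I would verify that the concatenation on the right is well-defined as a path, i.e.\ that the endpoint of $c_\gamma$ agrees with the starting point of $\rho(\gamma)\circ c_\mu$. The endpoint of $c_\gamma$ is $\rho(\gamma)\cdot x_o$ by the defining property of $c_\gamma$. The path $c_\mu$ starts at $x_o$, so $\rho(\gamma)\circ c_\mu$ starts at $\rho(\gamma)\cdot x_o$; the endpoints match and the concatenation is defined. Second, I would check that both sides project to the same loop in $\G\bs X$: the covering projection $X \to \G\bs X$ is $\G$-invariant, so the deck transformation $\rho(\gamma)$ does not change the image of a path; hence $\rho(\gamma)\circ c_\mu$ projects to $\mu$ while $c_\gamma$ projects to $\gamma$, and their concatenation projects to the loop $\gamma\mu$, as required.

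**Conclusion via uniqueness.** Having established that the right-hand side is a path in $X$ starting at $x_o$ and projecting to $\gamma\mu$, it is by definition a lift of $\gamma\mu$ beginning at $x_o$. By unique path lifting for covering spaces, such a lift is unique, and it is exactly $c_{\gamma\mu}$. This forces the asserted equality. The only mild subtlety is bookkeeping with the topologist's path-multiplication convention fixed in the conventions subsection: one must confirm that "first $\gamma$, then $\mu$" downstairs corresponds to "first $c_\gamma$, then the $\rho(\gamma)$-translate of $c_\mu$" upstairs, rather than an untranslated copy of $c_\mu$. The translation by $\rho(\gamma)$ is precisely what repositions the lift of $\mu$ so that it begins where $c_\gamma$ ended; this is the single conceptual point, and it is dictated by where $c_\gamma$ terminates.

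**Main obstacle.** There is no real analytic or algebraic difficulty here; the statement is formal covering-space theory. The only thing requiring care is the orientation/ordering convention, making sure the deck transformation is applied to $c_\mu$ and not to $c_\gamma$, and that it acts on the second factor consistent with the right action encoded in $\rho$. Once the endpoint-matching computation $c_\gamma(1)=\rho(\gamma)\cdot x_o = \big(\rho(\gamma)\circ c_\mu\big)(0)$ is in hand, uniqueness of lifts closes the argument immediately.
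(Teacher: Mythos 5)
Your proof is correct and is precisely the argument the paper intends: the paper's ``proof'' is just the picture in Figure~1, which depicts exactly your observation that $c_\gamma$ followed by the $\rho(\gamma)$-translate of $c_\mu$ is a lift of $\gamma\mu$ starting at $x_o$, so unique path lifting forces it to be $c_{\gamma\mu}$. You have simply written out rigorously (endpoint matching, $\G$-invariance of the projection, uniqueness of lifts) what the figure conveys, so nothing further is needed.
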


Here $\cdot$ denotes path multiplication and $\circ$ denotes composition. The
proof is best given by the picture Figure~\ref{fig:cocycle}.
\begin{figure}[!ht]
\epsfig{file=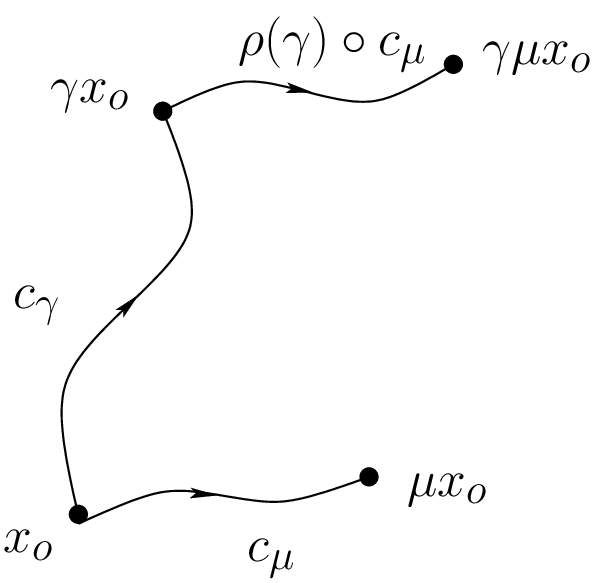, width=1.5in}
\caption{The cocycle relation $c_{\gamma\mu} = c_\gamma \cdot
\big(\rho(\gamma)\circ c_\mu\big)$}
\label{fig:cocycle}
\end{figure}

To obtain a homomorphism (instead of an anti-homomorphism), we need to take
inverses. Define
$$
\Theta_{x_o} : \pi_1(\G\bs X,\xbar_o) \to G
$$
by
$$
\Theta_{x_o}(\gamma) = T(c_\gamma)^{-1}M_{\rho(\gamma)}(x_o).
$$
Note that $\Theta_{x_o}(\gamma)^{-1}$ is the element of the fiber $G$ over $x_o$
that is identified with the point $T(c_\gamma)$ in the fiber $G$ over
$\rho(\gamma)\cdot x_o$. It is thus the result of parallel transporting $1\in G$
about the loop $\gamma$.

\begin{proposition}
The monodromy representation $\pi_1(\G\bs X,\xbar_o) \to G$ of the flat bundle
$\G\bs(G\times X) \to \G\bs X$ with respect to the identification above is
$$
\Theta_{x_o} : \pi_1(\G\bs X,\xbar_o) \to G.
$$ 
\end{proposition}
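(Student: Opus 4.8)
The plan is to prove two things: that $\Theta_{x_o}$ is a group homomorphism, and that it coincides with the parallel-transport description of the monodromy of $\G\bs(G\times X)\to \G\bs X$. The geometric meaning of the second point is already recorded in the remark preceding the proposition, so the real content is the homomorphism property, which I would obtain by assembling the structural identities available to us: the cocycle relation for the lifts $c_\gamma$, the anti-multiplicativity of $T$, the $\G$-invariance transport formula, and the defining cocycle identity of a factor of automorphy.

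First I would compute $\Theta_{x_o}(\gamma\mu)=T(c_{\gamma\mu})^{-1}M_{\rho(\gamma\mu)}(x_o)$ directly. Using the cocycle relation $c_{\gamma\mu}=c_\gamma\cdot\big(\rho(\gamma)\circ c_\mu\big)$ followed by $T(\alpha\beta)=T(\beta)T(\alpha)$ gives $T(c_{\gamma\mu})=T\big(\rho(\gamma)\circ c_\mu\big)\,T(c_\gamma)$. Since $c_\mu$ runs from $c_\mu(0)=x_o$ to $c_\mu(1)=\rho(\mu)\cdot x_o$, the $\G$-invariance transport formula rewrites $T\big(\rho(\gamma)\circ c_\mu\big)=M_{\rho(\gamma)}(\rho(\mu)x_o)\,T(c_\mu)\,M_{\rho(\gamma)}(x_o)^{-1}$. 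Taking inverses and right-multiplying by $M_{\rho(\gamma\mu)}(x_o)$, I would then invoke $\rho(\gamma\mu)=\rho(\gamma)\rho(\mu)$ together with the factor-of-automorphy identity $M_{gh}(x)=M_g(hx)M_h(x)$, so that $M_{\rho(\gamma\mu)}(x_o)=M_{\rho(\gamma)}(\rho(\mu)x_o)M_{\rho(\mu)}(x_o)$. The two copies of $M_{\rho(\gamma)}(\rho(\mu)x_o)^{\pm1}$ then cancel, leaving exactly $T(c_\gamma)^{-1}M_{\rho(\gamma)}(x_o)\,T(c_\mu)^{-1}M_{\rho(\mu)}(x_o)=\Theta_{x_o}(\gamma)\Theta_{x_o}(\mu)$.

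For the monodromy interpretation, I would unwind the $\G$-identification defining the quotient bundle. Parallel transport of $1\in G$ along the lift $c_\gamma$ terminates at $T(c_\gamma)$ in the fiber over $\rho(\gamma)\cdot x_o$; since the quotient glues $(g,x_o)$ to $\big(M_{\rho(\gamma)}(x_o)g,\rho(\gamma)x_o\big)$, the point $T(c_\gamma)$ is represented in the fiber over $x_o$ by $M_{\rho(\gamma)}(x_o)^{-1}T(c_\gamma)=\Theta_{x_o}(\gamma)^{-1}$, as asserted in the remark. Well-definedness is immediate: the class $c_\gamma$ depends only on $\gamma$, and $T$ is homotopy-invariant because $\nabla$ is flat, so $\Theta_{x_o}$ descends to $\pi_1(\G\bs X,\xbar_o)$.

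The hard part is not any single computation but the consistent bookkeeping of conventions: the interplay of the left $\G$-action with the right $G$-action, and in particular the observation that raw parallel transport $\gamma\mapsto\Theta_{x_o}(\gamma)^{-1}$ is an anti-homomorphism, mirroring the anti-multiplicativity $T(\alpha\beta)=T(\beta)T(\alpha)$. This is precisely why $\Theta_{x_o}$ is defined using the inverse transport $T(c_\gamma)^{-1}$: taking the inverse converts the anti-homomorphism into an honest homomorphism. One should also note that the deck transformation realizing the monodromy identification is forced to be $\rho(\gamma)$, since $c_\gamma$ ends at $\rho(\gamma)\cdot x_o$ and $\rho$ is the surjection supplied by covering space theory.
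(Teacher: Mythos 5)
Your proposal is correct and follows essentially the same route as the paper: the homomorphism property is verified by exactly the same chain (cocycle relation for $c_\gamma$, anti-multiplicativity of $T$, the invariant-transport formula $T(\gamma\circ\alpha)=M_\gamma(\alpha(1))T(\alpha)M_\gamma(\alpha(0))^{-1}$, and the factor-of-automorphy identity), and your unwinding of the quotient identification matches the paper's ``just trace through the identifications'' together with the remark preceding the proposition.
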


\begin{proof}
Just trace through the identifications. But to reassure the reader, we show that
$\Theta_{x_o}$ is a group homomorphism. (We'll drop $\rho$ and the $x_o$ below.)
If $\gamma,\mu\in \pi_1(\G\bs X,\xbar_o)$, then
\begin{align*}
\Theta(\gamma\mu) &= T(c_{\gamma\mu})^{-1} M_{\gamma\mu}(x_o) \cr
&= T(c_\gamma\cdot(\gamma\circ c_\mu))^{-1}M_{\gamma\mu}(x_o) \cr
&= T(c_\gamma)^{-1}T(\gamma\circ c_\mu)^{-1}M_\gamma(\mu\cdot x_o)M_\mu(x_o)\cr
&= T(c_\gamma)^{-1}M_\gamma(x_o)T(c_\mu)^{-1}
M_\gamma(\mu\cdot x_o)^{-1}M_\gamma(\mu\cdot x_o)M_\mu(x_o) \cr
&= \Theta(\gamma)\Theta(\mu).
\end{align*}
\end{proof}

Combining this with the transport formula above, we obtain a formula for the
monodromy in terms of $\w$ and the factor of automorphy.

\begin{corollary}
\label{cor:chen-dyson}
For all $x\in X$ and $\gamma \in \pi_1(\G\bs X,\xbar)$,
$$
\Theta_x(\gamma)
= \bigg(1 + \int_{c_\gamma}\w + \int_{c_\gamma}\w\w
+ \int_{c_\gamma} \w\w\w + \cdots\bigg)M_\gamma(x). \qed
$$
\end{corollary}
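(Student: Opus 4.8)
The plan is to combine the two results immediately preceding this corollary: the explicit description of the monodromy homomorphism $\Theta_x$ and the Transport Formula for the inverse parallel transport. Neither ingredient requires new work, so the proof is a direct substitution, and the only thing to verify is that assembling them produces precisely the asserted series.

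First I would recall the formula for the monodromy established in the preceding proposition, applied to the base point $x$ with image $\xbar$ in $\G\bs X$:
$$
\Theta_x(\gamma) = T(c_\gamma)^{-1} M_{\rho(\gamma)}(x),
$$
where $c_\gamma$ is the lift to $X$ of the loop $\gamma$ beginning at $x$. Following the abbreviation already adopted in the text (where $\rho$ is suppressed), I write $M_\gamma(x)$ for $M_{\rho(\gamma)}(x)$, so that this reads $\Theta_x(\gamma) = T(c_\gamma)^{-1} M_\gamma(x)$.

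Next I would apply the Transport Formula to the path $c_\gamma$, which expresses the inverse transport as the Chen iterated-integral series
$$
T(c_\gamma)^{-1} = 1 + \int_{c_\gamma}\w + \int_{c_\gamma}\w\w + \int_{c_\gamma}\w\w\w + \cdots.
$$
Substituting this expansion into the expression for $\Theta_x(\gamma)$ and keeping the factor of automorphy $M_\gamma(x)$ on the right yields exactly the claimed identity. Thus the corollary is simply the concatenation of the monodromy proposition with the Transport Formula.

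The single point that merits a remark, rather than a genuine obstacle, is the meaning of the infinite sum. Because $\w$ takes values in the pronilpotent Lie algebra $\g$ (in the case of interest, $\p \subseteq \C\ll \t,\a \rr$), the $n$-fold iterated integral $\int_{c_\gamma}\w\cdots\w$ lands in the $n$-th term of the defining filtration, so these contributions become arbitrarily small and the series converges in the completed algebra. Granting the two cited results together with this convergence, there is nothing further to prove.
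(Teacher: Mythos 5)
Your proposal is correct and is exactly the paper's argument: the corollary is stated with a \qed precisely because it is the immediate concatenation of the monodromy proposition $\Theta_{x_o}(\gamma)=T(c_\gamma)^{-1}M_{\rho(\gamma)}(x_o)$ with the Transport Formula for $T(\alpha)^{-1}$, which is what you do. Your added remark on convergence of the iterated-integral series in the completed algebra is a harmless (and reasonable) supplement, not a deviation.
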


\part{The Universal Elliptic KZB Connection}

\section{The Bundle $\bP$ over $\E'$}

Before we define the universal elliptic KZB connection, we need to define the
bundle $\bP$ over $\E'$ on which it lives.

\subsection{The flat bundle $\bP^\top$}
\label{sec:bPtop}

The bundle $\bP$ with the KZB connection will be the de Rham realization of a
topological local system $\bP^\top$. To provide context, we first construct it.

Denote by $Y$ the universal covering space of $\E'$. This is also the universal
covering space of $\E_\h' = \big(\C\times\h) - \Lambda_\h$. Choose a base point
$[E_o,x_o]$ of $\E'$ and a lift $y_o$ of it to $Y$. This determines an
isomorphism of $\Aut(Y/\E')$ with $\pi_1(\E',[E_o,x_o])$.

Denote the unipotent completion of $\pi_1(E_o',x_o)$ over $\C$ by $\cP_o$. The
natural action
$$
\pi_1(\E',[E_o,x_o]) \times \pi_1(E_o',x_o) \to \pi_1(E_o',x_o),
\quad (g,\gamma)\mapsto g\gamma g^{-1}
$$
determines a left action of $\pi_1(\E',[E_o,x_o])$ on $\cP_o$. We can
therefore form the quotient
$$
\pi_1(\E',[E_o,x_o])\bs \big( \cP_o\times Y \big)
$$
by the diagonal $\pi_1(\E',[E_o,x_o])$-action. This is a flat right principal
$\cP_o$-bundle which we shall denote by $\bP^\top \to \E'$. Its fiber over
$[E,x]$ is naturally isomorphic to the unipotent completion of $\pi_1(E',x)$.

Since the Lie algebra $\p_o$ of $\cP_o$ can be viewed as a group with
multiplication defined by the Baker-Campbell-Hausdorff formula, we can (and
will) view $\bP^\top$ as a local system of Lie algebras. (Cf.\ the comment
following Prop.~\ref{prop:exp}.)

\subsection{The bundle $\bP$}
\label{sec:bP}

Here we construct a bundle $\bP$ over $\E$ on which the universal elliptic KZB
connection lives. Its fiber over each point of $\E'$ is the Lie algebra
$$
\p := \L(\t,\a)^\wedge.
$$
Denote the corresponding group $\exp\p$ by $\cP$. It is prounipotent. The
universal elliptic KZB connection on it is constructed in
Section~\ref{sec:connection}. It is flat. In Section~\ref{sec:rigidity} we will
prove that it is isomorphic to the flat bundle $\bP$.

The bundle $\bP$ will be constructed as the quotient of $\p\times\C\times\h$ by
a lift of the action of $\SL_2(\Z)\ltimes \Z^2$ on $\C\times \h$ to
$\p\times\C\times\h$.

The (completed) universal enveloping algebra of $\p$ is the power series algebra
$\C\ll \t,\a \rr$. The adjoint action defines the ring homomorphism
$$
\C\ll \t,\a \rr \to \End \p
$$
that takes $f(\t,\a)$ to $f(\ad_\t,\ad_\a)\in \End\p$.  This restricts to a
homomorphism $\C\ll \t,\a \rr^\times \to \Aut \p$.

We use the notation of Section~\ref{sec:automorphy}. Take $G = \G :=
\SL_2(\Z)\ltimes \Z^2$, $X=\C\times \h$, and $V = \p$. Note that $\G$ acts on
$\p\times X$ via the projection $\G \to \SL_2(\Z)$ using the factor of
automorphy $M_\gamma(\tau)$ defined in (\ref{eqn:def}).

The bundle $\bP$ is defined using factors of automorphy
$\Mtilde_\gamma(\xi,\tau)$ which live in the group $\SL_2(\R) \ltimes \C\ll
\t,\a \rr^\times$, where $\SL_2(\R)$ acts on $\C\ll \t,\a \rr$ via its left
action on the generators via the factor of automorphy $M_\gamma(\tau)$  defined
in Example~\ref{ex:hodge}. Specifically, $M_\gamma(\tau)$ is defined by
\begin{equation}
\label{eqn:def}
M_\gamma(\tau) : 
\begin{cases}
\a &\mapsto (c\tau + d)^{-1}\a + 2\pi i c\t \cr
\t &\mapsto (c\tau+d)\t.
\end{cases}
\end{equation}
The general factor of automorphy is defined by
\begin{equation}
\label{eqn:M_tilde}
\Mtilde_\gamma(\xi,\tau) =
\begin{cases}
M_\gamma(\tau)\circ e\big(\frac{c\xi \t}{c\tau + d} \big) &
\gamma \in \SL_2(\Z);\cr
e(-m\t) & (m,n) \in \Z^2.
\end{cases}
\end{equation}
Here $e(u) := \exp(2\pi iu)$. This is a factor of automorphy for $\G$ as
$$
e(c\xi \t - m\t)\circ M_\gamma(\tau) =
\Mtilde_\gamma\big(\xi + (m,n)\gamma(\tau,1)^T,\tau\big)\circ e(-m\t),
$$
where $\gamma \in \SL_2(\Z)$ and $(m,n)\in\Z^2$.\footnote{Denote left
multiplication by $\phi \in \C\ll \t,\a\rr$ by $L_\phi$. For $M \in \Aut H$, we
have $M\circ L_{M^{-1}\phi} = L_\phi\circ M$. In particular,
$M_\gamma(\tau)\circ L_{\t/(c\tau+d)} = L_\t\circ M_\gamma(\tau)$.}

\begin{proposition}
This is a well-defined factor of automorphy.
\end{proposition}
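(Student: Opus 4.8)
The plan is to verify directly that $\Mtilde$ satisfies the factor-of-automorphy cocycle identity
$\Mtilde_{g_1g_2}(x) = \Mtilde_{g_1}(g_2 x)\circ\Mtilde_{g_2}(x)$ for all $g_1,g_2\in\G$ and all $x=(\xi,\tau)$, exploiting that $\G=\SL_2(\Z)\ltimes\Z^2$ is generated by its two subgroups. It is convenient to recall that an $\Aut\p$-valued factor of automorphy is exactly a homomorphic section $g\mapsto(\Mtilde_g,g)$ of the projection $\mathrm{Map}(\C\times\h,\Aut\p)\rtimes\G\to\G$, where the multiplication is $(f_1,g_1)(f_2,g_2)=(x\mapsto f_1(g_2x)\circ f_2(x),\,g_1g_2)$. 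Since (\ref{eqn:M_tilde}) only prescribes $\Mtilde$ on the subgroups $\SL_2(\Z)$ and $\Z^2$, the universal property of the semidirect product reduces the claim to three checks: that $\gamma\mapsto\Mtilde_\gamma$ is a cocycle on $\SL_2(\Z)$; that $(m,n)\mapsto e(-m\t)$ is a cocycle on $\Z^2$; and that the two are compatible with the conjugation relation $(1,(m,n))(\gamma,0)=(\gamma,0)(1,(m,n)\gamma)$ in $\G$.

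The $\Z^2$ check is immediate: $e(-m\t)$ is independent of the base point and these factors commute, so $e(-(m_1+m_2)\t)=e(-m_1\t)\circ e(-m_2\t)$. For the $\SL_2(\Z)$ check I would write $\Mtilde_\gamma(\xi,\tau)=M_\gamma(\tau)\circ e\big(c\xi\t/(c\tau+d)\big)$ and expand $\Mtilde_{\gamma_1}(\gamma_2 x)\circ\Mtilde_{\gamma_2}(x)$, transporting every exponential to the left past the $M$'s via the commutation rule $M_\gamma(\tau)\circ e(s\t)=e\big(s(c\tau+d)\t\big)\circ M_\gamma(\tau)$, which follows from $M_\gamma(\t)=(c\tau+d)\t$ in (\ref{eqn:def}). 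Using that $M_\gamma(\tau)$ is itself a cocycle (Example~\ref{ex:hodge}) together with the classical relation $j(\gamma_1\gamma_2,\tau)=j(\gamma_1,\gamma_2\tau)j(\gamma_2,\tau)$ for $j(\gamma,\tau)=c\tau+d$, the expression collapses to $M_{\gamma_1\gamma_2}(\tau)\circ e\big(c\xi\t/(c\tau+d)\big)=\Mtilde_{\gamma_1\gamma_2}(\xi,\tau)$ provided the scalar identity $c_1+c_2 d=c\,d_2$ holds, where $c,d$ are the bottom entries of $\gamma_1\gamma_2$. This last identity is exactly $c\,d_2-c_2 d=c_1(a_2d_2-b_2c_2)=c_1\det\gamma_2$, i.e.\ a consequence of $\det\gamma_2=1$.

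The heart of the matter, and the step I expect to demand the most care, is the mixed relation, which is precisely the displayed identity $e(c\xi\t-m\t)\circ M_\gamma(\tau)=\Mtilde_\gamma\big(\xi+(m,n)\gamma(\tau,1)^T\big)\circ e(-m\t)$ preceding the proposition. Evaluating the cocycle on $(1,(m,n))(\gamma,0)=(\gamma,(m,n)\gamma)$ yields the left-hand side after the same commutation manoeuvre, whereas evaluating it on the alternative factorisation $(\gamma,0)(1,(m,n)\gamma)$ yields $\Mtilde_\gamma\big(\xi+(m,n)\gamma(\tau,1)^T\big)\circ e(-m'\t)$, with $(m',n')=(m,n)\gamma$. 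To identify the two I would once more push $M_\gamma(\tau)$ through the exponential on the right; the sides then agree as soon as one verifies the single scalar cancellation $cn'-m'd=m(bc-ad)=-m$, which is again just $\det\gamma=1$. The main obstacle is therefore purely bookkeeping: every exponential must be carried past an $M_\gamma$ with the correct $(c\tau+d)$-rescaling, and one must keep straight which lattice vector, $(m,n)$ or its transform $(m,n)\gamma$, governs each factor of $e(\blank)$; no genuine analysis intervenes, and each nontrivial cancellation is forced by $\det\gamma=1$.
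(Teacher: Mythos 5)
Your proposal is correct and follows essentially the same route as the paper: the paper likewise reduces to (i) compatibility with the semidirect-product relation $(m,n)\circ\gamma = \gamma\circ\big((m,n)\gamma\big)$, (ii) the (trivial) homomorphism property on $\Z^2$, and (iii) the cocycle identity on $\SL_2(\Z)$, in each case pushing exponentials past $M_\gamma(\tau)$ via the commutation rule and invoking that $M_\gamma(\tau)$ is itself a factor of automorphy. Your two scalar cancellations ($cd_2-c_2d=c_1\det\gamma_2$ and $cn'-m'd=-m\det\gamma$) are exactly the determinant identities underlying the paper's displayed computations, so the only difference is your more abstract packaging via the universal property of the semidirect product.
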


\begin{proof}
The first task is to show that $\Mtilde$ is well-defined on $\Gamma\times
\C\times \h$ --- that is, it is compatible with the relation in $\G =
\SL_2(\Z)\ltimes \Z^2$. This relation is
$$
(m,n)\circ \gamma = \gamma \circ (m,n)(\cdot \gamma),
$$
where $\gamma \in \SL_2(\Z)$, $\circ$ denotes composition in $\G$ and $\cdot$
denotes the right action of $\SL_2(\Z)$ on $\Z^2$. If
$$
\gamma = \begin{pmatrix}a & b \cr c & d\end{pmatrix}
$$
then we have to show that
$$
e(-m\t)\circ\Mtilde_\gamma(\xi,\tau) = 
\Mtilde_\gamma(\xi + (m,n)\gamma (\tau,1)^T,\tau)\circ e\big(-(ma+nc)\t\big).
$$
Since
$$
M_\gamma(\tau) \circ e(\phi) = e(M_\gamma(\tau)\cdot\phi) \circ M_\gamma(\tau)
$$
for all $\phi \in \C\ll \t,\a \rr$, and since $M_\gamma(\tau) : \t\mapsto (c\tau
+ d)\t$ (see above), we have
$$
e(-m\t)\circ M_\gamma(\tau)
=M_\gamma(\tau)\circ e\bigg(\frac{-m\t}{c\tau+d}\bigg).
$$
Thus, the left-hand side expands to
\begin{align*}
e(-m\t)\circ \Mtilde_\gamma(\xi,\tau)
&= e(-m\t)\circ M_\gamma(\tau)\circ e\bigg(\frac{c\xi\t}{c\tau+d}\bigg) \cr
&= M_\gamma(\tau)\circ e\bigg(\frac{c\xi \t - m\t}{c\tau + d}\bigg)
\end{align*}

The right-hand side expands to
\begin{align*}
&\phantom{=} \Mtilde_\gamma(\xi + (ma+nc)\tau
+ (mb+nd),\tau)\circ e\big(-(ma+nc)\t\big) \cr
&= M_\gamma(\tau)\circ
e\bigg(\frac{c\big(\xi+(ma+nc)\tau + (mb+nd)\big)\t}{c\tau + d}\bigg)
\circ e\big(-(ma+nc)\t\big)\cr
&= M_\gamma(\tau)\circ e\bigg(\frac{c\xi \t - m\t}{c\tau + d}\bigg),
\end{align*}
which equals the left-hand side. It follows that $\Mtilde$ is a well-defined
function on $\G \times \C \times \h$.

Since $\Mtilde$ defines a homomorphism $\Z^2 \to \Q\ll \t \rr^\times$, to
complete the proof we need only check that the restriction of $\Mtilde$ to
$\SL_2(\Z)\times \C\times\h$ is a factor of automorphy. We will use the fact
that $M_\gamma(\tau)$ is a factor of automorphy.

Let
$$
\gamma_1 = \begin{pmatrix} a & b \cr c & d \end{pmatrix},
\quad
\gamma_2 = \begin{pmatrix} p & q \cr r & s \end{pmatrix}
\text{ and }
\gamma_1\gamma_2 = \begin{pmatrix} e & f \cr g & h \end{pmatrix}.
$$
Set $(\xi',\tau') = \gamma_2(\xi,\tau) = (\xi/(r\tau+s),\gamma_2 \tau)$.
Then
\begin{align*}
\Mtilde_{\gamma_1}(\xi',\tau') \Mtilde_{\gamma_2}(\xi,\tau)
&= M_{\gamma_1}(\gamma_2 \tau)\circ e\bigg(\frac{c\xi'\t}{c\tau' + d}\bigg)
\circ M_{\gamma_2}(\tau)\circ e\bigg(\frac{r\xi \t}{r\tau + s}\bigg) \cr
&= M_{\gamma_1}(\gamma_2 \tau) M_{\gamma_2}(\tau) \circ
e\bigg(\frac{c\xi'\t}{(c\tau' + d)(r\tau+s)}\bigg)
\circ e\bigg(\frac{r\xi \t}{r\tau + s}\bigg) \cr
&= M_{\gamma_1\gamma_2}(\tau) \circ
e\bigg(
\frac{c\xi\t}{(r\tau+s)\big(c(p\tau+q)+d(r\tau+s)\big)}+\frac{r\xi\t}{r\tau+s}
\bigg) \cr
&= M_{\gamma_1\gamma_2}(\tau) \circ
e\bigg(\frac{\big(c + r(g\tau+h)\big)\xi\t}{(r\tau+s)(g\tau+h)}\bigg)\cr
&= M_{\gamma_1\gamma_2}(\tau) \circ
e\bigg(\frac{g \xi\t}{g\tau + h}\bigg) \cr
&= \Mtilde_{\gamma_1\gamma_2}(\xi,\tau).
\end{align*}
\end{proof}

\begin{remark}
\label{rem:abelianization}
The bundle $\bP$ is a bundle of free Lie algebras. Its quotient by the
commutator subalgebra of each fiber is the bundle over  $\h$ with framing
$\t,\a$ and factor of automorphy $M_\gamma(\tau)$. So it is isomorphic to $\H$
by Example~\ref{ex:hodge}, as it should be.
\end{remark}

\section{Eisenstein Series and Bernoulli Numbers}

Define the Bernoulli numbers $B_n$ by
$$
\frac{x}{e^x-1} = \sum_{n=0}^\infty B_n \frac{x^n}{n!}.
$$
Recall that $B_0 = 1$, $B_1 = -1/2$ and that $B_{2k+1} = 0$ when $k>0$.

There are several ways to normalize Eisenstein series $G_{2k} : \h \to \C$. We
will use the normalization used by Zagier \cite{zagier}:
$$
G_{2k}(\tau) = \frac{1}{2}\frac{(2k-1)!}{(2\pi i)^{2k}}
\sum_{\substack{\lambda \in \Z\oplus \Z\tau\cr\lambda \neq 0}}
\frac{1}{\lambda^{2k}}
=
-\frac{B_{2k}}{4k} + \sum_{n=1}^\infty \sigma_{2k-1}(n)q^n,
$$
(properly summed when $k=1$), where $q=e(\tau)$ and $\sigma_k(n)=\sum_{d|n}d^k$.
In particular
$$
G_{2k}|_{q=0} = -\frac{B_{2k}}{4k} = \frac{(2k-1)!}{(2\pi i)^{2k}}\,\zeta(2k).
$$
When $k>1$, $G_{2k}$ is a modular form for $\SL_2(\Z)$ of weight $2k$:
$$
G_k(\gamma \tau) = (c\tau + d)^k G_k(\tau),\qquad \gamma \in \SL_2(\Z).
$$
And $G_2$ satisfies
$$
G_2(\gamma\tau) = (c\tau+d)^2 G_2(\tau) + ic(c\tau+d)/4\pi.
$$
(Cf.\ \cite[p.~457]{zagier}, bottom of page, and \cite[p.~459]{zagier},
near bottom of page.)

The role of $G_2(\tau)$ in this work should be clarified by the following
result, which follows from the transformation law for $G_2$ above.

\begin{lemma}
\label{lem:G2}
If $\SL_2(\Z)$ acts on $\C\times \h$ by $\gamma : (\xi,\tau) \mapsto
\big(\xi/(c\tau+d),\gamma\tau\big)$, then the form
$$
\frac{d\xi}{\xi} - 2\cdot 2\pi i\, G_2(\tau)\, d\tau
$$
is $\SL_2(\Z)$-invariant.
\end{lemma}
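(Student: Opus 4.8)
The plan is to verify $\SL_2(\Z)$-invariance directly, by computing the pullback $\gamma^\ast\w$ of the form $\w := d\xi/\xi - 2\cdot 2\pi i\, G_2(\tau)\, d\tau$ under a general $\gamma = \begin{pmatrix} a & b \cr c & d\end{pmatrix}$ and checking that it returns $\w$. Since the action sends $(\xi,\tau)$ to $(\xi/(c\tau+d),\gamma\tau)$, the calculation splits into an $\xi$-part and a $\tau$-part, and the whole point will be that the ``anomalous'' terms produced by each part cancel against one another.

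For the $\xi$-part, I would write $\xi_1 = \xi/(c\tau+d)$ and take the logarithmic differential, which gives $d\xi_1/\xi_1 = d\xi/\xi - c\,d\tau/(c\tau+d)$. For the $\tau$-part, the identity $ad-bc=1$ yields $d(\gamma\tau) = d\tau/(c\tau+d)^2$, and substituting the quasimodular transformation law $G_2(\gamma\tau) = (c\tau+d)^2 G_2(\tau) + ic(c\tau+d)/4\pi$ recorded above gives
$$
G_2(\gamma\tau)\, d(\gamma\tau) = G_2(\tau)\, d\tau + \frac{ic}{4\pi(c\tau+d)}\, d\tau.
$$
Multiplying through by $-2\cdot 2\pi i = -4\pi i$, the first term reproduces $-2\cdot 2\pi i\, G_2(\tau)\, d\tau$, while the second contributes $+c\,d\tau/(c\tau+d)$ (using $i^2 = -1$).

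Assembling the two parts, I expect $\gamma^\ast\w = d\xi/\xi - c\,d\tau/(c\tau+d) - 2\cdot 2\pi i\, G_2(\tau)\,d\tau + c\,d\tau/(c\tau+d) = \w$. There is no real obstacle beyond bookkeeping; the one substantive point is the cancellation of the two anomalous terms $\pm c\,d\tau/(c\tau+d)$, and this is precisely what pins down the normalizing constant $2\cdot 2\pi i = 4\pi i$: had one used an arbitrary multiple $N$ of $G_2(\tau)\,d\tau$, the surviving anomaly would be $(1 - Ni/4\pi)\,c\,d\tau/(c\tau+d)$, which vanishes only for $N = 4\pi i$. In this sense the lemma is really a repackaging of the failure of $G_2$ to be modular, arranged so that the modular anomaly of $d\xi/\xi$ exactly absorbs it.
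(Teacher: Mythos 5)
Your computation is correct and is exactly the argument the paper intends: the paper gives no written proof beyond the remark that the lemma ``follows from the transformation law for $G_2$ above,'' namely $G_2(\gamma\tau) = (c\tau+d)^2 G_2(\tau) + ic(c\tau+d)/4\pi$, which is precisely the identity you combine with $\gamma^\ast(d\xi/\xi) = d\xi/\xi - c\,d\tau/(c\tau+d)$ and $d(\gamma\tau) = d\tau/(c\tau+d)^2$ to cancel the anomalous terms. Your closing observation that the constant $4\pi i$ is forced by this cancellation is a nice addition, consistent with the paper's stated purpose of the lemma (clarifying the role of $G_2$).
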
 

This 1-form represents a generator of $H^1(\cL_{-1}',\Z(1)) \cong \Z$.

\subsection{Some useful identities}
\label{sec:b_identities}

The following well-known identities are used later in the paper. Since
$$
\frac{1}{2} \coth\big(u/2\big)
=  \frac{1}{2}\frac{e^{u/2}+e^{-u/2}}{e^{u/2}-e^{-u/2}}
= \frac{1}{2} \frac{e^u+1}{e^u-1}
= \frac{1}{2} + \frac{1}{u} \frac{u}{e^u-1}
=\sum_{m=0}^\infty \frac{B_{2m}}{(2m)!}u^{2m-1} ,
$$
we have
\begin{equation}
\label{eqn:coth}
\frac{1}{u} - \frac{u/4}{\sinh^2(u/2)}
= \frac{1}{u} + \frac{u}{2}\frac{d}{du}\coth\big(u/2)
= \sum_{m=1}^\infty (2m-1)\frac{B_{2m}}{(2m)!}u^{2m-1}.
\end{equation}
Rearranging gives the useful alternative form
\begin{equation}
\label{eqn:coth_alt}
\sum_{m=0}^\infty (2m-1)\frac{B_{2m}}{(2m)!}\, u^{2m-1}
= -\frac{u/4}{\sinh^2(u/2)}.
\end{equation}

\section{The Jacobi Form $F(\xi,\eta,\tau)$}

There are two versions of the function $F(u,v,\tau)$, one used by Levin-Racinet
\cite{levin-racinet}, the other by Zagier
\cite{zagier}.\footnote{Calaque-Enriquez-Etingof \cite{cee} do not explicitly
use the Jacobi form  $F$. However, their connection is expressed in terms of the
function $k(z,x|\tau)$, which is $F^\zag(z,x,\tau) - 1/x$. See their
Section~1.2.}  Denote them by $F(\xi,\eta,\tau)$ and $F^\zag(u,v,\tau)$,
respectively. Zagier's function is defined by
$$
F^\zag(u,v,\tau) :=
\frac{\theta'(0,\tau)\theta(u+v,\tau)}{\theta(u,\tau)\theta(v,\tau)},
$$
where $\theta$ is the classical theta function
$$
\theta(u,\tau) := \sum_{n\in \Z} (-1)^n q^{\shalf(n+\shalf)^2}e^{(n+\shalf)u},\quad
q = e(\tau)
$$
and $\theta'$ is its derivative with respect to $u$.

Their periodicity properties imply that $u = 2 \pi i \xi$, $v = 2 \pi i \eta$.
Since
$$
F(\xi,\eta,\tau) = \frac{1}{\xi} + \frac{1}{\eta}
\bmod \text{ holomorphic functions }
$$
and
$$
F^\zag(u,v,\tau) = \frac{1}{u} + \frac{1}{v}
\bmod \text{ holomorphic functions }
$$
near the origin, it follows that
$$
F(\xi,\eta,\tau) = 2\pi i F^\zag(2\pi i \xi, 2\pi i \eta,\tau).
$$
It satisfies the {\em symmetry condition}
$$
F(\xi,\eta,\tau) = F(\eta,\xi,\tau) = - F(-\xi,-\eta,\tau).
$$

\subsection{Expansions}

We use the formulas in \cite{zagier}, but write them using $F$ in place of
$F^\zag$.\footnote{Note the conflict in notation: Zagier sets $\xi = \exp u$ and
$\eta = \exp v$, which conflicts with the variables $(\xi,\eta)$ used by
Levin-Racinet: $2\pi i(\xi,\eta) = (u,v)$. See \cite[p.~455]{zagier}.}

Set $q = \exp(2\pi i \tau)$. Then
\begin{equation}
\label{eqn:qexp}
F(\xi,\eta,\tau) = \pi i\big[\coth(\pi i \xi) + \coth(\pi i \eta)\big]
+ 4 \pi \sum_{n=1}^\infty \bigg(
\sum_{d|n}\sin \big[2 \pi \big(\frac{n}{d}\xi + d\eta\big)\big]
\bigg)q^n.
\end{equation}

\begin{equation}
\label{eqn:def_F}
F(\xi,\eta,\tau) =
\frac{1}{\xi} + \frac{1}{\eta} - 2 \sum_{r,s = 0}^\infty
(2\pi i)^{1+\max\{r,s\}}
\bigg(\frac{\partial}{\partial \tau}\bigg)^{\min\{r,s\}} G_{|r-s|+1}(\tau)\,
\frac{\xi^r}{r!}\frac{\eta^s}{s!}.
\end{equation}

\subsection{Derivatives}

Differentiating these with respect to $\eta$ yields:
\begin{align}
\label{eqn:derivative}
\frac{1}{\eta} + \eta \frac{\partial F}{\partial\eta}(\xi,\eta,\tau) &=
 - 2 \sum_{\substack{r\ge 0\cr s\ge 1}}
(2\pi i)^{1+\max\{r,s\}}
\bigg(\frac{\partial}{\partial \tau}\bigg)^{\min\{r,s\}} G_{|r-s|+1}(\tau)\,
\frac{\xi^r}{r!}\frac{\eta^s}{(s-1)!}\cr
&=\bigg(\frac{1}{\eta} - \frac{(\pi i)^2\eta}{\sinh^2(\pi i \eta)}\bigg)
+ 8\pi^2\eta 	
\sum_{n=1}^\infty \bigg(
\sum_{d|n}d\cos \big[2 \pi \big(\frac{n}{d}\xi + d\eta\big)\big]
\bigg)q^n.
\end{align}

Comparing the result of differentiating this with respect to $\xi$ and
(\ref{eqn:def_F}) with respect to $\tau$, we obtain the {\em heat equation}:
$$
2 \pi i \frac{\partial F}{\partial \tau}(\xi,\eta,\tau)
= \frac{\partial^2 F}{\partial\xi\partial\eta}(\xi,\eta,\tau).
$$

\subsection{Elliptic and modularity properties}

The {\em elliptic property}, \cite[p.~456]{zagier} is:
\begin{equation}
\label{eqn:w}
F(\xi+m\tau + n,\eta,\tau) =
e(-m\eta)F(\xi,\eta,\tau)\qquad (m,n)\in \Z^2.
\end{equation}
Here, as previously, $e(x) = \exp(2\pi i x)$. Zagier states a more general form
of this, which follows from this one using the symmetry property of
$F(\xi,\eta,\tau)$.

The {\em modularity property} is:
\begin{equation}
\label{eqn:q}
F(\xi/(c\tau + d),\eta/(c\tau + d),\gamma\tau)
= (c\tau+d)e\big(c\xi\eta/(c\tau+d)\big) F(\xi,\eta,\tau).
\end{equation}
In particular
$$
F(\xi,\eta,\tau+1) = F(\xi,\eta,\tau) = F(\xi+1,\eta,\tau).
$$

\begin{proposition}
\label{prop:poincare}
The function $F$ induces a meromorphic section of the line bundle $\mathcal{N}
\to \cEbar\times_{\Mbar_{1,1}}\cEbar$ that was constructed in
Example~\ref{ex:linebdle}. The divisor of the section is $[\G_\iota] -[0_1] -
[0_2]$, where $\G_\iota$ is the graph in $E\times E$ of the involution $\iota$
that takes a point of $E$ to its inverse.
\end{proposition}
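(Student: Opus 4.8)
The plan is to verify the two assertions separately. Recall from Example~\ref{ex:linebdle} that a meromorphic section of $\cN \to \E\times_{\M_{1,1}}\E$ is the same thing as a meromorphic function $s$ on $X = \C\times\C\times\h$ satisfying the equivariance condition $s(\gamma\cdot(\xi,\eta,\tau)) = A_\gamma(\xi,\eta,\tau)\, s(\xi,\eta,\tau)$ for all $\gamma$ in $G = \SL_2(\Z)\ltimes(\Z^2\oplus\Z^2)$, where $A_\gamma$ is the factor of automorphy defining $\cN$. So the first task is to check that $F$ satisfies this identity, and the second is to locate its zeros and poles.

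For the equivariance I would treat the two types of generators in turn. For $\gamma\in\SL_2(\Z)$ the action is $(\xi,\eta,\tau)\mapsto\big((c\tau+d)^{-1}\xi,(c\tau+d)^{-1}\eta,\gamma\tau\big)$ and $A_\gamma = (c\tau+d)\,e\big(c\xi\eta/(c\tau+d)\big)$, so the required identity is exactly the modularity property of $F$ recorded above; nothing further is needed. For $\gamma = ((m,n),(r,s))\in\Z^2\oplus\Z^2$ the action is $(\xi,\eta,\tau)\mapsto(\xi+m\tau+n,\eta+r\tau+s,\tau)$. Here I would apply the elliptic property $F(\xi+m\tau+n,\eta,\tau)=e(-m\eta)F(\xi,\eta,\tau)$ in the first variable and, via the symmetry $F(\xi,\eta,\tau)=F(\eta,\xi,\tau)$, its analogue $F(\xi,\eta+r\tau+s,\tau)=e(-r\xi)F(\xi,\eta,\tau)$ in the second. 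Combining the two shifts and using that $e$ of an integer is $1$ (so the terms $e(-rn)$ and $e(-ms)$ drop out) collapses the accumulated factor to $e(\tau)^{-mr}e(\xi)^{-r}e(\eta)^{-m}$, which is precisely $A_\gamma$ in the lattice case. One should note that the two shifts commute, since they translate independent variables, matching the fact that $\Z^2\oplus\Z^2$ is abelian. This establishes that $F$ descends to a meromorphic section.

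For the divisor I would use the theta-function expression $F(\xi,\eta,\tau) = 2\pi i\, F^\zag(u,v,\tau)$ with $u=2\pi i\xi$, $v=2\pi i\eta$, and $F^\zag = \theta'(0,\tau)\,\theta(u+v,\tau)/\big(\theta(u,\tau)\theta(v,\tau)\big)$. The constant factor $2\pi i$ and the factor $\theta'(0,\tau)$, which is nonzero and independent of $\xi,\eta$, do not affect the divisor, so I only need the zero locus of $\theta(\cdot,\tau)$, which is the standard fact that, as a function of its first argument, $\theta$ is entire with simple zeros exactly along $2\pi i\,\Lambda_\tau$, i.e.\ at $\xi\in\Lambda_\tau$. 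Reading off the three theta factors on the fiber $E\times E$: the denominator $\theta(u,\tau)\theta(v,\tau)$ produces simple poles along $\{\xi\equiv 0\}=[0_1]$ and $\{\eta\equiv 0\}=[0_2]$; the numerator $\theta(u+v,\tau)$ produces a simple zero along $\{\xi+\eta\equiv 0\}$, which is exactly the graph $\G_\iota$ of $\iota:\xi\mapsto -\xi$. Hence $\div F = [\G_\iota]-[0_1]-[0_2]$, and since the theta zeros are simple all the multiplicities are $\pm 1$.

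I expect the only real work here to be bookkeeping rather than a genuine obstacle: matching normalizations and conventions (the substitution $u=2\pi i\xi$, the labeling of $0_1,0_2$, and the sign and integer-exponent simplifications in the lattice computation) and correctly invoking the standard description of the zero set of $\theta$. One point worth a sentence is that all of these transformation laws and the divisor analysis are uniform in $\tau$, so the conclusion holds not merely fiberwise over each $E\times E$ but for the section of $\cN$ over the family $\Ebar\times_{\Mbar_{1,1}}\E$; in particular the $q$-expansion of $F$ recorded above has no negative powers of $q$, so $F$ extends meromorphically across $q=0$ and the section makes sense over the compactification.
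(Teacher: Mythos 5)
Your proposal is correct, but it takes the route that the paper's proof only mentions in its closing sentence (that the result ``can also be proved using the formula for $F$ in terms of theta functions''); the paper's actual argument is different. The paper never invokes theta functions: it observes that $\div F$ has no vertical components, that its polar locus contains $[0_1]+[0_2]$ and its zero locus contains $[\G_\iota]$ with multiplicity one fiberwise, notes that the class of $\div F$ in $H^2(E_\tau^2)$ is constant in $\tau$, and then pins the divisor down exactly by restricting to the nodal fiber $q=0$, where $F/\pi i$ becomes the rational function $\frac{w+1}{w-1}+\frac{u+1}{u-1}$ in $w=e(\xi)$, $u=e(\eta)$; exactness on $E_0\times E_0$ then propagates to nearby fibers. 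You instead read the divisor off on every smooth fiber at once from Zagier's theta quotient, using the standard facts that $\theta(\cdot,\tau)$ has simple zeros exactly on the lattice and that $\theta'(0,\tau)\neq 0$. Your route buys the exact fiberwise divisor, multiplicities included, in one stroke, with no cohomological constancy argument; its costs are the imported theta facts and that it says nothing directly about the fiber over $q=0$, where theta functions are unavailable --- precisely the fiber the paper's degeneration argument treats explicitly. On that point your final paragraph is slightly too quick: to conclude that the divisor over $\Ebar\times_{\Mbar_{1,1}}\E$ has no component along the boundary fiber $E_0\times E_0$, you need both that the $q$-expansion \eqref{eqn:qexp} has no negative powers of $q$ (no pole along that fiber) and that its constant term $\pi i\big[\coth(\pi i\xi)+\coth(\pi i\eta)\big]$ is not identically zero (no zero along it); you state the first but should also state the second. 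A genuine plus of your write-up is the explicit verification that $F$ is equivariant for the factor of automorphy of $\cN$ --- for the lattice part via the elliptic property applied in both variables through the symmetry of $F$ --- a step the paper's proof leaves implicit.
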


\begin{proof}
Since $F(\xi,\eta,\tau)$ is meromorphic on $\C^2$ for each $\tau \in \h$, $\div
F$ has no vertical components over $\M_{1,1}$. The polar locus of $F$ over
$\M_{1,1}$ contains $[0_1]+[0_2]$ with multiplicity one on every fiber over
$\M_{1,1}$. The zero divisor of $F$ over $\M_{1,1}$ contains $[\G_\iota]$ with
multiplicity one on the generic fiber over $\M_{1,1}$. Since the class of $\div
F$ in $H^2(E_\tau^2)$ is constant and since $\div F$ has no vertical components,
it suffices to show that the class of $\div F$ is exactly $[\G_\iota] -[0_1] -
[0_2]$ on an open set of fibers and also over $q=0$.

Identity (\ref{eqn:qexp}) implies that
$$
\frac{1}{\pi i}F(\xi,\eta)|_{q=0} = \frac{w+1}{w-1} + \frac{u+1}{u-1}
$$
where $w = \exp(2\pi i\xi)$ and $u=\exp(2\pi i\eta)$. The coordinates on the
normalization of $E_0\times E_0$ are $(w,u)$. The identity sections are $w=1$
and $u=1$. The involution is given by $w = 1/w$. It is easily checked that
$$
\frac{w+1}{w-1} + \frac{u+1}{u-1} = 0
$$
implies that $wu=1$. It follows that the restriction of $\div F$ to $E_0\times
E_0$ is $[\G_\iota]-[0_1]-[0_2]$. But this implies that the divisor of $F$ is
$[\G_\iota]-[0_1]-[0_2]$ on all nearby fibers. The result follows.
\end{proof}

This result can also be proved using the formula for $F$ in terms of theta
functions.

\subsection{The Weierstrass $\wp$ function}
\label{sec:wp}

Recall that
$$
\wp(z,\tau) = \frac{1}{z^2} +
\sum_{\substack{\lambda \in \Z\oplus \Z\tau\cr \lambda \neq 0}}
\bigg[\frac{1}{(z-\lambda)^2} - \frac{1}{\lambda^2} \bigg].
$$
The next result follows from the standard identity
\begin{align*}
\wp(z,\tau)
&= \frac{1}{z^2} + \sum_{m=2}^\infty (2m-1)\bigg(
\sum_{\substack{\lambda \in \Z\oplus \Z\tau\cr\lambda \neq 0}}
\frac{1}{\lambda^{2m}}\bigg)z^{2m-2} \cr
&= \frac{1}{z^2} +
\sum_{m=2}^\infty \frac{2(2\pi i)^{2m}}{(2m-2)!}G_{2m}(\tau) z^{2m-2} \cr
&= \frac{1}{z^2} +
\sum_{m=1}^\infty \frac{2(2\pi i)^{2m+2}}{(2m)!}G_{2m+2}(\tau) z^{2m}.
\end{align*}

\begin{lemma}
\label{lem:wp}
Suppose that $x,y$ are commuting indeterminants. Then
\begin{multline*}
\frac{1}{2}\frac{xy}{x+y}
\bigg(
\big(\wp(x,\tau) - \frac{1}{x^2}\big)
-\big(\wp(y,\tau) - \frac{1}{y^2}\big)
\bigg)
\cr
=
\sum_{m\ge 1}\frac{(2\pi i)^{2m+2}}{(2m)!}G_{2m+2}(\tau)
\sum_{\substack{j+k=2m+1\cr j,k>0}} (-1)^{j}x^jy^k
\end{multline*}
in the ring $\O(\h)[[x,y]]$ of formal power series with coefficients in
$\O(\h)$. \qed
\end{lemma}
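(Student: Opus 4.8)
The plan is to reduce the entire statement to the standard Laurent expansion of $\wp$ recorded immediately above it, after which the identity becomes a routine manipulation of formal power series. Writing $c_m := \frac{2(2\pi i)^{2m+2}}{(2m)!}G_{2m+2}(\tau)$, that expansion reads $\wp(z,\tau) - z^{-2} = \sum_{m\ge 1} c_m z^{2m}$, so the difference on the left-hand side of the lemma is
$$
\big(\wp(x,\tau)-\tfrac{1}{x^2}\big) - \big(\wp(y,\tau)-\tfrac{1}{y^2}\big) = \sum_{m\ge 1} c_m\,(x^{2m} - y^{2m}).
$$
The crucial observation is that each $x^{2m}-y^{2m}$ is divisible by $x+y$: since $2m$ is even, substituting $x=-y$ annihilates it, so that
$$
\frac{x^{2m}-y^{2m}}{x+y} = \sum_{i=0}^{2m-1}(-1)^i\, x^{2m-1-i}y^i .
$$
This is the step that cancels the denominator $x+y$ and simultaneously confirms that the left-hand side genuinely lies in $\O(\h)[[x,y]]$, as claimed.

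Next I would multiply through by $\tfrac{1}{2}xy$, which sends the typical monomial $x^{2m-1-i}y^i$ to $x^{2m-i}y^{i+1}$. Reindexing by $j := 2m-i$ and $k := i+1$, the range $0\le i\le 2m-1$ corresponds bijectively to the index set $\{(j,k): j+k=2m+1,\ j,k>0\}$ appearing in the statement. The sign then matches: from $i=k-1$ and the fact that $j+k=2m+1$ is odd, one gets $(-1)^i=(-1)^{k-1}=(-1)^j$. Finally, absorbing the constant $\tfrac{1}{2}c_m = \frac{(2\pi i)^{2m+2}}{(2m)!}G_{2m+2}(\tau)$ in front of each inner sum reproduces the right-hand side verbatim.

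I do not anticipate a genuine obstacle here: there is no analytic content beyond the $\wp$-expansion already cited, and since everything takes place among formal power series with coefficients in $\O(\h)$ there are no convergence issues, so equality coefficient-by-coefficient suffices. The only point requiring care is the bookkeeping in the reindexing—verifying at once that the substitution $(i)\mapsto(j,k)$ is a bijection onto $\{j+k=2m+1,\ j,k>0\}$ and that the sign is $(-1)^j$ rather than $(-1)^k$. I expect this to be the only place one could slip, and it is settled by the parity computation above.
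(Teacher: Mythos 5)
Your proposal is correct and is exactly the argument the paper intends: the lemma is stated as an immediate consequence of the standard Laurent expansion $\wp(z,\tau)=z^{-2}+\sum_{m\ge 1}\frac{2(2\pi i)^{2m+2}}{(2m)!}G_{2m+2}(\tau)z^{2m}$, with the division of $x^{2m}-y^{2m}$ by $x+y$, multiplication by $\tfrac12 xy$, and the reindexing $(j,k)=(2m-i,\,i+1)$ with sign check $(-1)^i=(-1)^j$ left as the routine verification you carried out.
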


\subsection{The addition formula}

The following identity is used in the proof of the integrability of the elliptic
KZB connection.

\begin{proposition}[Addition Formula]
\label{prop:addition}
\begin{multline*}
F(\xi,\eta_1,\tau)\frac{\partial F}{\partial \eta}(\xi,\eta_2,\tau)
- F(\xi,\eta_2,\tau)\frac{\partial F}{\partial \eta}(\xi,\eta_1,\tau) \cr
= F(\xi,\eta_1+\eta_2,\tau)\big(\wp(\eta_1,\tau) - \wp(\eta_2,\tau)\big).
\end{multline*}
\end{proposition}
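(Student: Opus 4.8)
The plan is to fix $\tau\in\h$ and generic $\xi,\eta_2$, and to regard both sides of the identity as meromorphic functions of the single variable $\eta_1$ on the elliptic curve $E_\tau=\C/(\Z\oplus\Z\tau)$. I would show that the two sides are quasi-periodic of the same type, that they have identical poles with identical principal parts, and then finish with a Liouville-type argument. To record the quasi-periodicity, combine the elliptic property $F(\xi+m\tau+n,\eta,\tau)=e(-m\eta)F(\xi,\eta,\tau)$ with the symmetry $F(\xi,\eta,\tau)=F(\eta,\xi,\tau)$ to obtain
$$
F(\xi,\eta_1+m\tau+n,\tau)=e(-m\xi)\,F(\xi,\eta_1,\tau),
$$
and note that $\partial F/\partial\eta$ transforms by the same factor $e(-m\xi)$ after differentiating in $\eta_1$. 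Since $\wp(\cdot,\tau)$ is $(\Z\oplus\Z\tau)$-periodic, both sides pick up exactly the factor $e(-m\xi)$ under $\eta_1\mapsto\eta_1+m\tau+n$, so their difference $f(\eta_1)$ has period $1$ and satisfies $f(\eta_1+\tau)=e(-\xi)f(\eta_1)$.

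Next I would carry out the pole analysis. Using $F(\xi,\eta,\tau)=\eta^{-1}+(\text{holomorphic})$ near $\eta=0$ and the quasi-periodicity just established, on $E_\tau$ the left-hand side has poles only at $\eta_1\equiv 0$, with a double pole arising from the term $-F(\xi,\eta_2,\tau)\,\partial F/\partial\eta(\xi,\eta_1,\tau)$. On the right-hand side $\wp(\eta_1,\tau)$ contributes a double pole at $\eta_1\equiv 0$, while the apparent simple pole of $F(\xi,\eta_1+\eta_2,\tau)$ at $\eta_1\equiv -\eta_2$ is \emph{cancelled}, because $\wp(\eta_1,\tau)-\wp(\eta_2,\tau)$ vanishes there, $\wp$ being even. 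Hence both sides have exactly one pole on $E_\tau$, a double pole at $\eta_1\equiv 0$.

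Then I would match principal parts. Writing $A=\partial F/\partial\eta(\xi,\eta_2,\tau)$, $B=F(\xi,\eta_2,\tau)$, and expanding near $\eta_1=0$ via $F(\xi,\eta_1,\tau)=\eta_1^{-1}+g_0+g_1\eta_1+\cdots$, the left-hand side equals $B\eta_1^{-2}+A\eta_1^{-1}+O(1)$. On the right, from $\wp(\eta_1,\tau)=\eta_1^{-2}+O(\eta_1^2)$ and $F(\xi,\eta_1+\eta_2,\tau)=B+A\eta_1+O(\eta_1^2)$, one gets $B\eta_1^{-2}+A\eta_1^{-1}+O(1)$ as well. So the principal parts agree and $f(\eta_1)$ is entire. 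Being entire of period $1$, $f$ has a Fourier expansion $f(\eta_1)=\sum_n a_n e(n\eta_1)$, and $f(\eta_1+\tau)=e(-\xi)f(\eta_1)$ forces $a_n\bigl(q^n-e(-\xi)\bigr)=0$ with $q=e(\tau)$. For all $\xi$ outside the countable set where $e(-\xi)=q^n$ for some $n$, every $a_n$ vanishes, so $f\equiv 0$; since both sides are meromorphic in $\xi$, the identity extends to all $\xi$ by analytic continuation (and to all $\eta_2$ likewise).

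The hard part will be the bookkeeping in the pole analysis, and in particular the observation that the would-be pole of the right-hand side at $\eta_1\equiv -\eta_2$ is killed by the evenness of $\wp$, so that both sides genuinely have a single double pole at the origin. Once that is in place, the principal-part matching and the Fourier/Liouville finish are routine.
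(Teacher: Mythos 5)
Your proof is correct, but there is nothing in the paper to compare it against: the paper states Proposition~\ref{prop:addition} without proof (it is a classical property of Kronecker's function, and is invoked only once, in the integrability computation for the KZB form). Your argument is the standard elliptic-function proof and, as far as I can check, it is complete: both sides, viewed as functions of $\eta_1$, are quasi-periodic with the same factor $e(-m\xi)$ under $\eta_1\mapsto\eta_1+m\tau+n$ (this uses the symmetry $F(\xi,\eta,\tau)=F(\eta,\xi,\tau)$ together with the elliptic property, exactly as you say); the only potential extra pole of the right-hand side, at $\eta_1\equiv-\eta_2$, is cancelled because $\wp(\eta_1,\tau)-\wp(\eta_2,\tau)$ vanishes there by evenness of $\wp$; the principal parts at the remaining double pole $\eta_1\equiv 0$ agree, being $B\eta_1^{-2}+A\eta_1^{-1}$ on both sides in your notation; and the entire quasi-periodic difference then vanishes by the Fourier argument. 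Two small refinements. First, the exceptional set where $e(-\xi)=q^n$ is precisely the lattice $\Lambda_\tau=\Z\tau+\Z$ (since $e(-\xi)=e(n\tau)$ forces $\xi\in -n\tau+\Z$), and $F(\xi,\cdot,\tau)$ is singular for such $\xi$ anyway; so your conclusion already holds for every $\xi$ for which the identity makes sense, and the appeal to analytic continuation in $\xi$ can be dropped. Second, the only genericity needed of $\eta_2$ is $\eta_2\notin\Lambda_\tau$: at a $2$-torsion point the zero of $\wp(\eta_1,\tau)-\wp(\eta_2,\tau)$ at $\eta_1\equiv-\eta_2\equiv\eta_2$ is double rather than simple, which cancels the simple pole of $F(\xi,\eta_1+\eta_2,\tau)$ all the more, so the continuation in $\eta_2$ is likewise cosmetic. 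It would be worth recording a proof along these lines (or a reference to Zagier's paper, where such identities for $F$ are established via theta functions), since the paper leaves the statement unproved.
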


\section{The Universal Elliptic KZB Connection}
\label{sec:connection}

This is a $\G$-invariant flat connection constructed by Calaque, Enriquez and
Etingof \cite{cee} and by Levin and Racinet \cite{levin-racinet} on the bundle
\begin{equation}
\label{eqn:bdle}
\p \times \C \times \h \to \C \times \h.
\end{equation}
So it descends to a flat connection on the bundle $\bP \to \E'$. It has regular
singularities along the universal lattice:
$$
\Lambda_\h := \{(m\tau+n,\tau) \in \C\times \h\}.
$$
It therefore descends to a meromorphic connection on the bundle $\bP \to \E$
with regular singularities along the zero-section. In Section~\ref{sec:tate} we
show that the natural extension of this connection to the $q$-disk has regular
singularities along the nodal cubic.

In this section we will follow Levin-Racinet (with modifications).

\subsection{Derivations}

We have already explained the algebra homomorphism
$$
\C\ll \t,\a \rr \to
\End\p,\qquad f(\t,\a) \mapsto \{x \mapsto f(\t,\a)\cdot x\},
$$
where $f(\t,\a)\cdot x := f(\ad_\t,\ad_\a)(x)$. We will view $\p$ as a Lie
subalgebra of $\Der \p$ via the adjoint action $\ad : \p \to \Der \p$, which is
an inclusion as $\p$ has trivial center. Every derivation $\delta$ can be
written uniquely in the form
$$
\delta =
\delta(\a)\frac{\partial}{\partial \a} + \delta(\t)\frac{\partial}{\partial \t}.
$$
Consequently, there is a linear isomorphism
$$
\p \frac{\partial}{\partial \t} \oplus \p \frac{\partial}{\partial \a}
\overset{\simeq}{\longrightarrow} \Der \p.
$$

\subsection{The formula}

The connection is defined by a 1-form
$$
\w \in \Omega^1(\C\times\h,\log \Lambda)\hat{\otimes} \End \p.
$$
via the formula
$$
\nabla f = df + \w f
$$
where $f : \C\times \h \to \p$ is a (locally defined) section of
(\ref{eqn:bdle}). Specifically,
$$
\w = \frac{1}{2\pi i} d\tau\otimes \a\frac{\partial}{\partial \t} + \psi + \nu
$$
where
$$
\psi =
\sum_{m\ge 1}\bigg(\frac{(2\pi i)^{2m+1}}{(2m)!}G_{2m+2}(\tau)d\tau \otimes
\sum_{\substack{j+k=2m+1\cr j,k > 0}} (-1)^j[\ad_\t^j(\a),\ad_\t^k(\a)]
\frac{\partial}{\partial \a}\bigg)
$$
and
$$
\nu = \t F(\xi,\t,\tau)\cdot \a\,d\xi + \frac{1}{2\pi i}\bigg(
\frac{1}{\t} + \t\frac{\partial F}{\partial \t}(\xi,\t,\tau)
\bigg)\cdot \a\,d\tau.
$$
Note that each term takes values in $\Der \p$. Later we will show that its
restriction to a punctured first order neighbourhood of the identity section
takes values in a smaller subalgebra.

\begin{remark}
\label{rem:abelianization2}
Each term of the lower central series of $\bP$ is preserved by the connection.
The connection thus induces a connection on the bundle of abelianizations, which
is isomorphic to $\H$ (cf.\ Remark~\ref{rem:abelianization}).
Example~\ref{ex:connection_H} implies that this induced connection on $\H$ is
the natural connection.
\end{remark}

\subsection{Modularity}

Recall that $\G = \SL_2(\Z)\ltimes \Z^2$. In this section we shall prove:

\begin{proposition}
The universal elliptic KZB connection is $\SL_2(\Z)\ltimes \Z^2$-invariant. That
is,
$$
\gamma^\ast \w =
\Ad\big(\Mtilde_\gamma\big)\cdot\w - d\Mtilde_\gamma \Mtilde_\gamma^{-1}
$$
for all $\gamma \in \SL_2(\Z)\ltimes \Z^2$.
\end{proposition}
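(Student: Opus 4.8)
The plan is to verify the invariance equation $\gamma^\ast\w = \Ad(\Mtilde_\gamma)\cdot\w - d\Mtilde_\gamma\,\Mtilde_\gamma^{-1}$ separately for the two types of generators of $\G = \SL_2(\Z)\ltimes\Z^2$, since both sides are multiplicative in $\gamma$ (the left-hand side because pullback is functorial, the right-hand side by the cocycle property of factors of automorphy established earlier). Thus it suffices to check the relation on the lattice elements $(m,n)\in\Z^2$ and on matrices $\gamma\in\SL_2(\Z)$. Because $\w = \frac{1}{2\pi i}d\tau\otimes\a\frac{\partial}{\partial\t} + \psi + \nu$ is a sum of three pieces with rather different symmetry behaviour, I would handle each piece and each type of group element in turn, exploiting the transformation laws recorded earlier: the elliptic property and modularity property of $F$, the modular transformation of $G_2$ and $G_{2k}$, the heat equation, and Lemma~\ref{lem:exp_ad} for the $d\Mtilde_\gamma\,\Mtilde_\gamma^{-1}$ terms.

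\smallskip

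\noindent\textbf{Lattice translations.} For $(m,n)\in\Z^2$ the factor of automorphy is simply $e(-m\t)$, which is $\tau$- and $\xi$-independent, so $d\Mtilde_\gamma\,\Mtilde_\gamma^{-1}=0$ and the relation reduces to $\gamma^\ast\w = \Ad(e(-m\t))\cdot\w$, i.e.\ $e(-m\t)\circ\w\circ e(m\t)$. By identity~(\ref{eq:exp_ad}) this conjugation equals $e(-m\t)\cdot\w$ acting via the adjoint. First I would note that the $\frac{1}{2\pi i}d\tau\otimes\a\frac{\partial}{\partial\t}$ term and the $\psi$ term are built from $\t$ and brackets, and check how $\Ad(e(-m\t))$ moves them; the essential computation is for $\nu$, where the pullback $\xi\mapsto\xi+m\tau+n$ combines with the elliptic property $F(\xi+m\tau+n,\t,\tau)=e(-m\t)F(\xi,\t,\tau)$ to produce exactly the conjugation factor. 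The key is that the automorphy factor $e(-m\t)$ was engineered precisely so that the $e(-m\t)$ appearing from the elliptic property of $F$ matches the adjoint conjugation; I would verify the $d\tau$ component of $\nu$ similarly, using that $\partial F/\partial\t$ transforms the same way.

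\smallskip

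\noindent\textbf{Modular transformations.} This is where the real work lies, and I expect it to be the main obstacle. For $\gamma\in\SL_2(\Z)$ the factor $\Mtilde_\gamma(\xi,\tau)=M_\gamma(\tau)\circ e\!\big(\tfrac{c\xi\t}{c\tau+d}\big)$ is genuinely $\tau$- and $\xi$-dependent, so the inhomogeneous term $d\Mtilde_\gamma\,\Mtilde_\gamma^{-1}$ is nonzero and must be computed via Lemma~\ref{lem:exp_ad}. First I would compute $\gamma^\ast$ of each piece of $\w$ using $M_\gamma(\tau):\a\mapsto(c\tau+d)^{-1}\a+2\pi ic\,\t$, $\t\mapsto(c\tau+d)\t$, the modularity of $F$, and the transformation law of $G_2$ (whose anomalous $ic(c\tau+d)/4\pi$ term is the reason $G_2$ appears and must cancel against contributions from the $\frac{1}{2\pi i}d\tau\otimes\a\frac{\partial}{\partial\t}$ piece). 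The hard part is organizing the cancellation: the anomaly in $G_2$, the inhomogeneous derivative term $-d\Mtilde_\gamma\,\Mtilde_\gamma^{-1}$ coming from differentiating $e\!\big(\tfrac{c\xi\t}{c\tau+d}\big)$ and $M_\gamma(\tau)$, and the cross terms from $\Ad(M_\gamma(\tau))$ acting on $\a$ and $\t$ must all conspire to match. I would compute $d\Mtilde_\gamma\,\Mtilde_\gamma^{-1}$ explicitly as a sum of a $d\tau$-part and a $d\xi$-part, matching the $d\xi$-part against the transformed $\nu$ and the $d\tau$-part against the transformed $\frac{1}{2\pi i}\a\frac{\partial}{\partial\t}$, $\psi$ and $d\tau$-component of $\nu$; Lemma~\ref{lem:wp} and the $q$-expansion~(\ref{eqn:def_F}) of $F$ give the bridge between the $\psi$ term and the $\nu$ term. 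The anticipated difficulty is purely bookkeeping of these several competing contributions, especially tracking the $2\pi i$ factors and the $(c\tau+d)$ powers so that the $G_2$-anomaly cancels exactly.
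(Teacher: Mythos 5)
Your proposal follows the paper's proof in all essentials: the same reduction to checking invariance separately under $\Z^2$ and $\SL_2(\Z)$, the same tools (the elliptic and modularity properties of $F$, Lemma~\ref{lem:exp_ad}, and an explicit computation of $d\Mtilde_\gamma\,\Mtilde_\gamma^{-1}$ split into its $d\tau$ and $d\xi$ parts), and the same final bookkeeping of cancellations.

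Two of your predictions about \emph{where} the cancellations occur are misplaced, though neither derails the plan. First, under $\SL_2(\Z)$ the term $\psi$ is entirely self-contained: it involves only $G_{2m+2}$ with $m\ge 1$, which are genuine modular forms of weight $2m+2$, and $\Ad(\Mtilde_\gamma)$ multiplies the derivation $[\ad_\t^j(\a),\ad_\t^k(\a)]\frac{\partial}{\partial\a}$ (with $j+k=2m+1$) by exactly $(c\tau+d)^{2m}$, so $\gamma^\ast\psi=\Ad(\Mtilde_\gamma)\psi$ with no help from $\nu$. Consequently Lemma~\ref{lem:wp} and the expansion (\ref{eqn:def_F}) play no role here; they are needed only in the integrability proof. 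Likewise the anomalous transformation law of $G_2$ never enters explicitly: $G_2$ occurs only inside the Taylor coefficients of $F$, and the modularity property of $F$ (which you do invoke) already packages that anomaly; the quasi-modularity of $G_2$ surfaces only later, when the connection is restricted to $\M_{1,\vec{1}}$. Second, in the $\Z^2$ case the pieces do \emph{not} transform correctly one at a time: $\psi$ is invariant, but $\frac{1}{2\pi i}d\tau\otimes\a\frac{\partial}{\partial\t}$ acquires the anomaly $\frac{1}{2\pi i}\frac{1-e(-m\ad_\t)}{\ad_\t}(\a)\,d\tau$ under $\Ad(e(-m\t))$ (since its value on $\t$ is $\a\neq 0$), and this cancels precisely against the anomaly of $\nu$ produced by the elliptic property of $F$ together with $d(\xi+m\tau+n)=d\xi+m\,d\tau$. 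So in both cases the real content is a cross-cancellation among $\nu$, the $\a\frac{\partial}{\partial\t}$ term, and (for $\SL_2(\Z)$) $d\Mtilde_\gamma\,\Mtilde_\gamma^{-1}$ — exactly the computation your plan would force you to carry out, just not quite where you anticipate the difficulty to sit.
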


It suffices to check that the connection is invariant under $\Z^2$ and
$\SL_2(\Z)$. These are proved in the two following subsections.

\subsubsection{Ellipticity: invariance under $\Z^2$}

\begin{lemma}
For all $\delta \in \Der \p$ we have
$$
e(-m\t)\cdot \delta
= \delta + \frac{1-e(-m\ad_\t)}{\ad_\t}\delta(\t).
$$
\end{lemma}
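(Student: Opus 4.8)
The plan is to recognize the dot-action of $e(-m\t)$ on $\Der\p$ as conjugation by an inner automorphism and then read off the answer from Lemma~\ref{lem:exp_ad}. First I would set $u := -2\pi i m\,\t \in \p$. As an operator on $\p$, the element $e(-m\t)\in\C\ll\t,\a\rr$ acts by $f(\ad_\t,\ad_\a)$ with $f=e(-m\t)$, that is, by $\exp(-2\pi i m\,\ad_\t) = \exp(\ad_u)$. Taking $\phi=u$ in the identity recalled just before (\ref{eq:exp_ad}) (with $V=\p$), the dot-action on a derivation is genuine conjugation:
$$
e(-m\t)\cdot\delta = \exp(\ad_u)\circ\delta\circ\exp(-\ad_u).
$$
Since $\exp(\ad_u)$ acts on the enveloping algebra $\C\ll\t,\a\rr$ as conjugation by the group-like element $e^u$, and since $\delta$ extends uniquely to a continuous derivation of $\C\ll\t,\a\rr$, I can evaluate this conjugate on an arbitrary $x\in\C\ll\t,\a\rr$.

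Next I would expand using the Leibniz rule for $\delta$:
$$
e^u\,\delta\!\big(e^{-u}x\,e^u\big)\,e^{-u}
= \delta(x) + \big(e^u\delta(e^{-u})\big)\,x + x\,\big(\delta(e^u)e^{-u}\big).
$$
Differentiating $e^ue^{-u}=1$ gives $e^u\delta(e^{-u}) = -\delta(e^u)e^{-u}$, so writing $P := \delta(e^u)e^{-u}$ the right-hand side collapses to $\delta(x) + [x,P] = (\delta-\ad_P)(x)$. Hence $e(-m\t)\cdot\delta = \delta-\ad_P$ as derivations of $\p$, i.e.\ $\delta+\ad_v$ with $v=-P$, under the embedding $\ad:\p\hookrightarrow\Der\p$ (injective since $\p$ has trivial center). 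The second identity of Lemma~\ref{lem:exp_ad} evaluates $P = \delta(e^u)e^{-u} = \frac{\exp(\ad_u)-1}{\ad_u}\delta(u)$. Substituting $u=-2\pi i m\,\t$, so that $\ad_u = -2\pi i m\,\ad_\t$, $\exp(\ad_u)=e(-m\ad_\t)$ and $\delta(u)=-2\pi i m\,\delta(\t)$, the scalars $-2\pi i m$ cancel and
$$
v = -P = -\frac{e(-m\ad_\t)-1}{\ad_\t}\delta(\t) = \frac{1-e(-m\ad_\t)}{\ad_\t}\delta(\t),
$$
which is exactly the claimed formula.

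I expect the only real obstacle to be bookkeeping rather than any genuine difficulty: one must correctly unwind the module conventions (so that $e(-m\t)$ acting through $\ad$ makes its action on derivations true conjugation), track the scalar $-2\pi i m$, and keep the sign arising from $v=-P$. Should one prefer to avoid passing to the enveloping algebra, an equivalent route is to set $g(s)=\exp(s\,\ad_u)\circ\delta\circ\exp(-s\,\ad_u)$, observe $g'(s)=[\ad_u,g(s)]$ together with the derivation identity $[\ad_u,\delta]=-\ad_{\delta(u)}$, and solve the linear ODE $v'(s)=\ad_u(v(s))-\delta(u)$, $v(0)=0$, in the form $g(s)=\delta+\ad_{v(s)}$; evaluating at $s=1$ yields the same $v$. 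This mirrors the proof of Lemma~\ref{lem:exp_ad}.
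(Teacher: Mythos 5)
Your proof is correct and takes essentially the same route as the paper's: both identify the dot action $e(-m\t)\cdot\delta$ with conjugation via the identity preceding (\ref{eq:exp_ad}), expand by the Leibniz rule, and finish with Lemma~\ref{lem:exp_ad}. The only differences are bookkeeping — you conjugate inside the enveloping algebra and use the second identity of Lemma~\ref{lem:exp_ad} with $u=-2\pi i m\,\t$ (picking up the sign $v=-P$), whereas the paper applies the first identity with $u=2\pi i m\,\t$ directly to the adjoint action on $\p$.
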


\begin{proof}
For all $x \in \p$
\begin{align*}
\big(e(-m\t)\cdot \delta\big)(x)
&= e(-m\t)\delta \big(e(m\t)(x)\big)  &\text{(Equation~\ref{eq:exp_ad})} \cr
&= e(-m\t)\delta(e(m\t))(x) + e(-m\t)e(m\t)\delta(x) \cr
&= \delta(x) + \frac{1-e(-m\ad_\t)}{2\pi i m\ad_\t} \delta(2\pi i m\t)\cdot x
&\text{(Lemma~\ref{lem:exp_ad})}
\cr
&= \delta(x) + \frac{1-e(-m\ad_\t)}{\ad_\t} \delta(\t)\cdot x \cr
&= \bigg(\delta + \frac{1-e(-m\ad_\t)}{\ad_\t}\delta(\t)\bigg)(x).
\end{align*}
\end{proof}

\begin{corollary}
If $(m,n) \in \Z^2$, then
$$
(m,n)^\ast \bigg(\frac{1}{2\pi i}\a\frac{\partial}{\partial\t}d\tau\bigg)
- e(-m\t)\cdot \bigg(\frac{1}{2\pi i}\a\frac{\partial}{\partial\t}d\tau\bigg)
= -\frac{1}{2\pi i}\frac{1-e(-m\t)}{\t}(\a) d\tau.
$$
\end{corollary}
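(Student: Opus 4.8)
The plan is to reduce the identity to the Lemma immediately above. Write $\sigma := \frac{1}{2\pi i}\,\a\frac{\partial}{\partial \t}\,d\tau$ and let $\delta := \a\frac{\partial}{\partial \t} \in \Der\p$ be its (constant) coefficient, so that $\sigma = \frac{1}{2\pi i}\,\delta\,d\tau$. The first thing I would check is that the pullback term contributes nothing new: since $(m,n)$ acts on $\C\times\h$ by $(\xi,\tau)\mapsto(\xi+m\tau+n,\tau)$, we have $(m,n)^\ast d\tau = d\tau$, while the coefficient $\delta$ is independent of $(\xi,\tau)$. Hence $(m,n)^\ast\sigma = \sigma$, and the left-hand side of the corollary collapses to $\sigma - e(-m\t)\cdot\sigma$.

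Next I would apply the preceding Lemma to the derivation $\delta$. Since $\delta = \a\frac{\partial}{\partial\t}$ sends $\t\mapsto\a$ and $\a\mapsto 0$, we have $\delta(\t) = \a$, so the Lemma gives
$$
e(-m\t)\cdot\delta = \delta + \frac{1-e(-m\ad_\t)}{\ad_\t}(\a),
$$
where the second summand is the element $\frac{1-e(-m\ad_\t)}{\ad_\t}(\a)\in\p$ regarded as a derivation via $\ad$. Multiplying by $\frac{1}{2\pi i}\,d\tau$ and using the standard convention that $f(\t)(\a)$ denotes $f(\ad_\t)(\a)$, this reads
$$
e(-m\t)\cdot\sigma = \sigma + \frac{1}{2\pi i}\frac{1-e(-m\t)}{\t}(\a)\,d\tau.
$$

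Subtracting then yields $\sigma - e(-m\t)\cdot\sigma = -\frac{1}{2\pi i}\frac{1-e(-m\t)}{\t}(\a)\,d\tau$, which is precisely the asserted identity once combined with $(m,n)^\ast\sigma = \sigma$ from the first step. There is no real obstacle here beyond bookkeeping: the one point that requires care is recognizing that the $(m,n)$-pullback acts only on the scalar one-form $d\tau$ (hence trivially) while the endomorphism-valued coefficient $\delta$ is untouched, and then matching the operator notation $\frac{1-e(-m\t)}{\t}(\a)$ on the right-hand side with the output $\frac{1-e(-m\ad_\t)}{\ad_\t}(\a)$ of the Lemma.
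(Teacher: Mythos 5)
Your proof is correct and is exactly the paper's argument: the paper's proof is the single line ``Apply the previous lemma with $\delta = \a\frac{\partial}{\partial\t}$,'' and your write-up simply makes explicit the two bookkeeping points (that $(m,n)^\ast$ fixes $\frac{1}{2\pi i}\a\frac{\partial}{\partial\t}d\tau$ since $d\tau$ is translation-invariant and the coefficient is constant, and that $\delta(\t)=\a$) which the paper leaves implicit.
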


\begin{proof}
Apply the previous lemma with $\delta = \a\frac{\partial}{\partial\t}$.
\end{proof}

\begin{corollary}
If $a,b\in \N$, then
$$
e(-m\t)\cdot [\t^a\cdot \a,\t^b \cdot \a]\frac{\partial}{\partial \a}
= [\t^a\cdot \a,\t^b \cdot \a]\frac{\partial}{\partial \a}.
$$
\end{corollary}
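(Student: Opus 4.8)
The plan is to obtain the statement as an immediate specialization of the lemma just proved, applied to the derivation
$$
\delta = [\t^a\cdot \a,\t^b \cdot \a]\frac{\partial}{\partial \a} \in \Der\p.
$$
That lemma expresses $e(-m\t)\cdot\delta$ as $\delta + \frac{1-e(-m\ad_\t)}{\ad_\t}\delta(\t)$, so the entire content of the corollary reduces to showing that the correction term $\frac{1-e(-m\ad_\t)}{\ad_\t}\delta(\t)$ is zero for this particular $\delta$.

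First I would record the only fact that is needed: a derivation of the form $v\frac{\partial}{\partial\a}$, arising from the inclusion $\p\frac{\partial}{\partial\a}\to\Der\p$, sends $\a$ to $v$ and annihilates the generator $\t$. Taking $v = [\t^a\cdot\a,\t^b\cdot\a]\in\p$, this gives $\delta(\t)=0$, so the operator $\frac{1-e(-m\ad_\t)}{\ad_\t}$ is applied to $0$ and the correction term vanishes. Hence $e(-m\t)\cdot\delta = \delta$, which is exactly the assertion.

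There is no genuine obstacle here: the result is a one-line consequence of the preceding lemma, and the only thing to verify is $\delta(\t)=0$, which is immediate from the definition of the map $\p\frac{\partial}{\partial\a}\to\Der\p$. As a cross-check that avoids the lemma, one can argue directly from $e(-m\t)\cdot\delta = e(-m\t)\circ\delta\circ e(m\t)$ (Equation~(\ref{eq:exp_ad})): since $\delta(\t)=0$, the derivation $\delta$ commutes with each iterated bracket $\ad_\t^k$, hence with the automorphism $e(m\t)=\exp(2\pi i m\,\ad_\t)$, so the conjugation leaves $\delta$ fixed. This also shows, incidentally, that the identity holds for $v\frac{\partial}{\partial\a}$ with arbitrary $v\in\p$, the commutator form $[\t^a\cdot\a,\t^b\cdot\a]$ being merely the instance that occurs in the definition of $\psi$.
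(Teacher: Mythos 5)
Your proposal is correct and is essentially identical to the paper's own proof: both apply the preceding lemma to $\delta = [\t^a\cdot\a,\t^b\cdot\a]\frac{\partial}{\partial\a}$ and observe that $\delta(\t)=0$, so the correction term $\frac{1-e(-m\ad_\t)}{\ad_\t}\delta(\t)$ vanishes. The additional cross-check via conjugation by $e(m\t)$ is sound but not needed.
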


\begin{proof}
This follows directly from the previous lemma as the derivation
$$
[\t^a\cdot \a,\t^b \cdot \a]\frac{\partial}{\partial \a}
$$
annihilates $\t$.
\end{proof}

\begin{corollary}
If $(m,n)\in \Z^2$, then $(m,n)^\ast \psi = e(-m\t)\cdot \psi = \psi$. \qed

\end{corollary}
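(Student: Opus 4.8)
The plan is to establish the two asserted equalities $(m,n)^\ast\psi = \psi$ and $e(-m\t)\cdot\psi = \psi$ separately; together they verify the invariance condition for the $\psi$-term. Indeed, since $\Mtilde_{(m,n)} = e(-m\t)$ is constant on $\C\times\h$, we have $d\Mtilde_{(m,n)}\,\Mtilde_{(m,n)}^{-1} = 0$, so the required identity $\gamma^\ast\psi = \Ad(\Mtilde_\gamma)\cdot\psi - d\Mtilde_\gamma\,\Mtilde_\gamma^{-1}$ collapses to $(m,n)^\ast\psi = e(-m\t)\cdot\psi$, and the corollary pins both sides to $\psi$. The structural observation driving everything is that $\psi$ is a sum of terms
$$
\frac{(2\pi i)^{2\ell+1}}{(2\ell)!}G_{2\ell+2}(\tau)\,d\tau\otimes D_\ell,
\qquad
D_\ell := \sum_{\substack{j+k=2\ell+1\cr j,k>0}} (-1)^j[\ad_\t^j(\a),\ad_\t^k(\a)]\frac{\partial}{\partial\a},
$$
in which the scalar coefficient depends on $\tau$ only (through $G_{2\ell+2}$ and $d\tau$), while the derivation factor $D_\ell$ is a fixed element of $\Der\p$, independent of the point $(\xi,\tau)$.

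For the first equality I would simply note that the action of $(m,n)\in\Z^2$ on the base is $(\xi,\tau)\mapsto(\xi+m\tau+n,\tau)$, which fixes $\tau$ and hence $d\tau$, and that $\psi$ involves neither $\xi$ nor $d\xi$. Therefore the pullback acts as the identity on every summand, giving $(m,n)^\ast\psi = \psi$ at once.

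For the second equality, the action $e(-m\t)\cdot$ touches only the $\Der\p$-factor and leaves the scalar coefficients untouched. Each $D_\ell$ is a finite sum of derivations of the form $[\t^j\cdot\a,\t^k\cdot\a]\frac{\partial}{\partial\a}$ with $j,k>0$, which is precisely the shape treated in the previous corollary; that corollary gives $e(-m\t)\cdot[\t^j\cdot\a,\t^k\cdot\a]\frac{\partial}{\partial\a} = [\t^j\cdot\a,\t^k\cdot\a]\frac{\partial}{\partial\a}$, the point being that these derivations annihilate $\t$. Applying this to each summand yields $e(-m\t)\cdot D_\ell = D_\ell$, and hence $e(-m\t)\cdot\psi = \psi$ after summing over $\ell$.

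The only step requiring a word of care --- and the nearest thing to an obstacle --- is the passage from the termwise identity $e(-m\t)\cdot D_\ell = D_\ell$ to $e(-m\t)\cdot\psi = \psi$, since the defining sum over $\ell$ is infinite. This is justified by the continuity of the action $\C\ll\t,\a\rr\times\p\to\p$ (equivalently, of $f(\t,\a)\mapsto f(\ad_\t,\ad_\a)$) in the adic topology on $\Der\p$, together with the linearity of $e(-m\t)\cdot$ in its derivation argument; both are already in hand. Combining the two equalities gives $(m,n)^\ast\psi = \psi = e(-m\t)\cdot\psi$, as claimed.
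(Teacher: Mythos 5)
Your proof is correct and follows essentially the same route as the paper: $(m,n)^\ast\psi=\psi$ because $\psi$ involves only $\tau$ and $d\tau$ while the $\Z^2$-action fixes $\tau$, and $e(-m\t)\cdot\psi=\psi$ by applying the immediately preceding corollary (the derivations $[\ad_\t^j(\a),\ad_\t^k(\a)]\frac{\partial}{\partial\a}$ annihilate $\t$) term by term. The paper leaves exactly this argument implicit with its \qed; your extra remark on continuity for the infinite sum is a harmless refinement, not a different approach.
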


\begin{lemma}
For all $(m,n)\in\Z^2$
$$
(m,n)^\ast \nu - e(-m\t)\cdot \nu
= \frac{1}{2\pi i}\frac{1-e(-m\ad_\t)}{\ad_\t}(\a)d\tau.
$$
\end{lemma}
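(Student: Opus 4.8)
The plan is to split $\nu = u_1\,d\xi + u_2\,d\tau$, where $u_1 = \t F(\xi,\t,\tau)\cdot\a$ and $u_2 = \frac{1}{2\pi i}\big(\frac{1}{\t} + \t\frac{\partial F}{\partial\t}(\xi,\t,\tau)\big)\cdot\a$ are elements of $\p$ acting as derivations via $\ad$, and to compute $(m,n)^\ast\nu$ and $e(-m\t)\cdot\nu$ separately before subtracting. For the pullback I use that $(m,n)$ sends $(\xi,\tau)\mapsto(\xi+m\tau+n,\tau)$, so $d\xi\mapsto d\xi + m\,d\tau$, $d\tau\mapsto d\tau$, while each scalar coefficient gets its first argument shifted by $m\tau+n$. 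For $e(-m\t)\cdot\nu$ I use that the coefficients lie in $\ad(\p)$, so that conjugation by the automorphism $e(-m\t)$ of $\p$ (equivalently, the preceding Lemma) gives $e(-m\t)\cdot\ad_u = \ad_{e(-m\ad_\t)(u)}$; since $e(-m\ad_\t)$ commutes with every function of $\ad_\t$, this simply replaces each coefficient by $e(-m\ad_\t)$ applied to it.

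The two analytic inputs are the elliptic property $F(\xi+m\tau+n,\eta,\tau)=e(-m\eta)F(\xi,\eta,\tau)$ and its $\eta$-derivative, $\frac{\partial F}{\partial\eta}(\xi+m\tau+n,\eta,\tau)=e(-m\eta)\big(\frac{\partial F}{\partial\eta}(\xi,\eta,\tau) - 2\pi i\,m\,F(\xi,\eta,\tau)\big)$, after which I substitute $\eta=\ad_\t$ throughout. The first identity shows that the pullback of $u_1$ equals $\Phi := \t\,e(-m\t)F(\xi,\t,\tau)\cdot\a$, which is also exactly $e(-m\ad_\t)(u_1)$; hence the $d\xi$-coefficients of $(m,n)^\ast\nu$ and of $e(-m\t)\cdot\nu$ agree and cancel, leaving only the extra term $m\Phi\,d\tau$ produced by $d\xi\mapsto d\xi+m\,d\tau$. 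The second identity shows that the pullback of $u_2$ equals $\frac{1}{2\pi i}\big(\frac{1}{\t}+\t\,e(-m\t)\frac{\partial F}{\partial\t}\big)\cdot\a - m\Phi$, whereas $e(-m\ad_\t)(u_2) = \frac{1}{2\pi i}\big(\frac{e(-m\t)}{\t}+\t\,e(-m\t)\frac{\partial F}{\partial\t}\big)\cdot\a$.

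Subtracting the $d\tau$-coefficients, the $\t\,e(-m\t)\frac{\partial F}{\partial\t}$ terms cancel and what remains is $\frac{1}{2\pi i}\frac{1-e(-m\ad_\t)}{\ad_\t}(\a) - m\Phi$; adding back the $m\Phi\,d\tau$ from the $d\xi$-term then kills the $m\Phi$ exactly and yields $\frac{1}{2\pi i}\frac{1-e(-m\ad_\t)}{\ad_\t}(\a)\,d\tau$, as claimed, while the $d\xi$-coefficient of the difference vanishes. Two points deserve care. First, the operators $\frac{1}{\ad_\t}$ appearing in $u_2$ and in $e(-m\ad_\t)(u_2)$ are individually singular at $\ad_\t=0$, so I must recombine them into the manifestly regular operator $\frac{1-e(-m\ad_\t)}{\ad_\t}$ before evaluating on $\a$; this is legitimate because the combination $\frac{1}{\eta}+\eta\frac{\partial F}{\partial\eta}$ is holomorphic at $\eta=0$. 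Second, and this is the crux, the $-2\pi i\,m\,F$ term produced by differentiating the elliptic law is precisely what, after the $\frac{1}{2\pi i}$ normalization, supplies the $-m\Phi$ needed to cancel the $+m\Phi$ coming from the transformation of $d\xi$; matching these two $m$-dependent contributions in sign and coefficient is the only real obstacle, the remainder being bookkeeping.
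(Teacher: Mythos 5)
Your proposal is correct and follows essentially the same route as the paper: the same splitting of $\nu$ into its $d\xi$- and $d\tau$-parts, the elliptic property of $F$ and its $\eta$-derivative as the two analytic inputs, and the same cancellation of the two $m$-dependent terms $m\,\t\,e(-m\t)F(\xi,\t,\tau)\cdot\a\,d\tau$ (one from $d\xi\mapsto d\xi+m\,d\tau$, the other from differentiating the elliptic law), leaving $\frac{1}{2\pi i}\frac{1-e(-m\ad_\t)}{\ad_\t}(\a)\,d\tau$. Your explicit remarks on why $e(-m\t)\cdot\ad_u=\ad_{e(-m\ad_\t)(u)}$ and on recombining the singular operators into the regular $\frac{1-e(-m\ad_\t)}{\ad_\t}$ are points the paper leaves implicit, but the argument is the same.
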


\begin{proof}
Write $\nu = \nu_1 + \nu_2$, where
$$
\nu_1 = \t F(\xi,\t,\tau)\cdot \a\,d\xi
\text{ and }
\nu_2 = \frac{1}{2\pi i}\bigg(
\frac{1}{\t} + \t\frac{\partial F}{\partial\t}(\xi,\t,\tau)
\bigg)\cdot \a\,d\tau.
$$
Then
\begin{align*}
&\phantom{=} (m,n)^\ast\nu_1 - e(-m\t)\cdot \nu_1 \cr
&= \t F(\xi+m\tau+n,\t)\cdot\a\,d(\xi+m\tau+n)-\t e(-mt)F(\xi,\t)\cdot\a
\,d\xi \cr
&= \t e(-m\t)F(\xi,\t)\cdot\a\,(d\xi+md\tau)
-\t e(-m\t)F(\xi,\t)\cdot\a\,d\xi \cr
&= m\t e(-m\t)F(\xi,\t)\cdot\a\,d\tau.
\end{align*}
Note that
\begin{align*}
\frac{\partial F}{\partial\t}(\xi+m\tau+n,\t)
&= \frac{\partial}{\partial\t} \big(e(-m\t) F(\xi,\t)\big) \cr
&= e(-m\t) \frac{\partial F}{\partial\t}(\xi,\t) -2\pi i m e(-m\t) F(\xi,\t).
\end{align*}
Thus
\begin{align*}
&\phantom{=} 2 \pi i\big((m,n)^\ast \nu_2 - e(-m\t)\cdot \nu_2\big) \cr
&=\bigg(
\frac{1}{\t}+\t\frac{\partial F}{\partial\t}(\xi+m\tau+n,\t,\tau)
\bigg)\cdot\a\, d\tau
-e(-m\t)\bigg(
\frac{1}{\t}+\t\frac{\partial F}{\partial\t}(\xi,\t,\tau)\bigg)\cdot\a\,d\tau \cr
&= -2\pi i m \t e(-m\t) F(\xi,\t)\cdot\a\,d\tau
+ \frac{1}{\t}\big(1-e(-m\t)\big)\cdot\a\,d\tau.
\end{align*}
\end{proof}

If $(m,n)\in \Z^2$, then the results above imply that
$$
(m,n)^\ast\w = e(-m\t)\cdot\w(\xi,\tau).
$$
Since $e(-m\t)$ does not depend on $(\xi,\tau)$, $de(-m\t) = 0$ and $\w$ is
invariant under $\Z^2$.

\subsubsection{Modularity: invariance under $\SL_2(\Z)$}
Let
$$
\gamma = \begin{pmatrix}a & b \cr c & d \end{pmatrix} \in \SL_2(\Z).
$$
Recall that $M_\gamma(\tau)$ is defined by
\begin{align}
\label{eqn:automorphy}
\a &\mapsto (c\tau + d)^{-1}\a + 2\pi i c\t \cr
\t &\mapsto (c\tau+d)\t.
\end{align}
Its inverse is the linear map
\begin{align}
\a &\mapsto (c\tau + d)\a - 2\pi i c\t \cr
\t &\mapsto (c\tau+d)^{-1}\t.
\end{align}

\begin{lemma}
If $\gamma \in SL_2(\Z)$ and $a,b\in\N$, then
$$
e(c\xi\t/(c\tau+d))\cdot [\ad_\t^a(\a),\ad_\t^b(\a)]\frac{\partial}{\partial \a}
= [\ad_\t^a(\a),\ad_\t^b(\a)]\frac{\partial}{\partial\a}.
$$
\end{lemma}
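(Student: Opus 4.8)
The plan is to recognise this lemma as the exact $\SL_2(\Z)$-analogue of the corollary already proved in the subsection on invariance under $\Z^2$, the only change being that the integer $-m$ multiplying $\t$ is replaced by the scalar $\mu := c\xi/(c\tau+d)$. The first step is to note that
$$
e(c\xi\t/(c\tau+d)) = \exp\big(2\pi i\,\mu\,\t\big)
$$
is a power series in $\t$ alone, with scalar coefficients, and hence is exactly an element of the type $e(\mu\t)$ to which the earlier computation applies.

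Next I would record, for an arbitrary scalar $\mu$, the analogue of the $\Z^2$-section lemma that computes the action of $e(\mu\t)$ on a derivation $\delta\in\Der\p$. By Equation~(\ref{eq:exp_ad}) this action is the conjugation $e(\mu\t)\circ\delta\circ e(-\mu\t)$, and expanding it precisely as in that lemma — applying Lemma~\ref{lem:exp_ad} with $u = 2\pi i\,\mu\,\t$ — yields
$$
e(\mu\t)\cdot\delta = \delta + \frac{1-e(\mu\ad_\t)}{\ad_\t}\,\delta(\t),
$$
in which the correction term is an operator in $\ad_\t$ applied to $\delta(\t)$. The point to stress is that this derivation uses nothing about $m$ beyond its being a scalar independent of $\t$ and $\a$; it is therefore valid verbatim with $-m$ replaced by $\mu = c\xi/(c\tau+d)$.

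Finally I would apply this with $\delta = [\ad_\t^a(\a),\ad_\t^b(\a)]\frac{\partial}{\partial\a}$. Since $\delta$ has the form $u\,\frac{\partial}{\partial\a}$ with $u=[\ad_\t^a(\a),\ad_\t^b(\a)]\in\p$, it sends $\a\mapsto u$ and $\t\mapsto 0$, so $\delta(\t)=0$ and the correction term vanishes. Hence $e(c\xi\t/(c\tau+d))\cdot\delta = \delta$, as claimed. I do not expect a genuine obstacle here: the whole content is the bookkeeping observation that the $\Z^2$-lemma is insensitive to the arithmetic nature of the coefficient of $\t$, combined with the fact that any $\frac{\partial}{\partial\a}$-type derivation annihilates $\t$. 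This is exactly why the argument is word-for-word the same as that of the corresponding $\Z^2$ corollary.
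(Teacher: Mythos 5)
Your proof is correct and is essentially the paper's argument: the paper likewise disposes of this lemma by invoking Equation~(\ref{eq:exp_ad}) and the observation that the derivation $[\ad_\t^a(\a),\ad_\t^b(\a)]\frac{\partial}{\partial\a}$ annihilates $\t$, exactly as in the parallel $\Z^2$ corollary whose proof you transplant. Noting that the coefficient $c\xi/(c\tau+d)$ need only be a scalar, so the correction term $\frac{1-e(\mu\ad_\t)}{\ad_\t}\delta(\t)$ vanishes when $\delta(\t)=0$, is precisely the intended reasoning.
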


\begin{proof}
This follows from (\ref{eq:exp_ad}) as the derivation $\delta =
[\ad_\t^a(A),\ad_\t^b(A)]\frac{\partial}{\partial \a}$ vanishes on $\t$.
\end{proof}

\begin{lemma}
If $\gamma \in \SL_2(\Z)$ and $a,b\in \N$, then
$$
\Ad(M_\gamma(\tau)) [\ad_\t^a(A),\ad_\t^b(\a)]\frac{\partial}{\partial\a}
= (c\tau+d)^{a+b-1}[\ad_\t^a(A),\ad_\t^b(A)]\frac{\partial}{\partial\a}.
$$
\end{lemma}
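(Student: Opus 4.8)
The plan is to compute the conjugation $\Ad(M_\gamma(\tau))(D) = M_\gamma(\tau)\circ D\circ M_\gamma(\tau)^{-1}$ directly, where $D := [\ad_\t^a(\a),\ad_\t^b(\a)]\frac{\partial}{\partial\a}$. Since $\Der\p \cong \p\frac{\partial}{\partial\t}\oplus\p\frac{\partial}{\partial\a}$, a derivation is determined by its values on the two generators $\t$ and $\a$, so it suffices to evaluate $\Ad(M_\gamma(\tau))(D)$ on $\t$ and on $\a$. Writing $M := M_\gamma(\tau)$ and $u := c\tau+d$ throughout, I would first read off from the explicit inverse given just before the statement that $M^{-1}(\t) = u^{-1}\t$ and $M^{-1}(\a) = u\,\a - 2\pi i c\,\t$.

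Since $D$ annihilates $\t$ and sends $\a \mapsto w$, where $w := [\ad_\t^a(\a),\ad_\t^b(\a)]$, applying $D$ to the two expressions above gives $D(M^{-1}(\t)) = 0$ and $D(M^{-1}(\a)) = u\,w$ (the $\t$-term is killed by $D$). Applying $M$ then yields $\Ad(M)(D)(\t) = 0$ and $\Ad(M)(D)(\a) = u\,M(w)$, so that $\Ad(M)(D) = \big(u\,M(w)\big)\frac{\partial}{\partial\a}$. At this point the whole problem reduces to computing $M(w)$, i.e.\ to understanding how $M$ transforms the iterated bracket $\ad_\t^a(\a)$.

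The key step is this last computation. Because $M$ is a Lie algebra automorphism, it intertwines $\ad$ with the adjoint action on its image, $M\circ\ad_x = \ad_{M(x)}\circ M$; combined with $M(\t)=u\,\t$, which gives $\ad_{M(\t)} = u\,\ad_\t$, this yields $M(\ad_\t^a(\a)) = u^a\,\ad_\t^a(M(\a))$. Expanding $M(\a) = u^{-1}\a + 2\pi i c\,\t$ and using the vanishing $\ad_\t^a(\t)=0$ (valid for $a\ge 1$, which is the relevant range) makes the cross-term drop out, leaving $M(\ad_\t^a(\a)) = u^{a-1}\ad_\t^a(\a)$. Applying $M$ to the bracket $w$ and using this on both factors gives $M(w) = u^{a+b-2}\,w$, and combining with the leftover factor $u$ from the previous paragraph produces $\Ad(M)(D) = u^{a+b-1}\,w\,\frac{\partial}{\partial\a}$, which is exactly the claimed identity.

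The main obstacle is precisely the bookkeeping in the final step: one must commute $M$ past $\ad_\t$ via the automorphism property and then exploit $\ad_\t^a(\t)=0$ to annihilate the term $2\pi i c\,\t$ coming from $M(\a)$. It is this vanishing that collapses what could be a messy expression into the clean power $u^{a+b-2}$, and it is also the reason the hypothesis $a,b\ge 1$ is needed.
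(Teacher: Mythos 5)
Your proof is correct and follows essentially the same route as the paper's: both conjugate $D$ by $M_\gamma(\tau)$, observe that $D\circ M_\gamma(\tau)^{-1}$ kills $\t$ so the result is of the form $f(\t,\a)\frac{\partial}{\partial\a}$, and then compute the coefficient by expanding $M_\gamma(\tau)\big([\ad_\t^a(\a),\ad_\t^b(\a)]\big)$ using the automorphism property and the vanishing $\ad_\t^a(\t)=0$. Your explicit remark that this vanishing requires $a,b\ge 1$ is a worthwhile observation that the paper leaves implicit.
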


\begin{proof}
Set $\delta = [\ad_\t^a(\a),\ad_\t^b(\a)]\frac{\partial}{\partial\a}$. Since
$M_\gamma(\tau)^{-1}(\t) = (c\tau + d)^{-1}\t$, $\delta\circ
M_\gamma^{-1}(\t)=0$. Consequently, $\Ad(M_\gamma(\tau))\delta$ is of the form
$f(\t,\a)\frac{\partial}{\partial\a}$. The coefficient $f(\t,\a)$ is computed as
follows:
\begin{align*}
&\phantom{=}\Ad(M_\gamma(\tau))\delta(\a) \cr
&= M_\gamma(\tau)\circ\delta\circ M_\gamma(\tau)^{-1}(\a) \cr
&= M_\gamma(\tau)\circ \delta \big((c\tau+d)\a -2\pi i c\t\big)\cr
&= (c\tau+d) M_\gamma(\tau)\big([\ad_\t^a(\a),\ad_\t^b(\a)]\big)\cr
&= (c\tau+d)^{a+b+1}[\ad_\t^a\big((c\tau+d)^{-1}\a+2\pi i c\t\big),
\ad_\t^b\big((c\tau+d)^{-1}\a+2\pi i c\t\big)] \cr
&= (c\tau+d)^{a+b-1}[\ad_\t^a(\a),\ad_\t^b(\a)].
\end{align*}
\end{proof}

\begin{corollary}
If $\gamma \in \SL_2(\Z)$, then $\gamma^\ast \psi = \Ad(\Mtilde_\gamma)\psi$.
\end{corollary}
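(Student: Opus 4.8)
The plan is to reduce the claim to the two lemmas just proved by exploiting the factorization $\Mtilde_\gamma(\xi,\tau) = M_\gamma(\tau)\circ e\big(c\xi\t/(c\tau+d)\big)$ from (\ref{eqn:M_tilde}), together with the fact that $\psi$ is a sum of terms of pure modular weight. First I would write $\psi = \sum_{m\ge 1} c_m(\tau)\,d\tau\otimes D_m$, where $c_m(\tau) = \frac{(2\pi i)^{2m+1}}{(2m)!}G_{2m+2}(\tau)$ and $D_m = \sum_{j+k=2m+1,\,j,k>0}(-1)^j[\ad_\t^j(\a),\ad_\t^k(\a)]\frac{\partial}{\partial\a}$ is a fixed (i.e.\ $(\xi,\tau)$-independent) derivation of $\p$. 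Since $\Ad$ is a homomorphism and, by (\ref{eq:exp_ad}), the action $e(\phi)\cdot(-)$ coincides with $\Ad(e(\phi))$, one has $\Ad(\Mtilde_\gamma) = \Ad(M_\gamma(\tau))\circ\Ad\big(e(c\xi\t/(c\tau+d))\big)$. It therefore suffices to track the effect of each factor on each summand $D_m$ and to compare with the effect of $\gamma^\ast$ on the scalar $c_m(\tau)\,d\tau$.

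For the $\Ad$ side, the first lemma above shows that the exponential automorphy factor fixes each bracket derivation, i.e.\ $\Ad\big(e(c\xi\t/(c\tau+d))\big)D_m = D_m$, since those derivations annihilate $\t$. The second lemma then gives $\Ad(M_\gamma(\tau))[\ad_\t^j(\a),\ad_\t^k(\a)]\frac{\partial}{\partial\a} = (c\tau+d)^{j+k-1}[\ad_\t^j(\a),\ad_\t^k(\a)]\frac{\partial}{\partial\a}$; as every bracket in $D_m$ has $j+k=2m+1$, this says $\Ad(M_\gamma(\tau))D_m = (c\tau+d)^{2m}D_m$. Hence $\Ad(\Mtilde_\gamma)\psi = \sum_{m\ge 1}(c\tau+d)^{2m}c_m(\tau)\,d\tau\otimes D_m$.

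For the $\gamma^\ast$ side, note that $D_m$ is constant in the base variables, so $\gamma^\ast$ acts only on $c_m(\tau)\,d\tau$; and since $\psi$ carries no $d\xi$ component, the $\xi$-coordinate plays no role. The one point requiring care is the modular weight: since $m\ge 1$ forces $2m+2\ge 4$, each $G_{2m+2}$ is a genuine modular form (with no $G_2$-type anomaly), so $G_{2m+2}(\gamma\tau) = (c\tau+d)^{2m+2}G_{2m+2}(\tau)$. Combining this with $\gamma^\ast d\tau = d(\gamma\tau) = (c\tau+d)^{-2}\,d\tau$ (using $ad-bc=1$) gives $\gamma^\ast(c_m(\tau)\,d\tau) = (c\tau+d)^{2m}c_m(\tau)\,d\tau$, so that $\gamma^\ast\psi = \sum_{m\ge 1}(c\tau+d)^{2m}c_m(\tau)\,d\tau\otimes D_m$, matching the previous expression term by term.

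The computation is thus essentially bookkeeping, and the main thing to watch is that the two independent sources of the factor $(c\tau+d)^{2m}$ agree exactly for $j+k=2m+1$: on one side the weight-$(2m+2)$ modularity of $G_{2m+2}$ combined with the $(c\tau+d)^{-2}$ from pulling back $d\tau$, and on the other the $(c\tau+d)^{j+k-1}$ scaling of the bracket derivations. The only genuine input beyond the two lemmas is that $G_{2m+2}$ is truly modular, which is precisely why $\psi$ is indexed from $m\ge 1$ rather than $m\ge 0$.
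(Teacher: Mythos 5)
Your proof is correct and is essentially the paper's own argument, written out in more detail: both $\gamma^\ast$ and $\Ad(\Mtilde_\gamma)$ multiply each summand $G_{2m+2}(\tau)\,d\tau\otimes D_m$ of $\psi$ by $(c\tau+d)^{2m}$ --- on the $\Ad$ side via the two preceding lemmas (the exponential factor fixes $D_m$ because it kills $\t$, and $\Ad(M_\gamma(\tau))$ scales it by $(c\tau+d)^{j+k-1}$ with $j+k=2m+1$), and on the pullback side via the weight-$(2m+2)$ modularity of $G_{2m+2}$ combined with $\gamma^\ast d\tau=(c\tau+d)^{-2}d\tau$. The paper states exactly this matching of factors in one sentence; your version merely makes the bookkeeping explicit.
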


\begin{proof}
This follows as, for each $k\ge 1$, the expression
$$
G_{2k+2}(\tau)d\tau \otimes \sum_{\substack{a+b=2k+1\cr a,b>0}}
[\ad_\t^a(\a),\ad_\t^b(\a)]\frac{\partial}{\partial\a}
$$
is multiplied by $(c\tau+d)^{2k}$ by both $\gamma^\ast$ and
$\Mtilde_\gamma(\xi,\tau)$.
\end{proof}

\begin{lemma}
Set $\nu_1 = \t F(\xi,\t,\tau)\cdot \a d\xi$. Then
$$
\gamma^\ast \nu_1 - \Mtilde_\gamma(\xi,\tau) \nu_1
= - 2\pi i c\t\,d\xi
- \frac{c\xi \t}{c\tau + d}e(c\xi\t)F(\xi,(c\tau+d)\t,\tau)\cdot \a\, d\tau.
$$
\end{lemma}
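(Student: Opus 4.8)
The plan is to compute the two terms $\gamma^\ast\nu_1$ and $\Mtilde_\gamma\nu_1 := \Ad(\Mtilde_\gamma)\nu_1$ separately and then subtract. Throughout I regard $\nu_1 = \ad_Y\,d\xi$ as an inner-derivation-valued $1$-form, where $Y := \t F(\xi,\t,\tau)\cdot\a \in \p$. The key structural fact I will use is that $\t F(\xi,\t,\tau)$ is a genuine power series in $\t$ whose constant term equals $1$, since the simple pole of $F$ in its second variable cancels the leading factor of $\t$.

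First I would compute $\gamma^\ast\nu_1$. As $\gamma:(\xi,\tau)\mapsto\big(\xi/(c\tau+d),\gamma\tau\big)$, the chain rule gives
$$
\gamma^\ast(d\xi) = d\Big(\frac{\xi}{c\tau+d}\Big)
= \frac{d\xi}{c\tau+d} - \frac{c\xi\,d\tau}{(c\tau+d)^2},
$$
while the modularity property of $F$, applied with second variable $\eta = (c\tau+d)\t$, yields
$$
F\Big(\frac{\xi}{c\tau+d},\t,\gamma\tau\Big)
= (c\tau+d)\,e(c\xi\t)\,F\big(\xi,(c\tau+d)\t,\tau\big).
$$
Hence $\gamma^\ast Y = (c\tau+d)\,e(c\xi\t)\,\t F(\xi,(c\tau+d)\t,\tau)\cdot\a$, and assembling $\gamma^\ast\nu_1 = \ad_{\gamma^\ast Y}\,\gamma^\ast(d\xi)$ splits it into a $d\xi$-part equal to $\ad_{e(c\xi\t)\t F(\xi,(c\tau+d)\t,\tau)\cdot\a}\,d\xi$ and a $d\tau$-part equal to $-\tfrac{c\xi}{c\tau+d}\ad_{e(c\xi\t)\t F(\xi,(c\tau+d)\t,\tau)\cdot\a}\,d\tau$. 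Since $\Mtilde_\gamma\nu_1$ will carry no $d\tau$, this $d\tau$-part is already the $d\tau$-term on the right-hand side of the lemma.

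Next I would compute $\Mtilde_\gamma\nu_1$. Because $\Mtilde_\gamma = M_\gamma(\tau)\circ e\big(c\xi\t/(c\tau+d)\big)$ is an automorphism of $\p$ (see \eqref{eqn:M_tilde}), conjugating an inner derivation gives $\Ad(\Mtilde_\gamma)\ad_Y = \ad_{\Mtilde_\gamma\cdot Y}$, so it remains to evaluate $\Mtilde_\gamma\cdot Y$. Applying $e(c\xi\t/(c\tau+d))$ first commutes past the $\ad_\t$-series defining $Y$ and replaces $\a$ by $e\big(c\xi\t/(c\tau+d)\big)\cdot\a$; then applying $M_\gamma(\tau)$ — which by \eqref{eqn:automorphy} sends $\t\mapsto(c\tau+d)\t$ and $\a\mapsto(c\tau+d)^{-1}\a + 2\pi i c\t$ — and regrouping by the power of $\ad_\t$ produces
$$
\Mtilde_\gamma\cdot Y = 2\pi i c\t + e(c\xi\t)\,\t F\big(\xi,(c\tau+d)\t,\tau\big)\cdot\a .
$$
Subtracting is now immediate: in the $d\xi$-direction the two copies of $e(c\xi\t)\t F(\xi,(c\tau+d)\t,\tau)\cdot\a$ cancel, leaving $-\ad_{2\pi i c\t} = -2\pi i c\t$, and the $d\tau$-direction is supplied entirely by $\gamma^\ast\nu_1$, giving exactly the two terms in the statement.

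I expect the main obstacle to be the evaluation of $\Mtilde_\gamma\cdot Y$, and specifically the bookkeeping of its constant term. Every term $\ad_\t^n(\a)$ with $n\ge 1$ transforms homogeneously under $M_\gamma(\tau)$, scaling by $(c\tau+d)^{n-1}$, and these reassemble cleanly into $e(c\xi\t)\,\t F(\xi,(c\tau+d)\t,\tau)\cdot\a$; but the $n=0$ term, namely the coefficient $1$ coming from the pole of $F$, is sent to $(c\tau+d)^{-1}\a + 2\pi i c\t$ rather than scaling homogeneously. It is precisely this anomalous summand $2\pi i c\t$ that fails to cancel and produces the $-2\pi i c\t\,d\xi$ on the right-hand side. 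Some care is also needed to keep the two conventions straight — the adjoint action denoted ``$\cdot$'' and the substitution of $(c\tau+d)\t$ into the argument of $F$ — but this is routine once the constant-term contribution has been isolated.
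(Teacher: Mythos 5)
Your proposal is correct and follows essentially the same route as the paper's proof: both compute $\Mtilde_\gamma\cdot\big(\t F(\xi,\t,\tau)\cdot\a\big)$ using the factor of automorphy (with the constant term of $\t F$, coming from the pole of $F$, producing the anomalous $2\pi i c\t$), and compute $\gamma^\ast\nu_1$ via the modularity of $F$ together with $\gamma^\ast d\xi = d\xi/(c\tau+d) - c\xi\,d\tau/(c\tau+d)^2$, then compare. The only difference is the order of the two computations, which is immaterial.
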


\begin{proof}
First,
\begin{align*}
\Mtilde_\gamma(\xi,\tau)\nu_1
&= \Mtilde_\gamma(\xi,\tau)\big[\t F(\xi,\t,\tau)\cdot\a\big]d\xi\cr
&= e(c\xi\t)
(c\tau+d)\t F(\xi,(c\tau+d)\t,\tau)
\cdot\big((c\tau+d)^{-1}\a+2\pi ic\t\big)d\xi\cr
&= e(c\xi\t)\t F(\xi,(c\tau + d)\t,\tau)\cdot\a\, d\xi + 2\pi i c\t\,d\xi.
\end{align*}
as the value of $\t F(\xi,(c\tau+d)\t,\tau)$ at $\t=0$ is $(c\tau+d)^{-1}$. This
and the modular property of $F(\xi,\t,\tau)$ then yield:
\begin{align*}
\gamma^\ast \nu_1 &= \t F(\xi/(c\tau+d),\t,\gamma\tau)
\cdot\a \gamma^\ast d\xi \cr
&= (c\tau+d)\t e(c\xi\t)F(\xi,(c\tau+d)\t,\tau)\cdot\a
\bigg(\frac{d\xi}{c\tau+d}-\frac{c\xi d\tau}{(c\tau+d)^2}\bigg) \cr
&= \t e(c\xi\t)F(\xi,(c\tau+d)\t,\tau)\cdot\a
\bigg(d\xi-\frac{c\xi d\tau}{c\tau+d}\bigg) \cr
&= \Mtilde_\gamma(\xi,\tau) \nu_1 - 2\pi i c\t\,d\xi
- \frac{c\xi\t}{c\tau + d}e(c\xi\t)F(\xi,(c\tau+d)\t,\tau)\cdot\a\, d\tau.
\end{align*}
\end{proof}

As a special case of the general formula, we have:

\begin{lemma}
In $\Der \p$ we have:
$$
e(c\xi \ad_\t)\bigg(\frac{1}{2\pi i}\a \frac{\partial}{\partial\t}\bigg)
= \frac{1}{2\pi i}\a \frac{\partial}{\partial\t} +
\frac{1}{2\pi i}\frac{1-e(c\xi\t)}{\t}\cdot\a.
$$
\end{lemma}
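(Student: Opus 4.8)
The plan is to recognize this identity as a special case of the ``For all $\delta \in \Der\p$'' lemma proved above, applied to the single derivation $\delta = \frac{1}{2\pi i}\a\frac{\partial}{\partial\t}$. The first observation is that although that lemma is stated for the operator $e(-m\t)$ with $m$ an integer, its proof invokes only Equation~(\ref{eq:exp_ad}) and Lemma~\ref{lem:exp_ad}, both of which are purely formal identities in $\C\ll\t,\a\rr$ and $\Der\p$ valid for an arbitrary scalar multiple of $\t$. Consequently the same computation gives, for every scalar $s$,
$$
e(s\t)\cdot\delta = \delta + \frac{1-e(s\ad_\t)}{\ad_\t}\,\delta(\t),
$$
where $e(s\t)$ acts on $\delta$ by conjugation by the automorphism $\exp(2\pi i s\ad_\t)$ of $\p$, exactly as in Equation~(\ref{eq:exp_ad}).

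Next I would set $s = c\xi$. The left-hand operator $e(c\xi\t)\cdot$ is precisely the operator written $e(c\xi\ad_\t)(\,\cdot\,)$ in the statement: by Equation~(\ref{eq:exp_ad}), conjugation of a derivation by $\exp(2\pi i c\xi\ad_\t)$ is the same map on $\Der\p$ however one records the generator, as $c\xi\t$ or as $c\xi\ad_\t$. This is the only point needing any care, and it is nothing more than the content of~(\ref{eq:exp_ad}); I also note that $c\xi$ is treated as a scalar parameter, which is legitimate because the entire derivation of the general formula is formal in the coefficient of $\t$ and never used the integrality of $m$.

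Finally I would evaluate the correction term. Since $\delta = \frac{1}{2\pi i}\a\frac{\partial}{\partial\t}$ sends $\t\mapsto\frac{1}{2\pi i}\a$ and annihilates $\a$, we have $\delta(\t) = \frac{1}{2\pi i}\a$, so that
$$
\frac{1-e(c\xi\ad_\t)}{\ad_\t}\,\delta(\t)
= \frac{1}{2\pi i}\,\frac{1-e(c\xi\ad_\t)}{\ad_\t}(\a)
= \frac{1}{2\pi i}\,\frac{1-e(c\xi\t)}{\t}\cdot\a,
$$
where the last equality is just the convention $f(\t)\cdot\a = f(\ad_\t)(\a)$. Substituting into the displayed general formula gives the claimed identity. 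There is essentially no genuine obstacle beyond this bookkeeping; the one subtlety worth flagging is precisely the passage from the integer $-m$ of the original lemma to the function-valued scalar $c\xi$, which causes no difficulty since that lemma's proof is entirely formal in the coefficient of $\t$.
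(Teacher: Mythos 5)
Your proof is correct and is exactly the paper's argument: the paper states this lemma with the preamble ``As a special case of the general formula,'' meaning precisely the specialization of the $\Z^2$-invariance lemma $e(-m\t)\cdot\delta = \delta + \frac{1-e(-m\ad_\t)}{\ad_\t}\delta(\t)$ with $-m$ replaced by the scalar $c\xi$ and $\delta = \frac{1}{2\pi i}\a\frac{\partial}{\partial\t}$, so that $\delta(\t)=\frac{1}{2\pi i}\a$. Your observation that the general formula's proof (via Equation~(\ref{eq:exp_ad}) and Lemma~\ref{lem:exp_ad}) is formal in the coefficient of $\t$ and never uses integrality of $m$ is the right justification for this substitution.
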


\begin{lemma}
Set
$$
\nu_2 = \frac{1}{2\pi i}\bigg(
\frac{1}{\t} + \t\frac{\partial F}{\partial\t}(\xi,\t,\tau)
\bigg)\cdot\a\,d\tau.
$$
Then
\begin{multline*}
\gamma^\ast \nu_2 - \Mtilde_\gamma(\xi,\tau) \nu_2
= \frac{1}{2\pi i}\frac{1-e(c\xi\t)}{\t}\cdot\a \frac{d\tau}{(c\tau + d)^2}\cr
+ \frac{c\xi\t}{c\tau + d}e(c\xi\t)F(\xi,(c\tau+d)\t,\tau)\cdot \a\,d\tau.
\end{multline*}
\end{lemma}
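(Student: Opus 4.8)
The plan is to handle $\nu_2$ exactly as $\nu_1$ was handled in the preceding computation, reducing the claimed identity of $\Der\p$-valued $1$-forms to a single identity between the $\p$-valued coefficients of $d\tau$. Write $\nu_2=\ad_{v_2}\,d\tau$ with
\[
v_2=\frac{1}{2\pi i}\Big(\frac{1}{\t}+\t\frac{\partial F}{\partial\t}(\xi,\t,\tau)\Big)\cdot\a\in\p,
\]
where $\partial F/\partial\eta$ will denote the derivative of $F$ in its second argument. By (\ref{eqn:derivative}) the scalar series $\frac{1}{\t}+\t\frac{\partial F}{\partial\t}(\xi,\t,\tau)$ is holomorphic at $\t=0$ and vanishes there; hence $v_2=g(\ad_\t)\cdot\a$ for a genuine power series $g$ with $g(0)=0$, and this holomorphy is what legitimises the manipulations below. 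Since $\Mtilde_\gamma$ is a Lie automorphism, $\Ad(\Mtilde_\gamma)$ carries the inner derivation $\ad_{v_2}$ to $\ad_{\Mtilde_\gamma(v_2)}$, whereas $\gamma^\ast$ only transforms the $(\xi,\tau)$-dependence together with the form $d\tau$. Using $\gamma^\ast d\tau=(c\tau+d)^{-2}d\tau$, it therefore suffices to prove that $(c\tau+d)^{-2}\gamma^\ast v_2-\Mtilde_\gamma(v_2)$ equals the $d\tau$-coefficient of the right-hand side.

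First I would compute $\gamma^\ast v_2$. The only nontrivial ingredient is the behaviour of $\frac{\partial F}{\partial\t}(\xi,\t,\tau)$ under $\gamma$, that is, the $\eta$-derivative of $F$ evaluated at $(\xi/(c\tau+d),\t,\gamma\tau)$. Differentiating the modularity property of $F$ once in $\eta$ and then substituting $\eta=(c\tau+d)\t$ gives $\frac{\partial F}{\partial\eta}\big(\tfrac{\xi}{c\tau+d},\t,\gamma\tau\big)=(c\tau+d)^2\,e(c\xi\t)\big[\tfrac{2\pi i c\xi}{c\tau+d}F(\xi,(c\tau+d)\t,\tau)+\frac{\partial F}{\partial\eta}(\xi,(c\tau+d)\t,\tau)\big]$. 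Feeding this into $\gamma^\ast v_2$ and dividing by $(c\tau+d)^2$ produces three pieces: the untouched pole $\frac{1}{2\pi i}\frac{1}{(c\tau+d)^2\t}\cdot\a$, a piece $\frac{1}{2\pi i}\t\,e(c\xi\t)\frac{\partial F}{\partial\eta}(\xi,(c\tau+d)\t,\tau)\cdot\a$, and the piece $\frac{c\xi\t}{c\tau+d}e(c\xi\t)F(\xi,(c\tau+d)\t,\tau)\cdot\a$.

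Next I would compute $\Mtilde_\gamma(v_2)$ using $\Mtilde_\gamma=M_\gamma(\tau)\circ e(c\xi\t/(c\tau+d))$, just as for $\nu_1$. Because $v_2=g(\ad_\t)\cdot\a$ and $e(c\xi\t/(c\tau+d))$ commutes with every power series in $\ad_\t$, the inner factor only moves $\a$ to $e(c\xi\t/(c\tau+d))\cdot\a$; applying $M_\gamma(\tau)$ then rescales each $\ad_\t$ by $(c\tau+d)$, converts $e(c\xi\t/(c\tau+d))$ into $e(c\xi\t)$ (the $(c\tau+d)$ from $M_\gamma(\t)=(c\tau+d)\t$ cancelling the denominator), and sends $\a\mapsto(c\tau+d)^{-1}\a+2\pi i c\t$. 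The $2\pi i c\t$ contribution dies since $g(0)=0$ and $\ad_\t$ annihilates $\t$, leaving $\Mtilde_\gamma(v_2)=\frac{1}{2\pi i}e(c\xi\t)\big(\frac{1}{(c\tau+d)^2\t}+\t\frac{\partial F}{\partial\eta}(\xi,(c\tau+d)\t,\tau)\big)\cdot\a$. Subtracting, the two $\frac{\partial F}{\partial\eta}(\xi,(c\tau+d)\t,\tau)$ terms cancel (both carry the commuting factor $e(c\xi\t)$), the two pole pieces combine into $\frac{1}{2\pi i}\frac{1-e(c\xi\t)}{(c\tau+d)^2\t}\cdot\a$, and the surviving term is exactly $\frac{c\xi\t}{c\tau+d}e(c\xi\t)F(\xi,(c\tau+d)\t,\tau)\cdot\a$; reinstating $d\tau$ gives the stated formula.

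The main obstacle is bookkeeping rather than anything conceptual: one must keep the singular piece $\frac{1}{\t}$ and the term $\t\frac{\partial F}{\partial\eta}$ tied together through (\ref{eqn:derivative}), both to justify that $v_2\in\p$ and that $g(0)=0$ (which kills the $2\pi i c\t$ contribution of $M_\gamma$), and to see that the individually meaningless poles in $\gamma^\ast v_2$ and $\Mtilde_\gamma(v_2)$ recombine into the regular series $\frac{1-e(c\xi\t)}{\t}$. Care is also needed to apply the $\eta$-differentiated modularity law \emph{before} the substitution $\eta=(c\tau+d)\t$, and to track the conversion $e(c\xi\t/(c\tau+d))\mapsto e(c\xi\t)$ under $M_\gamma$ precisely, since an error there would spoil the clean cancellation of the $\frac{\partial F}{\partial\eta}$ terms.
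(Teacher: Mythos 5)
Your proof is correct and follows essentially the same route as the paper's: both compute $\gamma^\ast\nu_2$ via the $\eta$-differentiated modularity law for $F$, compute $\Mtilde_\gamma\nu_2$ using the decomposition $\Mtilde_\gamma = M_\gamma(\tau)\circ e\big(c\xi\t/(c\tau+d)\big)$ together with the holomorphy and vanishing at $\eta=0$ of $\frac{1}{\eta}+\eta\frac{\partial F}{\partial\eta}(\xi,\eta,\tau)$ (which kills the $2\pi i c\t$ contribution of $M_\gamma(\a)$), and then subtract, letting the $\frac{\partial F}{\partial\eta}$ terms cancel and the pole terms combine into $\frac{1-e(c\xi\t)}{\t}$. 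Indeed your differentiated modularity formula carries the factor $2\pi i\, c\xi$ correctly, whereas the paper's intermediate displays drop the $2\pi i$ (a typo, since the lemma's stated conclusion is only consistent with your version).
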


\begin{proof}
First note that the modularity property of $F(\xi,\t,\tau)$ implies that
\begin{align*}
&\phantom{=}\frac{\partial F}{\partial \t}(\xi/(c\tau + d),\t,\gamma \tau)\cr
&=(c\tau+d)\frac{\partial}{\partial\t}\big[e(c\xi\t)F(\xi,(c\tau+d)\t,\tau)\big]
\cr
&= (c\tau + d)e(c\xi\t)\big[c\xi F(\xi,(c\tau+d)\t,\tau) +
(c\tau+d)\frac{\partial F}{\partial \t}(\xi,(c\tau+d)\t,\tau)\big].
\end{align*}
Thus
\begin{align*}
2\pi i \gamma^\ast\nu_2
&=\bigg(\frac{1}{\t}+\t\frac{\partial F}
{\partial\t}(\xi/(c\tau+d),\t,\gamma\tau)
\bigg)\cdot \a\,d\bigg(\frac{a\tau+b}{c\tau+d}\bigg)\cr
&= \bigg(\frac{1}{\t}
+ (c\tau + d)^2e(c\xi\t)\t\frac{\partial F}{\partial\t}(\xi,(c\tau+d)\t,\tau)\cr
& \qquad + c\xi\t(c\tau + d)e(c\xi\t)F(\xi,(c\tau+d)\t,\tau)
\bigg)\cdot \a\,\frac{d\tau}{(c\tau+d)^2}.
\end{align*}
Since
$$
\Mtilde_\gamma(\xi,\tau)(\t) = (c\tau+d)\t \text{ and }
\Mtilde_\gamma(\xi,\tau)(\a) =  e(c\xi\t)\cdot \a/(c\tau+d)+2\pi i c\t
$$
we have
\begin{align*}
&\phantom{=} 2\pi i \Mtilde_\gamma(\xi,\tau)\nu_2 \cr
&= \bigg(\frac{1}{(c\tau+d)\t} +
(c\tau+d)\t\frac{\partial F}{\partial\t}(\xi,(c\tau+d)\t,\tau)
\bigg)\cdot \big(e(c\xi\t)\cdot \a/(c\tau+d) +2\pi ic\t\big)\,d\tau \cr
&= e(c\xi\t)\bigg(\frac{1}{\t} +
(c\tau+d)^2 \t\frac{\partial F}{\partial\t}(\xi,(c\tau+d)\t,\tau)
\bigg)\cdot \a\frac{d\tau}{(c\tau+d)^2}
\end{align*}
as $\frac{1}{\eta} + \eta \frac{\partial F}{\partial\eta}(\xi,\eta,\tau)$ is
holomorphic in $\eta$ and vanishes at $\eta = 0$ by (\ref{eqn:derivative}).

The previous lemma implies that
$$
\big(e(c\xi \ad_\t)-1\big)
\bigg(\frac{1}{2\pi i} \a \frac{\partial}{\partial \t}\bigg)
=
\frac{1}{2\pi i}\bigg(\frac{1-e(c\xi\t)}{\t}\bigg)\cdot \a.
$$
Now assemble the pieces to obtain the result.
\end{proof}

Combining the last two computations, we obtain:

\begin{corollary}
For all $\gamma \in \SL_2(\Z)$,
$$
\gamma^\ast \nu - \Mtilde_\gamma \nu
= \frac{1-e(c\xi\t)}{2\pi i\t}\cdot \a\frac{d\tau}{(c\tau+d)^2}
- 2\pi i c\t\,d\xi.
$$
\end{corollary}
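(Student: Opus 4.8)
The plan is simply to add the two preceding lemmas. By definition $\nu = \nu_1 + \nu_2$, and both the pullback $\gamma^\ast$ and left multiplication by the factor of automorphy $\Mtilde_\gamma(\xi,\tau)$ are additive in their argument. Hence
$$
\gamma^\ast\nu - \Mtilde_\gamma\nu
= \big(\gamma^\ast\nu_1 - \Mtilde_\gamma\nu_1\big)
+ \big(\gamma^\ast\nu_2 - \Mtilde_\gamma\nu_2\big),
$$
and the corollary will follow once the two bracketed quantities are replaced by the expressions computed in the two lemmas immediately above.

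Substituting, the $\nu_1$-lemma contributes the terms $-2\pi i c\t\,d\xi$ and $-\frac{c\xi\t}{c\tau+d}e(c\xi\t)F(\xi,(c\tau+d)\t,\tau)\cdot\a\,d\tau$, while the $\nu_2$-lemma contributes $\frac{1}{2\pi i}\frac{1-e(c\xi\t)}{\t}\cdot\a\frac{d\tau}{(c\tau+d)^2}$ together with $+\frac{c\xi\t}{c\tau+d}e(c\xi\t)F(\xi,(c\tau+d)\t,\tau)\cdot\a\,d\tau$. The decisive observation is that the two $F$-terms are identical except for sign and therefore cancel. What remains is exactly
$$
-2\pi i c\t\,d\xi + \frac{1-e(c\xi\t)}{2\pi i\t}\cdot\a\,\frac{d\tau}{(c\tau+d)^2},
$$
which is the asserted identity.

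There is essentially no obstacle here: the full computational weight of the statement lies in the two lemmas it combines, and all that is left is bookkeeping. The single point worth checking carefully is the cancellation of the $F$-terms, i.e.\ that the coefficient $\frac{c\xi\t}{c\tau+d}$, the factor $e(c\xi\t)$, the argument $(c\tau+d)\t$ of $F$, and the action on $\a\,d\tau$ all match verbatim between the two lemmas so that the two contributions really do sum to zero. This cancellation is precisely what removes the transcendental Jacobi form $F$ from the $d\tau$-component of $\gamma^\ast\nu - \Mtilde_\gamma\nu$ and leaves only the elementary expression above; it is the phenomenon that makes the subsequent verification of $\SL_2(\Z)$-invariance tractable.
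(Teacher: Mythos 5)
Your proposal is correct and coincides with the paper's own argument: the corollary is stated there immediately after the two lemmas with the remark ``Combining the last two computations, we obtain,'' i.e.\ one adds the $\nu_1$- and $\nu_2$-lemmas and the two $F$-terms cancel. You have verified the cancellation explicitly, which is exactly the only point of substance in this step.
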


\begin{lemma}
For all $\gamma\in \SL_2(\Z)$,
$$
d\Mtilde_\gamma \Mtilde_\gamma^{-1}
= e(c\xi\t)\cdot \big(dM_\gamma M_\gamma^{-1}\big) + 2\pi ic\t\,d\xi.
$$
\end{lemma}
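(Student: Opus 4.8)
The plan is to compute the right logarithmic derivative $d\Mtilde_\gamma\,\Mtilde_\gamma^{-1}$ straight from the factorization $\Mtilde_\gamma(\xi,\tau)=M_\gamma(\tau)\circ e\big(c\xi\t/(c\tau+d)\big)$ of $(\ref{eqn:M_tilde})$, and then to reorganize the $\tau$-part so that it is visibly $e(c\xi\t)$ applied to $dM_\gamma M_\gamma^{-1}$. Writing $Q=e\big(c\xi\t/(c\tau+d)\big)$, the Leibniz rule for the logarithmic derivative of a composite gives
$$
d\Mtilde_\gamma\,\Mtilde_\gamma^{-1}
= dM_\gamma\,M_\gamma^{-1}
+ M_\gamma\circ\big(dQ\,Q^{-1}\big)\circ M_\gamma^{-1}.
$$
The first summand already matches the bare $dM_\gamma M_\gamma^{-1}$ of the statement, so everything reduces to identifying the second summand and accounting for the difference between $dM_\gamma M_\gamma^{-1}$ and $e(c\xi\t)\cdot(dM_\gamma M_\gamma^{-1})$.

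Next I would compute the exponential contribution. Since $Q$ acts on $\p$ as $\exp(2\pi i\lambda\,\ad_\t)$ with $\lambda=c\xi/(c\tau+d)$, and the exponent is abelian in $\ad_\t$, one gets $dQ\,Q^{-1}=2\pi i\,d\lambda\,\ad_\t$ with $d\lambda=\frac{c}{c\tau+d}\,d\xi-\frac{c^2\xi}{(c\tau+d)^2}\,d\tau$. Conjugating by $M_\gamma$ and using $M_\gamma\circ\ad_\t\circ M_\gamma^{-1}=\ad_{M_\gamma(\t)}=(c\tau+d)\ad_\t$, which is immediate from $(\ref{eqn:automorphy})$, yields
$$
M_\gamma\circ\big(dQ\,Q^{-1}\big)\circ M_\gamma^{-1}
= 2\pi ic\,\ad_\t\,d\xi-\frac{2\pi ic^2\xi}{c\tau+d}\,\ad_\t\,d\tau.
$$
Under the inclusion $\p\hookrightarrow\Der\p$, $x\mapsto\ad_x$, the $d\xi$-term is exactly $2\pi ic\t\,d\xi$, the correction appearing in the statement; it remains to show the $d\tau$-term is the discrepancy produced by the action of $e(c\xi\t)$.

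For the last step I would compute the derivation $D:=dM_\gamma M_\gamma^{-1}$ on generators from $(\ref{eqn:automorphy})$ and its inverse, obtaining $D(\t)=\frac{c}{c\tau+d}\t\,d\tau$ and $D(\a)=-\frac{c}{c\tau+d}\a\,d\tau-2\pi ic^2\t\,d\tau$. The conjugation action of $e(c\xi\t)=\exp(2\pi ic\xi\,\ad_\t)$ on $\Der\p$ is governed by the single bracket $[\ad_\t,D]=-\ad_{D(\t)}=-\frac{c}{c\tau+d}\ad_\t\,d\tau$; because $[\ad_\t,\ad_\t]=0$ the adjoint series terminates after one term, so
$$
e(c\xi\t)\cdot D = D+2\pi ic\xi\,[\ad_\t,D]
= D-\frac{2\pi ic^2\xi}{c\tau+d}\,\ad_\t\,d\tau.
$$
This is precisely the $d\tau$-term found above, and assembling the three displays gives the claimed identity.

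The only real content, and the step I would watch most carefully, is this last matching: both the differentiation of the exponential factor and the failure of $dM_\gamma M_\gamma^{-1}$ to be $e(c\xi\t)$-invariant contribute the same term $-\frac{2\pi ic^2\xi}{c\tau+d}\ad_\t\,d\tau$, and the identity holds exactly because of this coincidence. The attendant bookkeeping hazard is keeping the two uses of the dot notation apart --- the adjoint action of $e(c\xi\t)$ on $\p$ versus its conjugation action on $\Der\p$, the latter being $(\ref{eq:exp_ad})$ --- together with the identification $\t=\ad_\t$ inside $\Der\p$.
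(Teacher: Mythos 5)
Your proof is correct, but it takes a genuinely different route from the paper's. The paper first commutes the exponential factor to the \emph{left} of $M_\gamma$, using $M_\gamma(\tau)\circ e(\phi) = e(M_\gamma(\tau)\cdot\phi)\circ M_\gamma(\tau)$ and $M_\gamma(\tau)\cdot\t = (c\tau+d)\t$ to write $\Mtilde_\gamma(\xi,\tau) = e(c\xi\t)\, M_\gamma(\tau)$; in this factorization the exponent $c\xi\t$ is independent of $\tau$ and $M_\gamma$ is independent of $\xi$, so the Leibniz rule gives
$$
d\Mtilde_\gamma\Mtilde_\gamma^{-1}
= \big(e(c\xi\t)\,dM_\gamma + 2\pi i c\t\, e(c\xi\t) M_\gamma\, d\xi\big)M_\gamma^{-1}e(-c\xi\t)
= e(c\xi\t)\cdot\big(dM_\gamma M_\gamma^{-1}\big) + 2\pi i c\t\, d\xi
$$
in three lines, with no need to evaluate $dM_\gamma M_\gamma^{-1}$ on generators or to analyze any adjoint series. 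You instead keep the exponential on the right with its $\tau$-dependent exponent $c\xi\t/(c\tau+d)$, and the identity then emerges from a matching of two separately computed terms: the $d\tau$-part of $M_\gamma\circ(dQ\,Q^{-1})\circ M_\gamma^{-1}$ and the first-order (and, by truncation, only) term of the adjoint action of $e(c\xi\t)$ on $D = dM_\gamma M_\gamma^{-1}$, via $[\ad_\t,D] = -\ad_{D(\t)}$. Both arguments are valid; what the paper's choice of factorization buys is brevity and the avoidance of exactly the ``coincidence'' you flag as the real content, while your computation has the side benefit of making that mechanism explicit and of producing the formulas $D(\t) = \frac{c}{c\tau+d}\t\,d\tau$ and $D(\a) = -\frac{c}{c\tau+d}\a\,d\tau - 2\pi i c^2\t\,d\tau$, which agree with the matrix expression for $dM_\gamma M_\gamma^{-1}$ that the paper computes anyway in the following lemma. (Your $D(\a)$ is in fact never used, since the bracket $[\ad_\t,D]$ depends only on $D(\t)$, so it could be omitted.)
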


\begin{proof}
Since $\Mtilde_\gamma(\xi,\tau) = e(c\xi\t)M_\gamma(\tau)$, we have
\begin{align*}
d\Mtilde_\gamma \Mtilde_\gamma^{-1}
&= d\big(e(c\xi\t)M_\gamma\big)M_\gamma^{-1}e(-c\xi\t) \cr
&= \big(e(c\xi\t)dM_\gamma+2\pi ic\t e(c\xi\t)M_\gamma d\xi\big)
M_\gamma^{-1}e(-c\xi\t) \cr
&= e(c\xi\t)\cdot \big(dM_\gamma M_\gamma^{-1}\big) + 2\pi ic\t\,d\xi.
\end{align*}
\end{proof}

\begin{lemma}
For all $\gamma \in \SL_2(\Z)$, we have
$$
\gamma^\ast\bigg(\frac{1}{2\pi i}\a\frac{\partial}{\partial\t}d\tau\bigg)
- M_\gamma \bigg(\frac{1}{2\pi i}\a\frac{\partial}{\partial\t}d\tau\bigg)
+ dM_\gamma M_\gamma^{-1} = 0.
$$
\end{lemma}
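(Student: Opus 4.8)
The plan is to recognize this identity as nothing more than the $\SL_2(\Z)$-invariance of the \emph{natural} flat connection on $\H$, transported into $\Der\p$. Write $\w_0 = \frac{1}{2\pi i}\a\frac{\partial}{\partial\t}\,d\tau$ for the first term of the connection, and read the middle term $M_\gamma(\w_0)$ as $\Ad(M_\gamma)\w_0$, i.e.\ $M_\gamma$ acting on $\Der\p$ by conjugation. Then the asserted identity is precisely the invariance criterion $\gamma^\ast\w = \Ad(M_\gamma)\w - dM_\gamma M_\gamma^{-1}$ for $\G$-invariant connections, specialized to this single term. My first move is to observe that all three summands lie in the image of the natural Lie algebra homomorphism $\gl(\H)\to\Der\p$ sending a linear endomorphism of $\H=\Span\{\t,\a\}$ to the unique derivation of $\p$ that agrees with it on the generators $\t,\a$: the derivation $\a\frac{\partial}{\partial\t}$ is the image of the rank-one endomorphism $\t\mapsto\a,\ \a\mapsto 0$, and $M_\gamma$ acts on $\p$ exactly as the automorphism extending its linear action on $\H$ from (\ref{eqn:automorphy}). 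Hence it suffices to prove the identity in $\End\H=\gl_2(\C)$ and push it forward.

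In $\End\H$ the identity falls out of material already assembled. By Example~\ref{ex:connection_H}, $\w_0$ is the connection form, in the framing $\a,\t$, of the canonical flat connection on $\cH=\H\otimes_\C\O$; by Example~\ref{ex:hodge}, $M_\gamma(\tau)$ is the factor of automorphy realizing $\H$ as a bundle over $\M_{1,1}$. Since $\H$ is by construction a local system on $\M_{1,1}$, this flat connection is $\SL_2(\Z)$-invariant, and the invariance criterion recorded above translates that invariance into exactly $\gamma^\ast\w_0 = \Ad(M_\gamma)\w_0 - dM_\gamma M_\gamma^{-1}$ in $\End\H$. Applying the homomorphism $\gl(\H)\to\Der\p$ to this $\gl_2$-valued identity then yields the lemma.

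The step that needs genuine care is the compatibility of $\gl(\H)\to\Der\p$ with the three operations in play. It must intertwine $\Ad(M_\gamma)$ on the two sides (this holds because the map is a homomorphism of Lie algebras and $M_\gamma$ acts on $\p$ through it), commute with the pullback $\gamma^\ast$ (which only rescales the $d\tau$ factor), and carry the logarithmic-derivative term $dM_\gamma M_\gamma^{-1}$ in $\End\H$ to the corresponding term in $\Der\p$ (naturality of the logarithmic derivative under the integrated homomorphism $\GL(\H)\to\Aut\p$). I expect this intertwining, rather than any analytic input, to be the only real obstacle; once it is verified term by term, the proof is finished.

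Should a self-contained verification be preferred, the identity can instead be checked directly. From $ad-bc=1$ one has $\gamma^\ast d\tau = (c\tau+d)^{-2}\,d\tau$; using (\ref{eqn:automorphy}) together with its displayed inverse one computes $\Ad(M_\gamma)\big(\a\frac{\partial}{\partial\t}\big)$ and the derivation $dM_\gamma M_\gamma^{-1}$ explicitly, and the three contributions cancel. The only subtlety there is bookkeeping of the $2\pi i c$ cross-terms produced by $M_\gamma$ and by its $\tau$-derivative, which must annihilate the $\t\frac{\partial}{\partial\t}$ and $\frac{\partial}{\partial\a}$ components exactly.
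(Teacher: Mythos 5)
Your proof is correct, and its main route is genuinely different from the paper's. The paper makes the same reduction you do --- it works ``using matrices with respect to the basis $\{\a,\t\}$ of $H$'', i.e.\ inside the image of $\gl(\H)\to\Der\p$ --- but then finishes by brute force: it writes $M_\gamma(\tau)$, $M_\gamma(\tau)^{-1}$ and $\frac{1}{2\pi i}\a\frac{\partial}{\partial\t}d\tau$ as explicit $2\times 2$ matrices, computes $dM_\gamma M_\gamma^{-1}$ and the conjugate $M_\gamma\big(\frac{1}{2\pi i}\a\frac{\partial}{\partial\t}d\tau\big)M_\gamma^{-1}$, and checks the cancellation entry by entry. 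You instead deduce the matrix identity from the fact that $\w_0=\frac{1}{2\pi i}\a\frac{\partial}{\partial\t}d\tau$ is the connection form, in the framing $\a,\t$, of the canonical Gauss--Manin connection of Example~\ref{ex:connection_H}, while $M_\gamma(\tau)$ is the factor of automorphy of $\H$ from Example~\ref{ex:hodge}; since the local system $\H$ and hence its canonical connection exist on $\M_{1,1}$ by construction (Section~\ref{sec:H}), the invariance criterion $\gamma^\ast\w=\Ad(M_\gamma)\w - dM_\gamma M_\gamma^{-1}$ holds automatically. This is not circular --- nothing in the construction of $\H$ or of its canonical connection depends on the present lemma --- and it is the exact analogue of the paper's own remark that the existence of $\H$ over $\M_{1,1}$ gives a conceptual proof that $M_\gamma(\tau)$ is a factor of automorphy. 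The compatibility checks you flag for $\gl(\H)\to\Der\p$ (it intertwines conjugation because $M_\gamma$ acts on $\p$ through $\GL(\H)\to\Aut\p$; it commutes with $\gamma^\ast$, which only rescales $d\tau$; and it is natural for logarithmic derivatives) are precisely the points needing verification, and they hold for the reasons you give. The trade-off: the paper's computation is self-contained and doubles as a concrete check on the normalization of $M_\gamma$ --- a live concern in this paper, where factors of automorphy had to be modified --- while your argument explains why the cancellation is forced and is insensitive to such conventions; your fallback direct verification is, in effect, the paper's proof.
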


\begin{proof}
This is best done using matrices with respect to the basis $\{\a,\t\}$ of $H$.
We have
$$
\frac{1}{2\pi i}\a\frac{\partial}{\partial \t}d\tau
= \frac{1}{2\pi i}\begin{pmatrix} 0 & 1 \cr 0 & 0 \end{pmatrix} d\tau
$$
and
$$
M_\gamma(\tau) =
\begin{pmatrix}
(c\tau + d)^{-1} & 0 \cr 2\pi i c & c\tau + d
\end{pmatrix}
,\qquad
M_\gamma(\tau)^{-1} =
\begin{pmatrix}
c\tau + d & 0 \cr -2\pi i c & (c\tau + d)^{-1}
\end{pmatrix}.
$$
So
\begin{align*}
dM_\gamma M_\gamma^{-1} &=
\begin{pmatrix}
-c(c\tau + d)^{-2} & 0 \cr 0 & c
\end{pmatrix}
\begin{pmatrix}
c\tau + d & 0 \cr -2\pi i c & (c\tau + d)^{-1}
\end{pmatrix}
d\tau \cr
&= \begin{pmatrix}
-c & 0 \cr -2\pi i c^2(c\tau+d) & c
\end{pmatrix}
\frac{d\tau}{c\tau+d}
\end{align*}
and
\begin{align*}
M_\gamma \bigg(\frac{1}{2\pi i}\a\frac{\partial}{\partial \t}d\tau\bigg)
&= \frac{1}{2\pi i}
\begin{pmatrix}
(c\tau + d)^{-1} & 0 \cr 2\pi i c & c\tau + d
\end{pmatrix}
\begin{pmatrix} 0 & 1 \cr 0 & 0 \end{pmatrix}
\begin{pmatrix}
c\tau + d & 0 \cr -2\pi i c & (c\tau + d)^{-1}
\end{pmatrix}
d\tau\cr
&= \frac{1}{2\pi i}\begin{pmatrix}
-2\pi i c & (c\tau+d)^{-1} \cr -(2 \pi ic)^2 (c\tau + d) & 2\pi ic
\end{pmatrix}
\frac{d\tau}{c\tau+d} \cr
&=
\frac{1}{2\pi i}\begin{pmatrix} 0 & 1 \cr 0 & 0 \end{pmatrix}
\frac{d\tau}{(c\tau+d)^2} +
\begin{pmatrix}
-c & 0 \cr -2 \pi i c^2 (c\tau + d) & c
\end{pmatrix}
\frac{d\tau}{c\tau+d} \cr
&= \gamma^\ast\bigg(\frac{1}{2\pi i}\a\frac{\partial}{\partial\t}d\tau\bigg)
+ dM_\gamma M_\gamma^{-1}.
\end{align*}

\end{proof}

\noindent
{\bf Final computation:} For all $\gamma \in \SL_2(\Z)$, we have:
\begin{align*}
&\phantom{=}\gamma^\ast\w-\Mtilde_\gamma\w
+ d\Mtilde_\gamma\Mtilde_\gamma^{-1}\cr
&= \gamma^\ast\bigg(\nu+\frac{\a}{2\pi i}\frac{\partial}{\partial\t}d\tau\bigg)
- \Mtilde_\gamma\bigg(
\nu + \frac{\a}{2\pi i}\frac{\partial}{\partial\t}d\tau\bigg)
+ e(c\xi\t)\cdot \big(dM_\gamma M_\gamma^{-1}\big) + 2\pi ic\t\,d\xi\cr
&= \gamma^\ast\bigg(\frac{\a}{2\pi i}\frac{\partial}{\partial\t}d\tau\bigg)
+ \frac{1-e(c\xi\t)}{2\pi i\t}\cdot \a\frac{d\tau}{(c\tau+d)^2}
- e(c\xi\t)\cdot M_\gamma\bigg(\frac{\a}{2\pi i}\frac{\partial}{\partial\t}d\tau
\bigg)
\cr & \hspace*{3.25in}
+ e(c\xi\t)\cdot \big(dM_\gamma M_\gamma^{-1}\big)\cr
&= e(c\xi\t)\cdot \bigg(
\gamma^\ast\bigg(\frac{1}{2\pi i}\a\frac{\partial}{\partial\t}d\tau\bigg)
- M_\gamma \bigg(\frac{1}{2\pi i}\a\frac{\partial}{\partial\t}d\tau\bigg)
+ dM_\gamma M_\gamma^{-1}\bigg) \cr
&= 0.
\end{align*}

\subsection{Integrability}

\begin{proposition}
The $1$-form $\w$ is closed.
\end{proposition}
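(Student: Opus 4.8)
The plan is to use the decomposition
$$
\w = \frac{1}{2\pi i}d\tau\otimes\a\frac{\partial}{\partial\t} + \psi + \nu
$$
and show that each of the three summands is closed separately. Since the exterior derivative $d$ only differentiates the base coordinates $\xi,\tau$ (the values all lying in the fixed space $\Der\p$), the first summand is closed because its coefficient $\frac{1}{2\pi i}\a\frac{\partial}{\partial\t}$ is constant, so that $d$ of it is $d(\mathrm{const})\wedge d\tau = 0$. Likewise every term of $\psi$ has the shape $g(\tau)\,d\tau\otimes D$ with $D\in\Der\p$ a constant derivation, whence $d\big(g(\tau)\,d\tau\otimes D\big) = g'(\tau)\,d\tau\wedge d\tau\otimes D = 0$. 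Thus $\psi$ is closed and the whole problem reduces to showing $d\nu = 0$.

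For $\nu$ I would write $\nu = P\,d\xi + Q\,d\tau$ with
$$
P = \t F(\xi,\t,\tau)\cdot\a, \qquad Q = \frac{1}{2\pi i}\Big(\frac{1}{\t} + \t\frac{\partial F}{\partial\t}(\xi,\t,\tau)\Big)\cdot\a .
$$
Using $d\xi\wedge d\xi = d\tau\wedge d\tau = 0$, the exterior derivative of a $\Der\p$-valued $1$-form in the two variables $\xi,\tau$ is
$$
d\nu = \Big(\frac{\partial Q}{\partial\xi} - \frac{\partial P}{\partial\tau}\Big)\,d\xi\wedge d\tau,
$$
so it suffices to verify the single identity $\partial Q/\partial\xi = \partial P/\partial\tau$. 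Here the term $\frac{1}{\t}\cdot\a$ contributes nothing to $\partial Q/\partial\xi$, being independent of $\xi$, and one is left with
$$
\frac{\partial Q}{\partial\xi} = \frac{1}{2\pi i}\,\t\frac{\partial^2 F}{\partial\xi\partial\t}(\xi,\t,\tau)\cdot\a, \qquad \frac{\partial P}{\partial\tau} = \t\frac{\partial F}{\partial\tau}(\xi,\t,\tau)\cdot\a .
$$

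These two expressions agree precisely when
$$
\frac{\partial^2 F}{\partial\xi\partial\t}(\xi,\t,\tau) = 2\pi i\,\frac{\partial F}{\partial\tau}(\xi,\t,\tau),
$$
which is exactly the heat equation for $F$ established earlier, with the second argument $\eta$ specialized to $\t$. So the genuine content of the proposition is nothing more than the heat equation, and the only point requiring care—where I expect the main (and rather modest) obstacle to lie—is the justification of the substitution $\eta=\t$: since $\eta F(\xi,\eta,\tau)$ and $\frac1\eta + \eta\,\partial_\eta F(\xi,\eta,\tau)$ are holomorphic at $\eta=0$, both $P$ and $Q$ are honest power series in $\ad_\t$, and the heat equation, being an equality of power series in $\eta$ with coefficients in $\O(\C\times\h)$, passes to $\eta=\t$ coefficient by coefficient. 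Granting this, $\partial Q/\partial\xi = \partial P/\partial\tau$, hence $d\nu = 0$, and therefore $d\w = 0$.
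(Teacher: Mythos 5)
Your proof is correct and is essentially the paper's own argument: the $d\tau$-only terms (the $\a\frac{\partial}{\partial\t}$ term and $\psi$) are closed because they are independent of $\xi$, and $d\nu=0$ reduces, after writing $\nu = P\,d\xi + Q\,d\tau$, to exactly the heat equation $2\pi i\,\partial F/\partial\tau = \partial^2F/\partial\xi\partial\eta$ evaluated at $\eta=\ad_\t$. Your added remark justifying the substitution $\eta=\t$ coefficient-by-coefficient is a point the paper leaves implicit, but it is the same proof.
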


\begin{proof}
It is clear that
$$
d\bigg(\frac{1}{2\pi i} d\tau\otimes \a\frac{\partial}{\partial \t} + \psi\bigg)
= 0
$$
as these terms do not depend upon $\xi$. The heat equation implies that
\begin{align*}
2\pi i\, d\nu &= 2\pi i\, \ad_\t dF(\xi,\ad_\t,\tau)(\a)\wedge d\xi + d\bigg(
\frac{1}{\ad_\t} + \ad_\t\frac{\partial F}{\partial \t}(\xi,\ad_\t,\tau)
\bigg)(\a)\wedge d\tau \cr
&= \ad_\t\bigg(
2\pi i\,\frac{\partial F}{\partial \tau}(\xi,\ad_\t,\tau) - 
\frac{\partial^2 F}{\partial \xi\partial \t}(\xi,\ad_\t,\tau)
\bigg)(\a)d\tau\wedge d\xi\cr
&= 0,
\end{align*}
so that $d\w = 0$.
\end{proof}

The proof of the vanishing of $[\w,\w]$ is quite involved. For this we employ 
the elegant calculus developed by Levin and Racinet in \cite[\S
3.1]{levin-racinet}. 

\subsubsection{The Levin-Racinet calculus}
\label{sec:calc}

For $U,V \in \L(\a,\t)^\wedge$, define
$$
x^r y^s \circ (U,V) = [\t^r\cdot U, \t^s\cdot V].
$$
This extends linearly to an action $f(x,y)\circ (U,V)$ of polynomials and power
series $f(x,y)$ in commuting indeterminants on ordered pairs of elements of
$\L(\t,\a)$. When $U$ and $V$ are equal, one has the identity $f(x,y)\circ (U,U) =
-f(y,x)\circ (U,U)$, so that
\begin{equation}
\label{eqn:skew}
f(x,y)\circ (U,U) = \frac{1}{2}\big(f(x,y)-f(y,x)\big)\circ(U,U).
\end{equation}
As an example of how this notation is used, note that Lemma~\ref{lem:wp} implies
that
\begin{equation}
\label{eqn:psi}
2\pi i \psi =
\frac{1}{2}\frac{xy}{x+y}\big(\wp(x)-\frac{1}{x^2} - \wp(y) + \frac{1}{y^2}\big)
\circ(\a,\a)\frac{\partial}{\partial \a} \otimes d\tau.
\end{equation}

Two more identities will be needed in the proof of the vanishing of $[\w,\w]$.

\begin{lemma}
\label{lem:identities}
Suppose that $U,V\in \L(\t,\a)^\wedge$.
\begin{enumerate}

\item (Jacobi identity) If $f(x,y) \in \C[[x,y]]$, then
$$
\ad_\t \big(f(x,y)\circ(U,V)\big) = (x+y)f(x,y)\circ(U,V).
$$

\item If $\delta$ is a continuous derivation of $\C\ll \t,\a\rr$ and $g(x)\in
\C[[x]]$, then
$$
\delta\big(g(\ad_\t)V\big) =
g(\ad_\t)\delta(V) + \bigg(\frac{g(x+y)-g(y)}{x}\bigg)\circ(\delta(\t),V).
$$

\end{enumerate}
\end{lemma}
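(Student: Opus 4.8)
The plan is to reduce both identities to polynomial identities by linearity and $\ad_\t$-adic continuity, so that it suffices to treat the monomials $f(x,y)=x^ry^s$ in part~(i) and $g(x)=x^n$ in part~(ii); the general statements then follow by extending linearly and passing to the limit. For part~(i) I would start from the definition $x^ry^s\circ(U,V)=[\ad_\t^r U,\ad_\t^s V]$ and use the single fact that $\ad_\t$ is a derivation of the Lie bracket (the Jacobi identity). Applying $\ad_\t$ gives
$$
\ad_\t\big([\ad_\t^r U,\ad_\t^s V]\big)
=[\ad_\t^{r+1}U,\ad_\t^s V]+[\ad_\t^r U,\ad_\t^{s+1}V],
$$
which in the $\circ$-notation is exactly $(x^{r+1}y^s+x^ry^{s+1})\circ(U,V)=(x+y)\,x^ry^s\circ(U,V)$. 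Summing over monomials yields the claim.

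For part~(ii) the core step is an explicit formula for $\delta(\ad_\t^n V)$. Using the Leibniz rule $\delta([\t,W])=[\delta(\t),W]+[\t,\delta(W)]$ and inducting on $n$ (peeling off one outer $\ad_\t$ at each stage), I expect to obtain
$$
\delta(\ad_\t^n V)=\ad_\t^n\delta(V)+\sum_{k=0}^{n-1}\ad_\t^k\big[\delta(\t),\ad_\t^{n-1-k}V\big].
$$
The base case $n=0$ is trivial, and I would check $n=1,2$ as a sanity test before committing to the general form. Multiplying by $g_n$ and summing, where $g(x)=\sum_n g_n x^n$, the first term sums to $g(\ad_\t)\delta(V)$, which matches the leading term of the asserted identity. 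For the remaining double sum I would rewrite the inner bracket as $\big[\delta(\t),\ad_\t^{n-1-k}V\big]=y^{n-1-k}\circ(\delta(\t),V)$ and then invoke part~(i) in its iterated form $\ad_\t^k\big(h(x,y)\circ(U,V)\big)=(x+y)^k\,h(x,y)\circ(U,V)$ to replace each outer $\ad_\t^k$ by multiplication by $(x+y)^k$.

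The step I expect to be the crux is recognizing the resulting inner sum as a geometric series: setting $a=x+y$ and $b=y$, one has $\sum_{k=0}^{n-1}a^k b^{n-1-k}=\frac{a^n-b^n}{a-b}$, and since $a-b=x$ this equals $\frac{(x+y)^n-y^n}{x}$. Summing against $g_n$ then produces $\frac{g(x+y)-g(y)}{x}\circ(\delta(\t),V)$, which is precisely the second term of the asserted formula. The only genuine care needed is to confirm that $\frac{g(x+y)-g(y)}{x}$ is a bona fide element of $\C[[x,y]]$ (its numerator vanishes at $x=0$, hence is divisible by $x$) so that the $\circ$-action is defined, and to check that the rearrangement of these infinite sums is legitimate in the $\ad_\t$-adically complete setting. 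Both are routine once the geometric-sum identity is in place.
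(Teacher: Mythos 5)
Your proposal is correct and follows essentially the same route as the paper: both reduce to monomials by linearity and continuity, prove part (i) directly from the fact that $\ad_\t$ is a derivation, and establish part (ii) for $g(x)=x^n$ by induction using part (i) (the Jacobi identity). Your explicit intermediate expansion $\delta(\ad_\t^n V)=\ad_\t^n\delta(V)+\sum_{k=0}^{n-1}\ad_\t^k[\delta(\t),\ad_\t^{n-1-k}V]$ followed by the geometric-series resummation is just a slightly more explicit packaging of the induction the paper leaves to the reader.
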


\begin{proof}
The first identity encodes the Jacobi identity and is left as an easy exercise.
To prove the second, note that both sides are linear in $g$, so that, by
continuity, it suffices to prove the result when $g$ is a monomial $x^n$. This
holds trivially when $n\le 1$. The general case follows by induction using the
Jacobi identity.
\end{proof}

\subsection{Integrability}

The following computation completes the proof of integrability.

\begin{lemma}
The 2-form $[\w,\w]$ vanishes, so that $\w$ is integrable.
\end{lemma}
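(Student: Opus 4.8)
The plan is to reduce the vanishing of $[\w,\w]$ to a single identity in $\Der\p$ and then verify that identity with the Levin--Racinet calculus, the Addition Formula, and Lemma~\ref{lem:wp}. First I would exploit that the base $\C\times\h$ is two-dimensional. Writing $\w = P\,d\tau + Q\,d\xi$ with $\Der\p$-valued coefficients $P,Q$, one has $\w\wedge\w = [P,Q]\,d\tau\wedge d\xi$, so (since $d\w=0$ is already proved) integrability is equivalent to the pointwise identity $[P,Q]=0$. The coefficient of $d\xi$ comes only from $\nu_1$, so $Q=\ad_{A_1}$ with $A_1=\t F(\xi,\t,\tau)\cdot\a$; the coefficient of $d\tau$ is $P=\frac{1}{2\pi i}\,\a\frac{\partial}{\partial\t}+\psi_0+\frac{1}{2\pi i}\ad_{A_2}$, where $\psi_0$ is the $d\tau$-coefficient of $\psi$ and $A_2=\big(\frac{1}{\t}+\t\frac{\partial F}{\partial\t}(\xi,\t,\tau)\big)\cdot\a$.

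Next I would use the elementary fact that for any derivation $\delta$ of $\p$ and any $A\in\p$ one has $[\delta,\ad_A]=\ad_{\delta(A)}$. Applying this to each summand of $P$ collapses the bracket to $[P,Q]=\ad_W$, where, with $A_0:=\a\frac{\partial}{\partial\t}$,
$$
W=\frac{1}{2\pi i}A_0(A_1)+\psi_0(A_1)+\frac{1}{2\pi i}[A_2,A_1].
$$
Since $\p$ has trivial center, $\ad$ is injective, so it remains only to prove the single Lie-algebra identity $A_0(A_1)+2\pi i\,\psi_0(A_1)+[A_2,A_1]=0$ in $\p$.

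The last step is a computation in the Levin--Racinet calculus, in which every term is brought to the form $f(x,y)\circ(\a,\a)$. Here Lemma~\ref{lem:identities}(ii) (with $\delta=A_0$, using $A_0(\t)=\a$) produces the symbol of $A_0(A_1)$; Lemma~\ref{lem:identities}(i)--(ii) together with \eqref{eqn:psi} produce the symbol of $\psi_0(A_1)$; and a direct expansion gives $[A_2,A_1]=h(x)g(y)\circ(\a,\a)$, where $g(x)=xF(\xi,x,\tau)$ and $h(x)=\frac{1}{x}+x\frac{\partial F}{\partial\eta}(\xi,x,\tau)$. Because $f\circ(\a,\a)$ depends only on the antisymmetric part of $f$ by \eqref{eqn:skew}, I only need the total symbol $S(x,y)$ to be symmetric in $x\leftrightarrow y$. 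After the rational contributions are combined, the antisymmetric part of $S$ splits into a $\wp$-contribution coming from $\psi_0$ and a product contribution $F(\xi,x,\tau)\frac{\partial F}{\partial\eta}(\xi,y,\tau)-F(\xi,y,\tau)\frac{\partial F}{\partial\eta}(\xi,x,\tau)$ coming from $[A_2,A_1]$. The crux — and the step I expect to be the main obstacle — is recognizing that the Addition Formula (Proposition~\ref{prop:addition}) evaluates exactly this product to $F(\xi,x+y,\tau)\big(\wp(y)-\wp(x)\big)$, which cancels the $\wp$-contribution term for term; the surviving $\tfrac{1}{x^2},\tfrac{1}{y^2}$ pieces then telescope to zero by elementary algebra. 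Hence $S$ is symmetric, $W=0$, and $[\w,\w]=0$. Everything apart from invoking the Addition Formula is formal bookkeeping within the calculus.
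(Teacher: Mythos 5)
Your proposal is correct and takes essentially the same route as the paper's own proof: the same reduction of integrability to the vanishing of the $d\tau\wedge d\xi$ coefficient, the same three-term decomposition using $[\delta,\ad_v]=\ad_{\delta(v)}$ (with injectivity of $\ad$), and the same verification in the Levin--Racinet calculus via (\ref{eqn:skew}), (\ref{eqn:psi}), Lemma~\ref{lem:identities} and the Addition Formula (Proposition~\ref{prop:addition}). The only blemish is a harmless sign/labelling slip when you quote the Addition Formula: the product actually appearing in the antisymmetrized symbol of $[A_2,A_1]$ is $F'(x)F(y)-F'(y)F(x)$, which equals $F(x+y)\big(\wp(y,\tau)-\wp(x,\tau)\big)$, and with this reading the term-by-term cancellation you describe is exactly the paper's.
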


\begin{proof}
 Note that
\begin{multline*}
\pi i\,[\w,\w] = 
\bigg[d\tau\otimes \a\frac{\partial}{\partial \t} + 2\pi i\,\psi
\cr
+ 
d\tau \otimes\bigg(\frac{1}{\ad_\t} +
\ad_\t\frac{\partial F}{\partial \t}(\xi,\ad_\t,\tau)\bigg)(\a),
\ad_\t F(\xi,\ad_\t,\tau)(\a)\otimes d\xi\bigg].
\end{multline*}
The expression (\ref{eqn:psi}) implies that the coefficient of $d\tau\wedge
d\xi$ is
\begin{multline*}
[\a\frac{\partial}{\partial \t},\t F(\t)(\a)]
+
\frac{1}{2}\Big[
\frac{xy}{x+y}\big(\wp(x) - \frac{1}{x^2} - \wp(y)
+ \frac{1}{y^2}\big)\circ(\a,\a) \frac{\partial}{\partial\a},\t F(\t)(\a)
\Big]
\cr
+
\Big[\frac{1}{\t} + \t F'(\t)(\a), \t F(\t)(\a) \Big]
\end{multline*}
where $F(z)$ denotes $F(\xi,z,\tau)$ and $F'(z)$ denotes $\partial F/\partial
z(\xi,z,\tau)$. We'll compute these three terms, one at a time.

Since $[\delta,\ad_v] = \ad_{\delta(v)}$, Lemma~\ref{lem:identities} and
equation (\ref{eqn:skew}) imply that the first term is
\begin{align*}
[\a\frac{\partial}{\partial \t},\t F(\t)\cdot\a]
&= \bigg(\frac{(x+y)F(x+y)-yF(y)}{x}\bigg)\circ(\a,\a)
\cr
&= \frac{1}{2}\bigg(\Big(
\frac{y^2-x^2}{xy}\Big)\, F(x+y) - \frac{y}{x}\, F(y) + \frac{x}{y}\, F(x)
\bigg)\circ (\a,\a).
\end{align*}

The identity $[\delta,\ad_v] = \ad_{\delta(v)}$ and the Jacobi identity
(Lemma~\ref{lem:identities}) imply that the second term is
\begin{align*}
&\frac{1}{2}
\ad_{\t F(\t)}\bigg(\frac{xy}{x+y}\Big(\wp(x) - \frac{1}{x^2} - \wp(y)
+ \frac{1}{y^2}\Big)\circ(\a,\a)\bigg) \cr
&=
\frac{1}{2}
{xy}F(x+y)\Big(\wp(x) - \frac{1}{x^2} - \wp(y)
+ \frac{1}{y^2}\Big)\circ(\a,\a) \cr
&= \frac{1}{2}\bigg(
\Big(\frac{x^2-y^2}{xy}\Big)\, F(x+y) + xy(\wp(x)-\wp(y))F(x+y)
\bigg)\circ (\a,\a)
\end{align*}
The addition formula, Proposition~\ref{prop:addition}, implies that the third
term is
\begin{align*}
&\bigg(\Big(\frac{1}{x} + x F'(x)\Big)y F(y)\bigg) \circ(\a,\a) \cr
&= \Big(\frac{y}{x}\, F(y) + xy F'(x) F(y)\Big)\circ (\a,\a)\cr
&= \frac{1}{2}\Big(
\frac{y}{x}\, F(y) - \frac{x}{y}\, F(x) + xy \big(F'(x) F(y) - F'(y)F(x)\big)
\Big) \circ (\a,\a)\cr
&= \frac{1}{2}\Big(
\frac{y}{x}\, F(y) - \frac{x}{y}\, F(x)
-xy\big(\wp(x)-\wp(y)\big)F(x+y)
\Big)\circ (\a,\a).
\end{align*}
These three terms clearly sum to 0.
\end{proof}

\part{Hodge Theory and Applications}

The main goal of this section is to show that the elliptic KZB connection
underlies an admissible variation of mixed Hodge structure (MHS) over $\E' =
\M_{1,2}$ and to show that this variation is isomorphic to the variation of MHS
whose fiber over $[E,x]$ is the Lie algebra of the unipotent fundamental group
of $(E',x)$. We use this to show that the periods of the limit MHS on the fiber
of $\bP$ over $[E_{\partial/\partial q},\partial/\partial w]$ are multizeta
values and to derive an explicit formula for the natural morphism of MHS
$$
\pi_1^\un(\Pminus,\partial/\partial w) \to
\pi_1^\un(E_{\partial/\partial q}',\partial/\partial w).
$$

As preparation, we show that the KZB connection extends to a meromorphic
connection over $\Mbar_{1,2}$ with regular singularities and  pronilpotent
monodromy about the boundary divisors. To fill a gap in the literature, we
prove, in Section~\ref{sec:rigidity}, that the local system associated to the
universal elliptic KZB connection on $\bP$ is the local system $\bP^\top$. It
can be used to prove the analogous result for the KZB connection over
$\M_{1,1+n}$. This complements results in \cite[\S4]{levin-racinet} and \cite[\S
4.3]{cee}.

Throughout, $\p = \L(\t,\a)^\wedge$ and $\cP$ is the corresponding prounipotent
group. Set
$$
\Der^0 \p = \{\delta \in \Der \p : \delta([\t,\a]) = 0\}.
$$
This is the infinitesimal analogue of the mapping class group $\G_{1,\vec{1}}$.

The reader is assumed to be familiar with the basics of Deligne's theory of
mixed Hodge structures \cite{deligne:hodge2}. A good introductory reference is
the book \cite{steenbrink-peters} by Steenbrink and Peters. Another good
introductory reference is Carlson's paper \cite{carlson}. An exposition of the
construction of the mixed Hodge structure on the unipotent fundamental group
of a smooth variety can be found in \cite{hain:bowdoin}.

\section{Extending $\bP$ to $\Mbar_{1,2}$}
\label{sec:extension}

The punctured universal elliptic curve $\E'$ is isomorphic to $\M_{1,2}$ and is
the complement of a normal crossing divisor in $\Mbar_{1,2}$. This divisor has
two components: the zero section and the nodal cubic. The flat bundle $\cP$ over
$\E'$ has prounipotent monodromy about each, and thus extends naturally to a
bundle over $\Mbar_{1,2}$ with regular singularities and (pro) nilpotent
residues along each component.

This extension is easily described. First, the complement of the Tate curve in
$\Mbar_{1,2}$ is the universal elliptic curve $\E$, which is a quotient of
$\C\times \h$.  The bundle $\bP$ defined in Section~\ref{sec:bP} is defined over
$\E$, not just over $\E'$. We take this to be the extension across the zero
section.

To extend $\bP$ across the Tate curve, recall from Example~\ref{ex:hodge} that
the holomorphic vector bundle $\cH := \H\otimes_\C \O_{\D^\ast}$ associated to
$\H$ is trivial on the punctured $q$-disk $\D^\ast$. The framing $\t$, $\a$ of
$\cH$ over $\D^\ast$ determines an extension $\Hbar$ of $\H$ to the entire
$q$-disk $\D$ that is framed by $\t$ and $\a$. The formula for the natural
connection on $\cH$ in Example~\ref{ex:connection_H} implies it extends to a
meromorphic connection on $\Hbar$ with a regular singular point at the cusp
$q=0$ and with nilpotent residue. This implies that $\Hbar$ is Deligne's
canonical extension of $\cH$ to $\Mbar_{1,1}$. (Cf.\ \cite{deligne:ode}.)

Since $\p = \L(\t,\a)^\wedge$, this determines an extension of $\p\times \C
\times \D^\ast \to \C \times \D^\ast$ to $\C\times \D$; its fiber over $q \in
\D$ is the free Lie algebra generated by the fiber of $\Hbar$ over $q$, which is
naturally isomorphic to $\L(\t,\a)^\wedge$. The pullback of the universal
elliptic curve over $\Mbar_{1,1}$ to $\D$ minus the double point $P$ of the
nodal cubic is the quotient of $\C\times \h$ by the subgroup
$$
\G := \begin{pmatrix}
1 & \Z \cr 0 & 1
\end{pmatrix}
\ltimes \Z^2.
$$
of $\SL_2(\Z)\ltimes \Z^2$.

The action of $\G$ on $\p \times \C \times \h$ induces an action of $\G$ on $\p
\times \C \times \D$. The pullback of $\bP$ to $\E_{\D^\ast}$ thus extends to a
bundle $\bPbar$ over $\E_\D$ (minus the double point $P$ of the nodal cubic) as
the quotient of this action.

Formulas (\ref{eqn:qexp}), (\ref{eqn:def_F}) and (\ref{eqn:derivative})
imply that this extension has regular singularities along the identity section
and along the nodal cubic $q=0$.

\begin{proposition}
The meromorphic extension of the elliptic KZB connection defined above has
regular singularities along the two boundary components of $\Mbar_{1,2}$: the
nodal cubic $E_0$ and the identity section. It has pronilpotent residue
at each codimension 1 boundary point. \qed
\end{proposition}

\section{Restriction to $E_\tau'$}

Fix $\tau \in \h$. The first task in proving that the universal elliptic KZB
connection has the expected monodromy is to check that its restriction to the
fiber $E_\tau'$ of $\E' \to \M_{1,1}$ induces an isomorphism of
$\pi_1(E_\tau',x)^\un$ with $\cP$.

The restriction of the universal elliptic KZB connection to $E_\tau'$ is
$$
\nabla = d + \nu_1 = d + \t F(\xi,\t,\tau)\cdot\a\, d\xi.
$$
Identify $\p$ with the image of the adjoint action $\ad : \p \to \Der \p$ which
is injective as $\p$ has trivial center. With this identification, $\nabla$
takes values in $\p$. Fix $x\in \C-\Lambda_\tau$. The associated monodromy
representation $\rho_x : \pi_1(E_\tau',x) \to \cP$ is given by (cf.\
Cor.~\ref{cor:chen-dyson})
$$
\rho_x(\gamma) = \bigg(1 + \int_{c_\gamma}\nu_1 + \int_{c_\gamma}\nu_1\nu_1 +
\int_{c_\gamma} \nu_1\nu_1\nu_1 + \cdots\bigg)e(-m(\gamma)\t)
$$
where $\rho(\gamma) = \big(m(\gamma),n(\gamma)\big) \in \Z^2$. (That is,
the class of $\gamma$ in $H_1(E_\tau')$ is $n(\gamma)\a + m(\gamma)\b$.)

\begin{proposition}
If $[\gamma] = n\a + m\b \in H_1(E_\tau')$, then
$$
\Theta_x(\gamma) \equiv 1 + (m\tau+n)\a - 2\pi i m \t \bmod (\t,\a)^2.
$$
\end{proposition}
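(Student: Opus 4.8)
The plan is to extract $\Theta_x(\gamma)$ from the Chen--Dyson expansion recorded just before the statement (cf.\ Corollary~\ref{cor:chen-dyson}) and to throw away everything lying in $(\t,\a)^2$. The one piece of input I would isolate first is the leading behaviour of $\nu_1 = \t F(\xi,\t,\tau)\cdot\a\,d\xi$. Since $F(\xi,\eta,\tau) = 1/\xi + 1/\eta \bmod \text{holomorphic}$, the product $\eta F(\xi,\eta,\tau)$ is holomorphic at $\eta = 0$ with value $1$ there. Writing $\t F(\xi,\t,\tau)\cdot\a = \sum_{k\ge 0} c_k(\xi,\tau)\,\ad_\t^k(\a)$ with $c_0 = 1$, each term with $k\ge 1$ lies in $(\t,\a)^{k+1}\subseteq (\t,\a)^2$, so that
\[
\nu_1 \equiv \a\,d\xi \pmod{(\t,\a)^2}.
\]
Note that the surviving coefficient $c_0 = 1$ is independent of $\xi$, which is what makes the computation clean.

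Next I would discard the higher iterated integrals and expand the factor of automorphy. Every factor $\nu_1$ lies in the augmentation ideal $(\t,\a)$, so $\int_{c_\gamma}\nu_1\nu_1$ and all longer words lie in $(\t,\a)^2$ and may be dropped; similarly $e(-m\t) = \exp(-2\pi i m\t) \equiv 1 - 2\pi i m\t \pmod{\t^2}$. Using the first step, $\int_{c_\gamma}\nu_1 \equiv \a\int_{c_\gamma}d\xi$ because $\a$ is constant, and therefore
\[
\Theta_x(\gamma) \equiv \Big(1 + \a\textstyle\int_{c_\gamma}d\xi\Big)\big(1 - 2\pi i m\t\big) \equiv 1 + \a\textstyle\int_{c_\gamma}d\xi - 2\pi i m\t \pmod{(\t,\a)^2},
\]
since the cross term $\big(\int_{c_\gamma}d\xi\big)\,(2\pi i m)\,\a\t$ is again of order $2$.

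It then remains only to evaluate the period $\int_{c_\gamma}d\xi$. Here I would lift the loop $\gamma$ representing $n\a + m\b \in H_1(E_\tau')$ to the path $c_\gamma$ in the universal cover $\C$; under the identification of $\a,\b$ with the generators $1,\tau$ of $\Lambda_\tau = \Z\oplus\tau\Z$ from Section~\ref{sec:H}, this lift runs from $x$ to $x + (n + m\tau)$, whence $\int_{c_\gamma}d\xi = m\tau + n$. Substituting yields exactly $\Theta_x(\gamma) \equiv 1 + (m\tau+n)\a - 2\pi i m\t \pmod{(\t,\a)^2}$.

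The Lie-theoretic content is immediate once $\eta F(\xi,\eta,\tau)|_{\eta=0} = 1$ is noted, so I expect no difficulty there. The step I would handle most carefully is this last period computation: one must align the orientation and sign conventions of the transport function $T$ (and the matching of the homology basis $\a,\b$ with the lattice basis $1,\tau$) so that the lift picks up $+(m\tau+n)$ rather than its negative. This is bookkeeping rather than a genuine obstacle, but it is the only place where a sign error could creep in.
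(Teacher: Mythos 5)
Your proof is correct and follows essentially the same route as the paper's: both reduce $\nu_1 \equiv \a\,d\xi \bmod (\t,\a)^2$, discard the higher iterated integrals, expand the factor of automorphy $e(-m\t) \equiv 1 - 2\pi i m\t$, and evaluate the period of $d\xi$ along the lifted path. The only cosmetic difference is that the paper checks the formula on the generators (using $\int_{c_\a} d\xi = 1$, $\int_{c_\b} d\xi = \tau$, and the residue term for later use), whereas you compute $\int_{c_\gamma} d\xi = m\tau + n$ for a general loop in one stroke.
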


\begin{proof}
Observe that
\begin{align*}
\nu_1 &= \t F(\xi,\t,\tau)\cdot \a\,d\xi \cr
&= \t\bigg(\frac{1}{\t} + \frac{1}{\xi} + \text{ holomorphic in }\xi \bigg)
\cdot \a\, d\xi \cr
&\equiv \a\,d\xi \bmod (\t,\a)^2
\end{align*}
and that $\Res_{\xi=0} \nu_1 \equiv [\t,\a] \bmod (\t,\a)^3$. It follows that
$\Theta_x(\a) \equiv 1+\a \bmod (\t,\a)^2$ and that
$$
\Theta_x(\b) \equiv (1+\tau \a)e(-\t)
\equiv 1 + \tau\a - 2\pi i \t \bmod (\t,\a)^2.
$$
\end{proof}

\begin{corollary}
\label{cor:h1}
The universal elliptic KZB connection induces the identification
$$
\C\a \oplus \C\t \to H_1(E_\tau;\C)
$$
that takes $\a$ to $\a$ and $2\pi i \t$ to $\tau \a - \b$, the Poincar\'e
dual of $\w_\tau$.
\end{corollary}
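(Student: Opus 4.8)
The plan is to deduce the corollary from the Proposition by abelianizing the monodromy. Since the restriction $\nabla = d + \nu_1$ takes values in $\p = \L(\t,\a)^\wedge$, the monodromy representation $\Theta_x$ lands in $\cP = \exp\p$, and the abelianization $\p/[\p,\p]$ is exactly the degree-one part $\C\a \oplus \C\t$. First I would observe that each $\Theta_x(\gamma)$ is group-like, so its logarithm is primitive and its class in $\p/[\p,\p]$ is the degree-one term of $\Theta_x(\gamma) - 1$. Passing to abelianizations therefore yields a homomorphism $\pi_1(E_\tau',x) \to \C\a \oplus \C\t$ which, the target being abelian, factors through $H_1(E_\tau';\C)$.

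Next I would read the resulting linear map straight off the Proposition: the class $[\gamma] = n\a + m\b$ is sent to the degree-one term $(m\tau + n)\a - 2\pi i m\t$. Taking $(n,m) = (1,0)$ and $(0,1)$ shows that $\a \mapsto \a$ and $\b \mapsto \tau\a - 2\pi i\t$, so with respect to the bases $\{\a,\b\}$ and $\{\a,\t\}$ the map is
$$
\begin{pmatrix} 1 & \tau \cr 0 & -2\pi i \end{pmatrix},
$$
which has determinant $-2\pi i \neq 0$ and is hence an isomorphism. The inclusion $E_\tau' \hookrightarrow E_\tau$ induces an isomorphism $H_1(E_\tau';\C) \cong H_1(E_\tau;\C)$ carrying $\a,\b$ to the basis of the same name, so this is an isomorphism onto $H_1(E_\tau;\C)$.

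Finally I would invert this isomorphism to obtain the asserted identification $\C\a \oplus \C\t \to H_1(E_\tau;\C)$. Solving the two relations gives $\a \mapsto \a$ and $2\pi i\t \mapsto \tau\a - \b$, and the computation of Section~\ref{sec:H} (see also Remark~\ref{rem:meaning}) identifies $\tau\a - \b$ as the Poincar\'e dual of $\w_\tau$, completing the proof. The step requiring the most care is not a genuine obstacle but a matter of bookkeeping: one must track that the Proposition computes the abelianized monodromy $H_1(E_\tau';\C) \to \C\a \oplus \C\t$, whereas the corollary records its inverse, and one must keep the factors of $2\pi i$ consistent with the normalization $\t = \w_\tau/2\pi i$ of Example~\ref{ex:hodge}.
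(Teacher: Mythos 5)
Your proposal is correct and takes essentially the same route as the paper: the corollary is read off from the Proposition by abelianizing the monodromy, sending $\a \mapsto \a$, $\b \mapsto \tau\a - 2\pi i\t$, and inverting, with the identification of $\tau\a - \b$ as the Poincar\'e dual of $\w_\tau$ coming from the computation in Section~\ref{sec:H}. Your careful bookkeeping (group-likeness, factoring through $H_1$, tracking the inverse and the factor of $2\pi i$) just makes explicit what the paper leaves implicit.
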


This corresponds to the framing of the bundle $\H$ given in
Example~\ref{ex:hodge}. (This statement can also be deduced from
Remark~\ref{rem:abelianization2}.)

The universal connection induces an isomorphism of the unipotent completion of
$\pi_1(E_\tau',x)$ with $\cP$ for all $(x,\tau) \in \E_\h$. This is a special
case of \cite[Prop.~2.2]{cee}.

\begin{corollary}
\label{cor:isom}
The monodromy of the restriction of the universal elliptic KZB connection to the
fiber $E_\tau'$ of $\E'$ over $[E_\tau]\in \M_{1,1}$ is a homomorphism
$\pi_1(E_\tau',x) \to \cP$ that induces an isomorphism $\pi_1(E_\tau',x)^\un \to
\cP$.
\end{corollary}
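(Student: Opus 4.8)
The plan is to recognize both the source and the target as unipotent completions of free groups of rank two and to invoke the weak form of the free-group completion criterion recorded in the subsection on the unipotent completion of a free group (the variant with $\theta(x_j)=\exp(U_j)$, $U_j\equiv X_j\bmod(\t,\a)^2$). Since $E_\tau'$ is a once-punctured elliptic curve, it is homotopy equivalent to a wedge of two circles, so $\pi_1(E_\tau',x)$ is free of rank two; I would fix a free basis $\alpha,\beta$ whose homology classes are $\a$ and $\b$ in $H_1(E_\tau')$. On the other side, $\cP=\exp\p$ with $\p=\L(\t,\a)^\wedge$ is precisely the unipotent completion of the free group on the two generators $\t,\a$. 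Thus it suffices to check that $\rho_x$ lands in $\cP$ and that the induced map on the degree-one parts (modulo $(\t,\a)^2$) is an isomorphism.

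First I would verify that $\rho_x$ takes values in $\cP$, i.e.\ that each $\Theta_x(\gamma)$ is group-like. By the Chen--Dyson formula (Corollary~\ref{cor:chen-dyson}), $\Theta_x(\gamma)=\rho_x(\gamma)$ is the product of the inverse transport $T(c_\gamma)^{-1}=1+\int_{c_\gamma}\nu_1+\int_{c_\gamma}\nu_1\nu_1+\cdots$ and the factor $e(-m\t)$. The transport $T(c_\gamma)$ is the endpoint value of the solution of $X'=-(c_\gamma^\ast\nu_1)X$, $X(0)=1$, and since $\nu_1$ is $\p$-valued (hence primitive-valued), the proposition asserting that solutions of $X'=AX$ with $A$ in the free Lie algebra are group-like shows $T(c_\gamma)$ is group-like; as the group-like elements form a group (Proposition~\ref{prop:exp}), so is $T(c_\gamma)^{-1}$. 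Finally $e(-m\t)$ is group-like because $\t$ is primitive. Hence $\rho_x(\gamma)\in\cP$.

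Next I would feed in the linear-term computation already carried out above. For the chosen basis one has $\log\rho_x(\alpha)\equiv\a$ and $\log\rho_x(\beta)\equiv\tau\a-2\pi i\,\t\bmod(\t,\a)^2$. The two linear parts $\a$ and $\tau\a-2\pi i\,\t$ are linearly independent in the degree-one space $\C\a\oplus\C\t$, since the change-of-basis determinant is $-2\pi i\neq 0$; this is exactly the content of Corollary~\ref{cor:h1}. Therefore $\rho_x(\alpha)$ and $\rho_x(\beta)$ are exponentials of elements of the augmentation ideal whose classes modulo $(\t,\a)^2$ form a basis of $\C\a\oplus\C\t$, and the weak form of the free-group completion criterion then yields a continuous isomorphism $\C\pi_1(E_\tau',x)^\wedge\to\C\ll\t,\a\rr$ and, by restriction to group-like elements, the asserted isomorphism $\pi_1(E_\tau',x)^\un\to\cP$.

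The substantive content is entirely in these two inputs --- group-likeness and the nondegeneracy of the linear part --- so I expect no genuine obstacle, only bookkeeping. The one point needing care is that the criterion as stated assumes $U_j\equiv X_j$, whereas here the linear parts are a nondegenerate linear combination of $\t$ and $\a$. I would handle this by pre-composing with the linear automorphism of $\p$ sending the generating pair $(\a,\t)$ to the basis $(\a,\tau\a-2\pi i\,\t)$ of $H$, which is an admissible change of free generators of $\L(\t,\a)^\wedge$ and reduces us exactly to the hypothesis $U_j\equiv X_j$ of the criterion.
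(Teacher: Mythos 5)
Your proposal is correct and follows essentially the same route as the paper: the paper's own argument consists of the linear-term computation $\Theta_x(\alpha)\equiv 1+\a$, $\Theta_x(\beta)\equiv 1+\tau\a-2\pi i\,\t \bmod (\t,\a)^2$ (its Proposition and Corollary~\ref{cor:h1}), combined with the weak free-group completion criterion stated in the unipotent completion section, exactly the two inputs you identify. Your explicit verification of group-likeness and the change-of-generators remark merely spell out steps the paper leaves implicit.
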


\subsection{A better framing of $\H$}

To get rid of the powers of $2\pi i$ in the formulas, we replace $2\pi i\,d\tau$
by $dq/q$ and set
\begin{equation}
\label{eqn:def_AT}
{T = 2 \pi i\, \t \text{ and } A = (2\pi i)^{-1}\a}.
\end{equation}

\begin{remark}
\label{rem:limit_H1}
There is a conceptual reason the basis $A,T$ is a good choice. Denote the
fiber of the universal elliptic curve $\E_\D \to \D$ over $q\in \D$ by $E_q$.
For each nonzero tangent vector $\v$ of $0\in \D$, there is a limit MHS on $H_1$
of the fiber, which we denote by $H_1(E_\v)$ and think of as the homology of the
fiber over $\v$. This MHS is an extension
$$
0 \to \Q(1) \to H_1(E_\v) \to \Q(0) \to 0
$$
which splits when $\v = \partial/\partial q$. In this case, the copy of $\Q(1)$
is spanned by $A$ and the copy of $\Q(1)$ is spanned by $T$.
As will become apparent in Part~\ref{part:Q-DR}, this limit MHS has a $\Q$-DR
form. The basis $A$, $T$ is a $\Q$-DR basis of $H_1(E_{\partial/\partial q})$.
The basis above is the extension of this basis to a framing of the bundle
$\Hbar_\D$ in which $F^0\Hbar$ is trivialized by $T$. The basis $\a$, $\b$
is a $\Q$-Betti basis of $H_1(E_\v)tro$.
\end{remark}

In this frame $[\t,\a] = [T,A]$,
$$
\frac{1}{2\pi i}d\tau \otimes \a \frac{\partial}{\partial \t}
= \frac{dq}{q}\otimes A \frac{\partial}{\partial T}.
$$
and the terms in the KZB connection become:
$$
\psi =
\sum_{m\ge 1}\bigg(\frac{G_{2m+2}(\tau)}{(2m)!}\frac{dq}{q} \otimes
\sum_{\substack{j+k=2m+1\cr j,k > 0}} (-1)^j[\ad_T^j(A),\ad_T^k(A)]
\frac{\partial}{\partial A}\bigg),
$$
and
$$
\nu =
T F^\zag(2\pi i \xi, T,\tau)\cdot A \frac{dq}{q}
+ \bigg(
\frac{1}{T} + T\frac{\partial}{\partial T} F^\zag(2\pi i\xi,T,\tau)
\bigg)\cdot A\frac{dq}{q}
$$

\begin{remark}
The periodicity properties (\ref{eqn:w}) and (\ref{eqn:q}) of $F$ and the
formulas for the factors of automorphy (\ref{eqn:def}) and (\ref{eqn:M_tilde})
imply that the connection is pulled back from a connection on the trivial bundle
$\L(A,T)^\wedge \times \C^\ast\times\D \to \C^\ast\times \D$ along the map
$\C\times \h \to \C^\ast\times \D$ defined by $(\xi,\tau)\mapsto (w,q) :=
(e(\xi),e(\tau))$.
\end{remark}

\section{Restriction to the First-order Tate Curve}
\label{sec:tate}

In this section we compute the restriction of the universal elliptic KZB
connection to  the first order Tate curve. This allows us to see that the
connection has regular singularities along the nodal cubic. Restricting further
to the the regular locus $\Pminus$ of the nodal cubic minus its identity is the
first step in computing the image of $\pi_1(\Pminus,\v)^\un$ in the limit mixed
Hodge structure on the unipotent fundamental group of the first order smoothing
of the Tate curve.

The restriction of the natural extension of the KZB connection to the boundary
divisor $q=0$ is the image of $\w$ under the restriction mapping
$$
\Omega^1_\cEbar(\log(\Mbar_{1,1}\cup \Ebar_0)) \to
\Omega^1_\cEbar(\log(\Mbar_{1,1}\cup \Ebar_0))\otimes_{\O(\Ebar)}\O_{E_0},
$$
where $E_0 \cong \Gm$ is the fiber $\Ebar_0$ of $\cEbar$ over $q=0$ with the
double point removed. The identity section of $\cEbar$ is identified with
$\Mbar_{1,1}$. In concrete terms the restriction mapping is given by
$$
G(\xi,q) \frac{d\xi}{\xi} + H(\xi,q)\frac{dq}{q} \mapsto
G(\xi,0) \frac{d\xi}{\xi} + H(\xi,0)\frac{dq}{q} 
$$
where $G$ and $H$ are holomorphic functions of $(\xi,q)$, and then setting
$w = e(\xi)$.

Formula~(\ref{eqn:qexp}) implies that
$$
F(\xi,\eta)|_{q=0} = \pi i\bigg(
\frac{e(\xi)+1}{e(\xi)-1} + \frac{e(\eta)+1}{e(\eta)-1}
\bigg)
=\pi i \bigg(
\frac{w+1}{w-1} + \coth(\pi i \eta)
\bigg)
$$
where $w := e(\xi)$ is the parameter in the normalization $\P^1$ of the nodal
cubic $E_0$. From this and the identity (\ref{eqn:derivative}), it follows that
when $q=0$
$$
\frac{1}{\eta} + \eta \frac{\partial F}{\partial\eta}(\xi,\eta)|_{q=0}
= \frac{1}{\eta} - \frac{(\pi i)^2\eta}{\sinh^2(\pi i \eta)}
= \frac{1}{\eta} - \frac{\pi^2\eta}{\sin^2(\pi \eta)},
$$
which is holomorphic at $\eta = 0$.

The restriction of the connection to a first order neighbourhood of $E_0$ is
given by the 1-form:
$$
\w_0 =  \frac{dq}{q}\otimes A\frac{\partial}{\partial T}
+ \psi_0 + \nu_0,
$$
where
\begin{align*}
\psi_0
&=
\sum_{m\ge 1}\bigg(\frac{1}{(2m)!}G_{2m+2}|_{q=0}\frac{dq}{q}
\otimes \sum_{\substack{j+k=2m+1\cr j,k > 0}}
(-1)^{j}[\ad_T^j(A),\ad_T^k(A)]\bigg)
\frac{\partial}{\partial A} \cr
&=-
\sum_{m\ge 1}\bigg(\frac{(2m+1)B_{2m+2}}{(2m+2)!}\frac{dq}{q}
\otimes \sum_{\substack{j+k=2m+1\cr j>k > 0}}
(-1)^j[\ad_T^j(A),\ad_T^k(A)]\bigg)
\frac{\partial}{\partial A}
\end{align*}
and, using the identity (\ref{eqn:coth}),
\begin{align*}
\nu_0 &=
\frac{T}{2}\bigg(\frac{w+1}{w-1} +
\frac{e^T+1}{e^T-1}\bigg)\cdot A\,\frac{dw}{w}
+
\frac{T/4}{\sinh^2(T/2)} \cdot A\,\frac{dq}{q} \cr
&=
[T,A]\, \frac{dw}{w-1} + \bigg(\frac{T}{e^T-1}\bigg)\cdot A\, \frac{dw}{w}
+  \bigg(\frac{1}{T} - \frac{T/4}{\sinh^2(T/2)}\bigg) \cdot A\,\frac{dq}{q}
\cr
&= 
[T,A]\, \frac{dw}{w-1} + \bigg(\frac{T}{e^T-1}\bigg)\cdot A\, \frac{dw}{w}
+
\sum_{m=1}^\infty (2m-1)\frac{B_{2m}}{(2m)!}\ad_T^{2m-1}A \frac{dq}{q}
\end{align*}

At this stage, it is convenient to define $\d_{2m} \in \Der_{2m}\p$ (the
derivations of $\p$ of degree $2m$) by\footnote{These derivations occur in the
work \cite{tsunogai} of Tsunogai on the action of the absolute Galois group on
the fundamental group of a once punctured elliptic curve. They also occur in the
paper of Calaque et al \cite[\S 3.1]{cee}.}
\begin{equation}
\label{eqn:deltas}
\d_{2m} =
\begin{cases}
-A\frac{\partial}{\partial T} & m = 0; \cr
\ad_T^{2m-1}(A)-
\sum_{\substack{j+k=2m-1\cr j>k > 0}}(-1)^j[\ad_T^j(A),\ad_T^k(A)]
\frac{\partial}{\partial A} & m > 0.
\end{cases}
\end{equation}

Assembling the pieces, we see that the restriction of the KZB connection form
to a first order neighbourhood of the Tate curve is
\begin{equation}
\label{eqn:w_0}
\w_0 = [T,A]\,\frac{dw}{w-1} + \bigg(\frac{T}{e^T-1}\bigg)\cdot A\, \frac{dw}{w}
+
\sum_{m=0}^\infty (2m-1)\frac{B_{2m}}{(2m)!}\d_{2m} \frac{dq}{q}.
\end{equation}

\subsection{Monodromy logarithms}

The residue of the connection at $(w,q)$ acts on the fiber $\L(A,T)^\wedge$ of
$\p$ over it as a derivation. At each point $(w,0)$, where $w\neq 0,1$, the
residue of the connection (\ref{eqn:w_0}) is
\begin{equation}
\label{eqn:residue}
N_q = \sum_{m\ge 0}(2m-1)\frac{B_{2m}}{(2m)!}\,\d_{2m}.
\end{equation}
The residue $N_w$ of the connection of each point along the identity section
$w=1$ is
$$
N_w = \ad_{[T,A]}.
$$
Since the KZB connection is flat and since the two boundary components intersect
transversely at $(w,q) = (1,1)$, $[N_q,N_w]=0$. This implies that each
$\d_{2n}$ annihilates $[T,A]$, and therefore lies in
$$
\Der^0\p := \{\delta \in \Der \p : \delta([T,A]) = 0\}.
$$

\begin{proposition}[Tsunogai \cite{tsunogai}]
\label{prop:highest_wt}
For all $m\ge 0$, $\d_{2m} \in \Der^0 \p$. When $m>1$, $\d_{2m}$ is a highest
weight vector of weight $2m-2$ for the natural $\sl_2$-action; that is, it is
annihilated by $A\partial/\partial T$. \qed
\end{proposition}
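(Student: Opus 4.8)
The plan is to reduce both assertions to short computations on the two generators $T$ and $A$. First I would fix the action of $\d_{2m}$ on generators. Reading the leading summand of the defining formula~(\ref{eqn:deltas}) as the inner derivation $\ad_{\ad_T^{2m-1}(A)}$ — which is the reading dictated by the origin of the $\nu_0$-term (an element of $\p$ acting through $\ad$) and the one that makes $\d_{2m}$ homogeneous of degree $2m$ — one gets $\d_{2m}(T)=[\ad_T^{2m-1}(A),T]=-\ad_T^{2m}(A)$ and $\d_{2m}(A)=[\ad_T^{2m-1}(A),A]-\sum_{j+k=2m-1,\ j>k>0}(-1)^j[\ad_T^j(A),\ad_T^k(A)]$.

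For the membership $\d_{2m}\in\Der^0\p$ I would appeal to the flatness already established. The curvature of the restriction to the polydisk is $[N_w,N_q]\,\frac{dw}{w-1}\wedge\frac{dq}{q}$, so $[N_w,N_q]=0$. Since $N_w=\ad_{[T,A]}$ is homogeneous of degree $2$ and $N_q=\sum_{m\ge0}(2m-1)\tfrac{B_{2m}}{(2m)!}\d_{2m}$ is a sum of derivations of pairwise distinct degrees $2m$ with nonzero coefficients $(2m-1)B_{2m}/(2m)!$, separating $[\ad_{[T,A]},N_q]=0$ by degree forces $[\ad_{[T,A]},\d_{2m}]=0$ for every $m$. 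Combining this with the identity $[\d_{2m},\ad_{[T,A]}]=\ad_{\d_{2m}([T,A])}$ and the injectivity of $\ad\colon\p\to\Der\p$ (trivial centre) gives $\d_{2m}([T,A])=0$. A self-contained alternative is the direct check $\d_{2m}([T,A])=[-\ad_T^{2m}(A),A]+[T,\d_{2m}(A)]=0$, where expanding $[T,\d_{2m}(A)]=\ad_T\d_{2m}(A)$ by the Jacobi identity (Lemma~\ref{lem:identities}(i)) and the skew-symmetrisation~(\ref{eqn:skew}) telescopes against the first term.

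For $m>1$ the remaining claim concerns the natural $\sl_2$-triple on $\p$ extending the standard action on $H=\C T\oplus\C A$; normalise it so that $T,A$ are the weight $\pm1$ vectors, the Cartan is $h$, and the raising operator is the derivation $e$ that sends $A\mapsto T$ and so annihilates each $\ad_T^n(A)$ with $n\ge1$ (it is this operator that is meant in the statement). The weight is immediate from homogeneity: $\d_{2m}(T)=-\ad_T^{2m}(A)$ has $h$-weight $2m-1$ and $T$ has weight $1$, so $[h,\d_{2m}]=(2m-2)\d_{2m}$. The highest-weight condition $[e,\d_{2m}]=0$ I would verify on generators, and it collapses because $e$ kills every $T$ and turns the final $A$ into $T$, producing $\ad_T^{n}(T)=0$. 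Hence $e(\d_{2m}(T))=0=\d_{2m}(e(T))$, while in $e(\d_{2m}(A))$ every term dies except through the lone generator $A$ in $[\ad_T^{2m-1}(A),A]$, giving $e(\d_{2m}(A))=[\ad_T^{2m-1}(A),T]=-\ad_T^{2m}(A)=\d_{2m}(T)=\d_{2m}(e(A))$; so $e$ commutes with $\d_{2m}$, exhibiting it as a highest weight vector of weight $2m-2$.

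The step I expect to demand the most care is simply getting the interpretation of~(\ref{eqn:deltas}) right so that $\d_{2m}$ is homogeneous and $\d_{2m}(T)=-\ad_T^{2m}(A)$; after that, the membership claim follows at once from flatness and the highest-weight claim from the single observation that the raising operator annihilates every $\ad_T^n(A)$ with $n\ge1$. If instead one insists on the Jacobi proof of $\d_{2m}([T,A])=0$, the telescoping bookkeeping (marrying~(\ref{eqn:skew}) with Lemma~\ref{lem:identities}(i)) is the only genuinely fiddly point. Finally, the exclusion $m=1$ is natural: there the double sum is empty, so $\d_2=\ad_{[T,A]}=N_w$ is the invariant, weight-zero derivation, a degenerate instance of the highest-weight statement.
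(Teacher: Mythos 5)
Your proof is correct. On the first assertion it is essentially the paper's argument: flatness gives $[N_w,N_q]=0$, and the paper likewise deduces from $N_w=\ad_{[T,A]}$ that each $\d_{2m}$ kills $[T,A]$ (you make the degree-separation step and the appeal to injectivity of $\ad$ explicit, which the paper leaves tacit); your ``self-contained alternative'' is exactly the paper's own direct computation for ``doubting Thomases,'' merely organized by applying Leibniz to $[T,A]$ rather than evaluating the two summands of $\d_{2m}$ separately.

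Where you genuinely diverge is the highest-weight assertion, and there your route is better than the paper's: the paper gives no computation at all, saying only that the claim ``is most easily verified by rewriting $\d_{2m}$ in the notation of Levine and Garoufalidis, where it is clear,'' whereas you verify $[e,\d_{2m}]=0$ on the two generators in two lines, using only that $e$ annihilates $\ad_T^n(A)$ for $n\ge 1$. Moreover, your parenthetical reinterpretation of the operator is not merely permissible but forced. In the paper's conventions ($u\,\partial/\partial\t$ sends $\t\mapsto u$), the operator $A\partial/\partial T$ named in the statement sends $T\mapsto A$, $A\mapsto 0$: it is the lowering operator of the $\sl(H)$-triple (it is $\Gr^W_\dot N_q$, the monodromy logarithm), and no nonzero derivation of positive $h$-weight $2m-2$ can be annihilated by it --- anything killed by both $e$ and $f$ has weight $0$ because $[e,f]=h$, and your computation shows $\d_{2m}$ is killed by $e=T\partial/\partial A$ and has weight $2m-2$. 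So the statement as printed contains a slip: the annihilator is $T\partial/\partial A$, which is exactly what you prove, and which is the only reading compatible with the claimed weight. Your closing remark on the excluded case is also right: $\d_2=\ad_{[T,A]}$ is killed by both $e$ and $f$, hence is $\sl_2$-invariant of weight zero. The one presentational change I would make is to promote the parenthetical to an explicit remark with this weight-theoretic justification, so a reader does not think you quietly altered the statement.
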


It can be shown that $\d_{2m}$ is a highest weight vector of the {\em unique}
copy of $S^{2m}H$ in $\Gr^W_{-2m}\Der^0\p$.

The following computation follows from identity (\ref{eqn:coth_alt}) and the
fact that
$$
\d_{2m}(T) = (\ad_T^{2m-1}A) \cdot T = - \ad_T^{2m}(A) = -T^{2m}\cdot A.
$$

\begin{proposition}
\label{prop:N(T)}
The value of $N_q$ on $T$ is
$$
N_q(T) = \frac{1}{4} \bigg(\frac{T^2}{\sinh^2(T/2)}\bigg) \cdot A. \qed
$$
\end{proposition}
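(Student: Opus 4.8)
The plan is to evaluate the formula (\ref{eqn:residue}) for $N_q$ on the generator $T$ one summand at a time, using the explicit description (\ref{eqn:deltas}) of the derivations $\d_{2m}$, and then to resum the answer with the Bernoulli identity (\ref{eqn:coth_alt}). Since
$$
N_q = \sum_{m\ge 0}(2m-1)\frac{B_{2m}}{(2m)!}\,\d_{2m},
$$
the entire computation reduces to finding $\d_{2m}(T)$ for each $m\ge 0$.

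First I would note that each $\d_{2m}$ is a sum of two pieces of rather different types. For $m>0$ the second piece is a derivation of the form $X\,\partial/\partial A$, which by construction annihilates $T$; for $m=0$ the lone term $-A\,\partial/\partial T$ sends $T\mapsto -A$. The surviving piece for $m>0$ is the term written $\ad_T^{2m-1}(A)$, which I would interpret, via the inclusion $\ad:\p\to\Der\p$, as the inner derivation $\ad_{\ad_T^{2m-1}(A)}$. Evaluating it on $T$ gives
$$
[\ad_T^{2m-1}(A),\,T] = -[T,\ad_T^{2m-1}(A)] = -\ad_T^{2m}(A).
$$
Because the case $m=0$ produces $\d_0(T) = -A = -\ad_T^{0}(A)$, the two cases merge into the uniform formula $\d_{2m}(T) = -\ad_T^{2m}(A)$, valid for all $m\ge 0$.

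Substituting this back gives
$$
N_q(T) = -\sum_{m\ge 0}(2m-1)\frac{B_{2m}}{(2m)!}\,\ad_T^{2m}(A).
$$
To finish, I would multiply the identity (\ref{eqn:coth_alt}) by $u$ to get
$$
\sum_{m\ge 0}(2m-1)\frac{B_{2m}}{(2m)!}\,u^{2m} = -\frac{u^2/4}{\sinh^2(u/2)},
$$
and then specialize $u=\ad_T$ and apply the resulting operator to $A$, using the convention that $f\cdot A$ denotes $f(\ad_T,\ad_A)$ applied to $A$. This produces exactly $N_q(T) = \frac{1}{4}\big(T^2/\sinh^2(T/2)\big)\cdot A$, as claimed.

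The only genuine subtlety, and the step I would treat most carefully, is the bookkeeping that isolates $\d_{2m}(T)$: one must see that the $\partial/\partial A$ component contributes nothing on $T$, that the leading term of $\d_{2m}$ is to be read as an inner derivation, and that the $m=0$ summand already fits the general pattern so that no separate argument is required. Everything else is a direct appeal to the generating-function identity recorded in Section~\ref{sec:b_identities}.
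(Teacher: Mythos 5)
Your proof is correct and is essentially the paper's own argument: the paper likewise reduces to the termwise computation $\d_{2m}(T) = -\ad_T^{2m}(A) = -T^{2m}\cdot A$ (the $\partial/\partial A$ parts killing $T$, the leading terms acting as inner derivations) and then resums via the identity (\ref{eqn:coth_alt}). You simply spell out the $m=0$ versus $m>0$ bookkeeping that the paper leaves implicit.
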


\subsection{Pullback to $\P^1-\{0,1,\infty\}$}
\label{sec:Pminus}

We can pullback the connection to $\P^1$ along the ``map'' $E'_0 = \Pminus \to
E_{\partial/\partial q}$ to the fiber of $\E$ over the tangent vector
$\partial/\partial q$. Just set $dq$ to zero to get:
\begin{equation}
\label{eqn:restnKZB}
\w_{E_0'} = [T,A]\, \frac{dw}{w-1} +
\bigg(\frac{T}{e^T-1}\bigg)\cdot A\, \frac{dw}{w}.
\end{equation}
Since
$$
\frac{T}{e^T-1} + \frac{T}{e^{-T}-1} + T = 0,
$$
it follows that
the residues $R_0$, $R_1$ and $R_\infty$ of $\w_{E_0'}$ at $0,1,\infty$ are:
\begin{equation}
\label{eqn:def_Rs}
R_0 = \bigg(\frac{T}{e^T-1}\bigg)\cdot A,\quad
R_1 = [T,A],\quad
R_\infty = \bigg(\frac{T}{e^{-T}-1}\bigg)\cdot A
\end{equation}

\section{Restriction to $\M_{1,\vec{1}}$}
\label{sec:vec}

In this section, we compute the restriction of the universal elliptic KZB
connection to the first order neighbourhood $\Mbar_{1,\vec{1}}$ of the 0-section
of $\cEbar$. In algebraic terms, this restriction map is induced by the
$\O_{\cEbar}$-module homomorphism
$$
\Omega^1_\cEbar(\log \Mbar_{1,1}) \to \Omega^1_\cEbar(\log \Mbar_{1,1})
\otimes_{\O_\cEbar}\O_{\Mbar_{1,1}}
$$ 
Here we are identifying the zero-section of $\cEbar$ with $\Mbar_{1,1}$. This
computation will allow us to see that the restricted connection takes values in
$\Der^0\p$.

In concrete terms the restriction mapping is given by
$$
G(\xi,\tau) \frac{d\xi}{\xi} + H(\xi,\tau)d\tau \mapsto
G(0,\tau) \frac{d\xi}{\xi} + H(0,\tau)d\tau 
$$
where $G$ and $H$ are holomorphic functions of $(\xi,\tau)$. The restricted
connection is thus given by the 1-form
$$
\w' = \frac{dq}{q}\otimes A\frac{\partial}{\partial T} + \psi + \nu'
$$
where
\begin{align*}
\nu' &= [\t,\a]\frac{d\xi}{\xi} + \frac{1}{2\pi i}\bigg(
\frac{1}{\t} + \t\frac{\partial F}{\partial \t}(0,\t,\tau)
\bigg)\cdot \a\, d\tau  \cr
&= [T,A]\bigg(\frac{d\xi}{\xi} - 2G_2(\tau)\frac{dq}{q}\bigg)
- \sum_{m\ge 1}
\frac{2}{(2m)!} G_{2m+2}(\tau)\,\frac{dq}{q}\otimes\ad_T^{2m+1}(A).
\end{align*}
Note that both terms in this last expression are $\SL_2(\Z)$-invariant
and that the term $\psi$ remains unchanged as it does not depend on $\xi$:
$$
\psi = \sum_{m\ge 1}\bigg(\frac{2}{(2m)!}G_{2m+2}(\tau)\frac{dq}{q} \otimes
\sum_{\substack{j+k=2m+1\cr j>k > 0}} (-1)^j[\ad_T^j(A),\ad_T^k(A)]
\frac{\partial}{\partial A}\bigg).
$$

\begin{proposition}
The restriction of the KZB connection to $\M_{1,\vec{1}}$ is given by the
$\SL_2(\Z)$-invariant 1-form
\begin{equation}
\label{eqn:conn}
\w' = -\frac{dq}{q}\otimes \d_0 -
\bigg(2G_2(\tau)\frac{dq}{q} - \frac{d\xi}{\xi}\bigg) \otimes \d_2
- \sum_{m=2}^\infty \frac{2}{(2m-2)!} G_{2m}(\tau)\frac{dq}{q}\otimes\d_{2m}
\end{equation}
on $\Der^0\L(A,T)^\wedge\times\C\times\h\to \C\times \h$.
\end{proposition}

This gives an alternative computation of $N_q$:
$$
N_q = \Res_{q=0}\w' = \sum_{m=0}^\infty (2m-1)\frac{B_{2m}}{(2m)!} \d_{2m}.
$$

\section{Rigidity}
\label{sec:rigidity}

We have not yet proved that $\bP$ with the KZB connection is isomorphic to the
flat bundle $\bP^\top$ defined in Section~\ref{sec:bPtop}. This will be resolved
in this section by proving that both have the same monodromy representation.

The punctured universal elliptic curve $\E'$ is the moduli space $\M_{1,2}$.
Choose a base point $[E_o,x_o]$ of $\M_{1,2}$, where $x_o\neq 0$. There is a
natural isomorphism
$$
\pi_1(\M_{1,2},[E_o,x_o]) \cong \pi_0 \Diff^+(E_o,x_o,0) \cong \Gamma_{1,2},
$$
where $\G_{1,2}$ is the mapping class group of a genus 1 curve with 2 marked 
points.

The restriction of the universal elliptic KZB connection to $E_o$ defines a
homomorphism $\pi_1(E_o',x_o) \to \Aut \p$ whose image lies in the subgroup
$\cP=\exp\p$ which acts on $\p$ via the adjoint action. Corollary~\ref{cor:isom}
implies that it induces an isomorphism $\pi_1^\un(E_o',x_o) \to \cP$.

Identify $\cP$ with $\pi_1^\un(E_o',x_o)$ via this isomorphism. Then one has the
monodromy representations
$$
\rho^\KZB: \G_{1,2} \to \Aut \cP \text{ and }
\rho^\top : \G_{1,2} \to \Aut\cP
$$
of $\bP$ and $\bP^\top$. To prove that $\bP^\top$ and $\bP$ are isomorphic, we
have to prove that $\rho^\KZB=\rho^\top$. Observe that if $\gamma \in
\pi_1(E_o',x_o)$, then $\rho^\top(\gamma)$ and $\rho^\KZB(\gamma)$ are both
conjugation by the image of $\gamma$ in $\cP$ as the restriction of $\bP$ and
$\bP^\top$ to $E'_o$ are isomorphic.

To prove that $\rho^\KZB=\rho^\top$ it is useful to consider a more abstract
situation. Suppose that $N$ is a normal subgroup of a discrete group $\G$.
Denote the unipotent completion of $N$ by $\cN$.\footnote{One can use any field
over char 0, but we will take $\C$.} The homomorphism $\G \to \Aut N$ that takes
$g\in \G$ to $n \mapsto gng^{-1}$ induces a homomorphism $\phi : G \to \Aut
\cN$. The restriction of $\phi$ to $N$ takes $n \in N$ to $\iota_{\theta(n)}$,
where $\theta : N \to \cN$ is the natural homomorphism and $\iota_u$ denotes
conjugation by $u \in \cN$.

\begin{lemma}
\label{lem:extension}
If $\cN$ has trivial center, $\phi$ is the unique homomorphism $\G \to \Aut\cN$
whose restriction to $N$ is $n\mapsto \iota_{\theta(n)}$.
\end{lemma}

\begin{proof}
The condition that $\cN$ have trivial center implies that the centralizer of $\im
\theta$ in $\Aut \cN$ is trivial: if $\sigma \in \Aut \cN$, then
$$
\sigma \iota_{\theta(n)} \sigma^{-1} = \iota_{\sigma(\theta(n))}.
$$
So $\sigma$ centralizers $\im \theta$ if and only if $\iota_{\theta(n)} =
\iota_{\sigma(\theta(n))}$ for all $n\in N$. Since  $\im\theta$ is Zariski dense
in $\cN$ and since $\cN$ has trivial center, this implies that $\sigma = \id$.

Suppose now that $\alpha : \G \to \Aut \cN$ is a homomorphism whose restriction
to $N$ is $n \mapsto \iota_{\theta(n)}$. If $g\in\G$, then
\begin{multline*}
\alpha(g)\iota_{\theta(n)}\alpha(g)^{-1} = \alpha(g)\alpha(n)\alpha(g)^{-1}
= \alpha(gng^{-1})
\cr
= \iota_{gng^{-1}} = \dots = \phi(g)\iota_{\theta(n)}\phi(g)^{-1}
\end{multline*}
for all $n\in N$.
So $\alpha(g)^{-1}\phi(g)$ centralizes $\im \theta$ and is therefore trivial.
\end{proof}

Applying the lemma with $\G = \G_{1,2}$, $N=\pi_1(E_o',x_o)$, $\cN=\cP$ and
$\phi = \rho^\top$ establishes the equality of $\rho^\KZB$ and $\rho^\top$.

\begin{theorem}
\label{thm:rigidity}
The exponential mapping induces an isomorphism of the locally constant sheaf
over $\E'$ of flat sections  of the universal elliptic KZB connection on $\bP$
with the locally constant sheaf $\bP^\top$ over $\E'$. Equivalently, the diagram
$$
\xymatrix{
\pi_1(\E',[E,x_o]) \ar[r]^(.6){\rho^\KZB} \ar@{=}[d] & \Aut \p \ar[d]^{\cong}
\cr
\pi_1(\E',[E,x_o]) \ar[r]_(.6){\rho^\top} & \Aut \cP
}
$$
commutes.
\end{theorem}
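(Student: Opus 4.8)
The plan is to reduce Theorem~\ref{thm:rigidity} to the uniqueness proposition stated just above, by placing the two monodromy representations in a common target and checking that they agree on a single fiber. First I would use the exponential isomorphism $\exp : \p \to \cP$ of Proposition~\ref{prop:exp} to transport the monodromy $\rho : \G_{1,2} \to \Aut\p$ of the universal elliptic KZB connection on $\bP$ into a homomorphism $\G_{1,2} \to \Aut\cP$, so that it and $\rho^\top : \G_{1,2} \to \Aut\cP$ have the same source and target. The goal is then to prove that these two homomorphisms coincide, since that is exactly the commutativity of the displayed square.

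Next I would verify that both homomorphisms satisfy the hypothesis of the proposition, namely that their restriction to the fiber subgroup $\pi = \pi_1(E_\tau',x_o) \subset \G_{1,2}$ is the composite $\pi \to \cP \xrightarrow{\mathrm{inner}} \Aut\cP$. For $\rho^\top$ this is immediate from the construction of $\bP^\top$, whose fibers carry the conjugation action of $\pi_1(\E')$ on the unipotent completion of $\pi_1(E')$. For the transported $\rho$ it is precisely Corollary~\ref{cor:isom} together with the fiberwise square recorded just before the theorem: the restriction of the KZB connection to $E_\tau'$ induces the isomorphism $\pi^\un \cong \cP$, and under it the monodromy on the fiber is the inclusion $\pi \hookrightarrow \cP$ followed by the inner action of $\cP$ on itself. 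Thus the restrictions of the two homomorphisms to $\pi$ correspond under $\exp$ and are both the inner action.

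With the hypotheses in place, I would invoke the proposition: it forces any homomorphism $\G_{1,2} \to \Aut\cP$ restricting to the inner action on $\pi$ to equal the natural action of $\G_{1,2}$ on $\cP$, which is $\rho^\top$ itself. Applied to the transported $\rho$, this yields equality with $\rho^\top$, i.e. the commutativity of the square in the theorem. To recover the first formulation I would then note that a fiberwise map of the associated local systems is a morphism of local systems exactly when it intertwines their monodromies; since $\exp$ is a fiberwise bijection by Proposition~\ref{prop:exp} and now commutes with monodromy, it is an isomorphism $\bP \to \bP^\top$ of local systems. The whole difficulty is concentrated in the proposition, which is assumed here; granting it, the only point needing genuine care is that the fiberwise identification of Corollary~\ref{cor:isom} is compatible across the base and not merely over the chosen fiber, a compatibility guaranteed by the homotopy invariance of parallel transport for the flat connection.
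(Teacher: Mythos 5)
Your proposal is correct and follows essentially the same route as the paper: both arguments identify the fiber monodromy of the KZB connection with the inner action via Corollary~\ref{cor:isom}, note that $\rho^\top$ restricts to the inner action by construction, and then invoke the rigidity proposition (proved via the semi-direct product decomposition of $\G_{1,2}$) to force the two representations of $\G_{1,2}$ in $\Aut\cP$ to coincide. The translation back to an isomorphism of local systems via the fiberwise exponential is likewise the paper's concluding step.
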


\begin{remark}
A similar argument can be used to prove that the local system associated to the
KZB connection over $\M_{1,1+n}$ constructed in \cite{cee} is the canonical
local system whose fiber over $[E,0,x_1,\dots,x_n]$ is the unipotent completion
of the fundamental group of the configuration space of $n$ points on $E'$ with
base point $(x_1,\dots,x_n)$.
\end{remark}

Any other connection on the extension of $\bP$ to $\Mbar_{1,2}$ with regular
singularities with pronilpotent residue and conjugate monodromy representation
differs from the KZB connection by a holomorphic map $\Mbar_{1,2} \to \Aut \p$.
Since $\Aut \p$ is an affine group and since $\Mbar_{1,2}$ is complete, the
change of gauge must be constant.

\begin{corollary}
The universal elliptic KZB connection is the unique meromorphic connection on
$\p \times \C \times \h$ (up to a constant change of gauge) satisfying:
\begin{enumerate}

\item it is $\SL_2(\Z)\ltimes \Z^2$ invariant with the same factors of
automorphy as the elliptic KZB connection,

\item it has poles along $\{(z,\tau)\in \C\times\h : z \in \Lambda_\tau\}$
and is holomorphic elsewhere,\

\item its natural extension to $\bP$ over $\Mbar_{1,2}$ has regular
singularities with pronilpotent residues,

\item for any one $[E,x]\in \M_{1,2}$, the restriction of the connection to
$\E'$ induces an isomorphism $\pi_1(E',x)^\un \to \cP$.

\end{enumerate}
\end{corollary}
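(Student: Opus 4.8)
The plan is to derive this uniqueness assertion from the rigidity result of Theorem~\ref{thm:rigidity} together with a compactness argument for maps into a pro-affine group. First I would note that the universal elliptic KZB connection $\nabla_\KZB = d + \w$ itself satisfies (i)--(iii): its $\SL_2(\Z)\ltimes\Z^2$-invariance was established in the Modularity subsection of Section~\ref{sec:connection}, the regularity of the singularities of its extension $\bPbar$ to $\Mbar_{1,2}$ (along both the zero section and the nodal cubic) follows from the explicit residue computations in Sections~\ref{sec:tate} and~\ref{sec:vec}, and property (iii) is exactly Corollary~\ref{cor:isom}. Thus it suffices to show that any holomorphic connection $\nabla$ on $\p\times\C\times\h$ satisfying (i)--(iii) differs from $\nabla_\KZB$ by a constant element of $\Aut\p$.

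The first step is to match monodromy. By (i), both $\nabla$ and $\nabla_\KZB$ descend to flat connections on $\bP\to\E'=\M_{1,2}$, with monodromy representations $\G_{1,2}\to\Aut\p\cong\Aut\cP$. Using the adjoint inclusion $\ad:\p\hookrightarrow\Der\p$, condition (iii) identifies the monodromy of $\nabla$ on the distinguished fiber $E_\tau'$ with the composite $\pi=\pi_1(E_\tau',x)\hookrightarrow\cP\xrightarrow{\Ad}\Aut\p$ of the natural inclusion with the inner action (possibly after an initial constant change of gauge, which is harmless for a statement proved only up to constant gauge); this is precisely the hypothesis of Theorem~\ref{thm:rigidity}. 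Since the same holds for $\nabla_\KZB$, the rigidity Proposition preceding Theorem~\ref{thm:rigidity} forces both full representations of $\G_{1,2}$ to equal $\rho$, and in particular to coincide.

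Once the monodromies agree, the two flat holomorphic connections on the bundle $\bP\to\E'$ are gauge equivalent by a holomorphic gauge transformation $g:\E'\to\Aut\p$ with $\nabla = g\cdot\nabla_\KZB$; holomorphicity of $g$ on $\E'$ is immediate since a matching holomorphic flat frame exists locally on the universal cover. The crux is then to extend $g$ holomorphically across the boundary of $\Mbar_{1,2}$, and this is where (ii) is used. Because both connections have regular singularities with pronilpotent residues along each component of the normal crossing boundary divisor, and because their local monodromies around those components agree (the monodromies being equal), each extension coincides with Deligne's canonical extension $\bPbar$ of the common flat bundle; by uniqueness of the canonical extension the comparison isomorphism $g$ extends to a holomorphic map $g:\Mbar_{1,2}\to\Aut\p$. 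I expect this extension step to be the main obstacle, as it is the only place the regular-singularity hypothesis genuinely intervenes; I would justify it via the rigidity of regular singular connections with nilpotent residues and prescribed monodromy.

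Finally, $\Mbar_{1,2}$ is a proper (complete) orbifold whereas $\Aut\p$ is a pro-affine group. Pulling back regular functions, and passing to a connected smooth proper cover of the orbifold if necessary, a holomorphic map from a connected complete variety to an affine variety is constant; since $\Aut\p$ is an inverse limit of affine groups whose regular functions separate points, $g$ is a single constant element of $\Aut\p$. Hence $\nabla$ and $\nabla_\KZB$ differ by a constant change of gauge, which is exactly the claimed uniqueness.
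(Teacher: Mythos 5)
Your proposal is correct and follows essentially the same route as the paper: the paper likewise deduces the corollary from the rigidity Proposition and Theorem~\ref{thm:rigidity} (matching monodromies, hence gauge equivalence over $\E'$), and then observes that the resulting gauge transformation extends, by regular singularities, to a holomorphic map $\Mbar_{1,2}\to\Aut\p$, which is constant since $\Mbar_{1,2}$ is proper and $\Aut\p$ is pro-affine. Your write-up merely makes explicit the steps (verification that KZB satisfies (i)--(iii), the initial constant gauge adjustment, and the canonical-extension argument for extending $g$) that the paper leaves implicit in its one-sentence remark preceding the corollary.
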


One consequence of this rigidity result is that the canonical extension of the
connection constructed by Calaque, Enriquez and Etingof \cite{cee} over
$\M_{1,2}$  equals the connection constructed by Levin and Racinet in
\cite{levin-racinet}.

\section{Hodge Theory}
\label{sec:hodge}

In this section we show that, with appropriate filtrations, $\bP$ is an
admissible variation of mixed Hodge structure (MHS) over $\E'$. We assume that
the reader is familiar with the definition of mixed Hodge structures. We begin
by recalling the definition of an admissible variation of MHS over a smooth
variety. Further details can be found in \cite{steenbrink-zucker} and
\cite{kashiwara}.

\subsection{Admissible variations of mixed Hodge structure}
Suppose that $X$ is a smooth projective variety (or orbifold) and that $U=X-D$
is the complement of a normal crossing divisor $D$ in $X$.
\begin{enumerate}

\item Suppose that $\V$ is a local system of finite dimensional $\Q$-vector
spaces over $U$. For simplicity, we assume that the local monodromy operator at
each smooth point $P$ of $D$ is unipotent. This holds for each finite
dimensional quotient of $\bP$. 

\item Let $\cV$ be Deligne's canonical extension of the flat vector bundle
$\V\otimes_\Q \O_U$ to $X$. It is a holomorphic vector bundle with a connection
$$
\nabla : \cV \to \cV \otimes \Omega^1_X(\log D)
$$
with regular singularities along $D$. It is characterized by the property
that the residue of the connection at each smooth point of $D$ is nilpotent.

\item Suppose that $F^\dot \cV$ is a decreasing filtration of $\cV$ by
holomorphic vector bundles over $X$ and that these satisfy ``Griffiths
transversality":
$$
\nabla : F^p \cV \to  F^{p-1}\cV \otimes \Omega^1_X(\log D) =:
F^p\big(\cV \otimes \Omega^1_X(\log D)\big).
$$
That is, $\nabla$ respects the Hodge filtration.

\item There is an increasing filtration $W_\dot$ of $\V$. It induces a
filtration $W_\dot \cV$ of $\cV$ by flat sub-bundles.

\item Suppose that for each $x \in U$ the restriction of $F^\dot \cV$ and 
$W_\dot \V$ to the fiber $V_x$ of $\V$ over $x$ define a MHS on $V_x$.

\end{enumerate}
When $X$ is a curve, these data (a ``pre-variation'' of MHS) form an {\em
admissible} variation of MHS if for each $P\in D$, there is a {\em relative
weight filtration} $M_\dot$ of the fiber $V_P$ of $\cV$ over $P$ and its
nilpotent endomorphism $N = \Res_P\nabla$.\footnote{Background material on
relative weight filtrations can be found in \cite{steenbrink-zucker} and
\cite{hain:weight}.} This means that:
\begin{enumerate}

\item $M_\dot$ is an increasing filtration of $V_P$ satisfying
$N(M_r V_P) \subseteq M_{r-2}V_P$ and $N(W_mV_P)\subseteq W_m V_P$ for all
$m$ and $r$,

\item for each $m$ and each $k$, $N^k$ induces an isomorphism
$$
N^k : \Gr^M_{m+k}\Gr^W_m V_P \to \Gr^M_{m-k}\Gr^W_m V_P.
$$

\end{enumerate}
In this case, for each $P\in D$ and each choice of non-zero tangent vector $\v
\in T_P X$, there is a canonical MHS on the fiber $V_P$ of $\cV$ over $P$. This
MHS will be denoted $V_\v$. It has weight filtration $M_\dot$; its Hodge
filtration is the restriction of $F^\dot\cV$ to $V_P$. The $\Q$-structure on
$V_P$ is spanned by the elements
$$
\lim_{t\to 0} (t/c)^{-N}v(t) \in V_P,
$$
where $N=\Res_P \nabla$, $t$ is a local holomorphic parameter on $X$ centered at
$P$, $\v=c\partial/\partial t$, and $v(t)$ is a flat section of $\V$ defined in
an angular sector containing a ray tangent to $\v$. Each $W_m V_P$ is a sub-MHS.

When $X$ has dimension $>1$, the pre-variation is admissible if its restriction
to each curve in $X$ is admissible. We will also refer to pro-objects of the
category of admissible variations of MHS as admissible variations.

\subsection{The variation $\bP$}
We now show that the KZB equation gives rise to an admissible variation of MHS.
Denote the maximal ideal $(T,A)$ of $\Q\ll T,A \rr$ by $I$. As in
\cite{levin-racinet}, define Hodge and weight filtrations on $\Q\ll T,A\rr$ in
the natural way by setting
$$
W_{-n}\Q\ll T,A\rr = I^n \text{ and }
F^{-p}\Q\ll T,A \rr = \{x \in \Q\ll T,A\rr : \deg_A(x) \le p\}.
$$
We also define a {\em relative weight filtration} $M_\dot$ on $\Q\ll T,A\rr$ by
$$
M_{-2m}\Q\ll T,A \rr = \{x \in \Q\ll T,A\rr : \deg_A(x) \ge m\}.
$$
These filtrations are multiplicative. By restriction, they induce Hodge, weight
and relative weight filtrations on $\p$ and thus on $\Der \p$. They also induce
filtrations on the bundle $\p \times \C \times \h$ over $\C\times \h$.

\begin{theorem}
\label{thm:hodge}
The Hodge and weight filtrations on $\p \times \C \times \h$ descend to Hodge
and weight filtrations of the local system $\bP \to \E'$. With these
filtrations, the local system $\bP$ over $\E'=\M_{1,2}$ and its restriction to
$\M_{1,\vec{1}}$ are admissible variations of MHS whose weight graded quotients
are direct sums of Tate twists of $S^m\H$. The MHS on the fiber over $[E,x]$ is
the canonical MHS on the Lie algebra of the unipotent completion of
$\pi_1(E',x)$. The relative weight filtration of the limit MHS associated to the
tangent vector $\v = \partial/\partial q + \partial/\partial w$ at the identity
of the nodal cubic is $M_\dot$. In addition, there is a natural homomorphism
$$
\pi_1(\Pminus,\partial/\partial w)^\un \to
\pi_1(E_{\lambda\partial/\partial q}',\partial/\partial w)^\un
$$
whose image is invariant under monodromy and which is a morphism of MHS for all
$\lambda \in \C^\ast$
\end{theorem}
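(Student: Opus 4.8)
The plan is to establish the five assertions in sequence, each reducing to structure already assembled in the preceding sections. First I would check that $W_\bullet$ and $F^\bullet$ descend to $\bP$. The factor of automorphy $\Mtilde_\gamma$ acts through the ring automorphism induced by the linear map $M_\gamma$ of $\H=\langle T,A\rangle$ (Example~\ref{ex:hodge}); since $M_\gamma$ is degree-preserving it fixes $W_{-n}=I^n$, and since $T\mapsto(c\tau+d)T$ while $A\mapsto(c\tau+d)^{-1}A+(\text{mult.})T$, it does not increase $\deg_A$ and hence preserves $F^\bullet$. The $\Z^2$-part acts by $e(-m\t)=\exp(-m\,\ad_T)$, which raises total degree and fixes $\deg_A$, so it preserves both filtrations as well. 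Note that $M_\bullet$ is only $\Z^2$-invariant, since $M_\gamma$ mixes $A$ and $T$; this is consistent with $M_\bullet$ being a feature of the limit rather than a global filtration.

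Next I would identify the graded quotients and verify the variation axioms. By Remark~\ref{rem:abelianization2}, $\Gr^W_{-1}\bP=\H$ and $\Gr^W_\bullet\bP$ is the free Lie algebra on $\H$; decomposing each homogeneous piece as an $\sl_2$-representation via Clebsch--Gordan exhibits $\Gr^W_{-n}\bP$ as a direct sum of Tate twists of $S^m\H$ (the twists arising from $\wedge^2\H\cong\Q(1)$), each a polarizable variation of Hodge structure of the correct weight. Griffiths transversality $\nabla F^{-p}\subseteq F^{-p-1}\otimes\Omega^1$ is a degree count on $\w$: the term $\tfrac{dq}{q}\otimes A\partial/\partial T$ sends $T\mapsto A$ and so shifts $F$ down one step, and inspection of $\psi$ and $\nu$ shows every term raises $\deg_A$ by at most one. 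Combined with the pronilpotence of the residues $N_q$ and $N_w$ computed in Sections~\ref{sec:tate} and~\ref{sec:vec}, this yields that $\bP$ over $\M_{1,2}$, and its restriction to $\M_{1,\vec{1}}$, is an admissible variation of mixed Hodge structure.

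To identify the fibre MHS with the canonical one of \cite{hain:dht2}, I would invoke rigidity: by Theorem~\ref{thm:rigidity} the underlying local system is canonically $\bP^\top$, and the canonical MHS is the realisation carried by the $\Q$-de~Rham structure of \cite{hain-matsumoto:mem}; as the KZB connection is precisely that realisation, the two structures agree by the uniqueness of admissible variations with prescribed graded quotients and monodromy. For the relative weight filtration at $\v$, the logarithm of the local monodromy is $N=N_q+N_w$ with $N_w=\ad_{[T,A]}$ and $N_q=\sum_{m\ge0}(2m-1)\tfrac{B_{2m}}{(2m)!}\,\d_{2m}$ (equation~(\ref{eqn:residue})). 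Since $N_w$ and every $\d_{2m}$ with $m\ge1$ raise total degree and so vanish on $\Gr^W$, only the degree-zero term $-\d_0=A\partial/\partial T$ survives, giving $\Gr^W N=A\partial/\partial T$. This is the $\sl_2$ raising operator (the $\d_{2m}$ are its highest-weight vectors, Proposition~\ref{prop:highest_wt}), whose monodromy weight filtration on $\bigoplus_m S^m\H$ is precisely the filtration by $\deg_A$; verifying $N\,M_k\subseteq M_{k-2}$ and the Lefschetz isomorphisms $N^k:\Gr^M_k\to\Gr^M_{-k}$ then identifies $M_\bullet$ as the relative weight filtration.

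Finally, the homomorphism $\pi_1(\Pminus,\partial/\partial w)^\un\to\pi_1(E_{\lambda\partial/\partial q}',\partial/\partial w)^\un$ is the one induced by the inclusion of the regular locus $E_0'\cong\Pminus$ of the nodal cubic, whose monodromy is governed by the residues $R_0,R_1,R_\infty$ of~(\ref{eqn:def_Rs}). That it is a morphism of MHS follows from functoriality of the limit MHS under this boundary inclusion. For monodromy invariance of the image, the key point is that $N_q$ annihilates $R_0,R_1,R_\infty$: for $R_1=[T,A]$ this is immediate as each $\d_{2m}\in\Der^0\p$, while for $R_0,R_\infty$ I would compute using $N_q(T)=\tfrac14(T^2/\sinh^2(T/2))\cdot A$ (Proposition~\ref{prop:N(T)}) together with the value of $N_q$ on $A$; since $N_q$ is a derivation, killing the generators forces it to fix the subalgebra they generate, so $\exp(N_q)$ preserves the image. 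The main obstacle is not in these largely routine degree-counts and explicit residue computations, but in the third assertion: pinning down admissibility and matching the fibre with the canonical MHS of \cite{hain:dht2} relies on the uniqueness theory for admissible variations and on the rigidity of Theorem~\ref{thm:rigidity}, which is where the substantive work lies.
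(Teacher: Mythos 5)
Most of your outline tracks the paper's proof: the descent of $F^\dot$ and $W_\dot$ through the factor of automorphy, the degree count giving Griffiths transversality, and the identification of $M_\dot$ as the relative weight filtration via $\Gr^W N_q = A\partial/\partial T$ together with the $\sl_2$ Lefschetz isomorphisms are all exactly the paper's steps. The genuine gap is in the third assertion --- the one you correctly flag as carrying the substantive work, but for which you offer no workable argument. ``Uniqueness of admissible variations with prescribed graded quotients and monodromy'' is not a valid principle, and it points in the wrong direction: the Hain--Zucker classification \cite{hain-zucker} says that a unipotent variation of MHS is determined by its underlying local system \emph{together with} the MHS on a fibre, so knowing the local system (Theorem~\ref{thm:rigidity}) and the weight graded quotients cannot by itself identify the fibre MHS --- that is precisely the datum at issue. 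Appealing to \cite{hain-matsumoto:mem} (``the KZB connection is precisely that realisation'') is circular: that the KZB connection realizes the canonical MHS is the statement being proved. The paper's actual argument is different: restrict $\bP$ to a single fibre $E'$; this is a unipotent variation of MHS over $E'$, so the main theorem of \cite{hain-zucker} gives that the monodromy representation $\theta_x : \p(E,x)^\canon \to \End\p(E,x)^\KZB$ is a morphism of MHS; on the other hand, $\theta_x : \p(E,x)^\KZB \to \End\p(E,x)^\KZB$ is also a morphism of MHS, because $\p(E,x)^\KZB$ is a Lie algebra in the category of pro-MHS and $\theta_x$ is its adjoint action; since $\p$ is free of rank two it has trivial centre, so $\theta_x$ is injective, and strictness of morphisms of MHS then forces both structures on $\p(E,x)$ to agree with the one induced on the common image, hence with each other. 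Without this (or a genuine substitute), your proof of the third assertion does not close.

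Two secondary points. First, admissibility does not follow from pronilpotence of the residues alone; one also needs the Hodge and weight bundles to extend across the boundary (in the paper this rests on $\Mtilde_{(m,n)} = e^{-mT} \in F^0W_0M_0\Der^0\p$) and the existence of the relative monodromy weight filtration --- which you do supply afterwards, so this is a matter of ordering rather than substance. Second, in the last assertion you reverse the paper's logic: the paper deduces $N_q(R_0)=N_q(R_1)=N_q(R_\infty)=0$ (Corollary~\ref{cor:N(R)}) \emph{from} the monodromy-invariance statement of the theorem, whereas you propose to prove invariance from that vanishing. Your direction is viable, but the computation you describe as routine is not: $N_q(R_0)=0$ is equivalent to the Bernoulli identity of Theorem~\ref{thm:identity}, whose elementary proof occupies Appendix~\ref{sec:identity}. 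You would need to import that identity for your final paragraph to be complete.
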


\begin{proof}
It suffices to consider the case where the base is $\E'$. The factor of
automorphy $\Mtilde_\gamma(\xi,\tau)$ (Cf.\ (\ref{eqn:M_tilde})) preserves both
the Hodge and weight filtrations on $\p\times \C\times \h$. They therefore
descend to filtrations of the bundle $\bP$ over $\E'$. Next observe that each
component of the universal elliptic KZB connection is a 1-form that takes values
in $F^{-1}W_0 \Der^0 \p$. This implies that, over $\E'$, the connection
satisfies Griffiths transversality and that the weight bundles are sub-local
systems of $\bP$. The weight filtration is defined over $\Q$ as it is defined in
terms of the lower central series filtration of $\p$.

As explained in Section~\ref{sec:extension}, the natural extension of the
universal elliptic KZB connection to the $q$-disk has regular singular points
along the nodal cubic and along the identity section. The extension of the Hodge
and weight bundles to the $q$-disk are the quotients of the bundles
$$
(F^p \p) \times \C^\ast \times \D \text{ and }
(W_m \p) \times \C^\ast \times \D
$$
over $\C^\ast\times \D$ (with coordinates $(w,q)$) by the factor of automorphy.
These are well defined as $\Mtilde_{(m,n)}(q) = e^{-mT}$, which lies in
$F^0 W_0 M_0 \Der^0\p$.

The residue at each point of the identity section is $N_w := \ad_{[T,A]}$. It
lies in $F^{-1}W_{-2}M_{-2}\Der^0\p$, which implies that the Hodge, weight and
(where relevant) the relative weight filtrations extend across the identity
section.

According to (\ref{eqn:residue}) the residue of the connection at each point of
the nodal cubic is
$$
N_q = \sum_{m\ge 0}(2m-1)\frac{B_{2m}}{(2m)!}\,\d_{2m} \in \Der^0 \p.
$$
It is easy to check that for each $m\ge 0$, $\d_{2m} \in
F^{-1}M_{-2}W_{-2m}\Der^0 \p$ for all $m\ge 0$, so that
$$
N_q \in F^{-1}M_{-2}W_{0}\Der^0\p.
$$
Moreover,
$$
\Gr^W_\dot N_q : \Gr^W_\dot \C\ll T,A\rr \to \Gr^W_\dot \C\ll T,A \rr
$$
is $\d_0$. Set $H=\C A \oplus \C T$. Since $\Gr^W_{-m} \C\ll T,A\rr = S^m H$
placed in weight $-m$ and since $\Gr^W_\dot N_q = -\d_0 = A\partial/\partial T
\in \sl(H)$, it follows easily from the representation theory of $\sl(H)$ that
$N_q^k$ induces an isomorphism
$$
\Gr^M_{-m+k} \Gr^W_{-m}\C\ll T,A\rr \to \Gr^M_{-m-k}\Gr^W_{-m} \C\ll T,A \rr.
$$
This implies that $M_\dot$ is the relative weight filtration of $N_q$ and for
$N_q + N_w$, which completes the proof that $\bP$ is an admissible variation of
MHS over $\M_{1,2}$.

To prove that the MHS on the fiber of $\bP$ over $[E]\in \M_{1,1}$ is its
canonical MHS (as defined in \cite{hain:dht1} or \cite{hain-zucker}) we consider
the restriction $\bP_E$ of $\bP$ to the fiber $E'$. The above discussion implies
that this is an admissible variation of MHS over $E'$. In fact, it is clearly  a
unipotent variation of MHS. Fix a base point $x\in E'$.
Theorem~\ref{thm:rigidity} implies that the fiber $\p(E,x)$ of $\bP_E$ over $x$
is naturally isomorphic to the Lie algebra of the unipotent completion of
$\pi_1(E',x)$. The monodromy representation $\theta_x : \p(E,x) \to \End
\p(E,x)$ of $\bP_E$ is the adjoint action. Since the center of $\p(E,x)$ is
trivial ($\p$ is free of rank $2$), $\theta_x$ is injective. Denote $\p(E,x)$
with its canonical MHS by $\p(E,x)^\canon$ and $\p(E,x)$ with the MHS given by
the elliptic KZB connection via Theorem~\ref{thm:rigidity} by $\p(E,x)^\KZB$.
The main theorem of  \cite{hain-zucker} implies that
$$
\theta_x : \p(E,x)^\canon \to \End \p(E,x)^\KZB
$$
is a morphism of MHS. On the other hand, since $\p(E,x)^\KZB$ is a Lie algebra
in the category of pro-MHS,
$$
\theta_x : \p(E,x)^\KZB \to \End \p(E,x)^\KZB
$$
is also a morphism of MHS. Since $\theta_x$ is injective, this implies that the
MHSs $\p(E,x)^\canon$ and $\p(E,x)^\KZB$ are equal.

The final statement follows from the construction \cite{hain:dht2} of limit
mixed Hodge structures on homotopy groups. It will be explained in greater
detail in \cite{hain:lmhs}.
\end{proof}

We can now prove that $N_q$ annihilates the $R_\alpha$.

\begin{corollary}
\label{cor:N(R)}
The derivation $N_q$ annihilates $R_0$, $R_1$ and $R_\infty$.
\end{corollary}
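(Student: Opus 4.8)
The plan is to avoid any power‑series manipulation and instead read everything off the explicit restricted connection already computed in Section~\ref{sec:tate}, invoking flatness at the two branches of the node. The key observation is that, in the frame $T,A$, the one‑form $\w_0$ has \emph{constant} coefficients with respect to the three logarithmic one‑forms $\frac{dw}{w}$, $\frac{dw}{w-1}$, $\frac{dq}{q}$. Indeed, combining the formula for $\nu_0$ with the definition (\ref{eqn:residue}) of $N_q$ and the residues (\ref{eqn:def_Rs}) gives
$$
\w_0 = \ad_{R_0}\,\frac{dw}{w} + \ad_{R_1}\,\frac{dw}{w-1} + N_q\,\frac{dq}{q},
$$
where $R_0=(\frac{T}{e^T-1})\cdot A$ and $R_1=[T,A]$ act on $\p$ through the adjoint representation, $N_q\in\Der^0\p$, and — this is the crucial point — none of the three coefficients depends on $w$ or $q$.

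Since the coefficients are constant, $d\w_0=0$, so integrability of $\w_0$ (a restriction of the flat universal connection) reduces to $\w_0\wedge\w_0=0$. First I note that $\frac{dw}{w}\wedge\frac{dw}{w-1}=0$, since both forms are multiples of $dw$. Hence the only surviving terms of $\w_0\wedge\w_0$ are $[N_q,\ad_{R_0}]\,\frac{dq}{q}\wedge\frac{dw}{w}$ and $[N_q,\ad_{R_1}]\,\frac{dq}{q}\wedge\frac{dw}{w-1}$, and flatness forces both $\Der\p$‑valued brackets to vanish. Using $[\delta,\ad_z]=\ad_{\delta(z)}$ for $\delta\in\Der\p$ and $z\in\p$, these read $\ad_{N_q(R_0)}=0$ and $\ad_{N_q(R_1)}=0$; as $\p$ has trivial center, $N_q(R_0)=0$ and $N_q(R_1)=0$. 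Finally $R_0+R_1+R_\infty=0$ gives $N_q(R_\infty)=0$, completing the proof.

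This is exactly the mechanism the paper already used near $(w,q)=(1,0)$ to show each $\d_{2m}$ kills $[T,A]$ (that is, $N_q(R_1)=0$); the only new input is that the $\frac{dw}{w}$‑coefficient of $\w_0$ is precisely $\ad_{R_0}$ and is $q$‑independent, so the same flatness relation at the $w=0$ branch of the node yields $N_q(R_0)=0$. I therefore expect no genuine computational obstacle — everything needed is on the page — and the only care required is bookkeeping: confirming that $\w_0$ really does have constant coefficients in this logarithmic frame (so that $d\w_0=0$ and the residues along $w=0$, $w=1$, $q=0$ are exactly $\ad_{R_0}$, $\ad_{R_1}$, $N_q$), and recording the identity $[\delta,\ad_z]=\ad_{\delta(z)}$ together with the triviality of the center of $\p$. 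For completeness I would add a sentence noting the conceptual route anticipated at the end of Section~\ref{sec:Pminus}: by Theorem~\ref{thm:hodge} the image of $\pi_1(\Pminus,\partial/\partial w)^\un$ in $\p$ is a constant sub‑variation of mixed Hodge structure, so the monodromy logarithm $N_q$ of the limit MHS restricts to zero on it and annihilates the $R_\alpha$; but the flatness argument above is shorter and entirely self‑contained.
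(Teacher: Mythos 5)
Your proof is correct, and it takes a genuinely different route from the one in the paper. The paper's own proof reduces everything to $N_q(R_0)=0$ (using $N_q(R_1)=0$ and $R_0+R_1+R_\infty=0$) and then argues Hodge-theoretically via Theorem~\ref{thm:hodge}: the image of the KZ isomorphism $\Theta_{KZ}$ is fixed by the monodromy about $q=0$, whose logarithm is $N_q$; a second, elementary proof via the equivalent Bernoulli-number identity (Theorem~\ref{thm:identity}) occupies Appendix~\ref{sec:identity}. What you do instead is push the paper's own flatness mechanism --- the one used in Proposition~\ref{prop:highest_wt} to obtain $[N_w,N_q]=0$ --- one step further: the full restricted form $\w_0=\ad_{R_0}\,\frac{dw}{w}+\ad_{R_1}\,\frac{dw}{w-1}+N_q\,\frac{dq}{q}$ computed in Section~\ref{sec:tate} has coefficients constant in $(w,q)$, so $d\w_0=0$ and integrability forces the coefficients of the two independent $2$-forms $\frac{dw}{w}\wedge\frac{dq}{q}$ and $\frac{dw}{w-1}\wedge\frac{dq}{q}$ to vanish separately; combined with $[\delta,\ad_z]=\ad_{\delta(z)}$ and the triviality of the center of $\p$, this yields $N_q(R_0)=N_q(R_1)=0$ simultaneously, and the sum relation gives $N_q(R_\infty)=0$. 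Two bookkeeping points should be made explicit in a final write-up: the displayed ``restriction to the polydisk about $(w,q)=(1,0)$'' in Section~\ref{sec:tate} omits the holomorphic-but-nonzero term $\ad_{R_0}\,dw/w$, and your argument correctly works with the full form $\w_0$; and the flatness of $\w_0$ is not flatness of a pullback to a submanifold but the specialization at $q=0$ of the integrability identity, which is legitimate because the coefficients in the logarithmic frame are holomorphic at $q=0$ --- this is standard and is exactly what the paper itself invokes to get $[N_w,N_q]=0$, so your appeal to it is at the paper's own level of rigor. As for what each approach buys: yours is the shortest, stays entirely within the already-established flatness of the universal elliptic KZB connection, and turns the Bernoulli identity of Theorem~\ref{thm:identity} into a corollary rather than an ingredient; the paper's proof, by contrast, explains conceptually why the vanishing must hold (monodromy invariance of the embedded $\Pminus$ path torsor inside the first-order Tate curve), an explanation that generalizes beyond what a curvature computation can see.
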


An elementary proof is given in Appendix~\ref{sec:identity}. Here we sketch a
more conceptual proof.

\begin{proof}[Sketch of Proof]
Since $N_q$ annihilates $R_1 = [T,A]$, and since $R_0 + R_1 + R_\infty = 0$, it
suffices to show that $N_q(R_0) = 0$. 

Denote the limit MHS on $\Q\pi_1(\Pminus,w)^\wedge$ associated to the tangent
vector $\partial/\partial w \in T_1\P^1$ by $\Q\pi_1(\Pminus,\partial/\partial
w)^\wedge$ and the limit MHS on $\Q\pi_1(E'_q,x)^\wedge$ associated to the
tangent vector $\lambda \partial/\partial q + \partial/\partial w$ of $\E$ at
the identity of the nodal cubic. One has the commutative diagram
$$
\xymatrix{
\C\pi_1(\Pminus,\partial/\partial w)^\wedge \ar[r]^(.65){\Theta_{KZ}}\ar[d] &
\C\ll R_0,R_1\rr \ar[d]\cr
\C\pi_1(E_{\lambda\partial/\partial q}',\partial/\partial w)^\wedge
\ar[r]_(.65){\Theta_\lambda} & \C\ll T,A\rr
}
$$
where the left hand vertical mapping is the one given by
Theorem~\ref{thm:hodge}, the right hand vertical map is given by the formulas
for $R_0$ and $R_1$, and where the top horizontal map is the standard
isomorphism given by the KZ-connection. The result follows as the image of
$\Theta_{KZ}$ is invariant under monodromy and as the logarithm of monodromy
acting on $\C\ll T,A\rr$ is $N_q$.
\end{proof}

As noted in Remark~\ref{rem:limit_H1}, the limit MHS on $H_1(E_\tau)$ is an
extension of $\Z$ by $\Z(1)$. The basis $T$, $A$ splits both the Hodge and
monodromy weight filtrations.  The following statement follows directly from
Corollary~\ref{cor:h1}.

\begin{lemma}
\label{lem:limitMHS}
The limit MHS on $H_1(E_\tau)$ associated to the tangent vector
$\lambda\partial/\partial q$ of the origin of the $q$-disk has complex basis $A$
and $T$ and integral basis spanned by $\a$ and $-\b$, where
$$
-\b = T - \log \lambda  A \text{ and } \a = 2\pi i A,
$$
so that the corresponding period matrix is
$$
\begin{pmatrix}
1 & -\log \lambda \cr 0 & 2\pi i
\end{pmatrix}
$$
Denote it by $H_1(E_{\lambda\partial/\partial q})$. \qed
\end{lemma}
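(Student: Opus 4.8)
The plan is to derive the lemma directly from Corollary~\ref{cor:h1} after rewriting it in the normalized frame $T,A$ of (\ref{eqn:def_AT}) and then evaluating at the tangential base point. Since $2\pi i\,A=\a$ and $T=2\pi i\,\t$, Corollary~\ref{cor:h1} identifies $\C A\oplus\C T$ with $H_1(E_\tau;\C)$ by $A\mapsto(2\pi i)^{-1}\a$ and $T\mapsto\tau\a-\b$, where on the right $\a,\b$ denote the homology classes. Solving these two relations, and using $q=e(\tau)$ so that $\log q=2\pi i\tau$, I obtain over $\D^\ast$ the relations $\a=2\pi i\,A$ and $\b=(\log q)\,A-T$. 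Under the positive loop $q\mapsto e(1)q$ one then reads off $\a\mapsto\a$ and $\b\mapsto\b+\a$, so the logarithm $N$ of the local monodromy satisfies $N\a=0$ and $N\b=\a=2\pi i\,A$, in agreement with Remark~\ref{rem:limit_H1}.

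Next I would feed this into the limit MHS at the tangential base point $v=\lambda\,\partial/\partial q$. The text preceding the lemma (and Remark~\ref{rem:limit_H1}) already records that $T,A$ is the $\Q$-de~Rham basis realizing Deligne's canonical extension (Section~\ref{sec:extension}) and splitting both the limit Hodge filtration and the monodromy weight filtration, with $\C A$ spanning the $\Z(1)$ and $T$ projecting to a generator of $\Z(0)$; so $T,A$ is the correct complex frame and only the rational (Betti) lattice must be computed. Writing $s=q/\lambda$ for the coordinate adapted to $v$ (so that $v=\partial/\partial s$) gives $\log q=\log\lambda+\log s$, hence $\b=(\log\lambda)\,A-T+(\log s)\,A$. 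Untwisting by $\exp\!\big(-\frac{\log s}{2\pi i}N\big)$ and using $N\b=2\pi i\,A$ removes precisely the multivalued term $(\log s)\,A$, and letting $s\to0$ leaves the limit rational frame
$$
\a=2\pi i\,A,\qquad -\b=T-(\log\lambda)\,A .
$$
Reading off the coefficients of $-\b$ and $\a$ in the basis $T,A$ yields the asserted period matrix $\bigl(\begin{smallmatrix}1 & -\log\lambda\\ 0 & 2\pi i\end{smallmatrix}\bigr)$.

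The only genuinely non-formal point is pinning down the normalization used in the second paragraph: that the tangent vector $\partial/\partial q$ corresponds to regularizing $\log q\to0$, so that scaling the tangent vector by $\lambda$ contributes exactly $+\log\lambda$, with the branch and sign of the logarithm matched to $N$ and to $q=e(\tau)$. Once this standard definition of the canonical fiber at a tangent vector is granted, everything else is substitution into Corollary~\ref{cor:h1}, so the lemma indeed follows directly, as claimed.
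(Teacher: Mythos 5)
Your proposal is correct and follows the same route as the paper, which simply asserts that the lemma ``follows directly from Corollary~\ref{cor:h1}'': you substitute $\a = 2\pi i A$ and $-\b = T - (\log q)A$ from that corollary and then apply the standard tangential base point regularization $\log q \mapsto \log\lambda$. Your explicit untwisting by $\exp\bigl(-\tfrac{\log s}{2\pi i}N\bigr)$ with $N\b=\a$, $N\a=0$ is exactly the detail the paper leaves implicit, and it is carried out with the correct signs and branch conventions.
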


\section{Pause for a Picture}
\label{sec:pause}

Suppose that $f : Y \to X$ is a family of varieties which is locally
topologically trivial over the Zariski open subset $X' = X-S$ of $X$, where $S$
is a normal crossing divisor. Suppose that $P\in S$ and that $\v$ is a non-zero
tangent vector at $P$ that is not tangent to $S$. Suppose that $\V$ is an
admissible variation of MHS over $X'$ whose fiber over $x\in X'$ is a given
topological invariant of the fiber $Y_x$ of $f$ over $x$. It is useful to think
of the limit MHS of $\V$ associated to $\v$ as the MHS on that invariant of the
``fiber $Y_\v$ of $f$ over $\v$.'' For example, we would like to think of the
limit MHS of $\H$ associated to the tangent vector $\v =
\lambda\partial/\partial q$ at the origin of the $q$-disk as being
$H_1(E_{\lambda\partial/\partial q})$, and the limit MHS of $\bP$ associated to
the tangent vector $\v = \partial/\partial q + \partial/\partial w$ of the
universal elliptic curve at the identity of the nodal cubic as a MHS on the Lie
algebra $\p(E_{\partial/\partial q},\partial/\partial w)$ of the unipotent
fundamental group of $(E_{\partial/\partial q}',\partial/\partial w)$. The goal
of this section is to explain how to make this more precise using real oriented
blowups.

\subsection{Real oriented blowups and tangential base points}
\label{sec:tang_bps}

The real oriented blowup of a Riemann surface $X$ at a finite subset $S$ will
be denoted by $\Bl^o_S X$. This is a bordered Riemann surface with one boundary
circle for each point of $S$. There is a continuous projection $\pi:\Bl^o_S X
\to X$ that induces a biholomorphism
$$
\Bl^o_S X - \partial\Bl^o_S X \longrightarrow X-S
$$
The fiber of $\pi$ over $P\in S$ is the quotient of $(T_PX)-\{0\}$ by the
multiplicative group of  positive real numbers.

\begin{example}
The real oriented
blowup of the unit disk at the origin is $[0,1)\times S^1$. The projection
to the disk is $(r,\theta)\mapsto re^{i\theta}$.
As explained in Appendix~\ref{sec:blowup}, there is a natural identification
of $\Bl^o_{0,\infty}\P^1$ with $[0,1]\times S^1$.
\end{example}

Each non-zero vector $\v \in T_P X$ determines an element $[\v] \in \Bl^o_P X$.
Set $X' = X-\{P\}$. The fundamental group $\pi_1(X',\v)$ is defined to be
$\pi_1(\Bl^o_P X,[\v])$. (cf.\ \cite{deligne:p1}.) When $t>0$, $\pi_1(X',\v)$
and $\pi_1(X',t\v)$ are canonically isomorphic. It is important to note that the
MHSs on their unipotent completions will not be isomorphic except when the local
monodromy operator associated to $\v$ acts trivially on $\pi_1^\un(X',\v)$.

\subsection{The fiber of $\E$ over $\partial/\partial q$}

We now sketch the construction of the fiber $E_{\partial/\partial q}$ of $\E$
over $\partial/\partial q$. Full details can be found in
Appendix~\ref{sec:blowup}. 

Suppose that $q \in \D^\ast$. The fiber of the universal elliptic curve over $q$
is $E_q := \C^\ast/q^\Z$. For $0<r<1$, set
$$
A_r := \{w\in \C^\ast : r^{1/2}\le |w| \le r^{-1/2}\}
\text{ and }
\Ahat_r := \Bl_1^o A_r.
$$
Denote their outer and inner boundaries by $\partial_+ A_r$ and $\partial_-A_r$,
respectively. The elliptic curve $E_q$ is the quotient of $A_{{|q|}}$ obtained
by identifying $w\in \partial_+A_{{|q|}}$ with $qw\in \partial_-A_{{|q|}}$.
Similarly, $\Bl^o_1 E_q$ is a quotient of $\Ahat_q$. See
Figure~\ref{fig:move}.

\begin{figure}[!ht]
\epsfig{file=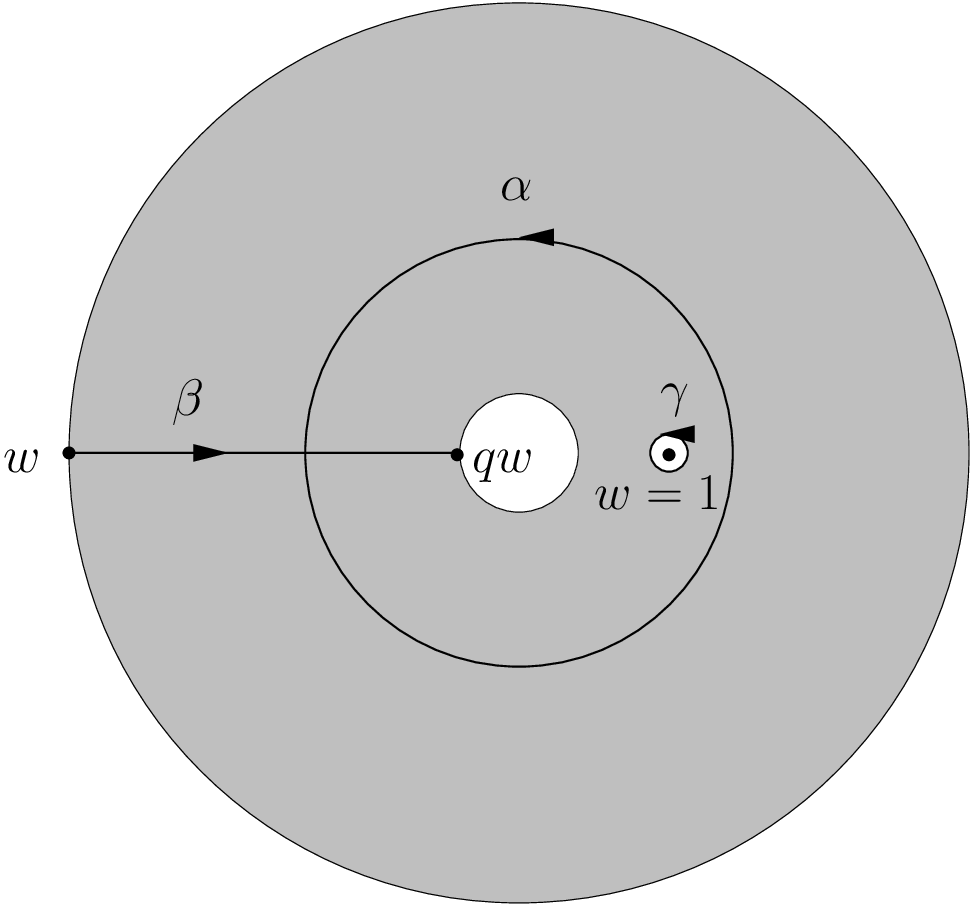, width=1.5in}
\caption{$\Bl_0^o E_q$ as a quotient of $\Ahat_{|q|}$}
\label{fig:move}
\end{figure}

The homology class $\a$ corresponds to the class of the positively oriented unit
circle $\alpha$; the homology class $\b$ corresponds to a path $\beta$ from $w$
to $qw$, where $w\in \partial_+A_{|q|}$. Note that these have intersection
number $+1$.  From
this it is evident that $\a$, the class of $\alpha$, is the vanishing cycle. The
Picard Lefschetz transformation is also evident: as $q$ travels once around
$\D^\ast$ in the positive direction, $a$ remains invariant, but $\b$, the class
of $\beta$ changes to $\b+\a$. This can also be seen from the formula
$$
T =
\tau \a - \b = \frac{\a}{2\pi i} \log q - \b.
$$
Let $\gamma$ be the boundary circle at $w=1$ of $\Ahat_r$.

As $r\to 0$, $A_r$ ``converges to'' $\Bl^o_{0,\infty}\P^1$ and $\Ahat_r$
``converges to'' $\Bl_{0,1,\infty}\P^1$. So $E_{re^{i\theta}}$ and
$\Bl_1E_{re^{i\theta}}$ ``converge'' to the surfaces obtained by identifying the
boundary circles of $\Bl_{0,\infty}^o\P^1$ and $\Bl_{0,1,\infty}\P^1$ at $0$ and
$\infty$ by multiplication $e^{i\theta}$. The resulting surface will be denoted
by $E_{e^{i\theta}\partial/\partial q}$. See Figure~\ref{fig:E_v}.
\begin{figure}[!ht]
\epsfig{file=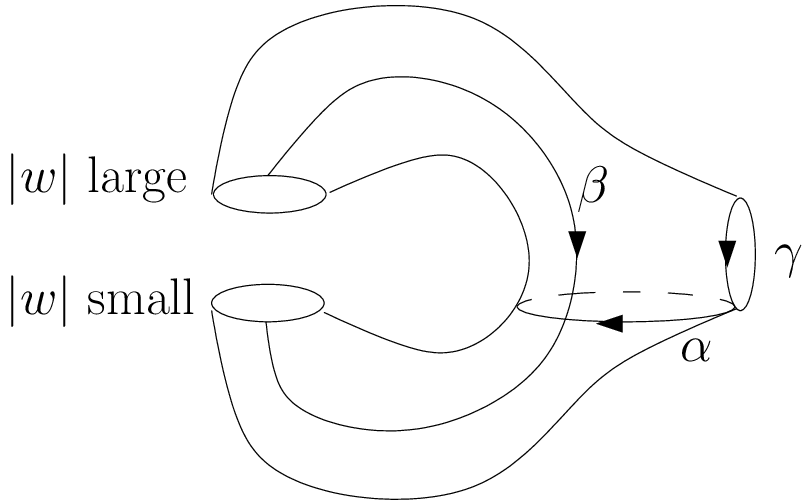, width=2in}
\caption{The ``nearby fiber'' $\Bl_1^oE_{\partial/\partial q}'$}
\label{fig:E_v}
\end{figure}
In Appendix~\ref{sec:blowup} we show that the universal family of elliptic
curves over $\D^\ast$ extends to a family of tori over $\Bl^o_0 \D$ whose fiber
over $[e^{i\theta}\partial/\partial q]$ is $E_{e^{i\theta}\partial/\partial q}$.

The curve $\gamma$ represents the commutator of two generators of
$\pi_1(\Bl_1(E_{\partial/\partial q}, [\partial/\partial w]))$. This is
consistent with the fact that
$$
\Res_{w=1}\w_{E_0'} = [T,A] = \frac{1}{2\pi i}[T,\a] = 
\frac{1}{2\pi i}[\tau \a - \b,\a] = \frac{1}{2\pi i}[\a,\b].
$$

\section{The KZ-equation and the Drinfeld Associator}
The quotient map
$$
\C\ll X_0,X_1,X_\infty\rr/(X_0+X_1+X_\infty) \to \C\ll X_0,X_1\rr
$$
is an isomorphism. We will identify these two rings. Recall that the
KZ-connection on $\P^1-\{0,1,\infty\}$ is given by
$$
\w_{KZ} = \frac{dw}{w} X_0 + \frac{dw}{w-1} X_1
\in H^0\big(\Omega^1_{\P^1}(\log\{0,1,\infty\})\big)
\comptensor \C\ll X_0,X_1\rr.
$$

The form $\w_{KZ}$ defines a flat connection on the trivial bundle
$$
\C\ll X_0,X_1\rr \times \P^1-\{0,1,\infty\} \to \P^1-\{0,1,\infty\}
$$
by the formula
$$
\nabla f = df - f\w_{KZ}.
$$
Its transport function induces a transport function
$$
\{\text{paths in }\P^1-\{0,1,\infty\}\} \to
\{\text{group-like elements of }\C\ll X_0,X_1\rr\}
$$
in $\P^1-\{0,1,\infty\}$.
It takes the path $\gamma$ in $\P^1-\{0,1,\infty\}$ to
\begin{equation}
\label{eqn:transport}
T(\gamma) = 1 + \int_\gamma \w_{KZ} + \int_\gamma \w_{KZ}\w_{KZ}
+ \int_\gamma \w_{KZ}\w_{KZ}\w_{KZ} + \cdots
\end{equation}
Since $\w_{KZ}$ is clearly integrable, the connection is flat and $T(\gamma)$
depends only on the homotopy class of $\gamma$ relative to its endpoints.

There are six standard tangent vectors of $\P^1-\{0,1,\infty\}$. Two are
anchored at each of $0,1,\infty$. They lie in one orbit under the action of the
symmetric group $S_3$ on $\P^1-\{0,1,\infty\}$ and are thus determined by the
two vectors at $w=0$, which are $\pm \partial/\partial w$.
\begin{center}
\begin{figure}[!ht]
\epsfig{file=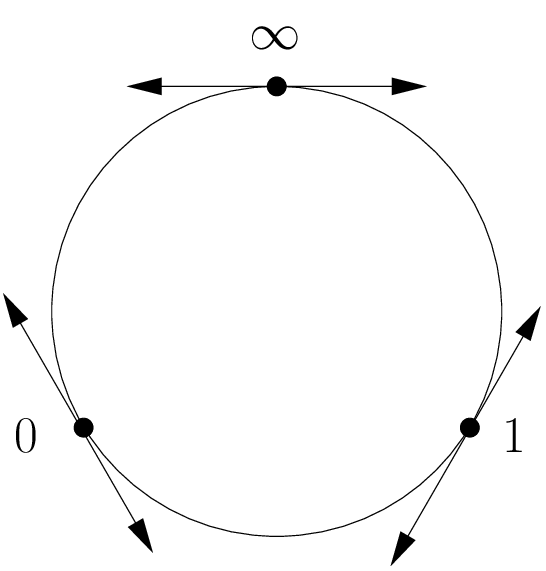, width=1.25in}
\caption{The $6$ tangent vectors}
\end{figure}
\end{center}
These have the property that their reduction mod $p$ is non-zero for all
prime numbers $p$.

The (KZ/de~Rham) version of the Drinfeld associator is the invertible power
series $\Phi(X_0,X_1) \in \C\ll X_0,X_1 \rr$ obtained by taking the regularized
value of the transport (\ref{eqn:transport}) above on the path $[0,1]$. It
begins:
\begin{multline*}
\Phi(X_0,X_1) = 1 - \zeta(2)[X_0,X_1] + \zeta(3)[X_0,[X_0,X_1]]
+ \zeta(1,2)[[X_0,X_1],X_1]
\cr
\phantom{xxxxxxxxxx} - \zeta(4)[X_0,[X_0,[X_0,X_1]]]
- \zeta(1,3)[X_0,[[X_0,X_1],X_1]]
\cr
-\zeta(1,1,2)[[[X_0,X_1],X_1],X_1]
+ \frac{1}{2}\zeta(2)^2[X_0,X_1]^2 + \cdots
\end{multline*}
where, for positive integers $n_1,\dots,n_r$, where $n_r>1$,
$$
\zeta(n_1,\dots,n_r) =
\sum_{0<k_1<\dots <k_r} \frac{1}{k_1^{n_1}k_2^{n_2}\dots k_r^{n_r}}.
$$
These are the {\em multiple zeta numbers}. They generalize the values of the
Riemann zeta function at positive integers. An explicit formula for
$\Phi(X_0,X_1)$ is given in \cite{le-murakami,furusho}.\footnote{To get this
formula for $\Phi(X_0,X_1)$, one has to reverse the order of all monomials ---
equivalently, replace each bracket $[U,V]$ by its negative $-[U,V]$. This is
because Furusho uses the opposite convention for path multiplication.} All
coefficients are rational multiplies of multiple zeta values.

Several (not all) of its basic properties are summarized in the following
result:

\begin{theorem}[Drinfeld]
The Drinfeld associator $\Phi$ satisfies:
\begin{enumerate}

\item $\Phi(X_0,X_1)\Phi(X_1,X_0) = 1$

\item In the ring $\C\ll X_0,X_1,X_\infty\rr/(X_0+X_1+X_\infty)$ we have
$$
\Phi(X_0,X_1) e^{i\pi X_1} \Phi(X_1,X_\infty) e^{i\pi X_\infty}
\Phi(X_\infty,X_0) e^{i\pi X_0} = 1.
$$

\end{enumerate}
\end{theorem}

The normalized value of $T$ on the unique real path from $w=0$ to $w=1$ is
$\Phi(X_0,X_1)$. View the symmetric group $S_3$ as $\Aut \{0,1,\infty\}$.  The
action of the automorphisms of $\P^1-\{0,1,\infty\}$ on the cusps determines an
isomorphism
$$
\Aut(\P^1,\{0,1,\infty\}) \to \Aut\{0,1,\infty\}.
$$
Let it act on $\{X_0,X_1,X_\infty\}$ by permuting the indices. Since the
connection is invariant under the $S_3$-action on $\P^1-\{0,1,\infty\}$, we
have, for example, the following values of the normalized transport on the real
paths:
$$
T^\norm([1,\infty]) = \Phi(X_1,X_\infty),\quad
T^\norm([0,\infty]) = \Phi(X_0,X_\infty),
$$
where $[0,\infty]$ is the path from $0$ to $\infty$ along the negative real
axis.

\subsection{The fundamental groupoid of $\P^1-\{0,1,\infty\}$}

Consider the category whose objects are the 6 tangent vectors of $\P^1$ defined
above and whose morphisms are homotopy classes from one tangent vector to
another.\footnote{That is, the path starts with one tangent vector and ends with
the negative of the second. Such a path $\gamma$ must also satisfy $\gamma(t)
\notin \{0,1,\infty\}$ when $0<t<1$. Composition of two such homotopy classes of
paths can be defined when the second path begins at the tangent vector where the
first ends.} Denote it by $\Pi(\P^1,V)$. It is generated by the paths shown in
the diagram. As above, a good topological model is to replace $\Pminus$ by the
real oriented blow-up of $\P^1$ at $\{0,1,\infty\}$, which is a 3-holed sphere,
and represent the tangent directions by the corresponding points on the boundary
of the blown up sphere.

\begin{center}
\begin{figure}[!ht]
\epsfig{file=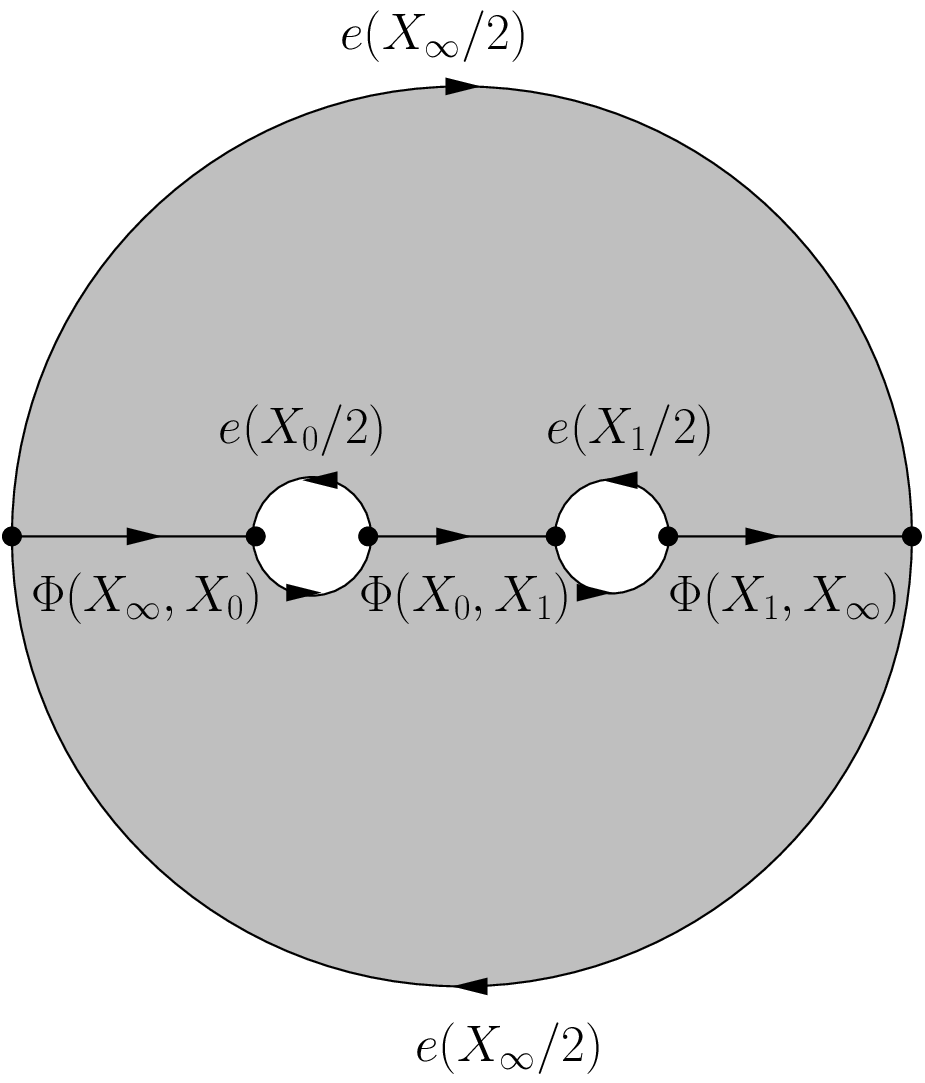, width=1.75in}
\caption{The fundamental groupoid of $\P^1-\{0,1,\infty\}$}
\end{figure}
\end{center}

Define a functor
$$
\Theta : \Pi(\P^1,V) \to
\big\{\text{group-like elements of } \C\ll X_0,X_1\rr \big\}
$$
by taking the positively oriented semi-circle about $a \in \{0,1,\infty\}$ to
$e(X_a/2)$ and the real interval from $a$ to $b$ to $\Phi(X_a,X_b)$. Drinfeld's
relations imply that $\Theta$ is well-defined.

This restricts to a group homomorphism $\Theta_\v :
\pi_1(\P^1-\{0,1,\infty\},\v) \to \cP$ for each of the 6 distinguished tangent
vectors $\v$.

\section{The Limit MHS on
$\pi_1(E_{\partial/\partial q}',\partial/\partial w)^\un$}
\label{sec:limit_mhs}

The computations of Section~\ref{sec:Pminus} imply that the restriction
$\w_{E_0'}$ of the KZB connection to $E_0' =\Pminus$ is obtained from $\w_{KZ}$
by composing it with the ring homomorphism
$$
\C\ll X_0,X_1,X_\infty \rr/(X_0+X_1+X_\infty) \hookrightarrow \C\ll T, A\rr
$$
defined by
\begin{align}
\label{eqn:homom}
X_0 &\mapsto R_0 = \bigg(\frac{T}{e^T-1}\bigg)\cdot A, \cr
X_1 &\mapsto R_1 = [T,A], \cr
X_\infty &\mapsto R_\infty = \bigg(\frac{T}{e^{-T}-1}\bigg)\cdot A,
\end{align}
which is well defined as $R_0+R_1+R_\infty = 0$.

Since the periods of $\w_{KZ}$ are understood (they are multiple zeta numbers),
this formula will allow us to compute the periods of $\w_{E_0'}$ in terms of
multiple zeta numbers.

\subsection{The Cylinder Relation}

To construct a well-defined homomorphism
$$
\pi_1(\E_{\partial/\partial q}',\partial/\partial w) \to \cP,
$$
we need to find all solutions $U\in \L(T,A)^\wedge$ of the equation
\begin{equation}
\label{eqn:cylinder}
e^{-U} e^{\lambda R_0} e^U e^{\lambda R_\infty}  = 1
\text{ in } \Q\ll T,A \rr.
\end{equation}
for all $\lambda \in \Q^\times$. This relation will be called the {\em cylinder
relation}. Note that a solution to the equation with $\lambda=1$ will be a
solution for all $\lambda$.

\begin{lemma}
For all $\lambda \in \C$, the relation $e^T e^{\lambda R_0} e^{-T} e^{\lambda
R_\infty} = 1$ holds in $\cP$. That is, $U=-T$ is a solution of the cylinder
equation.
\end{lemma}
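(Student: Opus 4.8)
The plan is to reduce the four–term relation to a one–variable functional equation for the operator $\ad_T$. First I would use the standard fact that in the associative algebra $\Q\ll T,A\rr$ conjugation by the group-like element $e^T$ acts on a Lie element $Y\in\p$ by the adjoint representation, $e^T\,Y\,e^{-T}=\Ad(e^T)(Y)=e^{\ad_T}(Y)$, which is exactly the identity recorded in the subsection on the two identities (applied in the associative algebra rather than in $\Der\p$). Since $R_0\in\p$ — indeed $\frac{x}{e^{x}-1}=\sum_{n\ge 0}B_n x^n/n!$ is a genuine power series with no negative powers, so $R_0$ is a power series in $\ad_T$ applied to $A$ — this turns the conjugation into algebra:
$$
e^T e^{\lambda R_0} e^{-T} = \exp\!\big(\lambda\, e^{\ad_T}(R_0)\big).
$$
Thus the whole relation will follow once I show $e^{\ad_T}(R_0)=-R_\infty$, for then the left–hand side collapses to $e^{-\lambda R_\infty}e^{\lambda R_\infty}=1$.

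Next I would compute $e^{\ad_T}(R_0)$ directly. By the definition (\ref{eqn:def_Rs}) together with the convention $f(T)\cdot A=f(\ad_T)(A)$, we have $R_0=\frac{\ad_T}{e^{\ad_T}-1}(A)$, so, writing $x=\ad_T$ and using that all the operators in sight are power series in the single operator $x$ and therefore commute,
$$
e^{\ad_T}(R_0)=\frac{x\,e^{x}}{e^{x}-1}(A).
$$
The crux of the argument is then the elementary functional identity
$$
\frac{x\,e^{x}}{e^{x}-1}=\frac{x}{1-e^{-x}}=-\frac{x}{e^{-x}-1},
$$
which, applied to $A$, yields $e^{\ad_T}(R_0)=-\frac{\ad_T}{e^{-\ad_T}-1}(A)=-R_\infty$, exactly as required.

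Assembling the pieces gives
$$
e^T e^{\lambda R_0} e^{-T} e^{\lambda R_\infty}=\exp\!\big(-\lambda R_\infty\big)\,e^{\lambda R_\infty}=1,
$$
valid for every $\lambda\in\C$, since nothing in the argument used rationality of $\lambda$. In the notation of (\ref{eqn:cylinder}) this is precisely the cylinder relation for $U=-T$.

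There is no genuinely hard step here: the entire content is the passage from conjugation by $e^T$ to the adjoint operator $e^{\ad_T}$ (which is legitimate precisely because $R_0$ is a Lie element) and the single-line identity relating $R_0$ and $R_\infty$. The one place I expect to have to be careful is the sign and bookkeeping in $\tfrac{x}{e^{x}-1}$ versus $\tfrac{x}{e^{-x}-1}$ and the verification that $e^{\ad_T}$ intertwines the two via the factor $e^{x}$; once that is pinned down, the conclusion is immediate.
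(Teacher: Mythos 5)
Your proposal is correct and follows essentially the same route as the paper: both reduce the relation to $e^T R_0 e^{-T} = -R_\infty$ via $\phi \exp(u)\phi^{-1} = \exp(\phi u \phi^{-1})$, rewrite the conjugation as $e^{\ad_T}$ applied to $R_0 = \big[\tfrac{\ad_T}{e^{\ad_T}-1}\big](A)$, and conclude with the functional identity $\tfrac{x e^x}{e^x-1} = -\tfrac{x}{e^{-x}-1}$. No gaps; the bookkeeping you flagged is exactly right.
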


\begin{proof}
It suffices to prove the relation $e^T R_0 e^{-T} = - R_\infty$.  Since
$\phi\exp(u) \phi^{-1} = \exp(\phi u \phi^{-1})$, we have
\begin{multline*}
e^T R_0 e^{-T}
= e^T \bigg(\bigg[\frac{T}{e^{T}-1}\bigg]\cdot A\bigg) e^{-T}
= e^{\ad_T}\bigg(\bigg[\frac{T}{e^{T}-1}\bigg]\cdot A\bigg)
\cr
= \bigg[\frac{Te^T}{e^{T}-1}\bigg]\cdot A
= \bigg[\frac{-T}{e^{-T}-1}\bigg]\cdot A
= - R_\infty.
\end{multline*}
\end{proof}

\begin{proposition}
Every solution of the cylinder relation (\ref{eqn:cylinder}) is of the form
$$
e^U  = e^{\lambda R_0} e^{-T}
$$
for some $\lambda \in \C$.
\end{proposition}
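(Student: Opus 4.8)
The plan is to reduce the cylinder relation to a statement about the centralizer of $R_0$ in the free Lie algebra $\p=\L(T,A)^\wedge$. As already observed, it suffices to treat $\lambda=1$, and since $\exp$ is injective on $\p$ (Proposition~\ref{prop:exp}), the relation $e^{-U}e^{R_0}e^U e^{R_\infty}=1$ is equivalent to $e^{-U}e^{R_0}e^U=e^{-R_\infty}$, i.e.\ to the single adjoint identity $\Ad(e^{-U})(R_0)=-R_\infty$. The preceding lemma says precisely that $U_0=-T$ satisfies $\Ad(e^{T})(R_0)=e^{\ad_T}(R_0)=-R_\infty$. Hence any solution $U$ obeys $\Ad(e^{-U})(R_0)=\Ad(e^{-U_0})(R_0)$, and applying $\Ad(e^{U_0})$ and using that $\Ad$ is a homomorphism rewrites this as $\Ad(h)(R_0)=R_0$, where $h:=e^{-T}e^{-U}=e^{U_0}e^{-U}\in\cP$.

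Next I would transport this fixed-point statement back into the Lie algebra. Writing $v=\log h\in\p$, the identity $\Ad(h)(R_0)=e^{\ad_v}(R_0)=R_0$ reads $(e^{\ad_v}-1)(R_0)=0$. Because $v$ lies in the positive-degree part of $\p$, the operator $\ad_v$ strictly raises degree, so $\frac{e^{\ad_v}-1}{\ad_v}=\sum_{n\ge 0}\ad_v^n/(n+1)!$ converges to a unit in $\End\p$; therefore $(e^{\ad_v}-1)(R_0)=0$ forces $\ad_v(R_0)=[v,R_0]=0$. Everything thus comes down to computing the centralizer of $R_0$ in $\p$.

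The main step, and the only one with real content, is the claim that this centralizer is exactly the line $\Q R_0$. I would prove it by a lowest-degree argument anchored on the leading term of $R_0$: from $T/(e^T-1)=1-T/2+\cdots$ one gets $R_0=A-\frac12[T,A]+\cdots$, so the lowest-degree (weight) component of $R_0$ is $A$. Suppose $0\ne v\in\p$ has $[v,R_0]=0$, and let $v_k$ be its lowest-degree component, $k\ge 1$. Comparing lowest-degree components of $[v,R_0]=0$ yields $[v_k,A]=0$ in $\L(T,A)$. Since the centralizer of a nonzero element of a free Lie algebra is one-dimensional (equivalently $\ker(\ad_A)=\Q A$, a consequence of the Shirshov--Witt theorem), we get $v_k\in\Q A$, hence $k=1$ and $v_1=cA$ for some $c$. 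Replacing $v$ by $v-cR_0$ — which still centralizes $R_0$ but now has no degree-$1$ term — the same argument would again force its lowest component into degree $1$, a contradiction unless $v-cR_0=0$. Thus $v=cR_0$, and since all the data lie in $\L(T,A)^\wedge_\Q$ the scalar $c$ is rational. Writing $\log h=\lambda' R_0$ and unwinding $h=e^{-T}e^{-U}$ gives $e^{-U}=e^{T}e^{\lambda' R_0}$, so $e^U=e^{-\lambda' R_0}e^{-T}$, the asserted form with $\lambda=-\lambda'\in\Q$. I expect the only genuine subtlety to be the clean invocation of the free-Lie-algebra centralizer fact together with its reduction from the inhomogeneous $R_0$ to its homogeneous leading term $A$; the remaining manipulations in $\cP$ and $\p$ are formal.
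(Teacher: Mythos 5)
Your proof is correct and follows essentially the same route as the paper's: reduce the cylinder relation, via the lemma $e^{\ad_T}(R_0)=-R_\infty$, to the statement that a group-like element conjugating $e^{R_0}$ to itself must lie on the one-parameter subgroup $e^{\Q R_0}$, and then invoke the fact that the centralizer of $R_0$ in $\L(T,A)^\wedge$ is the line spanned by $R_0$. The only difference is that the paper merely asserts this centralizer fact, whereas you actually prove it — first passing from $(e^{\ad_v}-1)(R_0)=0$ to $[v,R_0]=0$ via the invertibility of $(e^{\ad_v}-1)/\ad_v$, then running the lowest-degree-term argument with $\ker(\ad_A)=\Q A$ — thereby filling in the step the paper leaves unproved.
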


\begin{proof}
Suppose that $U$ is a solution of the cylinder equation. Set $V = \log(e^U
e^T)$. Then $V\in \L(T,A)^\wedge$ and $e^U = e^V e^{-T}$. The cylinder relation
implies that
$$
e^T e^{-V} e^{R_0} e^V e^{-T} = e^{-U} e^{R_0} e^U = e^{-R_\infty}
= e^T e^{R_0} e^{-T}
$$
so that $e^{-V} e^{R_0} e^V = e^{R_0}$. The result follows as the centralizer of
$R_0$ in $\L(T,A)^\wedge$ is $\C R_0$.
\end{proof}

\subsection{The homomorphisms
$\pi_1(E_{\partial/\partial q}',\partial/\partial w) \to \cP$}

The positive real axis determines two points $v_0$ and $v_\infty$ on the real
oriented blowup of $\P^1$ at $\{0,1,\infty\}$ --- the point $v_0$ lies on the
circle at $0$ and $v_\infty$ lies on the circle at $\infty$. There is a natural
$U(1)$ action on each of these circles.

Suppose that $\lambda \in \C^\ast$. Write it in the form $re^{i\theta}$. View
$E_{\lambda\partial/\partial q}'$ as the quotient of the real oriented blow-up
of $\P^1$ at $\{0,1,\infty\}$ by
$$
e^{i\phi}v_\infty \sim e^{i(\theta-\phi)}v_0.
$$
Denote the image of the two identified circles in $E_{\lambda\partial/\partial
q}$ by
$C$.
One can check that as $\lambda$ moves around the unit circle in the positive
direction, the identification changes by a positive Dehn twist about $C$. Note
that for each $\lambda$ there is a natural inclusion
$$
\iota : (\Pminus,\partial/\partial w) \to
(E_{\lambda\partial/\partial q}',\partial/\partial w)
$$
where in both cases $\partial/\partial w$ is in element of the tangent space
of $1\in \P^1$.

To define a homomorphism $\Theta_{\lambda} : \pi_1(E_{\lambda\partial/\partial
q}',\partial/\partial w) \to \cP$ such that the diagram
$$
\xymatrix{
\pi_1(\Pminus,\partial/\partial w) \ar[r]\ar[d]_{\iota_\ast} &
\exp\L(X_0,X_1)^\wedge \ar[d] \cr
\pi_1(E_{\lambda\partial/\partial q}',\partial/\partial w)
\ar[r]^{\Theta_\lambda} & \cP
}
$$
commutes, where the right-hand vertical map is defined by (\ref{eqn:homom}). We
need to give a ``factor of automorphy'' for the identification. This is the
monodromy along a ``path'' from $e^{i\phi}v_\infty$ to $e^{i(\theta-\phi)}v_0$.
In order that $\Theta_\lambda$ be well defined, this factor of automorphy has
to satisfy the cylinder relation.

For $\lambda \in \C^\ast$, define this factor of automorphy for $\Theta_\lambda$
to be
$$
\lambda^{R_0}e^{-T} := e^{\log\lambda R_0} e^{-T}.
$$
That is, the (inverse) monodromy in going from the tangent vector $\lambda v_0$
at $0 \in \P^1$ to the tangent vector $v_\infty$ at $\infty$ is
$\lambda^{R_0}e^{-T}$.

\begin{figure}[!ht]
\epsfig{file=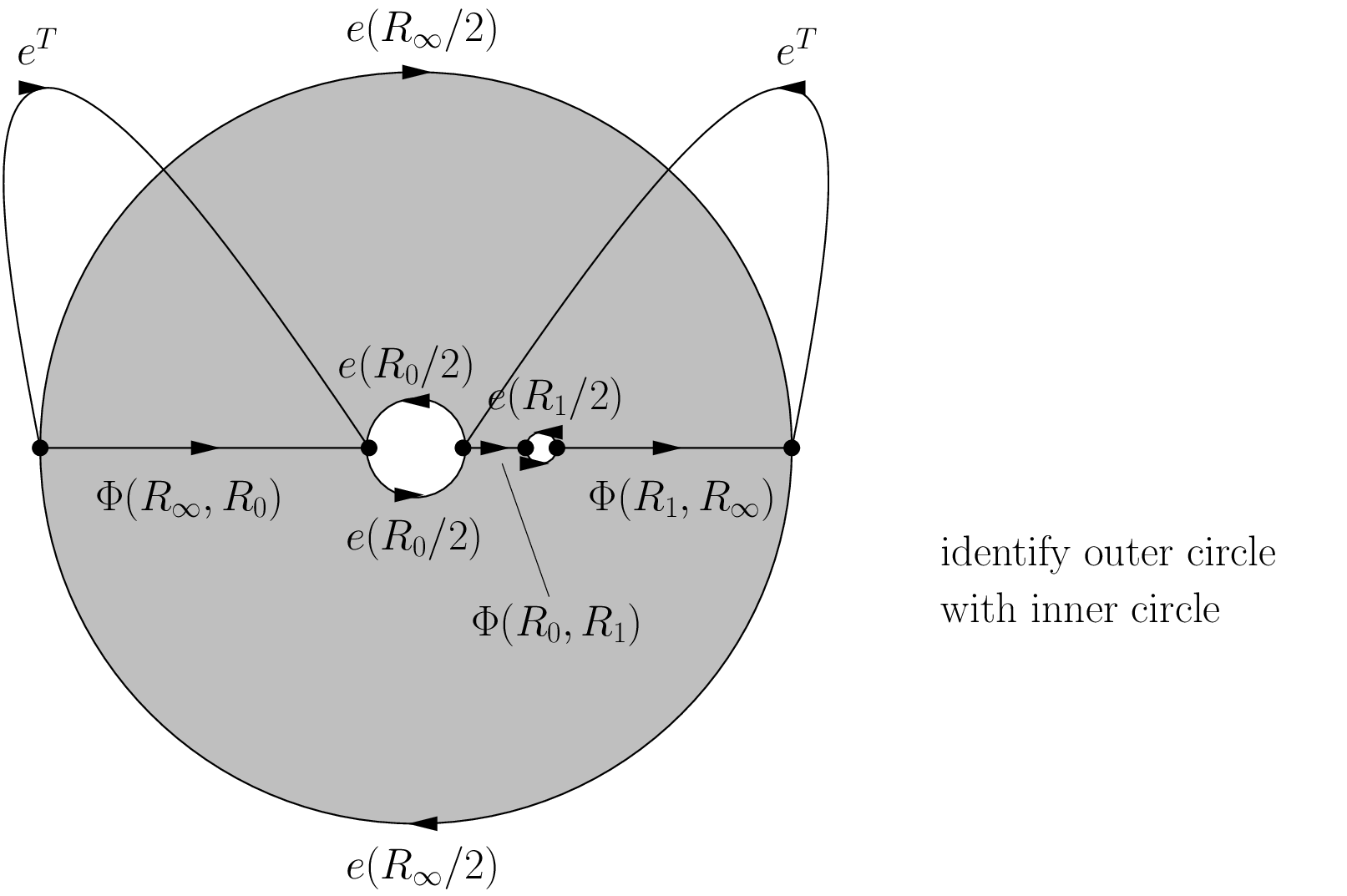, width=3.25in}
\caption{The path torsor of $E_{\partial/\partial q}'$} 
\end{figure}

Give $\C\ll A, T\rr$ the Hodge, weight and relative weight filtrations defined
in Section~\ref{sec:hodge}.

\begin{proposition}
The (complete Hopf algebra) homomorphism
$$
\Theta_\lambda :
\Q\pi_1(E_{\lambda\partial/\partial q}',\partial/\partial w)^\wedge
\to \C\ll T,A\rr
$$
is an isomorphism after tensoring the source with $\C$. This and the Hodge
and weight filtrations on $\C\ll A,T\rr$ defined in Section~\ref{sec:hodge}
define a MHS on $\Q\pi_1(E_{\lambda\partial/\partial q}',\partial/\partial
w)^\wedge$. This is the canonical limit MHS on the fiber of the universal
enveloping algebra of $\bP$ corresponding to the tangent vector
$\lambda\partial/\partial q + \partial/\partial w$ at the identity of the nodal
cubic. Its relative weight filtration is the one defined in
Section~\ref{sec:hodge}.
\end{proposition}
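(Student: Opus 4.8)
The plan is to prove the three assertions in turn, using Theorem~\ref{thm:hodge} to supply the Hodge-theoretic input. First I would show that $\Theta_\lambda$ is an isomorphism after tensoring with $\C$. By the topological model of Section~\ref{sec:pause}, the source $\pi:=\pi_1(E_{\lambda\partial/\partial q}',\partial/\partial w)$ is the fundamental group of a once-punctured torus, hence a free group of rank $2$; take free generators $\alpha,\beta$ represented by the $a$- and $b$-cycles, so that the loop around the puncture is their commutator. The map $\Theta_\lambda$ is a well-defined homomorphism by the cylinder relation proved above, and it carries each generator to a group-like element, say $\exp(U_\alpha)$ and $\exp(U_\beta)$ of $\C\ll T,A\rr$. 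By Remark~\ref{rem:abelianization2} together with Lemma~\ref{lem:limitMHS}, the map induced on $H_1(\pi;\C)$ is the identification with $\C A\oplus\C T$ carrying the integral classes $\a$ and $-\b$ to $2\pi i\,A$ and $T-\log\lambda\,A$; in particular $U_\alpha$ and $U_\beta$ reduce to a basis modulo $(J^\wedge)^2$. The criterion for maps out of a free group recalled in the discussion of unipotent completion then gives that $\hat\Theta_\lambda\otimes\C$ is an isomorphism of complete Hopf algebras.

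Transporting the filtrations $F$, $W$, $M$ of $\C\ll T,A\rr$ across this isomorphism equips $\Q\pi^\wedge\otimes\C$ with three filtrations. I would note that $W$ corresponds to the $J$-adic (lower central series) filtration, which is intrinsic to $\Q\pi^\wedge$ and hence defined over $\Q$, while $F$ is defined over $\C$. To conclude that $(\Q\pi^\wedge,W,F)$ is a MHS it remains to check purity of the weight-graded quotients. By the computation of $\Gr^W\C\ll T,A\rr$ in Section~\ref{sec:hodge}, each such quotient is a direct sum of Tate twists of symmetric powers of $\Gr^W$ of $H$; and Lemma~\ref{lem:limitMHS} (cf.\ Remark~\ref{rem:limit_H1}) identifies $\Gr^W H$ with $\Q(0)\oplus\Q(1)$. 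Thus every $\Gr^W$ is a sum of Tate Hodge structures, so it is pure, and $(W,F)$ is a MHS.

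It then remains to identify this MHS with the canonical limit MHS and to determine its relative weight filtration. The frame $T$, $A$ is exactly the frame of Deligne's canonical extension $\bPbar$ of $\bP$ across the nodal cubic built in Section~\ref{sec:extension}, and $\Theta_\lambda$ is the regularized transport attached to the tangential base point $\v=\lambda\partial/\partial q+\partial/\partial w$, with the factor of automorphy $\lambda^{R_0}e^{-T}$ implementing the gluing. By the construction of limit mixed Hodge structures on homotopy groups (\cite{hain:dht2}, to be elaborated in \cite{hain:lmhs}), the filtered object produced by this regularized transport is the canonical limit MHS of the admissible variation established in Theorem~\ref{thm:hodge}, and its relative weight filtration is that of the total monodromy logarithm $N_q+N_w$ at $\v$, already identified with $M_\dot$ in Theorem~\ref{thm:hodge}.

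The hard part will be precisely this last matching: checking that the de~Rham regularized transport $\Theta_\lambda$ --- obtained by solving the regular-singular KZB equation and normalizing by the canonical-extension frame at the tangential base point --- computes the same filtered vector space as the nilpotent-orbit construction of the limit MHS of Theorem~\ref{thm:hodge}. This is the point where the explicit ODE/transport picture must be reconciled with the abstract Steenbrink--Schmid limit, and where the choice of $\log\lambda$ and the regularization of divergent iterated integrals at the tangential base point enter. Once that compatibility is secured, purity of $\Gr^W$ and the value $M_\dot$ of the relative weight filtration are immediate from Theorem~\ref{thm:hodge}.
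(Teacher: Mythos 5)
Your first paragraph is essentially the paper's own argument for the isomorphism claim: $\Theta_\lambda$ sends the two free generators to group-like elements whose logarithms reduce, modulo $(J^\wedge)^2$, to a basis of $H_1$, and a map between completions of free objects inducing an isomorphism on $I/I^2$ is an isomorphism. One caveat: the effect of $\Theta_\lambda$ on $H_1$ should be computed directly from its definition --- the loop about $w=0$ maps to $e(R_0)$, so $\a \mapsto 2\pi i R_0 \equiv 2\pi i A \bmod I^2$, and the cylinder-crossing path maps to the factor of automorphy $\lambda^{R_0}e^{-T}$, so $\b \mapsto \log\lambda\, A - T \bmod I^2$ --- rather than deduced from Lemma~\ref{lem:limitMHS}, which describes the limit MHS on $H_1$ and is precisely what one must later compare against; as written, your citation runs that logic backwards, though the fix is immediate.

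The genuine gap is in your last two paragraphs, and you flag it yourself: you never prove that the MHS obtained by pulling back $F$, $W$, $M$ along $\Theta_\lambda$ is the canonical limit MHS; you only assert that it should follow from the general construction of limit MHS on homotopy groups ``once that compatibility is secured.'' That compatibility is the entire content of the proposition, so leaving it as a reconciliation between the ODE transport and an abstract Steenbrink--Schmid limit is not a proof. The paper closes this gap with a concrete reduction that your proposal is missing: to identify the pulled-back MHS with the limit MHS, it suffices to check agreement on $H_1(E_{\lambda\partial/\partial q}')$, because the limit MHS on $H_1$ determines the factor of automorphy $\lambda^{R_0}e^{-T}$ used to define $\Theta_\lambda$, and because the MHS induced by $\Theta_\lambda$ on the image of $\Q\pi_1(\Pminus,\partial/\partial w)^\wedge$ is already its canonical MHS --- this last fact coming from the final statement of Theorem~\ref{thm:hodge} together with the computation of Section~\ref{sec:Pminus} identifying $\w_{E_0'}$ with the pullback of $\w_{KZ}$ under $X_0 \mapsto R_0$, $X_1 \mapsto R_1$. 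Once that reduction is in place, the $H_1$ comparison is exactly Lemma~\ref{lem:limitMHS} matched against the computation from your first paragraph, and no appeal to the nilpotent-orbit machinery is needed. Without it, your argument stops exactly where the real work begins.
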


\begin{proof}
Observe that $\Theta_\lambda$ induces a homomorphism
$H_1(E_{\lambda\partial/\partial q}',\Z) \to \C T \oplus \C A$. It takes $\a$ to
$2\pi i R_0 \bmod I^2 = 2\pi i A$ and $\b$ to $\log(\lambda^{R_0} e^{-T}) \bmod
I^2 = \log \lambda A - T$. The homomorphism $\Theta_\lambda$ is an isomorphism
after complexifying as both its source and target are free and as it induces an
isomorphism on $I/I^2$. 

The homomorphism $\Theta_\lambda$ defines a MHS on 
$\Q\pi_1(E_{\lambda\partial/\partial q}',\partial/\partial w)^\wedge$ by pulling
back the Hodge, weight and relative weight filtrations of $\C\ll T,A\rr$. To
check that this is the limit MHS associated to $\lambda\partial/\partial q$, it
suffices to check that the induced MHS on $H_1(E_{\lambda\partial/\partial q})$
by $\Theta_\lambda$ agrees with the canonical limit MHS. This follows from the
discussion above and the fact that the MHS induced on the image of
$\Q\pi_1(\Pminus,\partial/\partial w)^\wedge$ by $\Theta_\lambda$ is its
canonical MHS, which follows from the computations in Section~\ref{sec:Pminus}.
The point being that the limit MHS on $H_1(E_{\lambda\partial/\partial q})$
corresponding to $\lambda\partial/\partial q$ determines the factor of
automorphy. The computations at the beginning of the proof imply that the MHS on
$H_1(E_{\lambda\partial/\partial q})$ induced by $\Theta_\lambda$ agrees with
the limit MHS that was computed in Lemma~\ref{lem:limitMHS}.
\end{proof}


\part{The $\Q$-de~Rham Structure}
\label{part:Q-DR}

Levin and Racinet \cite[\S5]{levin-racinet} sketch an argument to show that the
elliptic KZB connection is defined over $\Q$. This part is an expanded
exposition of a special case of their computation. In particular, we explicitly
compute (Thm.~\ref{thm:QDR}) the restriction of the canonical extension of the
universal elliptic KZB connection to $\M_{1,\vec{1}}$ in terms of the
$\Q$-algebraic coordinates on $\M_{1,\vec{1}}$. When reading
\cite[\S5]{levin-racinet}, it is important to note that that the restriction of
$\bPbar$ to an elliptic curve $E$ is {\em not} algebraically trivial. Levin and
Racinet trivialize the restriction of $\bP$ to $E'$. The $\Q$-connection they
write down is on the corresponding {\em trivial} extension of $\bP|_{E'}$ to
$E$. It does not have a regular singular point at 0. However, it is important in
applications, such as those in \cite{hain-matsumoto:mem}, to know that the
canonical extension $\bPbar$  and its connection are defined over $\Q$. This has
been verified by Ma Luo and will appear in his Duke PhD thesis.

\section{The $\Q$-DR Structure on $\Hbar$ over $\Mbar_{1,\vec{1}}$}

The first step is to compute the $\Q$-DR structure on $\H$ and its canonical
extension $\Hbar$. Since $\M_{1,\vec{1}}$ is the moduli space of elliptic curves
endowed with a non-zero abelian differential, the Hodge bundle $F^1\cH$ is
trivialized by its tautological section. We show that the canonical extension of
$\cH$ over $\Mbar_{1,\vec{1}}$ is trivial and that it and its connection are
defined over $\Q$. Material in this section must surely be well known and
classical (19th C).

\subsection{$\M_{1,\vec{1}}$ as a $\Q$-scheme}

As explained in Section~\ref{sec:univ_curve} (also see \cite{hain:elliptic}),
$\M_{1,\vec{1}}$ is the quotient $\cL_{-1}'$ of $\C\times\h$ by the action of
$\SL_2(\Z)$ which acts with factor of automorphy $(c\tau+d)^{-1}$. It is also
the complement in $\C^2$ of the discriminant locus $\Delta = 0$, where
$$
\Delta = u^3 - 27v^2.
$$
The quotient mapping $\C\times\h \to \C^2 - \Delta^{-1}(0)$ is
$$
(\xi,\tau) \mapsto \big(\xi^4 g_2(\tau),\xi^6 g_3(\tau)\big),
$$
where
$$
g_2(\tau) = 20(2\pi i)^4 G_4(\tau)
\text{ and }
g_3 = \frac{7}{3}(2\pi i)^6 G_6(\tau).
$$
The point $(u,v)$ corresponds to the pair $(E_{u,v},\w_{u,v})$ where $E_{u,v}$
is the elliptic curve $y^2 = 4 x^3 - ux - v$ and $\w_{u,v}$ is the  abelian
differential $dx/y$. This elliptic curve has discriminant (divided by 16) equal
to
$$
\Delta := u^3 - 27 v^2 = \xi^{12}(g_2^3-27g_3^2)
= (2\pi i\xi)^{12} \Delta_0,
$$
where $\Delta_0 = q\prod_{n\ge 1}(1-q^n)^{24}$ is the Ramanujan $\tau$-function.
We will view $\M_{1,\vec{1}}$ as the $\Q$-scheme $\Spec\Q[u,v,\Delta^{-1}]$.

\subsection{Trivializing $\cH$ over $\M_{1,\vec{1}}$}
\label{sec:trivializing}

To trivialize $\cH$, we need two linearly independent sections. The first is
given by the abelian differential $dx/y$. The second by $xdx/y$, a differential
of the second kind.

Set
$$
\eta_\tau = \wp(z,\tau)dz = \bigg(1 + 2\sum_{m=1}^\infty
\frac{G_{2m+2}(\tau)}{(2m)!}(2\pi i z)^{2m+2}\bigg)\frac{dz}{z^2}.
$$
This is a differential of the second kind on $E_\tau$.

\begin{proposition}
If $\gamma\in \SL_2(\Z)$, then $\eta_{\gamma\tau} = (c\tau + d)\eta_\tau$ and
$$
\int_{E_\tau} \w_\tau \smile \eta_\tau = 2\pi i.
$$
In particular, $H^1(E_\tau;\C) = \C\w_\tau \oplus \C\eta_\tau$ for all $\tau$.
\end{proposition}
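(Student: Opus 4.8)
The plan is to establish the three assertions in sequence, reducing each to a classical fact about the Weierstrass $\wp$-function.

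For the transformation law I would work directly from the expansion $\eta_\tau=\bigl(1+2\sum_{m\ge 1}\tfrac{G_{2m+2}(\tau)}{(2m)!}(2\pi i z)^{2m+2}\bigr)\tfrac{dz}{z^2}$ recorded just above. The key observation is that only the Eisenstein series $G_{2m+2}$ with $m\ge 1$ occur, i.e.\ series of weight $\ge 4$, so that each transforms as a genuine modular form and the anomalous term in the transformation law of $G_2$ never enters. Under $\gamma$ the coordinate $\xi=z$ is carried to $(c\tau+d)^{-1}z$ by the $\SL_2(\Z)$-action on $\C\times\h$. Substituting $\tau\mapsto\gamma\tau$ and $z\mapsto(c\tau+d)^{-1}z$ into the series for $\eta_{\gamma\tau}$ and using $G_{2m+2}(\gamma\tau)=(c\tau+d)^{2m+2}G_{2m+2}(\tau)$, every power of $(c\tau+d)$ inside the sum cancels, while the prefactor $dz/z^2$ contributes a single factor $(c\tau+d)$; this gives $\eta_{\gamma\tau}=(c\tau+d)\eta_\tau$. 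Equivalently, one may argue more conceptually from the homogeneity $\wp(\mu z;\mu\Lambda)=\mu^{-2}\wp(z;\Lambda)$ and the scaling behaviour of $\Lambda_\tau$, pulling $\eta_{\gamma\tau}$ back along the isomorphism $E_\tau\to E_{\gamma\tau}$; both routes agree with the already-established law $\w_{\gamma\tau}=(c\tau+d)^{-1}\w_\tau$ for the differential of the first kind.

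For the period $\int_{E_\tau}\w_\tau\smile\eta_\tau$ I would compute via the Riemann bilinear relation on the torus, $\int_{E_\tau}\w_\tau\wedge\eta_\tau=\oint_\a\w_\tau\,\oint_\b\eta_\tau-\oint_\b\w_\tau\,\oint_\a\eta_\tau$, normalised so that $\a\cdot\b=+1$. The periods of $\w_\tau=d\xi$ are $\oint_\a\w_\tau=1$ and $\oint_\b\w_\tau=\tau$. Since $\eta_\tau=\wp(z,\tau)\,dz$ has a double pole and no residue at the origin, it is of the second kind and defines a class in $H^1(E_\tau;\C)$; writing $\wp=-\zeta'$ for the Weierstrass $\zeta$-function shows that $\oint_\a\eta_\tau$ and $\oint_\b\eta_\tau$ are the negatives of the quasi-periods $\eta_1,\eta_2$ of $\zeta(z,\tau)$. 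Substituting, $\int_{E_\tau}\w_\tau\smile\eta_\tau=\tau\eta_1-\eta_2$, which equals $2\pi i$ by the Legendre relation $\eta_1\tau-\eta_2=2\pi i$.

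The ``in particular'' is then immediate. As $\dim_\C H^1(E_\tau;\C)=2$ and the cup product is a nondegenerate alternating pairing, the nonvanishing of $\int_{E_\tau}\w_\tau\smile\eta_\tau$ forces $\w_\tau$ and $\eta_\tau$ to be linearly independent (two proportional classes would pair to zero), hence to span $H^1(E_\tau;\C)$. The one genuine point of care, rather than a serious obstacle, is the bookkeeping of signs and factors of $2\pi i$: one must fix the orientation entering the Riemann bilinear relation so that it is compatible with the paper's Poincar\'e duality normalisation $\adual=-\b$, $\bdual=\a$, and pin down the sign of the Legendre relation from the ordering of the periods $1,\tau$ with $\Im\tau>0$. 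One should also read $\eta_{\gamma\tau}=(c\tau+d)\eta_\tau$ as an identity of de~Rham classes transported along the isomorphism $E_\tau\to E_{\gamma\tau}$, not as a literal equality of forms on a single curve.
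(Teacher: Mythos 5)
Your proof is correct, but your computation of the period $\int_{E_\tau}\w_\tau\smile\eta_\tau$ takes a genuinely different route from the paper's. The paper argues directly and self-containedly: it replaces $\eta_\tau$ by the smooth closed form $\psi:=\eta_\tau-d(\varphi F)$ (where $F'=\wp$ near the origin and $\varphi$ is a cutoff), which represents the same class, and then evaluates $\int_{E_\tau}\w_\tau\wedge\psi$ by Stokes as $\int_{\partial D}z\wp(z)\,dz=2\pi i$, a single residue computation. You instead expand the cup product by the Riemann bilinear relation in the periods of $\w_\tau$ and the quasi-periods of the Weierstrass $\zeta$-function, and then quote the Legendre relation $\eta_1\tau-\eta_2=2\pi i$. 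Both are sound; note, though, that they are nearly the same computation in different packaging, since the Legendre relation is itself classically proved by integrating $\zeta(z)\,dz$ around the period parallelogram, i.e.\ by exactly the residue argument the paper performs. What the paper's route buys is self-containedness and the fact that it silently handles the one point your argument leaves implicit: applying the bilinear relation to a meromorphic representative of the second kind requires knowing its de~Rham class is computed by a smooth form with the same periods, which is precisely what the cutoff construction provides. What your route buys is brevity (modulo classical facts) and an explicit identification of this period with the Legendre constant. For the transformation law $\eta_{\gamma\tau}=(c\tau+d)\eta_\tau$, your argument (modularity of the $G_{2m+2}$ with $m\ge 1$, or equivalently homogeneity of $\wp$) is exactly the "follows easily from the definition" that the paper intends, and your closing remark that this is an identity of classes transported along $E_\tau\to E_{\gamma\tau}$ is the right reading.
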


\begin{proof}
The first assertion follows easily from the definition of $\eta_\tau$. The
second formula follows from a routine residue computation:

Choose a closed disk $D = \{z: |z|\le R\}$ in $E_\tau$ about the origin. Let $F$
be a holomorphic function on $D$ satisfying $F'(z)=\wp(z)$.  Let $\varphi :
E_\tau \to \R$ be a smooth function that vanishes outside the annulus $A = \{z :
R/3 < |z| < R/2\}$ and is identically 1 when $|z|<R/3$. The $1$-form
$$
\psi := \eta_\tau - d(\varphi F(z))
$$
is smooth and closed. Since it agrees with $\eta_\tau$ outside $A$, it has the
same periods as $\eta_\tau$ and thus represents the same cohomology class. Since
$\w_\tau \wedge \psi$ is supported in $D$, we have
\begin{multline*}
\langle \w_\tau \smile \eta_\tau, E_\tau \rangle
= \int_{E_\tau} \w_\tau \wedge \psi
= \int_D dz\wedge \psi
= \int_{\partial D} z\psi
= \int_{\partial D} z\wp(z)dz
= 2\pi i.
\end{multline*}
\end{proof}

\begin{remark}
\label{rem:splitting}
This implies that the exact sequence
$$
0 \to \cL \to \cH \to \cL_{-1} \to 0
$$
over $\M_{1,1}$ splits; the copy of $\cL_{-1}$ in $\cH$ is spanned locally by
$\eta_\tau$. We will see below that this sequence also splits over
$\Mbar_{1,1}$. This splitting also follows from the vanishing of
$H^1(\Mbar_{1,1},\cL_2)$ as there are no modular forms of weight $2$ and level
$1$.
\end{remark}

\begin{corollary}
The sections $\xi^{-1}\w_\tau$ and $\xi\eta_\tau$ of $\cH$ over $\C\times\h$
are $\SL_2(\Z)$-invariant. \qed
\end{corollary}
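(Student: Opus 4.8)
The plan is to prove invariance by a direct computation, playing the weight of each object against the known transformation law of the coordinate $\xi$. Recall that a section of $\cH$ over $\C\times\h$ is the same as a function $(\xi,\tau)\mapsto s(\xi,\tau)\in H^1(E_\tau;\C)$, and that it is $\SL_2(\Z)$-invariant --- equivalently, descends to $\M_{1,\vec{1}}=\cL_{-1}'$ --- precisely when $\Phi_\gamma\big(s(\xi,\tau)\big)=s\big(\gamma\cdot(\xi,\tau)\big)$ for every $\gamma\in\SL_2(\Z)$. Here $\Phi_\gamma\colon H^1(E_\tau;\C)\to H^1(E_{\gamma\tau};\C)$ is the fibrewise identification built into $\cH$ as a bundle over $\M_{1,1}$, and the action on the base is $\gamma\colon(\xi,\tau)\mapsto\big((c\tau+d)^{-1}\xi,\gamma\tau\big)$, so that $\gamma^\ast\xi=(c\tau+d)^{-1}\xi$.

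First I would record the two weight laws in the form I need them. Example~\ref{ex:hodge} gives $\w_{\gamma\tau}=(c\tau+d)^{-1}\w_\tau$, which, read as an identity in the fibre over $\gamma\tau$, says $\Phi_\gamma(\w_\tau)=(c\tau+d)\,\w_{\gamma\tau}$; the preceding Proposition gives $\eta_{\gamma\tau}=(c\tau+d)\eta_\tau$, i.e.\ $\Phi_\gamma(\eta_\tau)=(c\tau+d)^{-1}\eta_{\gamma\tau}$. Then the computation is a one-line cancellation in each case. For $s=\xi^{-1}\w_\tau$ one has $s\big(\gamma\cdot(\xi,\tau)\big)=(c\tau+d)\,\xi^{-1}\w_{\gamma\tau}$ on the one hand and $\Phi_\gamma\big(s(\xi,\tau)\big)=\xi^{-1}\Phi_\gamma(\w_\tau)=(c\tau+d)\,\xi^{-1}\w_{\gamma\tau}$ on the other, so the two agree. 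For $t=\xi\eta_\tau$ the factor $(c\tau+d)^{-1}$ coming from $\gamma^\ast\xi$ cancels the weight $+1$ of $\eta_\tau$ in exactly the same way.

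The hard part here is not analytic but bookkeeping: I must fix, once and for all, the direction of $\Phi_\gamma$ and which of the weights $\pm1$ each of $\w_\tau$, $\eta_\tau$ and $\xi$ carries, since a single misplaced sign would invert the conclusion. The cleanest safeguard I would use is the invariance of the cup pairing: since $\w_{\gamma\tau}\smile\eta_{\gamma\tau}=(c\tau+d)^{-1}\w_\tau\smile(c\tau+d)\eta_\tau=\w_\tau\smile\eta_\tau=2\pi i$, the weights of $\w_\tau$ and $\eta_\tau$ are genuinely opposite, and $(\xi^{-1}\w_\tau)\smile(\xi\eta_\tau)=2\pi i$ is a true constant. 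This is both a consistency check on the conventions and the reason the two invariant sections trivialize $\cH$ over $\M_{1,\vec{1}}$, as the surrounding discussion requires.
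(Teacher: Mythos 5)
Your proof is correct and is exactly the argument the paper intends: the corollary is stated with a \qed because it follows immediately from the preceding proposition ($\eta_{\gamma\tau}=(c\tau+d)\eta_\tau$) together with the law $\w_{\gamma\tau}=(c\tau+d)^{-1}\w_\tau$ from Example~\ref{ex:hodge}, and your computation just spells out this weight cancellation against $\gamma^\ast\xi=(c\tau+d)^{-1}\xi$, with conventions handled correctly. The cup-product check $(\xi^{-1}\w_\tau)\smile(\xi\eta_\tau)=2\pi i$ is a sound extra safeguard but not a different method.
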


For a lattice $\Lambda$ in $\C$, set
$$
\wp_\Lambda(z) := \frac{1}{z^2} +
\sum_{\substack{\lambda \in \Lambda\cr \lambda \neq 0}}
\bigg[\frac{1}{(z-\lambda)^2} - \frac{1}{\lambda^2} \bigg].
$$
The Weierstrass $\wp$-function $\wp(z,\tau)$ defined in Section~\ref{sec:wp} is
$\wp_{\Lambda_\tau}(z)$. One checks easily that
$$
\wp_{\xi^{-1}\Lambda}(\xi^{-1}z) = \xi^2\wp_\Lambda(z).
$$

Multiplication by $\xi^{-1}$ induces an isomorphism $E_\tau \to
\C/\xi^{-1}\Lambda_\tau$ under which $dz$ and $\wp_{\xi^{-1}\Lambda_\tau}dz$
pull back to $\xi^{-1}\w_\tau$ and $\xi\eta_\tau$, respectively.

\begin{proposition}
\label{prop:diffs}
If $(u,v) = (\xi^4 g_2(\tau),\xi^6 g_3(\tau)\big)$, then
\begin{enumerate}

\item the map
$$
z\mapsto
\big[\wp_{\xi^{-1}\Lambda_\tau}(z),\wp_{\xi^{-1}\Lambda_\tau}'(z),1\big]
$$
from $\C/\xi^{-1}\Lambda_\tau$ to $\P^2$ induces an isomorphism
$\C/\xi^{-1}\Lambda_\tau \to E_{u,v}$;

\item under this isomorphism
$$
dx/y = dz = \xi^{-1}\w_\tau \text{ and }
xdx/y = \wp_{\xi^{-1}\Lambda_\tau}(z)dz;
$$

\item under the isomorphism $E_\tau \to \C/\xi^{-1}\Lambda_\tau \to E_{u,v}$,
$dx/y$ and $xdx/y$ pull back to $\xi^{-1}\w_\tau$ and $\xi\eta_\tau$,
respectively.

\end{enumerate}\qed
\end{proposition}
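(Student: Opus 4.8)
All three parts follow from the classical theory of the Weierstrass $\wp$-function together with the homogeneity relation $\wp_{\xi^{-1}\Lambda}(\xi^{-1}z) = \xi^2\wp_\Lambda(z)$ recorded above, so the plan is to reduce each assertion to a standard fact and a short check of constants.

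First, for part (i), I would compute the lattice invariants of $\xi^{-1}\Lambda_\tau$. Since $g_2(\tau)$ and $g_3(\tau)$ are, via their definitions, constant multiples of the lattice sums $\sum_{\lambda\neq 0}\lambda^{-4}$ and $\sum_{\lambda\neq 0}\lambda^{-6}$, the substitution $\lambda\mapsto\xi^{-1}\mu$ gives at once the scaling $g_2(\xi^{-1}\Lambda_\tau) = \xi^4 g_2(\Lambda_\tau)$ and $g_3(\xi^{-1}\Lambda_\tau) = \xi^6 g_3(\Lambda_\tau)$. Using Zagier's normalization of $G_4$ and $G_6$ together with the stated formulas $g_2(\tau) = 20(2\pi i)^4 G_4(\tau)$ and $g_3(\tau) = \tfrac{7}{3}(2\pi i)^6 G_6(\tau)$, I would verify that $g_2(\Lambda_\tau)=g_2(\tau)$ and $g_3(\Lambda_\tau)=g_3(\tau)$ coincide with the classical invariants $60\sum_{\lambda\neq 0}\lambda^{-4}$ and $140\sum_{\lambda\neq 0}\lambda^{-6}$, whence $g_2(\xi^{-1}\Lambda_\tau) = u$ and $g_3(\xi^{-1}\Lambda_\tau) = v$. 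The classical Weierstrass differential equation for $\xi^{-1}\Lambda_\tau$ then reads $(\wp'_{\xi^{-1}\Lambda_\tau})^2 = 4\wp_{\xi^{-1}\Lambda_\tau}^3 - u\wp_{\xi^{-1}\Lambda_\tau} - v$, so the image of $z\mapsto[\wp_{\xi^{-1}\Lambda_\tau}(z),\wp'_{\xi^{-1}\Lambda_\tau}(z),1]$ lies on $E_{u,v}\colon y^2 = 4x^3 - ux - v$, with $z=0$ sent to the point at infinity; that this map is an isomorphism of elliptic curves is the uniformization theorem.

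For part (ii), I would set $x = \wp_{\xi^{-1}\Lambda_\tau}(z)$ and $y = \wp'_{\xi^{-1}\Lambda_\tau}(z)$, so that $dx = y\,dz$ and hence $dx/y = dz$ and $x\,dx/y = \wp_{\xi^{-1}\Lambda_\tau}(z)\,dz$; the remaining identity $dz = \xi^{-1}\w_\tau$ is precisely the pullback formula stated just before the proposition. Part (iii) is then immediate: composing the isomorphism of part (i) with the multiplication-by-$\xi^{-1}$ isomorphism $E_\tau\to\C/\xi^{-1}\Lambda_\tau$, under which $dz$ and $\wp_{\xi^{-1}\Lambda_\tau}(z)\,dz$ pull back to $\xi^{-1}\w_\tau$ and $\xi\eta_\tau$ respectively, exhibits $dx/y$ and $x\,dx/y$ pulling back to $\xi^{-1}\w_\tau$ and $\xi\eta_\tau$.

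The one delicate point, and the one I expect to consume the most care, is exactly this bookkeeping of constants: confirming that Zagier's normalization of $G_4,G_6$ and the stated formulas for $g_2,g_3$ reproduce the classical invariants in the Weierstrass equation written as $y^2 = 4x^3 - g_2 x - g_3$, so that all the factors of $4$, $60$, $140$ and all powers of $2\pi i$ cancel correctly. Everything else is either formal differentiation or a citation of the uniformization theorem.
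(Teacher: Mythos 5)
Your proof is correct and is essentially the argument the paper leaves implicit: the proposition is stated there without proof (as ``well known and classical'' material), resting on the preceding homogeneity relation $\wp_{\xi^{-1}\Lambda}(\xi^{-1}z)=\xi^2\wp_\Lambda(z)$ and standard Weierstrass theory, which is exactly what you spell out. The constant check you flag as delicate does go through: Zagier's normalization gives $G_4(\tau) = 3(2\pi i)^{-4}\sum_{\lambda\neq 0}\lambda^{-4}$ and $G_6(\tau) = 60(2\pi i)^{-6}\sum_{\lambda\neq 0}\lambda^{-6}$, so $g_2(\tau) = 20(2\pi i)^4 G_4(\tau) = 60\sum_{\lambda\neq 0}\lambda^{-4}$ and $g_3(\tau) = \tfrac{7}{3}(2\pi i)^6 G_6(\tau) = 140\sum_{\lambda\neq 0}\lambda^{-6}$ are indeed the classical invariants of $\Lambda_\tau$, whence $g_2(\xi^{-1}\Lambda_\tau)=u$ and $g_3(\xi^{-1}\Lambda_\tau)=v$ as your argument requires.
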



\begin{corollary}
For each $(u,v) \in \M_{1,\vec{1}}$, the elements $dx/y$ and $xdx/y$ of the
fiber $H^1(E_{u,v})$ of $\cH$ are linearly independent. \qed
\end{corollary}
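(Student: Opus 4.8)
The plan is to read the statement off part~(iii) of the preceding Proposition and the earlier computation of $H^1(E_\tau;\C)$, transporting linear independence across the explicit isomorphism $E_\tau \cong E_{u,v}$. First I would record the one genuinely substantive point: that the parameter $\xi$ cannot vanish on $\M_{1,\vec{1}}$. Indeed, a point $(u,v)\in\M_{1,\vec{1}}$ satisfies $\Delta(u,v)\neq 0$, and under the parametrization $(u,v)=(\xi^4 g_2(\tau),\xi^6 g_3(\tau))$ one has $\Delta = (2\pi i\,\xi)^{12}\Delta_0$ with $\Delta_0$ nowhere vanishing on $\h$; hence $\xi\neq 0$. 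Equivalently, $\M_{1,\vec{1}}=\cL_{-1}'$ is the complement of the zero section, so the fibre coordinate $\xi$ is nonzero by construction.

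Next I would invoke part~(iii) of the preceding Proposition, which supplies an isomorphism $E_\tau \to E_{u,v}$ under which $dx/y$ and $xdx/y$ pull back to $\xi^{-1}\w_\tau$ and $\xi\eta_\tau$, respectively. Since the pullback along an isomorphism of elliptic curves is a linear isomorphism on $H^1(-;\C)$, the classes $dx/y$ and $xdx/y$ are linearly independent in $H^1(E_{u,v};\C)$ if and only if $\xi^{-1}\w_\tau$ and $\xi\eta_\tau$ are linearly independent in $H^1(E_\tau;\C)$. Because $\xi\neq 0$, the latter are nonzero scalar multiples of $\w_\tau$ and $\eta_\tau$, which form a basis of $H^1(E_\tau;\C)$ by the earlier Proposition establishing $H^1(E_\tau;\C)=\C\w_\tau \oplus \C\eta_\tau$; linear independence is therefore immediate.

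There is no real obstacle here: the content lies entirely in the two Propositions already proved, and this corollary merely carries their conclusion across the explicit identification of $E_\tau$ with $E_{u,v}$. The only step requiring any care is the verification that $\xi$ is nonvanishing on $\M_{1,\vec{1}}$, which is precisely the condition distinguishing $\cL_{-1}'$ from $\cL_{-1}$ and amounts to the non-degeneracy of the chosen abelian differential.
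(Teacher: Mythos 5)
Your proof is correct and follows exactly the route the paper intends: the corollary is stated with no proof precisely because it is the immediate combination of part~(iii) of the preceding Proposition with the earlier fact that $H^1(E_\tau;\C)=\C\w_\tau\oplus\C\eta_\tau$, which is what you carry out. Your explicit check that $\xi\neq 0$ on $\M_{1,\vec{1}}$ (equivalently, that $\M_{1,\vec{1}}=\cL_{-1}'$ excludes the zero section) is the only detail the paper leaves tacit, and you handle it correctly.
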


Denote these sections of $\cH$ over $\M_{1,\vec{1}}$ by $\That$ and $\Shat$,
respectively. They trivialize $\cH$ and determine the extension
$$
\Hbar := \O_{\Mbar_{1,\vec{1}}}\Shat \oplus \O_{\Mbar_{1,\vec{1}}}\That
$$
of $\cH$ to $\Mbar_{1,\vec{1}}$.

\begin{proposition}
\label{prop:nabla_0}
The connection on $\Hbar$ with respect to this trivialization is
$$
\nabla_0 = d +
\Big(
-\frac{1}{12}\frac{d\Delta}{\Delta}\otimes\That
+ \frac{3}{2}\frac{\alpha}{\Delta}\otimes\Shat
\Big)\frac{\partial}{\partial\That}
+
\Big(
-\frac{u}{8}\frac{\alpha}{\Delta}\otimes\That
+ \frac{1}{12}\frac{d\Delta}{\Delta}\otimes\Shat
\Big)\frac{\partial}{\partial\Shat}
$$
where $\alpha = 2udv-3vdu$ and $\Delta = u^3 - 27 v^2$. This form is logarithmic
on $\Mbar_{1,\vec{1}}$ with nilpotent residue along $\Delta=0$ and is therefore
the canonical extension of $\cH$ over $\Mbar_{1,\vec{1}}$. It is defined over
$\Q$.
\end{proposition}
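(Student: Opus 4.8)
The plan is to compute the Gauss--Manin connection on $\cH$ algebraically, directly in the trivializing frame $\That = dx/y$, $\Shat = xdx/y$ of the Weierstrass family $E_{u,v}: y^2 = 4x^3 - ux - v$. Since the Gauss--Manin connection sends a family of de~Rham classes to the class of its parameter derivative reduced modulo exact forms, the first step is to differentiate $\That$ and $\Shat$ in $u$ and $v$. Using $2y\,\partial y/\partial u = -x$ and $2y\,\partial y/\partial v = -1$ one finds
$$
\frac{\partial}{\partial u}\frac{dx}{y} = \frac{x\,dx}{2y^3},\quad
\frac{\partial}{\partial v}\frac{dx}{y} = \frac{dx}{2y^3},\quad
\frac{\partial}{\partial u}\frac{x\,dx}{y} = \frac{x^2\,dx}{2y^3},\quad
\frac{\partial}{\partial v}\frac{x\,dx}{y} = \frac{x\,dx}{2y^3},
$$
so everything reduces to expressing the classes $S_j := x^j\,dx/y^3$ for $j=0,1,2$ in the basis $\{\That,\Shat\}$.

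Next I would set up the reduction modulo exact forms. The identity $d(x^k/y) = \big(kx^{k-1}/y - x^k(12x^2-u)/2y^3\big)\,dx$ together with $1/y = (4x^3-ux-v)/y^3$ gives, for each $k$, a relation
$$
(4k-6)S_{k+2} + u(\tfrac12 - k)S_k - kv\,S_{k-1} \equiv 0 \pmod{\text{exact}},
$$
whose cases $k=0,1,2$ yield $S_2 = \tfrac{u}{12}S_0$, $S_3 = -\tfrac{u}{4}S_1 - \tfrac{v}{2}S_0$, and $S_4 = \tfrac{u^2}{16}S_0 + v\,S_1$. Substituting these into the expansions $\That = 4S_3 - uS_1 - vS_0$ and $\Shat = 4S_4 - uS_2 - vS_1$ (both coming from $1/y=(4x^3-ux-v)/y^3$) produces the linear system
$$
\That = -3v\,S_0 - 2u\,S_1,\qquad \Shat = \tfrac{u^2}{6}S_0 + 3v\,S_1,
$$
whose coefficient matrix has determinant $-9v^2 + \tfrac{u^3}{3} = \Delta/3$. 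Inverting expresses $S_0,S_1$ (and hence $S_2=\tfrac{u}{12}S_0$) as explicit $\O[\Delta^{-1}]$-combinations of $\That,\Shat$; feeding these back into the four derivatives above and recognizing the combinations $6u\,dv - 9v\,du = 3\alpha$ and $3u^2\,du - 54v\,dv = d\Delta$ yields exactly $\nabla\That = -\tfrac1{12}\tfrac{d\Delta}{\Delta}\That + \tfrac32\tfrac{\alpha}{\Delta}\Shat$ and $\nabla\Shat = -\tfrac{u}{8}\tfrac{\alpha}{\Delta}\That + \tfrac1{12}\tfrac{d\Delta}{\Delta}\Shat$, i.e. the stated form.

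The remaining assertions are quick. Rationality over $\Q$ is immediate, as every coefficient lies in $\Q[u,v,\Delta^{-1}]$ and $\That,\Shat$ are algebraic forms defined over $\Q$. For the logarithmic/nilpotent claim I would read off the residue matrix $R$ along $D := \{\Delta = 0\}$: the $\tfrac{d\Delta}{\Delta}$ terms contribute $\mp\tfrac1{12}$ on the diagonal, so $\operatorname{tr}R = 0$, while the off-diagonal entries are multiples of $\rho := \Res_D(\alpha/\Delta)$. Solving $d\Delta = 3u^2\,du - 54v\,dv$ for $v=v(\Delta,u)$ shows $\alpha = -\tfrac{u}{27v}\,d\Delta + \tfrac{\Delta}{9v}\,du$, so $\alpha/\Delta$ has a simple pole with holomorphic residue $\rho = -u/(27v)$; on $D$ one has $27v^2 = u^3$, whence $\rho^2 = 1/(27u)$. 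This is precisely the identity forcing $\det R = -\tfrac1{144} + \tfrac{3u}{16}\rho^2 = 0$, so $R$ is nilpotent and $\Hbar$ is Deligne's canonical extension.

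The main obstacle is the bookkeeping in the reduction step --- pinning down the coefficients in the relations among the $S_j$ and in the final $2\times 2$ inversion --- together with verifying that $\alpha/\Delta$ is genuinely logarithmic (a simple pole with holomorphic residue) rather than merely a first-order pole. The clean factorization $\alpha\wedge d\Delta = -6\Delta\,du\wedge dv$ is what makes both the simple-pole property and the nilpotency identity $\rho^2 = 1/(27u)$ transparent, and I expect it to be the organizing computation of the proof.
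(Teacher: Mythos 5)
Your main computation is correct and is essentially the paper's own argument: differentiating $dx/y$ and $xdx/y$ under the Gauss--Manin connection, reducing the classes $x^j\,dx/y^3$ modulo the exact forms $d(x^k/y)$ using $2y\,dy=(12x^2-u)\,dx$, and inverting the resulting $2\times 2$ system whose determinant is $\Delta/3$ is exactly what the paper's sketch does, and your coefficients all check out. Your residue analysis along $\Delta=0$ is in fact \emph{more} complete than the paper's: the paper verifies the logarithmic property via the criterion that $\Delta\varphi$ and $\Delta\,d\varphi$ be holomorphic (equivalent to your factorization $\alpha\wedge d\Delta=-6\Delta\,du\wedge dv$), but leaves the nilpotency of the residue implicit, whereas you exhibit $\operatorname{tr}R=0$ and $\det R=-\tfrac{1}{144}+\tfrac{3u}{16}\rho^2=0$ using $27v^2=u^3$ on the divisor. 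One small caveat: your coordinates $(u,\Delta)$ degenerate where $v=0$, i.e.\ at the origin $(u,v)=(0,0)$ (the cuspidal cubic); this is harmless because that point does not lie on $\Mbar_{1,\vec{1}}$, but the paper's criterion has the advantage of applying uniformly, including at singular points of the divisor.

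There is, however, a genuine gap: the boundary of $\M_{1,\vec{1}}$ in $\Mbar_{1,\vec{1}}\cong\cLbar_{-1}$ has \emph{two} components, the discriminant locus $\Delta=0$ (nodal curves) and the zero section of $\cLbar_{-1}$ (vanishing tangent vector), which in the weighted-projective picture of $\Spec\Q[u,v]$ appears as the line at infinity. The statement ``logarithmic on $\Mbar_{1,\vec{1}}$,'' and hence the conclusion that $\Hbar$ with $\nabla_0$ is Deligne's canonical extension over $\Mbar_{1,\vec{1}}$, requires checking logarithmic behaviour along that second component as well; the paper does this explicitly (verifying that $w\varphi$ and $w\,d\varphi$ are holomorphic along the line at infinity, where $w=0$ is a local equation for it, for $\varphi=\alpha/\Delta$ and $u\alpha/\Delta$). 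Your proposal never addresses this divisor, so as written it only establishes the canonical-extension property along $\Delta=0$, not over all of $\Mbar_{1,\vec{1}}$. The missing check is easy --- the forms $d\Delta/\Delta$, $\alpha/\Delta$ and $u\alpha/\Delta$ are weighted-homogeneous of weights $0$, $-2$ and $2$, and one verifies the pole order at infinity directly --- but it must be done for the proposition as stated.
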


\begin{proof}[Sketch of Proof]
We need to understand how the classes $dx/y$ and $xdx/y$ depend on $(u,v)$.
Each of the $1$-forms
$$
\frac{\partial}{\partial u}\Big(\frac{dx}{y}\Big)=\frac{1}{2}\frac{xdx}{y^3},\
\frac{\partial}{\partial v}\Big(\frac{dx}{y}\Big)=\frac{1}{2}\frac{dx}{y^3},\
\frac{\partial}{\partial u}\Big(\frac{xdx}{y}\Big)=\frac{1}{2}\frac{x^2dx}{y^3}
,\ \frac{\partial}{\partial v}\Big(\frac{xdx}{y}\Big)=\frac{1}{2}\frac{xdx}{y^3}
$$
is a differential of the second kind on each $E_{u,v}$. So the cohomology
class of each is a linear combination of the classes of $dx/y$ and $xdx/y$.

The differentials $d(1/y),\ d(x/y)\text{ and }d(x^2/y)$, the relation $2ydy =
(12x^2-u)dx$, and some linear algebra give
$$
\begin{pmatrix}
\frac{dx}{y^3} & \frac{xdx}{y^3} & \frac{x^2dx}{y^3}
\end{pmatrix}
\equiv
\frac{3}{\Delta}
\begin{pmatrix}
\frac{dx}{y} & \frac{xdx}{y}
\end{pmatrix}
\begin{pmatrix}
3v & - u^2/6 & uv/4 \cr 2u & -3v & u^2/6
\end{pmatrix}
$$
where $\equiv$ means congruent mod exact forms of the second kind, and thus
equal in cohomology.

Now
\begin{align*}
\nabla_0
\begin{pmatrix}
\That &\Shat
\end{pmatrix}
&= \nabla_0
\begin{pmatrix}
\frac{dx}{y} & \frac{xdx}{y}
\end{pmatrix}
\cr
&=
\frac{1}{2}
\begin{pmatrix}
\frac{xdx}{y^3} & \frac{x^2dx}{y^3}
\end{pmatrix}
du
+
\frac{1}{2}
\begin{pmatrix}
\frac{dx}{y^3} & \frac{xdx}{y^3}
\end{pmatrix}
dv
\cr
&=
\frac{3}{2\Delta}
\begin{pmatrix}
\frac{dx}{y} & \frac{xdx}{y}
\end{pmatrix}
\begin{pmatrix}
-u^2du/6+3vdv & uvdu/4-u^6dv/6 \cr -3vdu+2udv & u^2du/6-3vdv
\end{pmatrix}
\cr
&=
\begin{pmatrix}
\That &\Shat
\end{pmatrix}
\begin{pmatrix}
-\frac{1}{12}\frac{d\Delta}{\Delta} & -\frac{u\alpha}{8\Delta} \cr
\frac{3\alpha}{2\Delta} & \frac{1}{12}\frac{d\Delta}{\Delta}
\end{pmatrix}
\end{align*}

The forms $\alpha/\Delta$ and $u\alpha/\Delta$ are logarithmic. One can prove
this directly. Alternatively, we can use the fact \cite{deligne:ode} that a
meromorphic form $\varphi$ on $\C^2$ has logarithmic singularities along
$\Delta=0$ if and only if $\Delta\varphi$ and $\Delta d\varphi$ are both
holomorphic along $\Delta=0$. This holds in our case as
$$
\Delta d(\alpha/\Delta) = - du\wedge dv \text{ and }
\Delta d(u\alpha/\Delta) = u du\wedge dv.
$$
Similarly, one checks that $w\varphi$ and $wd\varphi$ are holomorphic along the
line at infinity, where $w=0$ is a local defining equation of the line at
infinity, then $\varphi$ is logarithmic along the line at infinity. This is
easily checked when $\varphi$ is $\alpha/\Delta$ and $u\alpha/\Delta$.
\end{proof}

This implies that the sequence
$$
0 \to \cLbar \to \Hbar \to \cLbar_{-1} \to 0
$$
splits over $\Mbar_{1,\vec{1}}$. The lift of $\cLbar_{-1}$ in $\Hbar$ is
$\O_{\Mbar_{1,\vec{1}}}\Shat$.

\subsection{Transcendental version}

We re-derive the formula for the connection in terms of the coordinates
$(\xi,\tau)\in \C\times\h$. This will yield some formulas that are useful
in computing the algebraic version of the universal elliptic KZB connection.

As observed above, if $(u,v) = (\xi^4 g_2(\tau),\xi^6 g_3(\tau))$, then
$dx/y = \xi^{-1}\w_\tau$. Since $\That$ is the class of $dx/y$, we have
$$
\That = \xi^{-1} T.
$$
\begin{proposition}
We have $\eta_\tau = (2\pi i)^2 (A - 2G_2(\tau)T)$ so that
$$
\Shat = \xi\eta_\tau = (2\pi i)^2 \xi (A - 2G_2(\tau)T).
$$
\end{proposition}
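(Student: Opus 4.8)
The plan is to expand $\eta_\tau$ in the frame $\{A,T\}$ of $H^1(E_\tau;\C)$ and pin down the two coefficients by a period/cup-product computation. Since $T=2\pi i\,\t=\w_\tau$ and $A=(2\pi i)^{-1}\a$ are linearly independent (their coefficient matrix against the dual basis $\adual,\bdual$ is invertible, with determinant $\pm(2\pi i)^{-1}$), I may write $\eta_\tau=cA+dT$ for unique scalars $c,d$ depending on $\tau$, and the claim is precisely that $c=(2\pi i)^2$ and $d=-2(2\pi i)^2G_2(\tau)$. I would extract the two coefficients separately: $c$ by pairing against $T$ via the cup product, and $d$ by integrating over the cycle $\a$.

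For $c$ I would use the cup product, exploiting the proposition immediately preceding. Recall from Remark~\ref{rem:meaning} that $\langle\a,\t\rangle=-(2\pi i)^{-1}$; since $A=(2\pi i)^{-1}\a$ and $T=2\pi i\,\t$ this gives $\int_{E_\tau}A\smile T=\langle\a,\t\rangle=-(2\pi i)^{-1}$, hence $\int_{E_\tau}T\smile A=(2\pi i)^{-1}$, while $T\smile T=0$ because the cup product is alternating on $H^1$. Pairing $\eta_\tau=cA+dT$ with $T$ and invoking $\int_{E_\tau}\w_\tau\smile\eta_\tau=2\pi i$ from the preceding proposition yields $2\pi i=\int_{E_\tau}T\smile\eta_\tau=c\int_{E_\tau}T\smile A=c\,(2\pi i)^{-1}$, so $c=(2\pi i)^2$. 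This step is entirely in-house and needs no classical input.

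The coefficient $d$ is isolated by integrating over the cycle $\a$ (the class of $1\in\Lambda_\tau$): since $\int_\a A=(2\pi i)^{-1}\bdual(\a)=0$ and $\int_\a T=\int_\a\w_\tau=\int_\a d\xi=1$, we get $d=\int_\a\eta_\tau$. Now $\eta_\tau=\wp(z,\tau)\,dz$ and $\zeta'=-\wp$ for the Weierstrass $\zeta$-function, so this $\a$-period equals minus the quasi-period $\zeta(z_0+1)-\zeta(z_0)$. The main obstacle is to match this classical quasi-period to the paper's $G_2$ with the correct normalization: using $\zeta(z_0+1)-\zeta(z_0)=\tfrac{\pi^2}{3}E_2(\tau)$ together with $G_2=-\tfrac{1}{24}E_2$ (the normalization forced by the $q$-expansion in Section~\ref{sec:wp}) gives quasi-period $=2(2\pi i)^2G_2$, whence $d=-2(2\pi i)^2G_2$. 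Getting the sign right here amounts to the Legendre relation $\eta_1\tau-\eta_2=2\pi i$ between the quasi-periods for $1$ and $\tau$, which is the one genuinely classical ingredient; as a cross-check one may instead recover $c$ from the $\b$-period together with Legendre and reconfirm $c=(2\pi i)^2$. Assembling $c$ and $d$ gives $\eta_\tau=(2\pi i)^2\big(A-2G_2(\tau)T\big)$, and multiplying by $\xi$ yields $\Shat=\xi\eta_\tau=(2\pi i)^2\xi\big(A-2G_2(\tau)T\big)$.
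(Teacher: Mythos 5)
Your proposal is correct, but it reaches the coefficient of $T$ by a genuinely different route than the paper. Both arguments fix the overall scalar the same way: pairing with $T=\w_\tau$ and using $\int_{E_\tau}\w_\tau\smile\eta_\tau=2\pi i$ from the preceding proposition, together with $\langle T,A\rangle=(2\pi i)^{-1}$, forces the $A$-coefficient to be $(2\pi i)^2$. The divergence is in how the term $-2G_2(\tau)T$ is identified. The paper never computes a period of $\eta_\tau$ over a cycle: it uses the anomalous transformation law $G_2(\gamma\tau)=(c\tau+d)^2G_2(\tau)+ic(c\tau+d)/4\pi$ to show that $A-2G_2(\tau)T$ transforms with factor $(c\tau+d)^{-1}$, i.e.\ spans a copy of $\cL_{-1}$ inside $\cH_\h$, notes that $\eta_\tau$ spans another such copy (by $\eta_{\gamma\tau}=(c\tau+d)\eta_\tau$), concludes the two sections are proportional, and then pins the factor down by the cup-product pairing. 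You instead read off the $T$-coefficient directly as the $\a$-period $\int_\a\eta_\tau=-\big(\zeta(z_0+1)-\zeta(z_0)\big)$ of the second-kind differential $\wp(z,\tau)\,dz$, and convert the classical quasi-period formula $\zeta(z_0+1)-\zeta(z_0)=\tfrac{\pi^2}{3}E_2(\tau)$, with $G_2=-E_2/24$, into $-2(2\pi i)^2G_2(\tau)$; your Legendre-relation cross-check of the $A$-coefficient is also consistent. What each approach buys: the paper's argument stays entirely inside the machinery it has already set up (factors of automorphy, the quoted $G_2$-law, the bundle $\cL_{-1}\subset\cH$) and needs no Weierstrass $\zeta$-function theory, but its proportionality step is left implicit --- strictly, one needs that two line subbundles of $\cH$ both isomorphic to $\cL_{-1}$ coincide, e.g.\ because the induced map $\cL_{-1}\to\cH/\cL_{-1}\cong\cL$ is a weight-two modular form and hence zero. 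Your computation sidesteps that rigidity argument altogether by determining both coefficients through explicit pairings, at the cost of importing one classical ingredient (the quasi-period evaluation) that the paper nowhere states; the two uses of external input --- Zagier's $G_2$-transformation law versus the Eisenstein--Weierstrass quasi-period formula --- are of course equivalent facts in disguise.
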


\begin{proof}
Using the notation of Example~\ref{ex:hodge}, we have
\begin{align*}
\begin{pmatrix}
\a & \t
\end{pmatrix}
\begin{pmatrix}
1 \cr 8 \pi^2 G_2(\tau)
\end{pmatrix}
&= 
\begin{pmatrix}
\a' & \t'
\end{pmatrix}
\begin{pmatrix}
(c\tau + d)^{-1} & 0 \cr 2\pi i c & c\tau + d
\end{pmatrix}
\begin{pmatrix}
1 \cr 8 \pi^2 G_2(\tau)
\end{pmatrix}
\cr
&=
(c\tau + d)^{-1}
\begin{pmatrix}
\a' & \t'
\end{pmatrix}
\begin{pmatrix}
1 \cr 8\pi^2 G_2(\gamma \tau)
\end{pmatrix}.
\end{align*}
From this it follows that
$$
A - 2G_2(\tau)T = (c\tau+d)^{-1}\big(A'- 2G_2(\gamma\tau)T').
$$
Consequently, $A - 2G_2(\tau)T$ is a section of $\cH$ that spans a copy of
$\cL_{-1}$ over $\h$. But $\eta_\tau$ is another such section. It follows that
$\eta_\tau$ is a holomorphic multiple of $A - 2G_2(\tau)T$. This multiple can be
determined by pairing with $\w_\tau$. Since
$$
\langle T,A - 2G_2(\tau)T\rangle = \langle \w_\tau, (2\pi i)^{-1}\a\rangle
= (2\pi i)^{-1} \text{ and }
\langle T,\eta_\tau\rangle = \int_{E_\tau}\w_\tau \smile \eta_\tau = 2\pi i,
$$
it follows that
$$
\eta_\tau = (2\pi i)^2 (A-2G_2(\tau)T).
$$
There are several ways to prove the second assertion. One is to observe that
$$
\langle \That, \Shat \rangle =
\langle \frac{dx}{y}, \frac{xdx}{y}\rangle = 2\pi i = \langle T,S\rangle
= \langle \xi^{-1}T,\xi S\rangle = \langle \That,\xi S\rangle.
$$
\end{proof}

We've already seen in Example~\ref{ex:connection_H} that the connection
$\nabla_0$ on $\cH$ over $\h$ with respect to the framing $A$, $T$ is given by
$$
\nabla_0 = d + 2\pi i A\frac{\partial}{\partial T}\otimes d\tau.
$$

\begin{proposition}
With respect to the framing $\Shat$ and $\That$ of $\Hbar$ over
$\M_{1,\vec{1}}$, the connection on $\cH$ is
\begin{multline*}
\nabla_0 = d +
\Big(
\big(4\pi i\, G_2\,d\tau - \frac{d\xi}{\xi}\big)\otimes\That
+\frac{2\pi i}{(2\pi i \xi)^2}d\tau\otimes\Shat
\Big)\frac{\partial}{\partial \That}
\cr
-
\Big(
(2\pi i\xi)^2(8\pi i\, G_2^2 + 2 G_2')\,d\tau\otimes\That
+\big(4\pi i\, G_2\,d\tau - \frac{d\xi}{\xi}\big)\otimes\Shat
\Big)\frac{\partial}{\partial \Shat}
\end{multline*}
\end{proposition}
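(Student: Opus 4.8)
The plan is to derive the formula purely as a change of framing, starting from the connection $\nabla_0 = d + 2\pi i\, A\frac{\partial}{\partial T}\otimes d\tau$ on $\cH$ over $\h$ already recorded via Example~\ref{ex:connection_H}. In the frame $A,T$ this amounts to $\nabla_0 A = 0$ and $\nabla_0 T = 2\pi i\, A\, d\tau$, so once the two framings are matched the whole computation reduces to the Leibniz rule. The only inputs beyond Example~\ref{ex:connection_H} are the frame-change relations $\That = \xi^{-1}T$ and $\Shat = (2\pi i)^2\xi\,(A - 2G_2(\tau)T)$ established just above, the latter being the content of the preceding proposition on $\eta_\tau$.

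First I would invert those relations so that the old frame is expressed in the new one, namely $T = \xi\That$ and $A = 2G_2(\tau)\,\xi\,\That + (2\pi i)^{-2}\xi^{-1}\Shat$; these substitutions are exactly what turn a covariant derivative computed in the $A,T$ frame back into one in the $\Shat,\That$ frame. Since $\nabla_0$ descends from $\C\times\h$ to $\M_{1,\vec{1}}$ and the sections $\Shat,\That$ are $\SL_2(\Z)$-invariant, it suffices to run the calculation upstairs on $\C\times\h$.

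Next I would compute the two columns of the connection matrix by Leibniz. For the first, $\nabla_0\That = d(\xi^{-1})\,T + \xi^{-1}\nabla_0 T = -\tfrac{d\xi}{\xi}\That + \xi^{-1}\,2\pi i\,A\,d\tau$; substituting the expression for $A$ and using $(2\pi i\xi)^2 = (2\pi i)^2\xi^2$ produces precisely the coefficient of $\partial/\partial\That$. For the second, writing $\Shat = (2\pi i)^2\xi(A - 2G_2 T)$ and applying Leibniz gives $\nabla_0(A - 2G_2 T) = -2\,(dG_2)\,T - 2G_2\,\nabla_0 T$; here differentiating $G_2$ supplies the term in $G_2' := dG_2/d\tau$, while the cross term $G_2\cdot\nabla_0 T = 2\pi i\,G_2 A\, d\tau$ together with the substitution for $A$ supplies the term in $G_2^2$. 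Collecting the coefficients of $\That$ and $\Shat$ then yields the coefficient of $\partial/\partial\Shat$.

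The argument is entirely elementary, so the one real difficulty is bookkeeping: keeping the powers of $2\pi i$ straight and, above all, not dropping the $G_2'$ contribution coming from $d\big(G_2(\tau)\big)$, which is the single place where the non-modularity of $G_2$ enters the formula. As sanity checks I would confirm that the two diagonal entries $4\pi i\,G_2\,d\tau - \tfrac{d\xi}{\xi}$ are negatives of one another, so that the connection matrix is trace-free---as it must be, since $\langle\That,\Shat\rangle = 2\pi i$ is a flat (constant) pairing---and that freezing $\xi$ recovers the original formula of Example~\ref{ex:connection_H}.
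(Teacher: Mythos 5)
Your proposal is correct and takes essentially the same route as the paper's proof: both start from the formula of Example~\ref{ex:connection_H} together with the relations $\That=\xi^{-1}T$ and $\Shat=(2\pi i)^2\xi\,(A-2G_2(\tau)T)$ and then apply the Leibniz rule, the only difference being that the paper organizes the bookkeeping as two successive matrix gauge transformations (first $(T,A)\to(T,S)$, then the rescaling by $\xi^{\pm1}$), whereas you substitute the inverted frame relations directly into $\nabla_0\That$ and $\nabla_0\Shat$. One caveat on your final sanity check: since $\Shat$ and $\That$ still depend on $\tau$ through $G_2$, ``freezing $\xi$'' does not literally recover the formula of Example~\ref{ex:connection_H}, but this remark plays no role in the argument (your trace-free check, by contrast, is valid and is exactly the flatness of the constant pairing $\langle\That,\Shat\rangle=2\pi i$).
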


\begin{proof}
Since $S/(2\pi i)^2 = A - 2G_2 T$,
\begin{align*}
\nabla_0
\begin{pmatrix}
T & S/(2\pi i)^2
\end{pmatrix}
&=
\nabla_0
\begin{pmatrix}
T & A
\end{pmatrix}
\begin{pmatrix}
1 & -2G_2 \cr 0 & 1
\end{pmatrix}
+
\begin{pmatrix}
T & A
\end{pmatrix}
\begin{pmatrix}
0 & -2G_2' \cr 0 & 0
\end{pmatrix}
d\tau
\cr
&=
\begin{pmatrix}
T & A
\end{pmatrix}
\bigg(
\begin{pmatrix}
0 & 0 \cr 2\pi i & 0
\end{pmatrix}
\begin{pmatrix}
1 & -2G_2 \cr 0 & 1
\end{pmatrix}
+
\begin{pmatrix}
0 & -2G_2' \cr 0 & 0
\end{pmatrix}
\bigg) d\tau
\cr
&=
\begin{pmatrix}
T & S/(2\pi i)^2
\end{pmatrix}
\begin{pmatrix}
1 & 2G_2 \cr 0 & 1
\end{pmatrix}
\begin{pmatrix}
0 & -2G_2' \cr 2\pi i & -4\pi i G_2
\end{pmatrix}
d\tau
\cr
&=
\begin{pmatrix}
T & S/(2\pi i)^2
\end{pmatrix}
\begin{pmatrix}
4\pi i G_2 & -(8\pi i G_2^2 + 2G_2') \cr 2\pi i & -4\pi i G_2
\end{pmatrix}
d\tau
\end{align*}
Rescaling, we have
$$
\nabla_0
\begin{pmatrix}
T & S
\end{pmatrix}
=
\begin{pmatrix}
T & S
\end{pmatrix}
\begin{pmatrix}
4\pi i G_2 & -(2\pi i)^2(8\pi i G_2^2 + 2G_2') \cr (2\pi i)^{-1} & -4\pi i G_2
\end{pmatrix}
d\tau
$$
Denote the $2\times 2$ matrix of 1-forms in this expression by $Bd\tau$. Then
\begin{align*}
\nabla_0
\begin{pmatrix}
\That & \Shat
\end{pmatrix}
&=
\nabla_0
\begin{pmatrix}
T & S
\end{pmatrix}
\begin{pmatrix}
\xi^{-1} & 0 \cr 0 & \xi
\end{pmatrix}
+
\begin{pmatrix}
T & S
\end{pmatrix}
\begin{pmatrix}
-\xi^{-2} & 0 \cr 0 & 1
\end{pmatrix}
d\xi
\cr
&=
\begin{pmatrix}
\That & \Shat
\end{pmatrix}
\bigg[
\begin{pmatrix}
\xi & 0 \cr 0 & \xi^{-1}
\end{pmatrix}
B
\begin{pmatrix}
\xi^{-1} & 0 \cr 0 & \xi
\end{pmatrix}
d\tau
+
\begin{pmatrix}
\xi & 0 \cr 0 & \xi^{-1}
\end{pmatrix}
\begin{pmatrix}
-\xi^{-2} & 0 \cr 0 & 1
\end{pmatrix}
d\xi
\bigg]
\cr
&=
\begin{pmatrix}
\That & \Shat
\end{pmatrix}
\begin{pmatrix}
-\frac{d\xi}{\xi} + 4\pi i G_2(\tau)d\tau &
-2(2\pi i\xi)^2(4\pi i G_2(\tau)^2 + G_2'(\tau))d\tau \cr
\frac{2\pi i}{(2\pi i \xi)^2}d\tau &
\frac{d\xi}{\xi} - 4\pi i G_2(\tau)d\tau
\end{pmatrix}
\end{align*}
\end{proof}

Comparing this formula with that in Proposition~\ref{prop:nabla_0}, we conclude:
\begin{equation}
\label{eqn:identities}
\frac{d\xi}{\xi} - 4\pi i G_2(\tau)d\tau = \frac{1}{12}\frac{d\Delta}{\Delta}
\text{ and }
\frac{2\pi i}{(2\pi i \xi)^2}d\tau = \frac{3\alpha}{2\Delta}.
\end{equation}.
Taking the quotient of the two off diagonal entries of the connection matrix,
we conclude that
$$
G_4(\tau) = \frac{6}{5}\big(2 G_2(\tau)^2 + G_2'(\tau)/2\pi i\big).
$$
This can also be verified by observing that the RHS is a modular form of
weight 4 and then computing value of both sides at $q=0$.

\section{The $\Q$-DR Structure on $\bPbar$ over $\Mbar_{1,\vec{1}}$}

The bundle $\bPbar$ over $\Mbar_{1,\vec{1}}$ is the trivial bundle whose fiber
is $\L(\Shat,\That)^\wedge$. We will define a $\Q$ structure on it --- that is,
a $\Q$ structure on its truncations by the terms of its lower central series.

Some preliminary observations will be helpful. Since the cup product of the
rational differentials $dx/y$ and $xdx/y$ is $2\pi i$, it is natural to multiply
their Poincar\'e duals by $(2\pi i)^{-1}$ to obtain a $\Q$-de Rham basis of the
first homology. Motivated by this, we define
$$
\That_0 = \That/2\pi i \text{ and } \Shat_0 = \Shat/2\pi i.
$$
Since both basis elements of $\Hbar$ are multiplied by the same constant, the
formula for the connection on $\Hbar$ given in Proposition~\ref{prop:nabla_0}
remains valid when we replace $\Shat$ by $\Shat_0$ and $\That$ by $\That_0$.

Set $\p_\Q = \L_\Q(\Shat_0,\That_0)^\wedge$. Define the $\Q$-structure on
$\bPbar$ to be $\Mbar_{1,\vec{1}/\Q}\times \p_\Q$.

Define derivations $\ehat_{2m}$ of $\p_\Q$ by
$$
\ehat_{2m} =
\begin{cases}
-\Shat_0\frac{\partial}{\partial \That_0} & m = 0; \cr
\That_0^{2m-1}\cdot\Shat_0 -
\sum_{\substack{j+k=2m-1\cr j>k > 0}}(-1)^j
[\That_0^j\cdot\Shat_0,\That_0^k\cdot\Shat_0]
\frac{\partial}{\partial \Shat_0} & m > 0.
\end{cases}
$$

\begin{lemma}
For all $m\ge 0$, we have $(2\pi i\xi)^{2m-2}\ehat_{2m} = \d_{2m}$ in $\Der\p$.
\end{lemma}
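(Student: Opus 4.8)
The plan is to prove the identity by a direct change--of--generators computation. First I would record the relation between the two framings of $\p$. Combining $\That = \xi^{-1}T$, $\Shat = \xi\eta_\tau$, the formula $\eta_\tau = (2\pi i)^2(A - 2G_2 T)$, and the definitions $\That_0 = \That/2\pi i$, $\Shat_0 = \Shat/2\pi i$ gives $\That_0 = (2\pi i\xi)^{-1}T$ and $\Shat_0 = 2\pi i\xi\,(A - 2G_2 T)$; inverting, $T = 2\pi i\xi\,\That_0$ and $A = (2\pi i\xi)^{-1}\Shat_0 + 4\pi i\xi\,G_2\,\That_0$. Two elementary facts then drive everything. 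Since $\That_0$ is a scalar multiple of $T$, one has $\ad_{\That_0} = (2\pi i\xi)^{-1}\ad_T$, and because $\ad_T$ annihilates the $T$--component of $\Shat_0$, for every $j\ge 1$ one gets $\ad_{\That_0}^{\,j}(\Shat_0) = (2\pi i\xi)^{1-j}\ad_T^{\,j}(A)$. Second, any derivation $D$ of $\p$ killing $\That_0$ (equivalently killing $T$) with $D(\Shat_0) = W$ satisfies $D(A) = (2\pi i\xi)^{-1}W$ and $D(T) = 0$, so that $W\,\partial/\partial\Shat_0 = (2\pi i\xi)^{-1}W\,\partial/\partial A$; crucially the $G_2$--term in the expression for $A$ drops out precisely because $D(\That_0)=0$.

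For $m\ge 1$ I would then substitute these into the defining formula for $\ehat_{2m}$ and compare the two sides by evaluating on the generators $T$ and $A$. The inner summand $\ad_{\That_0}^{2m-1}(\Shat_0)$ becomes $(2\pi i\xi)^{2-2m}\ad_T^{2m-1}(A)$, so its adjoint action is $(2\pi i\xi)^{2-2m}$ times the inner part of $\d_{2m}$. In each bracket of the outer sum the two factors $\Shat_0$ contribute $(2\pi i\xi)^{+2}$ while $\ad_{\That_0}^{j}$ and $\ad_{\That_0}^{k}$ contribute $(2\pi i\xi)^{-(j+k)} = (2\pi i\xi)^{-(2m-1)}$, and the conversion $\partial/\partial\Shat_0 \to \partial/\partial A$ supplies one more factor $(2\pi i\xi)^{-1}$; the net exponent is again $2-2m$. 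Thus every term of $\ehat_{2m}$ equals $(2\pi i\xi)^{2-2m}$ times the corresponding term of $\d_{2m}$, which rearranges to $(2\pi i\xi)^{2m-2}\ehat_{2m} = \d_{2m}$. The base case $m=0$, with $\ehat_0 = -\Shat_0\,\partial/\partial\That_0$, is treated separately by the same substitution.

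The only genuinely delicate point is the bookkeeping of the outer derivation under this non--scalar change of generators: because $\Shat_0$ mixes $A$ and $T$, one must carefully distinguish $\partial/\partial\Shat_0$, which kills $\That_0$, from $\partial/\partial A$, which kills $T$. The clean cancellation of the $G_2$--shift for $m\ge 1$ rests on the observation that every occurrence of $\Shat_0$ in the defining formula is either hit by at least one $\ad_{\That_0}$ (in the inner and bracket terms) or sits in the target slot of a derivation that kills $\That_0$ (in the outer term); this is exactly what reduces the comparison to a single power of $2\pi i\xi$, the exponent being $-(\text{number of }\ad_{\That_0})+(\text{number of }\Shat_0)$, counted uniformly as $2-2m$.
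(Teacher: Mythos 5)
Your treatment of the case $m\ge 1$ is correct, and it is essentially the paper's own proof: the paper records the values of $\d_{2m}$ and $\ehat_{2m}$ on the generators, observes that for $m\ge 1$ the defining formulas for $\d_{2m}$ hold verbatim with $A$ replaced by $A-2G_2(\tau)T$ (precisely because $\ad_T$ kills $T$ and because the outer derivations kill $T$), and then rescales via $\That_0 = T/(2\pi i\xi)$, $\Shat_0 = 2\pi i\xi\,(A-2G_2(\tau)T)$. Your two ``elementary facts'' are exactly this mechanism, just organized term-by-term instead of by evaluating both derivations on the generators $\That_0,\Shat_0$.

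The genuine problem is your final claim that the base case $m=0$ ``is treated separately by the same substitution.'' It is not, and in fact the identity fails there. Your own cancellation mechanism requires every occurrence of $\Shat_0$ either to be hit by some $\ad_{\That_0}$ or to sit in the target slot of a derivation that kills $\That_0$. But $\ehat_0 = -\Shat_0\,\partial/\partial\That_0$ kills $\Shat_0$, not $\That_0$, and its value on $\That_0$ is $-\Shat_0$, so the $G_2$-shift does not drop out. Concretely, since $T = 2\pi i\xi\,\That_0$,
$$
(2\pi i\xi)^{-2}\,\ehat_0(T)
= -(2\pi i\xi)^{-1}\,\Shat_0
= -\big(A - 2G_2(\tau)\,T\big),
\qquad\text{whereas}\qquad
\d_0(T) = -A,
$$
so $(2\pi i\xi)^{-2}\ehat_0$ and $\d_0$ differ on $T$ by $2G_2(\tau)\,T$, and the asserted equality fails whenever $G_2(\tau)\neq 0$. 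To be fair, this is a defect of the lemma as stated rather than of your method: the paper's proof also establishes its key identity only ``when $m\ge 1$,'' and only the cases $m\ge 1$ are used in Theorem~\ref{thm:QDR}, the $\d_0$-term of the connection being absorbed into $\nabla_0$. So the correct fix is to restrict the statement (or your proof) to $m\ge 1$, not to assert that the same substitution handles $m=0$.
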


\begin{proof}
First observe that
$$
\d_{2m}(T) = -T^{2m}\cdot A \text{ and } \d_{2m}(A) =
\sum_{\substack{j>k\ge 0\cr j+k = 2m-1}}(-1)^{j+1}[T^j\cdot A,T^k\cdot A]
$$
and 
$$
\ehat_{2m}(\That_0) = -\That_0^{2m}\cdot \Shat_0 \text{ and }
\ehat_{2m}(\Shat_0) = \sum_{\substack{j>k\ge 0\cr j+k= 2m-1}}
(-1)^{j+1}[\That_0^j\cdot \Shat_0,\That_0^k\cdot \Shat_0].
$$
One checks easily that, when $m\ge 1$, $\d_{2m}(T) = -T^{2m}\cdot (A-2G_2T)$
and
$$
\d_{2m}(A-2G_2T) =
\sum_{\substack{j>k\ge 0\cr j+k = 2m-1}}
(-1)^{j+1}[T^j\cdot (A-2G_2T),T^k\cdot (A-2G_2T)].
$$
The result follows by rescaling as $\That_0 = T/(2\pi i\xi)$ and $\Shat_0 = 2\pi
i\xi(A-2G_2T)$.
\end{proof}

The connection $\nabla_0$ on $\cH$ defines, and will be viewed as, a
$\Q$-rational connection on each graded quotient of $\bP$.

\begin{theorem}
\label{thm:QDR}
With respect to the framing of $\bP$ over $\M_{1,\vec{1}}$ described above, the
universal elliptic KZB-connection $\nabla$ is given by
$$
\nabla = \nabla_0 + \frac{1}{12}\frac{d\Delta}{\Delta}\otimes \ehat_2
+\sum_{m\ge 2}\frac{3}{(2m-2)!} \frac{p_{2m}(u,v)(3vdu - 2udv)}{\Delta}
\otimes \ehat_{2m}
$$
where $\Delta = u^3 - 27 v^2$ is the discriminant and where $p_{2m}(u,v)\in
\Q[u,v]$ is the polynomial characterized by $(2\pi i\xi)^{2m} G_{2m}(\tau) =
p_{2m}(u,v)$. The Hodge bundles $F^p\bP$ are all defined over $\Q$.
\end{theorem}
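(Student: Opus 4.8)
The plan is to convert the transcendental formula~(\ref{eqn:conn}) for the restricted connection into the $\Q$-algebraic coordinates $(u,v)$ by a direct substitution, feeding in the three pieces of dictionary already assembled above: the Lemma identifying $(2\pi i\xi)^{2m-2}\ehat_{2m}$ with $\d_{2m}$, the identities~(\ref{eqn:identities}), and the defining relation $(2\pi i\xi)^{2m}G_{2m}(\tau)=p_{2m}(u,v)$. First I would rewrite~(\ref{eqn:conn}), relative to the constant frame $T,A$, as
$$
\w' = -\frac{dq}{q}\otimes\d_0
+ \Big(\frac{d\xi}{\xi}-2G_2(\tau)\frac{dq}{q}\Big)\otimes\d_2
- \sum_{m\ge 2}\frac{2}{(2m-2)!}\,G_{2m}(\tau)\,\frac{dq}{q}\otimes\d_{2m},
$$
and then treat the three ranges $m=0$, $m=1$, $m\ge2$ separately.

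For $m=1$, the Lemma gives $\d_2=\ehat_2$, and since $dq/q=2\pi i\,d\tau$ the coefficient is $\frac{d\xi}{\xi}-4\pi i\,G_2\,d\tau=\frac1{12}\frac{d\Delta}{\Delta}$ by the first identity of~(\ref{eqn:identities}); this produces the middle term of the theorem. For $m\ge2$ I would substitute $\d_{2m}=(2\pi i\xi)^{2m-2}\ehat_{2m}$ and $G_{2m}(2\pi i\xi)^{2m-2}=p_{2m}(u,v)/(2\pi i\xi)^2$, so that the coefficient collapses, via the second identity $\frac{2\pi i}{(2\pi i\xi)^2}d\tau=\frac{3\alpha}{2\Delta}$ and $\alpha=2u\,dv-3v\,du$, to
$$
-\frac{2}{(2m-2)!}\,p_{2m}(u,v)\,\frac{2\pi i}{(2\pi i\xi)^2}\,d\tau
= \frac{3}{(2m-2)!}\,\frac{p_{2m}(u,v)\,(3v\,du-2u\,dv)}{\Delta}.
$$
The existence of $p_{2m}\in\Q[u,v]$ used here follows from the fact that for $m\ge2$ the series $G_{2m}$ is a modular form of weight $2m$ and the ring of such forms is $\Q[G_4,G_6]$, with $u,v$ proportional over $\Q$ to $\xi^4(2\pi i)^4G_4$ and $\xi^6(2\pi i)^6G_6$.

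The term $m=0$ is the one requiring care, and is where the main obstacle lies. Because $\d_0(T)=-A$ carries no factor of $G_2$, the naive rescaling $\ehat_0=(2\pi i\xi)^2\d_0$ fails: the $G_2$-shift built into $\Shat_0=2\pi i\xi(A-2G_2T)$ is annihilated by $\ad_T$ for $m\ge1$ but not for $m=0$. Instead I would absorb $-\frac{dq}{q}\otimes\d_0$ into $\nabla_0$, checking via Example~\ref{ex:connection_H} that, relative to the frame $T,A$, one has $\nabla_0=d+2\pi i\,A\tfrac{\partial}{\partial T}\otimes d\tau=d-\frac{dq}{q}\otimes\d_0$. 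Since $\nabla_0$ is intrinsic, its connection form in the frame $\Shat_0,\That_0$ is the one given in Proposition~\ref{prop:nabla_0}; thus the only appearance of the non-rational quasimodular form $G_2$ is inside $\nabla_0$, where it has already been shown to be $\Q$-rational. Assembling the three ranges yields the stated formula, and confirms that $\nabla-\nabla_0=\sum_{m\ge1}(\cdots)\ehat_{2m}$ is a frame-independent identity in $\Der\p$.

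Rationality of $\nabla$ is then immediate: $\nabla_0$ is defined over $\Q$ by Proposition~\ref{prop:nabla_0}, the $\ehat_{2m}$ are defined over $\Q$ as derivations of $\p_\Q=\L_\Q(\Shat_0,\That_0)^\wedge$, and the coefficient forms $\frac{d\Delta}{\Delta}$ and $\frac{p_{2m}(u,v)(3v\,du-2u\,dv)}{\Delta}$ lie in $\Omega^1_{\M_{1,\vec{1}}/\Q}$. For the Hodge bundles, the filtration $F^\bullet$ is multiplicative and induced from $F^\bullet\H$; over $\M_{1,\vec{1}}$ the Hodge line is spanned by the $\Q$-rational differential $dx/y=\That$ (hence by $\That_0$), with $\Shat_0$ spanning a $\Q$-rational complement, so that $F^p\bP$ is cut out by $\Shat_0$-degree in $\p_\Q$ and is therefore defined over $\Q$.
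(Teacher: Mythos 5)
Your proposal is correct and takes essentially the same route as the paper's proof: absorb the $\d_0$ term into $\nabla_0$ (so that the only occurrence of the quasimodular $G_2$ sits inside the intrinsic, already-rational connection $\nabla_0$), then rewrite the remaining $\d_2$ and $\d_{2m}$ terms using the rescaling lemma $(2\pi i\xi)^{2m-2}\ehat_{2m}=\d_{2m}$, the identities (\ref{eqn:identities}), and the defining relation $(2\pi i\xi)^{2m}G_{2m}(\tau)=p_{2m}(u,v)$. Your explicit discussion of the $m=0$ term --- that the naive rescaling fails because the $G_2$-shift in $\Shat_0$ is not killed when no $\ad_T$ is applied --- is precisely what the paper compresses into the remark that ``the change of frame is homogeneous'' (and it correctly restricts the preceding lemma to $m\ge 1$, where it is actually valid), so your write-up is, if anything, slightly more careful than the original.
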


The polynomial $p_{2m}(u,v)$ is weighted homogeneous of weight $2m$ in $u$ and
$v$, where $u$ is given weight 4 and $v$ is given weight 6. The polynomials
of weight up to 24 are:
\begin{align*}
 p_{4}(u,v) &= \frac{1}{20}u \cr
 p_{6}(u,v) &= \frac{3}{7} v \cr
 p_{8}(u,v) &= \frac{3}{10} u^2 \cr
 p_{10}(u,v) &= \frac{108}{11} uv \cr  
 p_{12}(u,v) &= \frac{756}{65} u^3 + \frac{16200}{91} v^2 \cr
 p_{14}(u,v) &= 1296\, uv^2 \cr
 p_{16}(u,v) &= \frac{174636}{85} u^4 + \frac{1166400}{17} uv^2 \cr     
 p_{18}(u,v) &= \frac{9471168}{19} u^3v + \frac{256608000}{133} v^3 \cr   
 p_{20}(u,v) &= \frac{25147584}{25} u^5 + \frac{678844800}{11} u^2 v^2 \cr
 p_{22}(u,v) &= \frac{10671720192}{23} u^4 v + \frac{103296384000}{23} u v^3\cr
 p_{24}(u,v) &= \frac{73581830784}{65} u^6 + \frac{1410877440000}{13} u^3 v^2
 + \frac{15547365504000}{91} v^4    
\end{align*}

\begin{proof}
With respect to the framing $A$, $T$ of $\bP$, the connection is $\nabla = d +
\w'$ where $\w'$ is the form (\ref{eqn:conn}). Since the change of frame is
homogeneous, the transformed connection is of the form $\nabla = \nabla_0 +
\w'$. We just need to express $\w'$ in the frame given by Lie words in
$\Shat_0$, $\That_0$. Using the identities (\ref{eqn:identities}) and the
preceding lemma we have
\begin{align*}
\w' &=
-
\bigg(2G_2(\tau)\frac{dq}{q} - \frac{d\xi}{\xi}\bigg) \otimes \d_2
- \sum_{m=2}^\infty \frac{2}{(2m-2)!} G_{2m}(\tau)\frac{dq}{q}\otimes\d_{2m}
\cr
&= \frac{1}{12}\frac{d\Delta}{\Delta}\otimes\ehat_2 - \sum_{m\ge 2}
\frac{2}{(2m-2)!}(2\pi i \xi)^{2m}G_{2m}(\tau)
\frac{2\pi i}{(2\pi i\xi)^2}\,d\tau\otimes\ehat_{2m}\cr
&= \frac{1}{12}\frac{d\Delta}{\Delta}\otimes\ehat_2 - \sum_{m\ge 2}
\frac{3}{(2m-2)!}\frac{p_{2m}(u,v)\alpha}{\Delta}\otimes\ehat_{2m}
\end{align*}
The last assertion follows from the fact that $F^p\bP$ is trivial and
consists of those Lie words whose degree in $\That_0$ is $\ge -p$.
\end{proof}

\section{The $\Q$-de~Rham Structure on
$F^{2n+1}H^1(\M_{1,1},S^{2n}\cH)$}

Here we compute the $\Q$-structure on $F^{2n+1}H^1_\dR(\M_{1,1},S^{2n}\cH)$. The
computation of the $\R$-de~Rham structure on all of $H^1(\M_{1,1},S^{2n}\cH)$
can be found in \cite[\S17.2]{hain:vancouver}.

The starting point is the isomorphisms
$$
H^0(\Mbar_{1,1}, \cL_{2n+2}) \to 
H^0(\Omega^1_{\Mbar_{1,1}}(P)\otimes F^{2n}S^{2n}\Hbar)
\to F^{2n+1}H^1(\M_{1,1},S^{2n}\cH),
$$
where $P$ denotes the cusp $q=0$. The second isomorphism takes a 1-form to its
cohomology class. The first follows from the isomorphisms $\cL\cong F^1\Hbar$
and $\Omega^1_{\Mbar_{1,1}}(P) \cong \cL_2$, which together induce an
isomorphism
$$
\cL_{2n+2} \cong \Omega^1_{\Mbar_{1,1}}(P)\otimes F^{2n}S^{2n}\Hbar.
$$

We now explain the $\Q$-structure. Recall that if $f$ is a modular form of
weight $2n+2$ of $\SL_2(\Z)$, then 
$$
\w_f := f(\tau)\bw^{2n}d\tau \in E^1(\h)\otimes S^{2n}H
$$
is an $\SL_2(\Z)$-invariant 1-form on $\h$, where $\bw := 2\pi i T$ is the
section of $H\times\h \to \h$ that takes the value $2\pi i \w_\tau$ at
$\tau\in\h$ and $\SL_2(\Z)$ acts on $H=\C A \oplus \C T$ via the factor of
automorphy (\ref{eqn:automorphy}).\footnote{Recall the definitions
(\ref{eqn:def_AT}).} It gives a framing of $F^1\cH$ over $\h$. The section
$\bw$ extends to a framing of $F^1\Hbar$ over the $q$-disk.

Denote the space of modular forms of $\SL_2(\Z)$ of weight $m$ whose Fourier
coefficients lie in the subfield $F$ of $\C$ by $M_{m,F}$. These form a graded
ring $M_{\ast,F}$ isomorphic to $F[G_4,G_6]$.

\begin{proposition}
The $\Q$-structure on $F^{2n+1}H^1_\dR(\M_{1,1},S^{2n}\cH)$ is
$$
\{f(q)\bw^{2n}\frac{dq}{q} : f(q)  \in M_{2n+2,\Q}\}.
$$
\end{proposition}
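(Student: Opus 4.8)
The plan is to trace the $\Q$-structure on $F^{2n+1}H^1_\dR(\M_{1,1},S^{2n}\cH)$ back through the two displayed isomorphisms to the space $H^0(\Mbar_{1,1},\cL_{2n+2})$ and then to pin it down by a precise bookkeeping of powers of $2\pi i$ and of the fibre coordinate $\xi$. First I would check that the isomorphisms
$$
H^0(\Mbar_{1,1},\cL_{2n+2}) \cong H^0\big(\Omega^1_{\Mbar_{1,1}}(P)\otimes F^{2n}S^{2n}\Hbar\big)\cong F^{2n+1}H^1(\M_{1,1},S^{2n}\cH)
$$
are defined over $\Q$, so that the $\Q$-structure on the right is the image of the $\Q$-rational algebraic sections on the left. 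The second map sends a form to its class; since $\Hbar$ carries the $\Q$-structure and $\Q$-connection $\nabla_0$ of Proposition~\ref{prop:nabla_0}, it suffices to see that the first isomorphism, built from $\cL\cong F^1\Hbar$ and the Kodaira--Spencer identification $\Omega^1_{\Mbar_{1,1}}(P)\cong\cL_2$, is $\Q$-rational. The former holds because $F^1\Hbar=\O\That$ is trivialized by the $\Q$-tautological section $\That=dx/y$, and the latter is exactly the second relation of (\ref{eqn:identities}), which presents $dq/q=2\pi i\,d\tau$ as a $\Q$-rational multiple of $\alpha/\Delta$ up to powers of $2\pi i\,\xi$.

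Next I would make the $\Q$-structure on $H^0(\Mbar_{1,1},\cL_{2n+2})$ explicit by pulling back along the $\Gm$-torsor $\M_{1,\vec1}=\Spec\Q[u,v,\Delta^{-1}]\to\M_{1,1}$. Over $\M_{1,\vec1}$ the bundle $F^1\Hbar$ is $\Q$-trivialized by $\That$, and $\alpha=2u\,dv-3v\,du$ is the $\Gm$-horizontal $1$-form spanning the pullback of $\Omega^1_{\Mbar_{1,1}}(P)$. Tracking weights ($u,v$ of weights $4,6$; $\That$ of weight $-1$; $\alpha/\Delta$ of weight $-2$), a $\Q$-rational, cusp-regular section of $\Omega^1_{\Mbar_{1,1}}(P)\otimes F^{2n}S^{2n}\Hbar$ is exactly $\widetilde Q(u,v)\,\That^{2n}\,\tfrac{3\alpha}{2\Delta}$ with $\widetilde Q\in\Q[u,v]$ weighted-homogeneous of weight $2n+2$ (holomorphy at the cusp forbidding powers of $\Delta^{-1}$).

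The heart of the argument is to match these $\Q$-rational sections with $\{f(q)\bw^{2n}\tfrac{dq}{q}\}$ for $f\in M_{2n+2,\Q}$. Here $\bw=2\pi i\,\w_\tau=2\pi i\,\xi\That$, so $\bw^{2n}=(2\pi i\,\xi)^{2n}\That^{2n}$, while (\ref{eqn:identities}) gives $\tfrac{dq}{q}=(2\pi i\,\xi)^2\tfrac{3\alpha}{2\Delta}$; hence
$$
f(q)\,\bw^{2n}\,\tfrac{dq}{q}=f\,(2\pi i\,\xi)^{2n+2}\,\That^{2n}\,\tfrac{3\alpha}{2\Delta}.
$$
Writing $f=P(G_4,G_6)$ and using $u=20(2\pi i)^4\xi^4 G_4$, $v=\tfrac73(2\pi i)^6\xi^6 G_6$, each monomial $G_4^aG_6^b$ with $4a+6b=2n+2$ satisfies $G_4^aG_6^b\,\xi^{2n+2}=\tfrac{3^b}{20^a7^b}(2\pi i)^{-(2n+2)}u^av^b$, so $f\,\xi^{2n+2}=(2\pi i)^{-(2n+2)}\widetilde Q(u,v)$ with $\widetilde Q\in\Q[u,v]\iff P\in\Q[G_4,G_6]$. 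The transcendental factor $(2\pi i)^{2n+2}$ then cancels exactly: $f(q)\bw^{2n}\tfrac{dq}{q}=\widetilde Q(u,v)\,\That^{2n}\,\tfrac{3\alpha}{2\Delta}$. Thus this section is $\Q$-rational precisely when $\widetilde Q\in\Q[u,v]$, i.e. when $f\in\Q[G_4,G_6]=M_{2n+2,\Q}$, the last equality being the classical statement that $\Q$-rational $q$-expansions correspond to $\Q$-polynomials in $G_4,G_6$ (Serre~\cite{serre}; Zagier's $G_{2k}$ have rational Fourier coefficients). Running the correspondence $P\leftrightarrow\widetilde Q$ in both directions gives both inclusions.

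The main obstacle is Step~2 together with the cancellation in Step~3: keeping the several $\Q$-structures honest through the Hodge-bundle and Kodaira--Spencer identifications, and in particular verifying that the transcendental factor $(2\pi i\,\xi)^{2n+2}$ produced by $\bw^{2n}\tfrac{dq}{q}$ is annihilated exactly by the $(2\pi i)^{-(2n+2)}\xi^{-(2n+2)}$ hidden in the normalizations $g_2=20(2\pi i)^4G_4$, $g_3=\tfrac73(2\pi i)^6G_6$ and in the relations (\ref{eqn:identities}). Everything hinges on these precise constants, so the work is entirely in confirming that the weights and the powers of $2\pi i$ balance.
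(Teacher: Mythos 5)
Your proof is correct and follows essentially the same route as the paper: descend $\Q$-rational, $\Gm$-invariant sections $h(u,v)\,\That^{2n}\,\alpha/\Delta$ from $\M_{1,\vec{1}/\Q}$ to $\M_{1,1/\Q}$, compare them with $f(q)\bw^{2n}\frac{dq}{q}$ using the identities (\ref{eqn:identities}) together with $u=20G_4$, $v=7G_6/3$, and conclude via $M_{2n+2,\Q}=\Q[G_4,G_6]_{2n+2}$. The only difference is presentational: the paper fixes the normalization $\xi=1/(2\pi i)$ (embedding $E_\tau$ by $[\wp/(2\pi i)^2,\wp'/(2\pi i)^3,1]$, so that $\bw=dx/y$ is itself the tautological $\Q$-rational section and the $2\pi i$ bookkeeping disappears), whereas you keep $\xi$ general and verify the cancellation of the factor $(2\pi i\xi)^{2n+2}$ by hand.
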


\begin{proof}
Embed $E_\tau$ into $\P^2$ via the mapping
$$
z + \Lambda_\tau \mapsto [\wp_\tau(z)/(2\pi i)^2,\wp_\tau'(z)/(2\pi i)^3,1].
$$
The image is the plane cubic $y^2 = 4x^3 -ux -v$ where
$$
u=g_2(\tau)/(2\pi i)^4 = 20 G_4(\tau) \text{ and }
v=g_3(\tau)/(2\pi i)^6 = \frac{7}{3}G_6(\tau).
$$
This curve has discriminant $\Delta_0(\tau)$, where $\Delta_0$ denotes the
normalized cusp form of weight 12.

With this normalization $dx/y = 2\pi i \w_\tau=\bw(\tau)$.\footnote{This would
have been a better normalization to use in Part~\ref{part:Q-DR}.} We choose it
because the value of the section $\bw$ at $q=0$ is $dw/w$, where $w$ is the
parameter on the nodal cubic that maps $0$ and $\infty$ to the node and $1$ to
the identity. (Cf.\ Exercise~47 in \cite{hain:elliptic}.)

We regard $\bw$ as the section $dx/y$ of $F^1\cH$ over $\M_{1,\vec{1}}= \A^2_\Q
- \{u^3-27v^2=0\}$. It is
defined over $\Q$. Since $x$ has weight $2$ and $y$ weight $3$, it has weight
$-1$ under the $\Gm$ action. If $h(u,v) \in \Q[u,v]$ is a polynomial of weight
$2n+2$ (where $u$ has weight $4$ and $v$ weight $6$), then
$$
\frac{h(u,v)}{u^3-27v^2}\, (2udv-3vdu)\bw^{2n} 
$$
is a $\Q$-rational, $\Gm$-invariant section of
$\Omega^1_{\M_{1,\vec{1}/\Q}}(\log D)\otimes F^{2n}S^{2n}\cH$ over
$\M_{1,\vec{1}/\Q}$, where $D$ denotes the discriminant locus $u^3-27v^2=0$.
Since $\M_{1,1/\Q} = \Gm \bbs \M_{1,\vec{1}/\Q}$ it descends to a section of
$\Omega^1_{\M_{1,1/\Q}}(P)\otimes F^{2n}S^{2n}\cH$.

The identity (\ref{eqn:identities}) implies that the pullback of this form along
the map $\h \to \M_{1,\vec{1}}$ defined by $\tau \mapsto (20
G_4(\tau),7G_6(\tau)/3)$ is
$$
\frac{2}{3} h\big(20 G_4(\tau),7G_6(\tau)/3\big) \bw^{2n} \frac{dq}{q}.
$$
The result follows as $M_{2n+2,\Q}$ is isomorphic to the polynomials in $G_4$
and $G_6$ with rational coefficients.
\end{proof}

\appendix

\section{Vanishing of $N(R_0)$}
\label{sec:identity}

Here we give an elementary proof of the vanishing of $N_q(R_0)$ established in
Corollary~\ref{cor:N(R)}. It does not use limit mixed Hodge structures. Instead
we deduce the vanishing from an identity involving Bernoulli numbers.

For non-negative integers, define polynomials
\begin{align*}
h_{a,b}(x,y) &= x^{2a-1}y^{2b}-x^{2b}y^{2a-1}+xy(x+y)^{2b-1}(y^{2a-2}-x^{2a-2})
\cr
&= xy(y-x)\Big(x^{2a-2}y^{2b-2}+(x+y)^{2b-1}
\sum_{\substack{i+j=2a-3\cr i,j\ge 0}} x^iy^j\Big).
\end{align*}
in commuting indeterminants $x$ and $y$. Note that
$$
h_{0,n}(x,y) = - \sum_{\substack{i+j=2n-1\cr i,j \ge 0}}
\Big[\binom{2n}{i+1}-\binom{2n}{j+1}\Big] x^i y^j
$$
for all $n\ge 1$.

\begin{theorem}
\label{thm:identity}
For all $n\ge 1$,
$$
\sum_{\substack{a+b=n\cr a>0}} (2a-1)\binom{2n}{2a}\frac{B_{2a}B_{2b}}{B_{2n}}
\, h_{a,b}(x,y)
= \sum_{\substack{i+j=2n-1\cr i,j \ge 0}}
\Big[\binom{2n}{i+1}-\binom{2n}{j+1}\Big] x^i y^j \in \Z[x,y].
$$
Equivalently,
$$
\sum_{\substack{a+b=n\cr a,b\ge 0}}
(2a-1)\frac{B_{2a}}{(2a)!}\frac{B_{2b}}{(2b)!} \, h_{a,b}(x,y) = 0.
$$
\end{theorem}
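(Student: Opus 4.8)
The plan is to prove the stronger \emph{ungraded} generating-function identity
$$
S(x,y):=\sum_{a,b\ge 0}(2a-1)\frac{B_{2a}}{(2a)!}\frac{B_{2b}}{(2b)!}\,h_{a,b}(x,y)=0
$$
in the commuting variables $x,y$. Since each $h_{a,b}$ is homogeneous of degree $2(a+b)-1$, the homogeneous component of $S$ of degree $2n-1$ is exactly the sum $\sum_{a+b=n}$ in the statement; hence the vanishing of $S$ gives the claimed identity for all $n\ge 1$ simultaneously, and proving the displayed form is enough.

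The first step is to introduce the two Bernoulli generating functions furnished by Section~\ref{sec:b_identities}, namely
$$
Q(u):=\sum_{m\ge 0}\frac{B_{2m}}{(2m)!}u^{2m}=\frac{u}{2}\coth\frac{u}{2},\qquad P(u):=\sum_{m\ge 0}(2m-1)\frac{B_{2m}}{(2m)!}u^{2m}=-\frac{(u/2)^2}{\sinh^2(u/2)},
$$
the second being $u$ times (\ref{eqn:coth_alt}). The factor $(2a-1)$ attaches only to the index $a$, so I associate $P$ with the $a$-variable and $Q$ with the $b$-variable. Splitting $h_{a,b}$ into its four monomial blocks $x^{2a-1}y^{2b}$, $-x^{2b}y^{2a-1}$, $+xy(x+y)^{2b-1}y^{2a-2}$ and $-xy(x+y)^{2b-1}x^{2a-2}$, and summing each block over $a$ and $b$ separately, collapses the whole sum into closed form:
$$
S(x,y)=\frac{P(x)Q(y)}{x}-\frac{P(y)Q(x)}{y}+\frac{xy\,Q(x+y)}{x+y}\Big(\frac{P(y)}{y^2}-\frac{P(x)}{x^2}\Big).
$$

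Substituting the closed forms (and abbreviating $\sigma_u=\sinh(u/2)$), every $u^2$ factor cancels and $S$ reduces to
$$
S(x,y)=\frac{xy}{8}\Big[\frac{\coth\tfrac{x+y}{2}-\coth\tfrac{y}{2}}{\sigma_x^2}-\frac{\coth\tfrac{x+y}{2}-\coth\tfrac{x}{2}}{\sigma_y^2}\Big].
$$
At this point the theorem becomes the elementary identity $\coth A-\coth B=-\sinh(A-B)/(\sinh A\sinh B)$: applying it with $A=\tfrac{x+y}{2}$ and $B=\tfrac y2$, resp.\ $B=\tfrac x2$, turns each bracketed fraction into $-\big(\sigma_x\,\sigma_y\,\sinh\tfrac{x+y}{2}\big)^{-1}$, so the two terms coincide and the bracket vanishes. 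This is the decisive computation, but it is short.

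The one point I would treat most carefully—and the only genuine obstacle—is the bookkeeping at the boundary indices $a=0$ and $b=0$: the individual blocks $x^{2a-1}y^{2b}$, $(x+y)^{2b-1}$ and $y^{2a-2}$ produce negative powers of $x$, $y$ and $x+y$ (matching the Laurent parts of $P(x)/x$, $Q(x+y)/(x+y)$ and $P(x)/x^2$), even though each $h_{0,b}$ is a genuine polynomial after the cancellation recorded just before the theorem. I would therefore perform the separation in the field of formal Laurent series in $x,y$, note that the three displayed terms of $S$ all lie there while their sum is the honest power series $S$, and conclude that the vanishing of the resulting meromorphic germ forces every Taylor coefficient, hence every homogeneous component, to be zero. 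Nothing beyond (\ref{eqn:coth_alt}) and the $\coth$ subtraction formula is required.
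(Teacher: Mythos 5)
Your proof is correct and is essentially the paper's own argument: you form the same generating function, invoke the same closed forms derived from (\ref{eqn:coth}) and (\ref{eqn:coth_alt}), and split $h_{a,b}$ into the same four monomial blocks to arrive at exactly the paper's four-term combination $F(x,y)-F(y,x)+\tfrac{x}{x+y}F(y,x+y)-\tfrac{y}{x+y}F(x,x+y)$ with $F(u,v)=P(u)Q(v)/u$. The only differences are that you execute explicitly, via the $\coth$ subtraction formula, the final cancellation that the paper dismisses as ``elementary algebraic manipulations,'' and that you address the Laurent-series bookkeeping at the boundary indices $a=0$, $b=0$ (best phrased as working in the localization of $\C[[x,y]]$ at $x$, $y$, $x+y$), a point the paper passes over silently.
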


\begin{proof}
It suffices to show that
\begin{equation}
\label{eqn:series}
\sum_{n\ge 0} \sum_{\substack{a+b=n\cr a,b\ge 0}}
(2a-1)\frac{B_{2a}}{(2a)!}\frac{B_{2b}}{(2b)!} \, h_{a,b}(x,y) = 0.
\end{equation} 
Observe that
\begin{align*}
\sum_{n\ge 0} \sum_{\substack{a+b=n\cr a,b\ge 0}}
(2a-1)\frac{B_{2a}}{(2a)!}\frac{B_{2b}}{(2b)!} \, u^{2a-1}v^{2b}
&= \bigg(\sum_{a\ge 0}(2a-1)\frac{B_{2a}}{(2a)!} u^{2a-1}\bigg)
\bigg(\sum_{b\ge 0}\frac{B_{2b}}{(2b)!} v^{2b}\bigg)
\cr
&= \frac{4u}{\sinh^2(u/2)}\bigg(\frac{v}{e^v-1}+\frac{v}{2}\bigg)
\cr
&= \frac{u e^u}{(e^u-1)^2}\bigg(\frac{v}{e^v-1}+\frac{v}{2}\bigg)
.
\end{align*}
Denote this function of $(u,v)$ by $F(u,v)$. The series (\ref{eqn:series})
is then
$$
F(x,y)-F(y,x)+\frac{x}{x+y}F(y,x+y)-\frac{y}{x+y}F(x,x+y)
$$
which is easily to vanish by elementary algebraic manipulations.
\end{proof}

We finish by showing that this identity is equivalent to the vanishing of
$N_q(R_0)$. For this we need to relate polynomials to the free Lie algebra
$\L(T,A)$. For this we use the Levin-Racinet calculus \cite[\S
3.1]{levin-racinet}. Recall from Section~\ref{sec:calc} that for $U,V \in
\L(A,T)$,
$$
x^r y^s \circ (U,V) = [T^r\cdot U, T^s\cdot V].
$$
This extends linearly to an action $f(x,y)\circ (U,V)$ of polynomials $f(x,y)$
in commuting indeterminants on ordered pairs of elements of $\L(T,A)$. When $U$
and $V$ are equal, one has the identity $f(x,y)\circ (U,U) = -f(y,x)\circ
(U,U)$, so that
$$
2f(x,y)\circ (U,U) = (f(x,y)-f(y,x))\circ(U,U).
$$
In this case we need only consider polynomials $f(x,y)$ satisfying
$f(x,y)+f(y,x) = 0$.

The significance of the polynomials $h_{a,b}(x,y)$ is given by:

\begin{lemma}
For all $a\ge 0$ and $b\ge 0$ with $a+b>0$,
$$
2\,\d_{2a}(T^{2b}\cdot A) = h_{a,b}(x,y)\circ (A,A).
$$
\end{lemma}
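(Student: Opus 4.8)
The plan is to reduce the statement to an identity of ordinary polynomials in the commuting variables $x,y$, using the two formulas collected in Lemma~\ref{lem:identities} together with the skew relation (\ref{eqn:skew}). Throughout I write $T^r\cdot U := \ad_T^r(U)$, so that the Levin--Racinet bracket reads $x^ry^s\circ(U,V) = [T^r\cdot U, T^s\cdot V]$ and obeys the shift rule $f(x,y)\circ(T^p\cdot U, V) = \big(x^pf(x,y)\big)\circ(U,V)$ (and similarly in the second slot), which I would verify by linearity from the monomial case.

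First I would record the action of $\d_{2a}$ on the generators directly from (\ref{eqn:deltas}): for all $a\ge 0$ one has $\d_{2a}(T) = -T^{2a}\cdot A$, and $\d_{2a}(A) = P(x,y)\circ(A,A)$ with $P(x,y) = \sum_{j>k\ge 0,\,j+k=2a-1}(-1)^{j+1}x^jy^k$ (the two pieces of the defining formula assemble into this single sum once the inner term $[\,T^{2a-1}\cdot A, A\,]$ is absorbed as the $k=0$ summand, and $P\equiv 0$ when $a=0$). Next I would apply part (2) of Lemma~\ref{lem:identities} with $g(x)=x^{2b}$ and $V=A$ to expand
$$
\d_{2a}(T^{2b}\cdot A) = \ad_T^{2b}\,\d_{2a}(A) + \Big(\tfrac{(x+y)^{2b}-y^{2b}}{x}\Big)\circ\big(\d_{2a}(T),A\big).
$$
Substituting $\d_{2a}(A)=P\circ(A,A)$ and iterating the Jacobi identity (part (1) of the lemma) turns the first term into $(x+y)^{2b}P(x,y)\circ(A,A)$; substituting $\d_{2a}(T)=-T^{2a}\cdot A$ and using the shift rule turns the second into $-x^{2a-1}\big((x+y)^{2b}-y^{2b}\big)\circ(A,A)$. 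Hence $\d_{2a}(T^{2b}\cdot A) = Q(x,y)\circ(A,A)$, where $Q(x,y) := (x+y)^{2b}P(x,y) - x^{2a-1}\big((x+y)^{2b}-y^{2b}\big)$.

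The final step uses (\ref{eqn:skew}): since $f\circ(A,A)$ depends only on the antisymmetric part of $f$, it suffices to verify the polynomial identity $Q(x,y)-Q(y,x) = h_{a,b}(x,y)$. The one genuine computation is summing the geometric series to get $P(x,y)-P(y,x) = (x^{2a}-y^{2a})/(x+y)$. As $(x+y)^{2b}$ is symmetric, the first summand of $Q$ contributes $(x+y)^{2b-1}(x^{2a}-y^{2a})$ to the antisymmetrization, while the second contributes $-(x+y)^{2b}(x^{2a-1}-y^{2a-1}) + x^{2a-1}y^{2b}-x^{2b}y^{2a-1}$. The elementary identity $(x^{2a}-y^{2a})-(x+y)(x^{2a-1}-y^{2a-1}) = xy(y^{2a-2}-x^{2a-2})$ then merges the two $(x+y)$-power terms into $xy(x+y)^{2b-1}(y^{2a-2}-x^{2a-2})$, leaving precisely the three terms defining $h_{a,b}$.

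I expect the main obstacle to be bookkeeping rather than anything conceptual: keeping the signs and index ranges in $P$ straight, justifying the shift rule and the $\circ$-calculus when the first argument is not a generator, and handling the boundary value $a=0$. In that case $h_{a,b}$ must be read through the polynomial expression for $h_{0,n}$ recorded just before Theorem~\ref{thm:identity} (the literal product formula has negative exponents), and one checks separately that $\d_0=-A\,\partial/\partial T$ yields $-\big(\tfrac{(x+y)^{2b}-y^{2b}}{x}-\tfrac{(x+y)^{2b}-x^{2b}}{y}\big)\circ(A,A)$, which expands by the binomial theorem into exactly that polynomial. For $a\ge 1$ the expressions are honest polynomials (the factor $(x+y)^{2b-1}$ combines cleanly with $y^{2a-2}-x^{2a-2}$), so no such care is needed.
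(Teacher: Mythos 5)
Your proof is correct and follows essentially the same route as the paper: your intermediate polynomial $Q(x,y)=(x+y)^{2b}P(x,y)-x^{2a-1}\big((x+y)^{2b}-y^{2b}\big)$ is exactly the paper's $f_{a,b}(x,y)$ (since $(x+y)^{2b}P(x,y)=x^a(x+y)^{2b-1}\big(x^a-(-y)^a\big)$), and both arguments finish by antisymmetrizing via (\ref{eqn:skew}). The only difference is that you supply, via Lemma~\ref{lem:identities} and the shift rule, the verification that the paper dismisses as ``easily verified,'' together with the boundary case $a=0$.
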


\begin{proof}
Observe that $h_{a,b}(x,y) = f_{a,b}(x,y) - f_{a,b}(y,x)$ where
$$
f_{a,b}(x,y) = 
x^a(x+y)^{2b-1}\big(x^a-(-y)^a\big) - x^{2a-1}\big((x+y)^{2b}-y^{2b}\big).
$$
The result now follows from the easily verified identity
$$
\d_{2a}(T^{2b}\cdot A) = f_{a,b}(x,y)\circ (A,A).
$$
\end{proof}

Theorem~\ref{thm:identity} implies the vanishing of $N_q(R_0)$:
\begin{align*}
N_q(R_0)
&= \sum_{a\ge 0}\sum_{b\ge 0}(2a-1)
\frac{B_{2a}}{(2a)!}\frac{B_{2b}}{(2b)!}\, \d_{2a}(T^{2b}\cdot A)
\cr
&= \frac{1}{2}\Big(\sum_{a\ge 0}\sum_{b\ge 0}(2a-1)
\frac{B_{2a}}{(2a)!}\frac{B_{2b}}{(2b)!}\, h_{a,b}(x,y)\Big)\circ(A,A)
\cr
&= 0.
\end{align*}

\section{The Universal Elliptic Curve over $\Bl_0^+\D$}
\label{sec:blowup}

Here we justify the claim, made in Section~\ref{sec:pause}, that the fiber of
the universal elliptic curve over $e^{i\theta}\partial/\partial q$ is obtained
from the real oriented blowup of $\P^1$ at $\{0,\infty\}$ by identifying its
two boundary components with a suitable twist.


The map $[0,1) \times S^1 \to \D$ that takes $(r,\theta)$ to $e^{i\theta}$ is
the real oriented blowup $\Bl^o_0 \D \to \D$ of the disk. More generally, the
map
$$
S^1\times [0,1] \to \P^1 \text{ defined by } (\phi,t) \mapsto [te^{i\phi},1-t]
$$
is $\Bl^o_{0,\infty}\P^1 \to \P^1$. With this identification, the inclusion
$\C^\ast \to \Bl^o_{0,\infty}\P^1$ takes $se^{i\phi}$ to
$\big(\phi,s/(1+s)\big)$.


The fiber of the universal elliptic curve over $q = re^{i\theta}$ is the
quotient of
$$
A := \{(w,q)\in \C^\ast\times \D^\ast : \sqrt{|q|}\le |w| \le 1/\sqrt{|q|}\}
$$
obtained by glueing $w$ to $qw$ when $|w| = 1/\sqrt{|q|}$. Write $w=s e^{i\phi}$
so that we can identify $A$ with
$$
\{(s,\phi,re^{i\theta}) \in \R\times S^1 \times \D^\ast :
\sqrt{r} \le s \le 1/\sqrt{r}\}.
$$
With this identification, $(1/\sqrt{r},\phi,re^{i\theta})$ is glued to
$(\sqrt{r},\phi+\theta,re^{i\theta})$.

The function
$$
h(r,s) =
\Big(\frac{s}{1+s} - \frac{\sqrt r}{1+\sqrt r}\Big)
\Big(\frac{1}{1+\sqrt r} - \frac{\sqrt r}{1+\sqrt r}\Big)^{-1}
$$
induces homeomorphisms $h(r,\blank) : [\sqrt{r}, 1/\sqrt{r}] \to [0,1]$ for all
$r\ge 0$. It has inverse $k(r,\blank)$, where
$$
k(r,t) = \frac{\sqrt{r}-(\sqrt{r}-1)t}{1+(\sqrt{r}-1)t}
$$

Define an equivalence relation on $B := S^1\times S^1 \times [0,1) \times
[0,1)$
by
$$
(\phi,\theta,1,r) \sim (\phi+\theta,\theta,0,r).
$$
Set $\B = B/\hspace{-2.5pt}\sim$. The map $(\phi,\theta,t,r) \to (r,\theta)$
defines a projection $\pi: \B \to \Bl^o_0 \D$. This is a torus bundle over
$\Bl^o_0 \D$. Its fiber $B_\theta$ over $(0,\theta) \in \Bl^o_0 \D$ is the
quotient of $\Bl^o_{0,\infty}\P^1$ obtained by identifying the two boundary
components by a twist by $\theta$. The inclusion $A \hookrightarrow  B$ defined
by $(s,\phi,re^{i\theta}) \mapsto (\phi,\theta,h(r,s),r)$ induces a map
$\E_{\D^\ast} \to \B$ that commutes with the projections to $\D$ and is a
homeomorphism into its image.

The map $B \to \C^\ast \times \D$ that takes $(\phi,\theta,t,r)$ to
$(k(r,t)e^{i\phi},re^{i\theta})$ induces a map $\B \to \E_\D$ such that the
diagram
$$
\xymatrix{
\B \ar[r]\ar[d] & \E_\D \ar[d] \cr
\Bl^o_0\D \ar[r] & \D
}
$$
commutes. For each $\theta \in S^1$, the composite
$
\C^\ast \hookrightarrow \Bl^o_{0,\infty}\P^1 \to B_\theta \to E_0
$
is the natural inclusion of the smooth locus of the nodal cubic $E_0$ given by
the parameter $w=se^{i\phi}$. The map $\Bl^o_{0,\infty}\P^1 \to B_\theta \to
E_0$ collapses the boundary of $\Bl^o_{0,\infty}\P^1$ to the double point of
$E_0$.

\end{document}